\newcommand{\clearsubcaptcounter}{\setcounter{sub\@captype}{0}}
\crefname{hypothesis}{Hypothesis}{Hypotheses}
\title{Unified hp-HDG Frameworks for Friedrichs' PDE systems\thanks{Submitted to the editors DATE.
\funding{This work was funded by some funds}}}
\author{
Jau-Uei Chen\thanks{Department of Aerospace Engineering and Engineering Mechanics, The University of Texas at Austin, Austin, TX 78712, USA
  (\email{chenju@utexas.edu}).}
\and
Shinhoo Kang\thanks{Mathematics and Computer Science Division, Argonne National Laboratory. Lemont, IL 60439, USA.}
\and
Tan Bui-Thanh\footnotemark[2]\,\,\thanks{Oden Institute for Computational Engineering and Sciences, The University of Texas at Austin, Austin, TX 78712, USA}
\and
John N. Shadid\thanks{Computational Mathematics Department, Sandia National Laboratories, P.O. Box 5800, MS 1321, Albuquerque, NM 87185, US}
}
\DeclareMathOperator*{\infess}{ess\,inf}
\newcommand*{\addFileDependency}[1]{
  \typeout{(#1)}
  \@addtofilelist{#1}
  \IfFileExists{#1}{}{\typeout{No file #1.}}
}
\newcommand{\beq}{\begin{equation}}
\newcommand{\eeq}{\end{equation}}
\newcommand{\bal}{\begin{align}}
\newcommand{\eal}{\end{align}}
\newcommand{\bsp}{\begin{split}}
\newcommand{\esp}{\end{split}}
\newcommand{\bdm}{\begin{displaymath}}
\newcommand{\edm}{\end{displaymath}}
\newcommand{\bit}{\begin{itemize}}
\newcommand{\eit}{\end{itemize}}
\newcommand{\bde}{\begin{description}}
\newcommand{\ede}{\end{description}}
\newcommand{\bce}{\begin{center}}
\newcommand{\ece}{\end{center}}
\newcommand{\ben}{\begin{enumerate}}
\newcommand{\een}{\end{enumerate}}
\newcommand{\bea}{\begin{eqnarray}}
\newcommand{\eea}{\end{eqnarray}}
\newcommand{\barr}{\begin{array}}
\newcommand{\earr}{\end{array}}
\newcommand{\bean}{\begin{eqnarray*}}
\newcommand{\eean}{\end{eqnarray*}}
\newcommand{\edoc}{

 \usepackage[colorinlistoftodos,prependcaption,textsize=tiny]{todonotes}

\newcommand{\unsure}[2][1=]{\todo[linecolor=red,backgroundcolor=red!25,bordercolor=red,#1]{#2}}
\newcommand{\change}[2][1=]{\todo[linecolor=blue,backgroundcolor=blue!25,bordercolor=blue,#1]{#2}}
\newcommand{\info}[2][1=]{\todo[linecolor=OliveGreen,backgroundcolor=OliveGreen!25,bordercolor=OliveGreen,#1]{#2}}
\newcommand{\improvement}[2][1=]{\todo[linecolor=Plum,backgroundcolor=Plum!25,bordercolor=Plum,#1]{#2}}
\newcommand{\thiswillnotshow}[2][1=]{\todo[disable,#1]{#2}}

\newcommand{\sidenote}[1]{\marginpar{\tiny\raggedright\textsf{\hspace{0pt}#1}}}
\usepackage{soul}

\newcommand{\shinhoo}[1]{{\color{magenta}Shinhoo: #1}}
\ifpdf
\hypersetup{
  pdftitle={Unified hp-HDG Frameworks for Friedrichs' PDE systems},
  pdfauthor={Jau-Uei Chen, Shinhoo Kang, Tan Bui-Thanh}
}
\fi



\begin{document}

\maketitle

\begin{center}
    {\em This paper is dedicated to Professor Leszek Demkowicz on the occasion of his $70$th birthday}.
\end{center}

\begin{abstract}
This work proposes a unified $hp$-adaptivity framework for hybridized  discontinuous Galerkin (HDG) method for a large class of partial differential equations (PDEs) of  Friedrichs' 
type.
In particular, we present unified $hp$-HDG formulations for abstract one-field and two-field structures and prove their well-posedness.
In order to handle non-conforming interfaces we simply take advantage of HDG built-in mortar structures. With split-type mortars and the approximation space of trace, a numerical flux can be derived via Godunov approach and be naturally employed without any additional treatment. As a consequence, the proposed formulations are parameter-free.
We perform several numerical experiments for time-independent and linear PDEs   including elliptic, hyperbolic, and mixed-type to verify the proposed unified $hp$-formulations and demonstrate the effectiveness of $hp$-adaptation. Two adaptivity criteria are considered: one is based on a simple and fast error indicator, while the other is rigorous but more expensive using an adjoint-based error estimate. The numerical results show that these two approaches are comparable in terms of convergence rate even for problems with  strong gradients, discontinuities, or singularities.
\end{abstract}

\begin{keywords}
  Hybridized Discontinuous Galerkin; Friedrichs' system; Discontinuous Galerkin; Hybridization; $hp$-adaptation
\end{keywords}

\begin{AMS}
  68Q25, 68R10, 68U05
\end{AMS}

\section{Introduction}
The hybridized discontinuous Galerkin (HDG) methods were first
introduced in \cite{Cockburn2009a} and they inherit many benefits of
discontinuous Galerkin (DG) methods including the applicability to a
wide variety of partial differential equations (PDEs), the capability
of handling complex geometries, and supporting high-order accuracy, to
name a few. In addition, HDG methods improve computational efficiency
\cite{Cockburn2016a} by condensing out the local unknowns, and  the linear
system to be solved for the trace unknowns on the mesh skeleton is
smaller than DG counterparts. With these favorable advantages, HDG
methods indeed have great success solving various kinds of PDEs such
as Poisson equation \cite{Cockburn2016a,Cockburn2010,kirby2012},
convection-diffusion equations \cite{Nguyen2009a,Nguyen2009b,Fu2014},
Stokes equations
\cite{Cockburn2009b,Cockburn2011,Nguyen2010,Cockburn2014, kang2019},
Navier-Stokes equations \cite{Nguyen2011a,Cesmelioglu2016}, Maxwell
equation \cite{nguyen2011b,Li2014}, acoustics and elastodynamics
equations \cite{nguyen2011c}, Helmholtz equation
\cite{griesmaier2011,cui2014}, and magneto-hydrodynamic equations
\cite{lee2019}, to mention a few.  A constructive and unified HDG
framework for a large class of physics governed by elliptic,
parabolic, hyperbolic, and mixed-typed PDEs has been developed in
\cite{Tan2015} that not only rediscovers most of the existing HDG
methods but also discovers new ones.

As with any numerical discretization method, standard HDG could be
inefficient in some crucial situations where high gradient,
discontinuous, and/or singular features are present. Unfortunately,
these extreme features are not uncommon in almost all
engineering/physics applications. A cure to this issue is to employ
$hp$-adaptivity. The idea is first proposed in \cite{babuska1981a} and
is systematically studied in \cite{guo1986a,guo1986b}. It consists of
two key findings. The first one is that an exponential convergence
rate can be attained by uniformly increasing the degree of
approximation ($p$-refinement) if the solution is regular enough
\cite{Babuska1987}. The second one is that a low degree of
approximation along with refined mesh ($h$-refinement) is desired if
the solution is non-smooth. In brief, the ideal situation is to
locally execute either $h$- or $p$- refinement according to the local
behavior of the solution.

In fact, the adaptive feature has been routinely applied in the
context of HDG methods either through $h$-adaptivity \cite{carstensen2011,
  nguyen2011d, Egger2013, Dahm2014, woopen2014a, chen2016,
  cockburn2016b, Samii2016, araya2019, fidkowski2020, leng2020,
  muixi2020, sanchez2020, leng2021a, leng2021b, shin2021, bai2022,
  cockburn2022, levy2023}, $p$-adaptivity \cite{giorgiani2013b, Giorgio2014,
  Hoermann2018, sevilla2018, giacomini2019, may2021}, or
$hp$-adaptivity \cite{balan2013, Woopen2014b, woopen2015,
  Balan2016}. To drive the adaptation process, some indicator is
necessary. There are three popular approaches: \textit{a posterior}
error estimator, adjoint-based error estimate, and heuristic
indicator. Although the reliability and efficiency of an \textit{a
  posterior} error estimator sometimes can be guaranteed
\cite{Egger2013, chen2016, araya2019}, the derivation is
problem-dependent and is typically non-trivial, especially for nonlinear problems,
\cite{carstensen2011, Egger2013, chen2016, araya2019, leng2020,
  leng2021a, leng2021b, shin2021}. In addition, a post-process may be
required in this type of error estimator \cite{Egger2013, araya2019,
  shin2021, giorgiani2013b, Giorgio2014}. On the other hand, an
adjoint-based error estimate is popular in engineering applications
\cite{balan2013, Dahm2014, woopen2014a, woopen2015, Balan2016,
  fidkowski2020, may2021, cockburn2022} since, in this scenario, one
is usually more interested in some specific quantities instead
of the  solution itself.
An
adaptation process driven by an adjoint-based error estimate has been
developed for computing accurate values for such 
quantities of interest. Finally, some heuristic indicator can also be employed in
driving adaptation
\cite{Huerta1999} and is typically inexpensive to be computed. For example, in
\cite{Samii2016, Hoermann2018}, a measure of jump of flux is used as an error indicator. However, an error indicator is not necessarily associated
with ``error". For instance, authors in \cite{nguyen2011d,
  bai2022} take advantage of artificial viscosity as an indicator and
the work \cite{muixi2020} uses damage-field as an indicator. In this
paper, we study both adjoint-based error estimate and error
indicator. The first option is easier to be derived compared to
\textit{a posterior} error estimator but it still possesses certain
robustness \cite{balan2013, Dahm2014, woopen2014a, woopen2015,
  Balan2016}. The second option is more ad-hoc and lacks robustness. The discussion of both approaches will be
addressed in Section \secref{strategy_hp} and the numerical comparison
will be made in Section \secref{numerics}.

The adaptation procedure involves either $h$-refinement or
$p$-refinement or both for the mesh under consideration. The local
mesh refinement can be achieved without having any hanging node at the
element boundaries for simplex meshes. In this case, no special
treatment is required. The classic algorithms without re-meshing are
bisection \cite{rivara1984a,rivara1984b,stevenson2007} and red-green
procedures \cite{bank1983}. Another approach is to simply re-generate
the mesh where the layout of the small and large elements depends on
some metric \cite{woopen2015, fidkowski2020, leng2020, may2021,
  bai2022, levy2023}. This mechanism is usually more computational
expensive but the resulting mesh is more economical. We would like to
mention that, however, $h$-nonconforming interfaces are typically
involved in local $h$-refinement for quadrilateral and hexahedral
meshes. Thanks to natural built-in mortars in HDG methods, the
relevant techniques can be easily utilized to treat nonconforming
interfaces. In addition, the issue of $p$-nonconforming interfaces due
to local enrichment of approximation space can also be addressed by
the mortar techniques. Nonetheless, special attention is needed for
curved boundaries \cite{sevilla2018, giacomini2019}. In this work,
though our $hp$-HDG approaches are valid for
triangular/tetrahedral/quadrilateral/hexahedral elements with straight
edges/faces in both 2D and 3D, our numerical results are only for
two-dimensional problems with triangle elements.

So far, we have reviewed various HDG schemes for solving different physical problems. Since each physics has a unique characteristic, it is natural to develop different numerical scheme for different problem.
However, the PDEs of Friedrichs' type \cite{friedrichs1958} embraces a large class of PDEs  with similar mathematical structure and this  provides an opportunity of developing a single unified framework. This idea is first adopted in a series of papers \cite{ern2006a,ern2006b,ern2008} in the analysis of DG methods. Friedrichs' system is also the basis to unify various discontinuous Petrov-Galerkin methods \cite{Tan2013}. In the work \cite{Tan2015}, the author uses Friedrichs' system to propose a unified and constructive framework for HDG schemes via a Godunov approach, with the assumption that the interfaces are conforming. 

This paper extends the work in \cite{Tan2015} in two important directions. First, our extension now provides a unified HDG framework for  PDEs with two-field structure (to be defined). Second we develop two unified $hp$-HDG frameworks: one for one-field PDE structure and another for two-field PDE structure. In particular, we consider Friedrichs' systems with more general assumptions that cover one- and two-field structures. For two-field structures, both full and partial coercivities are examined. The resulting system thus covers a wide range PDEs including hyperbolic, elliptic, or mixed-type PDEs. We propose two $hp$-HDG formulations: one for one-field PDEs and the other for two-field PDEs. The derivation heavily relies on the Godunov approach.

For the two-field formulation, we further exploit its intrinsic structure to obtain the corresponding reduced trace system. A few assumptions are identified to guarantee the existence of the numerical flux, and this is also a key to prove the well-posedness. Several numerical experiments are carried out to verify the effectiveness of the abstract $hp$-HDG formulations when applied to specific PDEs. In order to drive the adaptivity, an adhoc error indicator and an adjoint-based error estimation are implemented and their performance are compared. As shall be shown, using either of these criteria, numerically polluted areas induced by high gradient/discontinuity/singular can indeed decrease through the $hp$-adaptation process, and acceptable convergence rates can be attained in some cases.   

The paper is organized as follows. Section \secref{FdsSys}
briefly reviews Friedrichs' systems and outline important assumptions
that will be used in the well-posedness analysis. In Section \secref{hpHDGform}, key concepts about mortar techniques
are discussed, and HDG numerical fluxes, and the corresponding
$hp$-HDG formulations, for one-field and two-field Friedrichs' systems are derived. In addition, the well-posedness of these formulations is proved. The $hp$-adaptation strategy with adaptive criteria based on ad-hoc and adjoint-based error indicator is presented in Section \secref{strategy_hp}.  Several numerical examples for elliptic PDEs (with corner singularity, anisotropic diffusion with discontinuous boundary condition, heterogeneous anisotropic with discontinuous diffusivity field), linear hyperbolic PDE (with variable speed and discontinuous boundary condition), and convection-diffusion PDE (with boundary layer and discontinuous boundary condition) are presented in Section \secref{numerics}. Section \secref{conclusion} concludes the paper with future work.

\section{Linear PDEs of Friedrichs' type} \seclab{FdsSys}
The main
idea of Friedrichs' unification of PDEs \cite{friedrichs1958} is to cast wide classes of PDEs into the first order systems which share the same mathematical structure.
In this section, we outline 
one-field and two-field PDE of Friedrichs' type. The following notations are used in the paper.    Boldface lowercases are reserved for (column) vectors, uppercase letters are for matrices, and boldface uppercase letters are for third order tensors. 
Considering the following general system of linear PDEs  defined in a Lipschitz domain $\dom\subset\real^{d}$, where $d$ refers to the spatial dimension:
\beq\eqnlab{general_PDE}
    \FdsSum\FdsPartial\FdsFluxComp\LRp{\FdsVar} + \FdsG\FdsVar
    :=\FdsSum\FdsPartial\LRp{\FdsAk\FdsVar} + \FdsG\FdsVar = \Fdsforcing\text{ in }\dom,
\eeq
where ${\FdsFluxComp}\LRp{\FdsVar}:=\FdsAk \FdsVar$ is the $\FdsIndx$-th component of the flux tensor $\FdsFlux\LRp{\FdsVar}$, $\FdsVar$ the unknown solution with values in $\real^{\FdsNVar}$,  and $\Fdsforcing \in \real^{\FdsNVar}$ the forcing term. 
Here $\FdsPartial$ stands for the $\FdsIndx$-th (component-wise) partial derivative. Different types of constraints imposed on $\FdsAk$ and $\FdsG$ will result in different types of Friedrichs' systems, and we shall discuss each case separately in the following sub-sections.

\subsection{One-field Friedrichs' systems}
One-field Friedrichs' systems come with the following standard assumptions \cite{friedrichs1958,Jensen2005,ern2006a}:
\ben[label=(A.\arabic*)]
    \item \itmlab{A1}$\FdsG\in[\Lsp{\infty}(\dom)]^{\FdsNVar,\FdsNVar}$
    \item \itmlab{A2}$\forall\FdsIndx\in\LRc{1,\dots,d},\, \FdsAk\in\LRs{\Lsp{\infty}(\dom)}^{\FdsNVar,\FdsNVar}$ and $\FdsSum\FdsPartial\FdsAk\in\LRs{\Lsp{\infty}(\dom)}^{\FdsNVar,\FdsNVar}$.
    \item \itmlab{A3}$\forall\FdsIndx\in\LRc{1,\dots,d},\, \FdsAk=(\FdsAk)^T$ a.e. in $\dom$.
    \item \itmlab{A4}$\exists\mu_0>0,\,{\FdsG+ \FdsG^{T} + \sum_{\FdsIndx=1}^d\FdsPartial\FdsAk}\geq 2\mu_0\id_{\FdsNVar}$ a.e. in $\dom$.
\een
where $\id_{\FdsNVar}$ is a $\FdsNVar\times\FdsNVar$ identity matrix. In this paper, the inequality of the type \itmref{A4} stands for the semi-positive definiteness of the difference between the matrices on the left-hand side and  the right-hand side. 
Inequality \itmref{A4} is also  known as  full coercivity \cite{ern2006a}.  Here, $[\Lsp{\infty}(\dom)]^{\FdsNVar,\FdsNVar}$ denotes the space of $\FdsNVar\times \FdsNVar$ matrix-valued essentially bounded functions on $\dom$. 
It turns out that any symmetric and strictly hyperbolic PDE system is an example of one-field Friedrichs' system. For example, an advection-reaction equation falls into this category.

\subsection{Two-field Friedrichs' systems}
Let  $\FdsNVarAux$ and $\FdsNVarSol$ be two positive integers such that $\FdsNVar=\FdsNVarAux + \FdsNVarSol$. Denote $\Hsp_{\FdsVarAux}:=\LRs{\Lsp{2}\LRp{\dom}}^{\FdsNVarAux}$, $\Hsp_{\FdsVarSol}:=\LRs{\Lsp{2}\LRp{\dom}}^{\FdsNVarSol}$, and $\Hsp:=\Hsp_{\FdsVarAux}\times\Hsp_{\FdsVarSol}$, where $\Lsp{2}\LRp{\dom}$ is the space of square-integrable functions on $\dom$. Suppose we have the decomposition $\FdsVar=\LRp{\FdsVarAux,\FdsVarSol}$ for all $\FdsVar\in\Hsp$, and
\beq\eqnlab{Fds_block_mat}
\FdsG =
\begin{bmatrix}
\FdsGaa & \FdsGas\\
\FdsGsa & \FdsGss
\end{bmatrix},\quad\quad
\FdsAk =
\begin{bmatrix}
\FdsAkaa & \FdsBk\\
\FdsBk^T & \FdsCk
\end{bmatrix}, \quad \FdsIndx\in\LRc{1,\dots,d}.
\eeq
Two additional key assumptions on which the two-field theory is based are \cite{ern2006b}:
\ben[resume,label=(A.\arabic*)]
    \item \itmlab{A5} $\FdsAkaa=0$, $\forall\FdsIndx\in\LRc{1,\dots,d}$, 
    \item \itmlab{A6} $\FdsGaa\geq k_0 \id_{\FdsNVarAux}$ for some $k_0>0$, 
\een
where $\id_{\FdsNVarAux}$ is the identity matrix in $\real^{\FdsNVarAux,\FdsNVarAux}$. Assumptions \itmref{A5}-\itmref{A6} allow us to eliminate the $\FdsVarAux$-component of $\FdsVar$ in the PDE system and the resulting differential equation is an elliptic-like PDE for the $\FdsVarSol$-component. 
The two-field Friedrichs' systems that satisfy assumptions \itmref{A1}-\itmref{A6} cover wide variety of PDEs including convection-diffusion-reaction equation, compressible linear continuum mechanics with reaction term, and simplified MHD \cite{ern2006b}. 

We note that the positivity condition \itmref{A4} can be further relaxed to account for systems that have two-field structures with partial coercivity. This class includes convection-diffusion, anisotropic diffusion, and typical compressible linear continuum mechanics (e.g., linearized compressible elasticity or linearized compressible Navier-Stokes) equations, to name a few. This can be accomplished (see \cite{ern2008}) 
by replacing assumption \itmref{A4} with the following:\\ 
\ben[label=(A4.\alph*)]
    \item \itmlab{A4a}$\exists\mu_0>0,\,{\FdsG + \FdsG^{T} + \FdsSum\FdsAk}  \geq 2\mu_0\id^{o}_{\FdsNVarAux}$ a.e. in $\dom$, where $\id^{o}_{\FdsNVarAux}$ is an $\FdsNVar\times\FdsNVar$ matrix defined as $\id^{o}_{\FdsNVarAux}:=\begin{bmatrix}
    \id_{\FdsNVarAux} & 0\\
    0 & 0
    \end{bmatrix}$.
    \item \itmlab{A4b}$\FdsGas=(\FdsGsa)^T=0$ and $\FdsBk$ are constant over $\dom$.
\een


\begin{rema}
Here, we omit one additional inequality ((A3B") in \cite{ern2008}) required for two-field Friedrichs' systems with partial coercivity since it will not be used in our analysis of HDG. However, the inequality is critical in the proof of well-poseness of the continuous PDE stated in \eqnref{general_PDE}. Such an inequality can be viewed as the generalized form of Friedrichs–Poincaré \cite{brenner2003a} or Korn's \cite{brenner2003b} inequality, and the discrete version of it can actually be used in our analysis which, however, will lead to a mesh-dependent HDG scheme.
\end{rema}

\subsection{Boundary conditions}
Though the numerical results in Section \secref{numerics} use non-homogeneous boundary conditions, it is sufficient to show the well-posedness of the one-field setting \itmref{A1}-\itmref{A4} and the two-field setting \itmref{A1}-\itmref{A6} (or \itmref{A1}-\itmref{A3}, \itmref{A4a}-\itmref{A4b}, \itmref{A5}-\itmref{A6}) with homogeneous boundary condition.   Similar to \cite{ern2006a,ern2006b,ern2008}, we consider a general homogeneous boundary condition in the following form
\begin{subequations}\eqnlab{Abstrct_BC}
\begin{align}
\begin{split}\eqnlab{General_BC}
    \LRp{\FdsABnd - \FdsM}\FdsVar=\bs{0}
\end{split}\\
\intertext{where $\FdsM\in\real^{\FdsNVar,\FdsNVar}$ and $\FdsABnd:=\FdsSum\normalComp{\FdsIndx}\FdsAk$ with $\normal=\LRp{\normalComp{1},\dots,\normalComp{d}}^T$ being unit outward vector of $\domBnd$. In addition, we assume that}
\begin{split}\eqnlab{Abstrct_M1}
    \FdsM \geq 0,
\end{split}\\
\begin{split}\eqnlab{Abstrct_M2}
    \Null{\FdsABnd-\FdsM} + \Null{\FdsABnd+\FdsM} = \real^{\FdsNVar}
\end{split}
\end{align}
\end{subequations}
with $\Null{\cdot}$ denoting the nullspace of its argument. It should be noted that there is no unique definition of $\FdsM$ and different choices will lead to different boundary conditions.

\section{\texorpdfstring{$hp$}{hp}-HDG Formulations}\seclab{hpHDGform}
In this section, we are going to derive the $hp$-HDG formulations
for  Friedrichs'
systems outlined in  Section \secref{FdsSys}.
Toward formulating an $hp$-HDG scheme, it is essential to derive
a numerical flux due to discontinuous approximation space(s) used for
the volume unknown(s). 
The well-known Godunov approach, which involves solving the Riemann problem either exactly or approximately, is one of the most popular methods to construct numerical fluxes.
The key to realize is that the Godunov flux can be hybridized
\cite{Tan2015}. In the other words, the Godunov flux\footnote{It
should be noted that the Godunov flux is simply an upwind flux if the
problem of interest is linear and thus we may use these two terms
interchangeably in this paper without confusion.} can be defined
implicitly along with trace unknown(s) and thus can be employed as an
HDG numerical flux. In addition, such an approach is desirable since
it can lead to a parameter-free scheme. On the other hand, the key
ingredient to handle $hp$-nonconforming interfaces is to construct
such flux directly on the mortars which are naturally built-in HDG
methods. As we shall show, this can be achieved with the specific choice of the
configuration of the mortars and the approximation space(s) of trace
unknown(s).

\subsection{Nomenclatures}
This section collects notations and conventions for the rest of the
paper.  A partition $\domPart$ of the domain $\dom\subset\real^{d}$ is
a finite collection of disjoint elements $\elem$ such that
$\cup_{\elem \in \domPart} \overline{\elem}= \overline{\dom}$ where
the mesh size $h$ is defined as $\max_{\elem\in\domPart}
diam(\elem)$. For the simplicity of the exposition, we will use
two-dimensional simplex elements to convey our idea, though our
approach is valid for three-dimensional settings as well.
The set of elemental boundaries 
is denoted by $\domPartBnd=\LRc{\elemBnd\mid\elem\in\domPart}$ 
each of which comes with unit outward normal vector $\normal^{\elem}$. We conventionally identify $\normal$ as the normal vector on the boundary $\elemBnd$ of element $\elem$ (also denoted as $\elemM$) and $\normal^+$ = $-\normal^-$ as the normal vector of the boundary of a neighboring element (also denoted as $\elemP$).

An element $\elem^+$ is said to be a neighbor of the element $\elem^-$ when $\elemBnd^+\cap\elemBnd^-$ has a positive $d-1$ Lebesgue measure.
For an element $K$ of the partition $\domPart$, we define a face of the element $\elem\in\domPart$ by $\face\in\elemBnd$. 
For an interior interface (nonconforming or not), we introduce a  \tit{mortar} $\mort$ 
as $\mort=\elemBnd^+\cap\elemBnd^-$, and  $\mort=\elemBnd\cap\domBnd$ on the boundary of $\dom$. 
For any conforming interface the mortar $\mort$ is clearly defined and for any nonconforming interface the mortar is defined in Section \secref{hNoncon}. The collection of mortars, called \tit{mesh skeleton}, is denoted by $\skel$,
$\skel=\skelInt\bigcup\skelBnd$ with 
$\skelBnd = \LRc{\mort\in\skel \mid \mort\subset\domBnd}$ and $\skelInt = \skel \setminus \skelBnd$. 
The derivation of an HDG scheme is centered on HDG numerical flux which typically comes with the newly introduced unknowns residing on the skeleton. Such unknowns are usually termed trace unknowns while the usual unknowns defined within elements, such as the ones in the DG methods, are termed as volume unknowns.

For the quantity $f$ that is  possibly double-valued  on the mesh skeleton, we define the jump of $f$ on $\mort\in\skel$ as:
\begin{align*}
    &\jump{f} = f^- + f^+,\quad\text{for }\forall\mort\in\skelInt,
    &&\jump{f} = f,\quad\text{for }\forall\mort\in\skelBnd,
\end{align*}
where $f^{\pm}(x)=\lim_{\substack{y \rightarrow x \\ y\in \face^{\pm}}}f(y)$ and $\face^{\pm}\in\elemBnd^{\pm}$. 

We define $\polySpc{p}\LRp{D}$ as the space of polynomials of degree at most $p$ on a domain $D$ and $\Lsp{2}\LRp{D}$ as the space of square integrable functions on $D$. In particular, we denote the degree of polynomials in an element $\elem$ by $\polyElem$ and on a mortar $\mort$ by $\polyMort$. Next, we introduce discontinuous piecewise polynomial spaces
\begin{align*}
    &\FdsVarApprSpc{}=\LRs{\ApprSpc{}}^{\FdsNVar},
    &&\ApprSpc{}:=\LRc{z_h \in \Lsp{2}(\domPart) :  \eval{z_h}{\elem}\in\polySpc{\polyElem}\LRp{\elem},\ \forall \elem\in\domPart},\\
    &\FdsTrcVarApprSpc{}=\LRs{\TrcApprSpc{}}^{\FdsNVar},
    &&\TrcApprSpc{}:=\LRc{\widehat{z}_h\in \Lsp{2}(\skel)  :  \eval{\widehat{z}_h}{\mort}\in\polySpc{\polyMort}\LRp{\mort},\ \forall\mort\in\skel}.
\end{align*}
To account for various boundary conditions, we denote $\domBndDir$ as Dirichlet type of boundary, $\domBndNmn$ as Neumann type of boundary, and $\domBndRb$ as Robin type boundary. The boundary now can be decomposed as $\domBnd=\domBndDir\cup\domBndNmn\cup\domBndRb$ where the intersections of any two types of boundaries are empty set. 
To facilitate the discussion of the two-field Friedrichs' system later, we further introduce some additional approximation spaces:
\begin{align*}
    &\FdsVarAuxApprSpc{}=\LRs{\ApprSpc{}}^{\FdsNVarAux},
    &&\FdsVarSolApprSpc{}=\LRs{\ApprSpc{}}^{\FdsNVarSol},\\
    &\FdsTrcVarAuxApprSpc{}=\LRs{\TrcApprSpc{}}^{\FdsNVarAux},
    &&\FdsTrcVarSolApprSpc{}=\LRs{\TrcApprSpc{}}^{\FdsNVarSol}.
\end{align*}
 Finally, we define the inner product for the aforementioned finite element spaces. $\LRp{\cdot,\cdot}_{D}$ is defined as the $\Lsp{2}$-inner product on a domain $D\in\real^d$ and  $\LRa{\cdot,\cdot}_{D}$ as the $\Lsp{2}$-inner product  on a domain $D$ if $D\in\real^{d-1}$. To make our presentation more concise, we introduce the following definitions:
\begin{equation*}\small
    \begin{cases}
    \LRp{\cdot,\cdot}_{\domPart}:=\sum_{\elem\in\domPart}\LRp{\cdot,\cdot}_{\elem},\\
    \LRa{\cdot,\cdot}_{\domPartBnd}:=\sum_{\elemBnd\in\domPartBnd}\sum_{\face\in\elemBnd}\sum_{\mort\subseteq\face}\LRa{\cdot,\cdot}_{\mort},\\
    \LRa{\cdot,\cdot}_{\elemBnd}:=\sum_{\face\in\elemBnd}\sum_{\mort\subseteq\face}\LRa{\cdot,\cdot}_{\mort},\\
    \LRa{\cdot,\cdot}_{\skel}:=\sum_{\mort\in\skel}\LRa{\cdot,\cdot}_{\mort},\\
    \LRa{\cdot,\cdot}_{\skelBnd}:=\sum_{\mort\in\skelBnd}\LRa{\cdot,\cdot}_{\mort}.
    \end{cases}\,
\end{equation*}

\subsection{Mortar-based technique}\seclab{mortar}
A mortar technique is characterized by the introduction of mortars,
finite element spaces on the mortars, and the method that uses mortars
to patch the subdomains/elements.  Our mortar approach is built upon
four mortar approaches, all of which share the aforementioned three
steps, and the key difference is the way they compute the mortar
unknowns. The first approach is due to
\cite{Maday1988,anagnostou1990,bernardi1989,Bernardi1993,bernardi2005,barth2005},
originally developed for elliptic PDEs, that uses the mortar unknowns
to weakly maintain the continuity of the solution across the
mortars. In this case, the mortar unknowns are solved together with
the volume unknowns on subdomains or elements.  The second approach
was developed in \cite{Kopriva1996,Kopriva2002, karniadakis2005,
  Tan2012,friedrich2018} for hyperbolic PDEs in the context of
spectral element and DG approaches. The upwind states, which can be
considered as the mortar unknowns \cite{Tan2015}, are computed on the
mortars to construct the numerical fluxes to ensure flux conservation
across the mortars. The third approach by
\cite{belgacem1999,wheeler2000,arbogast2000,Pencheva2003}, originally
developed in the context of mixed finite element methods for
elliptic-type PDEs, calls the mortar unknowns as Lagrange multipliers
and, similar to the first approach, they are solved together with the
volume unknowns. The key difference is that the weak continuity of the
flux is enforced instead of the weak continuity of the
solution. Finally, the HDG approach \cite{Cockburn2009a,
  Cockburn2016a,Cockburn2010,kirby2012,
  Nguyen2009a,Nguyen2009b,Fu2014,
  Cockburn2009b,Cockburn2011,Nguyen2010,Cockburn2014, kang2019,
  lee2019, Tan2015}, in which the mortar unknowns are called trace
unknowns, uses mortar unknowns to enforce the weak continuity of the
flux similar to the third approach. The mortar unknowns are also
solved together with the volume unknowns.
 
In this paper, we extend the HDG built-in mortars to fully account for $hp$-nonconforming interfaces. To that end, two ingredients are required. They include the appropriate choice of mortar configuration and the finite element space defined over the mortars. We will show that our choice, without any additional interpolation or projection, can lead to a setting where the Riemann problem is well-defined and the numerical flux can be derived via the Godunov approach.

\subsubsection{\texorpdfstring{$h$}{h}-nonconforming interfaces}\seclab{hNoncon}
In $h$-nonconforming interfaces (see Figure \figref{noncon_interface}), $\faceP$ is not necessarily equal to $\faceM$. We hence need to carefully consider the definition of mortar. There are two options for constructing a mortar as shown in Figure \figref{mortar}. One is a set of \tit{split-sided} mortars (i.e. Figure \figref{split_mortar}) which conforms to the smaller sides of the adjacent elements, while the other is \tit{full-sided} mortar (i.e. Figure \figref{full_mortar}) which conforms to the larger side of the adjacent element. In the context of HDG methods, the first option is used in \cite{Chen2012, Egger2013, Chen2014} and the second one is used \cite{Samii2016, Dahm2014, muixi2020}. Although the usage of the full-sided mortar is less computationally intensive, the first type of mortar is chosen in this work to facilitate the implementation of the Godunov approach. 
\begin{figure}[!htb]
\centering
\subfloat[A nonconforming interface\label{fig:noncon_interface}]{
	\includegraphics[width=50mm]{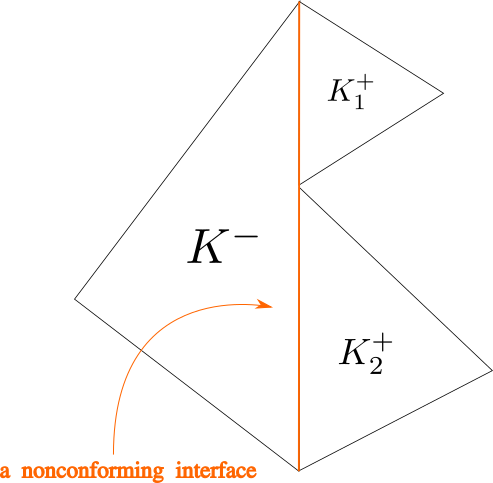}
}
\subfloat[Split-sided mortars\label{fig:split_mortar}]{
	\includegraphics[width=50mm]{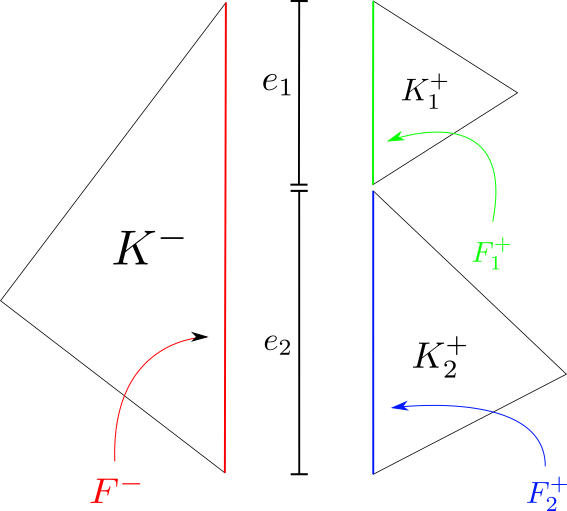}
}
\subfloat[A full-sided mortar\label{fig:full_mortar}]{
	\includegraphics[width=50mm]{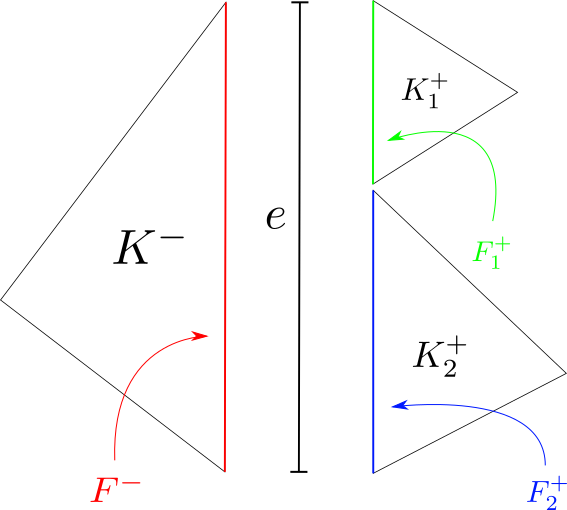}
}
\caption{Different options of mortars on a nonconforming interface.}  
\figlab{mortar}
\end{figure}

\begin{rema}
It should be noted that in \cite{Kozdon2018} the authors showed
theoretically that either full- or split-sided mortar can lead to a
stable DG scheme on a discrete level for a time-dependent linear
elasticity problem. Additionally, they also showed numerically that
both type of mortars can lead to conservative schemes. However,
split-sided mortar is still suggested in \cite{Kozdon2018} in the
sense that \ben
    \item it is the most natural approach for DG methods, 
    \item full-sided mortar has a spectral radius more than twice as large as the split-side mortar for the problem considered (hence, more restrictive time step size for explicit methods).
\een
\end{rema}

\subsubsection{\texorpdfstring{$p$}{p}-nonconforming interfaces}\seclab{pNoncon}
For  $p$-nonconforming interfaces, $\polyElemP = \polyElemM$ does not hold in general. Furthermore, the degree of approximation of trace unknowns denoted by $\polyMort$ could differ from $\polyElemP$ or $\polyElemM$. In this work, we choose:
\begin{equation}\eqnlab{polyMort}
\begin{cases}
    \polyMort = \max\LRc{\polyElemP,\polyElemM},
    \text{ for }\forall\mort\in\skelInt,\\
    \polyMort = \polyElem,
    \text{ for }\forall\mort\in\elemBnd\cap\skelBnd,
\end{cases}
\end{equation}
to facilitate the Godnuov approach and stability. This choice is also suggested in \cite{Cockburn2009a}.  

\subsubsection{\texorpdfstring{$hp$}{hp}-nonconforming interfaces}\seclab{hpNoncon}
By combining the setting presented in Section \secref{hNoncon} and Section \secref{pNoncon}, we can now handle $hp$-nonconforming interfaces and construct an $hp$-HDG scheme for the Friedrichs' system using the Godnuov approach. In addition, neither projection nor interpolation is required thanks to our specific selection of the configuration of mortars and of the degree of approximation of the trace unknowns. 

To illustrate the idea, we consider the nonconforming interface shown
in Figure. \figref{split_mortar} and focus on the segment
$\mort_1$. It is not clear how to implement the Godnuov approach since
the Riemann problem is not well-defined. The left state and right
state are defined on the domains that do not conform to each other
(i.e $\faceM\neq\faceP_1$). To resolve this issue, we can either
project and interpolate the states onto the mortar $\mort_1$. Then,
the Godnuov approach can be applied by solving the Riemann problem
that is properly defined by these intermediate states. This
methodology is already proven to be successful in the context of DG
methods \cite{Kopriva1996,Kopriva2002, karniadakis2005,
  Tan2012,friedrich2018}. {\em Throughout  the paper we assume that all the edges/faces are straight, that is, the meshes are affine. Curved elements are more delicate to treat and this will be part of our future work.} Owing to the natural built-in mortar in HDG
methods and the way we handle the nonconforming interfaces, both
projection, and interpolation are actually implicitly implied. To see
it, we consider the following piecewise polynomial functions
$\FdsVarAppr\in\FdsVarApprSpc{}$,
$(\FdsVarAppr^*)^{\pm}\in\FdsTrcVarApprSpc{}$ and
$\FdsTrcTestAppr\in\FdsTrcVarApprSpc{}$. Moreover, we define a
projection operator $\projOp{\cdot}$ that is the $\Lsp{2}$-projection
into the space $\FdsTrcVarApprSpc{}$. The projection from left state
defined on $\faceM$ to the left intermediate state $(\FdsVarAppr^*)^-$
can be stated as
$\projOp{\eval{\FdsVarAppr}{\faceM\cap\mort_1}}=\eval{(\FdsVarAppr^*)^-}{\mort_1}$
and the equality holds in the sense that \beq
\LRa{\FdsVarAppr,\FdsTrcTestAppr}_{\mort_1} =
\LRa{(\FdsVarAppr^*)^-,\FdsTrcTestAppr}_{\mort_1}\quad\forall\FdsTrcTestAppr\in\FdsTrcVarApprSpc{}.
\eeq Since we use split-sided mortars and choose degree approximation of
the trace test space $\FdsTrcVarApprSpc{}$ by Eq. \eqnref{polyMort},
it is obvious that for any polynomial function
$\f\in\polySpc{\polyElemM}\LRp{\elemM}$ it has to satisfy that
$\eval{\f}{\faceM\cap\mort_1}\subseteq\polySpc{\polyMort}\LRp{\mort_1}$. Due
to the unique representation of polynomials, the projection actually
does nothing here, and hence
$\projOp{\eval{\FdsVarAppr}{\faceM\cap\mort_1}}=\eval{\FdsVarAppr}{\faceM\cap\mort_1}$. As
consequence, the left intermediate state is nothing but just the
restriction of the left state:
$\eval{(\FdsVarAppr^*)^-}{\mort_1}=\eval{\FdsVarAppr}{\faceM\cap\mort_1}$. The
same argument can also be made in terms of interpolation. Similarly,
we have the right intermediate state
$\eval{(\FdsVarAppr^*)^+}{\mort_1}=\eval{\FdsVarAppr}{\faceP\cap\mort_1}$. Now
the upwind numerical flux can be constructed by solving the Riemann
problem locally along the normal $\normal$ of the segment
$\faceM\cap\mort_1$. Given that being along in a single direction is
one-dimension in nature, the normal vector $\normal$ can be
parameterized by some scalar $n$ where $n=0$ will correspond to the
location of the mortar $\mort_1$. By extending the definition of the
coefficient matrix $\FdsABnd:=\FdsSum\normalComp{\FdsIndx}\FdsAk$ with
$\normal=\LRp{\normalComp{1},\dots,\normalComp{d}}^T$ being unit
outward vector of $\elemBnd$ for $\forall\elem\in\domPart$, the
statement of the Riemann problem \cite{toro1999} reads: find
$\FdsVarAppr\LRp{n,t}$ such that \beq\eqnlab{local_Riemann}
\pp{\FdsVarAppr}{t}+\pp{\LRp{\FdsABnd\FdsVarAppr}}{n}=\bs{0}, \eeq
with initial condition $\FdsVarAppr\LRp{n,0}=(\FdsVarAppr^*)^-$ for
$n<0$, $\FdsVarAppr\LRp{n,0}=(\FdsVarAppr^*)^+$ for $n>0$. Here,
(artificial) time $t$ is introduced to help understand the Godunov
flux via the Riemann problem, but it is otherwise not necessary in the
derivation. Figure \figref{riemann} illustrates the idea of how the
Riemann problem is defined in direction $\normal$ that is parametrized
by $n$. With the well-defined problem \eqnref{local_Riemann}, we are
now in the position to derive upwinding HDG flux by following the
procedure outlined in the work \cite{Tan2015}. In this paper, the coefficient matrix $\FdsABnd$ will be assumed to be continuous across the
mesh skeleton\footnote{This condition can be relaxed, but we will use
it to keep the presentation concise.}. In particular, $\FdsABnd$ is
symmetric according to \itmref{A3} and hence its eigen-decomposition
is guaranteed to exist. We thus can also defined
$\FdsAbsA:=\FdsEigenVec\abs{\FdsEigenVal}\FdsEigenVec^{-1}$ where
$\FdsEigenVal:=diag(\lambda_1,\dots,\lambda_{\FdsNVar})$ and
$\lambda_i$ are eigenvalues of $\FdsABnd$, and $\FdsEigenVec$ is the
matrix composed by the corresponding eigenvectors.
\begin{figure}[!htb]
\centering
\includegraphics[width=100mm]{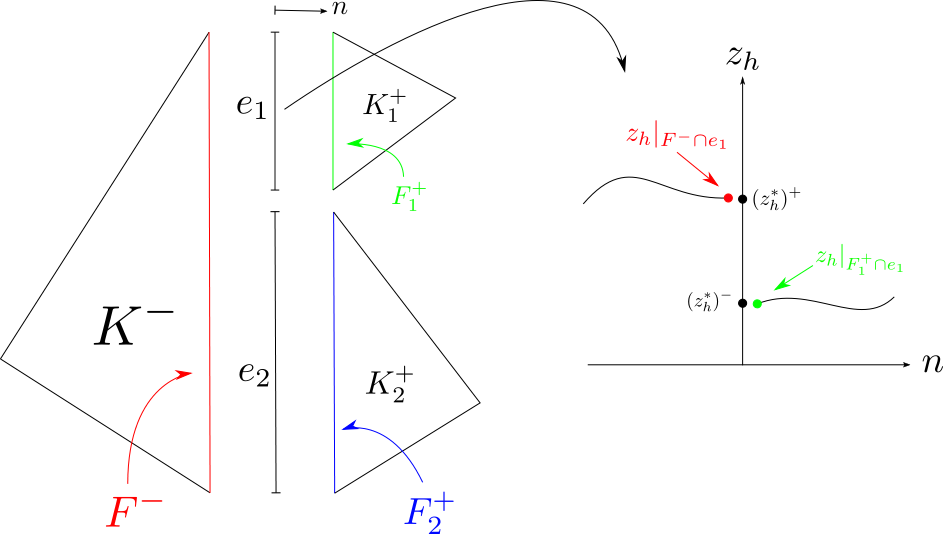}
\caption{Illstration of how the Riemann problem is defined along a normal direction $\normal$.}  
\figlab{riemann}
\end{figure}
\subsection{A constructive derivation of an $hp$-HDG formulation}\seclab{hp_hdg}
\subsubsection{Friedrichs' system with one-field structure}\seclab{Fds_one}
In this section, we derive $hp$-HDG formulation for linear PDE in Eq. \eqnref{general_PDE} that satisfies one-field Friedrichs' system assumptions \itmref{A1}-\itmref{A4}.
To begin with, we apply Galerkin approximation to Eq. \eqnref{general_PDE} on an element $\elem\in\domPart$ together with integration by parts. The resulting local problem reads: seek $\FdsVarAppr\in\FdsVarApprSpc{}$ such that
\beq\eqnlab{Fds_one_hp_local_undefined}\small
\begin{aligned}
    -\FdsSum\LRp{{\FdsAk}\FdsVarAppr,{\FdsPartial}\FdsTestAppr}_{\elem} 
    + \LRp{{\FdsG}\FdsVarAppr,\FdsTestAppr}_{\elem} 
    + \LRa{\upwindFlux\LRp{\FdsVarAppr}\normal,{\FdsTestAppr}}_{\elemBnd}
    = \LRp{{\bs{\forcing}},\FdsTestAppr}_{\elem},
    \quad\forall\elem\in\domPart,
\end{aligned}
\eeq
 for all $\FdsTestAppr\in\FdsVarApprSpc{}$ and the flux $\upwindFlux\LRp{\FdsVarAppr}$ is a tensor in which each component is a $\FdsNVar\times d$ matrix. As result, $\upwindFlux\LRp{\FdsVarAppr}\normal$ is also an $\FdsNVar\times d$ matrix. By treating nonconforming interfaces in the fashion presented in Sec. \secref{hpNoncon}, the normal flux $\upwindFlux\LRp{\FdsVarAppr}\normal$ on $\mort\in\skel$ is still not well-defined since the traces of both $\FdsVarAppr^-$ of element $\elemM$ and $\FdsVarAppr^+$ of element $\elemP$ co-exist on $\mort$. However, it can be resolved by Godunov-type methods \cite{godunov1959} through first solving, either exactly or approximately, the Riemann problem \eqnref{local_Riemann} for the upwind sate at the mortar $\mort$ and then introducing the upwind numerical flux $\upwindFluxMort\LRp{\FdsVarAppr^-,\FdsVarAppr^+}\normal$. Furthermore, as reported in \cite{Tan2015}, such flux is hybridizable. The upwind-based HDG flux can then be constructed by replacing the upwind state with the designated trace unknown. Following this procedure, the upwind HDG flux 
 reads:
\beq\eqnlab{Fds_one_hdg_flux}
\upwindFluxTrc\LRp{\FdsVarAppr,\FdsTrcVarAppr}\normal := \FdsABnd\FdsVarAppr + \FdsAbsA\LRp{\FdsVarAppr - \FdsTrcVarAppr}
\eeq 
where the right hand side (RHS) of \eqnref{Fds_one_hdg_flux} is nothing but the upwind flux in its one-sided form when $\FdsTrcVarAppr$ is the Riemann solution. Note that $\FdsAbsA$ can also be replaced by some other stability parameter matrix $\stabPar$ and that will result in different numerical fluxes (also see the discussion in \cite{Tan2015}). By replacing $\upwindFlux\LRp{\FdsVarAppr}\normal$ by $\upwindFluxTrc\LRp{\FdsVarAppr,\FdsTrcVarAppr}\normal$, we arrive at the so-called \tit{local equations}: seek $\LRp{\FdsVarAppr,\FdsTrcVarAppr}\in\FdsVarApprSpc{}\times\FdsTrcVarApprSpc{}$ such that
\beq\eqnlab{Fds_one_hp_local}\small
\begin{aligned}
    -\FdsSum\LRp{{\FdsAk}\FdsVarAppr,{\FdsPartial}\FdsTestAppr}_{\elem} 
    + \LRp{{\FdsG}\FdsVarAppr,\FdsTestAppr}_{\elem} 
    + \LRa{{\FdsABnd}{\FdsVarAppr}+{\FdsAbsA}\LRp{{\FdsVarAppr}-\FdsTrcVarAppr},{\FdsTestAppr}}_{\elemBnd}
    = \LRp{{\bs{\forcing}},\FdsTestAppr}_{\elem},
    \quad\forall\elem\in\domPart,
\end{aligned}
\eeq
for all $\FdsTestAppr\in\FdsVarApprSpc{}$. To close the system, we still require one more constraint. This can be achieved by weakly enforcing the continuity of the normal numerical flux \eqnref{Fds_one_hdg_flux} across the mortars: for $\LRp{\FdsVarAppr,\FdsTrcVarAppr}\in\FdsVarApprSpc{}\times\FdsTrcVarApprSpc{}$,
\begin{equation}\eqnlab{Fds_one_hp_global}
    \LRa{\jump{{\FdsABnd}{\FdsVarAppr}+{\FdsAbsA}\LRp{{\FdsVarAppr}-\FdsTrcVarAppr}},\FdsTrcTestAppr}_{\mort} 
    = 0, \quad\forall\mort\in\skelInt,
\end{equation}
is enforced for $\forall\FdsTrcTestAppr\in\FdsTrcVarApprSpc{}$. The equation \eqnref{Fds_one_hp_global} is called as \tit{conservativity condition} \cite{Cockburn2009a} since it will guarantee that the scheme is locally conservative. In addition, it couples all volume unknowns and hence is referred to as a \tit{global equation}. On the boundary, it is clearer to enforce non-homogeneous version of boundary conditions \eqnref{General_BC} 
directly through the trace unknown which is already defined on $\skelBnd$:
\beq\eqnlab{Fds_one_hdg_bc1}
    \LRa{\LRp{{\FdsABnd}-{\FdsMBnd}}\FdsTrcVarAppr,\FdsTrcTestAppr}_{\mort}
    = \LRa{ \LRp{{\FdsABnd}-{\FdsMBnd}}\bs{g}, \FdsTrcTestAppr}_{\mort},
    \quad\forall\FdsTrcTestAppr\in\FdsTrcVarApprSpc{}\,\text{and}\,\forall\mort\in\skelBnd,   
\eeq
where we set $\FdsMBnd:=\FdsAbsA$ and the function $\bs{g}:\domBnd\rightarrow\real^{\FdsNVar}$ is defined as
\begin{equation}\eqnlab{Fds_one_Dirichlet}
    \bs{g} := \begin{cases}
    \bs{\DirVal}\quad&\text{if}\,\LRp{\FdsABnd-\FdsMBnd}\neq0,\\
    \bs{0}&\text{if}\,\LRp{\FdsABnd-\FdsMBnd}=0,
    \end{cases}
\end{equation}
where $\bs{\DirVal}$ is the Dirichlet data and is set to be zero for the homogeneous boundary condition. It should be noted that the equation \eqnref{Fds_one_hdg_bc1} corresponds to the "inflow" boundary condition \cite{Tan2015}. In addition, \eqnref{Fds_one_hdg_bc1} is simply analogous formula to its  continuous version stated in \eqnref{General_BC}. Since \eqnref{Fds_one_hdg_bc1} only specify inflow condition, it is clear that $\FdsTrcVarAppr$ cannot be uniquely determined on the outflow. Thus, we further require that   
\beq\eqnlab{Fds_one_hdg_bc2}
        \LRa{{\FdsABnd}{\FdsVarAppr}+{\FdsAbsA}\LRp{{\FdsVarAppr}-\FdsTrcVarAppr},\FdsTrcTestAppr}_{\mort} 
        = \LRa{{\FdsABnd}\FdsTrcVarAppr,\FdsTrcTestAppr}_{\mort},
        \quad\forall\FdsTrcTestAppr\in\FdsTrcVarApprSpc{}\,\text{and}\,\forall\mort\in\skelBnd.
\eeq
Equation \eqnref{Fds_one_hdg_bc2} is resulted from maintaining consistency of the numerical flux, and corresponds to outflow conditions. In fact, \eqnref{Fds_one_hdg_bc1} and \eqnref{Fds_one_hdg_bc2} can be incorporated into a single equation as:
\beq\eqnlab{Fds_one_hdg_bc}
    \LRa{\jump{{\FdsABnd}{\FdsVarAppr}+{\FdsAbsA}\LRp{{\FdsVarAppr}-\FdsTrcVarAppr}},\FdsTrcTestAppr}_{\skelBnd}
    = -\LRa{ \half\LRp{{\FdsABnd}-{\FdsMBnd}}\bs{g}, \FdsTrcTestAppr}_{\skelBnd} 
    + \LRa{\half\LRp{{\FdsABnd}+{\FdsMBnd}}\FdsTrcVarAppr, \FdsTrcTestAppr}_{\skelBnd}.
\eeq
In this paper, we will work with the general form of boundary condition \eqnref{Fds_one_hdg_bc} for one-field Friedrichs' system.
The complete $hp$-HDG formulation for the one-field Friedrichs' system is composed by Eq. \eqnref{Fds_one_hp_local}, Eq. \eqnref{Fds_one_hp_global}, and Eq. \eqnref{Fds_one_hdg_bc} together: seek $\LRp{\FdsVarAppr,\FdsTrcVarAppr}\in\FdsVarApprSpc{}\times\FdsTrcVarApprSpc{}$ such that \footnote{Although $\bs{g}$ is set to be zero, we still keep it in the right-hand side of the global equation so that reader can easily observe that inflow and outflow boundaries are switched in the adjoint $hp$-HDG formulation.}
\begin{subequations}\eqnlab{Fds_one_hp_hdg}\small
\begin{align}
\begin{split}\eqnlab{Fds_one_hp_hdg_local}
    -\FdsSum\LRp{{\FdsAk}\FdsVarAppr,{\FdsPartial}\FdsTestAppr}_{\domPart} 
    + \LRp{{\FdsG}\FdsVarAppr,{\FdsTestAppr}}_{\domPart} 
    + \LRa{{\FdsABnd}{\FdsVarAppr}+{\FdsAbsA}\LRp{{\FdsVarAppr}-\FdsTrcVarAppr},{\FdsTestAppr}}_{\domPartBnd} 
    = \LRp{{\bs{\forcing}},\FdsTestAppr}_{\domPart},&
\end{split}\\
\begin{split}\eqnlab{Fds_one_hp_hdg_global}
    \LRa{\jump{{\FdsABnd}{\FdsVarAppr}+{\FdsAbsA}\LRp{{\FdsVarAppr}-\FdsTrcVarAppr}},\FdsTrcTestAppr}_{\skel}
    = -\LRa{ \half\LRp{{\FdsABnd}-{\FdsMBnd}}\bs{g}, \FdsTrcTestAppr}_{\skelBnd} 
    + \LRa{\half\LRp{{\FdsABnd}+{\FdsMBnd}}\FdsTrcVarAppr, \FdsTrcTestAppr}_{\skelBnd},&
\end{split}
\end{align}
\end{subequations}
for all $\LRp{\FdsTestAppr,\FdsTrcTestAppr}\in\FdsVarApprSpc{}\times\FdsTrcVarApprSpc{}$. We now show that the numerical scheme in \eqnref{Fds_one_hp_hdg} is trivially locally and globally conservative, and furthermore well-posed.

\begin{lemma}[Local conservation]\lemlab{Fds_one_local_consrv}
The $hp$-HDG scheme in \eqnref{Fds_one_hp_hdg} is locally conservative.
\end{lemma}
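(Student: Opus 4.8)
The plan is to test the local equations \eqnref{Fds_one_hp_hdg_local} with constant vector fields supported on a single element. Recall that local conservation for an HDG discretization means that, for every $\elem\in\domPart$ and every constant $\bs{c}\in\real^{\FdsNVar}$,
\[
\LRa{\upwindFluxTrc\LRp{\FdsVarAppr,\FdsTrcVarAppr}\normal,\bs{c}}_{\elemBnd} + \LRp{\FdsG\FdsVarAppr,\bs{c}}_{\elem} = \LRp{\bs{\forcing},\bs{c}}_{\elem};
\]
this is precisely the discrete counterpart of integrating the PDE \eqnref{general_PDE} against $\bs{c}$ over $\elem$ and applying the divergence theorem to the flux term, with the exact normal flux replaced by the HDG numerical flux $\upwindFluxTrc\LRp{\FdsVarAppr,\FdsTrcVarAppr}\normal$ of \eqnref{Fds_one_hdg_flux}. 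So it suffices to extract this identity directly from the scheme.

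First I would fix $\elem\in\domPart$ and $\bs{c}\in\real^{\FdsNVar}$, and choose in \eqnref{Fds_one_hp_hdg_local} the test function $\FdsTestAppr$ equal to $\bs{c}$ on $\elem$ and $\bs{0}$ on all other elements. This is an admissible member of $\FdsVarApprSpc{}$ because the space is broken and $\bs{c}\in\LRs{\polySpc{\polyElem}\LRp{\elem}}^{\FdsNVar}$ for any $\polyElem\geq0$. Since $\FdsPartial\FdsTestAppr=\bs{0}$ on $\elem$, the volume term $-\FdsSum\LRp{\FdsAk\FdsVarAppr,\FdsPartial\FdsTestAppr}_{\domPart}$ vanishes, and all remaining terms collapse to their contributions on $\elem$ alone, leaving exactly
\[
\LRp{\FdsG\FdsVarAppr,\bs{c}}_{\elem} + \LRa{\FdsABnd\FdsVarAppr+\FdsAbsA\LRp{\FdsVarAppr-\FdsTrcVarAppr},\bs{c}}_{\elemBnd} = \LRp{\bs{\forcing},\bs{c}}_{\elem}.
\]
Identifying the integrand on $\elemBnd$ with $\upwindFluxTrc\LRp{\FdsVarAppr,\FdsTrcVarAppr}\normal$ via \eqnref{Fds_one_hdg_flux} yields the desired balance, and since $\elem$ and $\bs{c}$ were arbitrary, local conservation follows.

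I do not expect any genuine obstacle here: the whole argument rests on two elementary observations — that $\FdsVarApprSpc{}$ contains elementwise constants (which needs only $\polyElem\geq0$), and that a test function supported on a single element is legitimate because the volume space is discontinuous. The only point worth stating carefully is the precise sense in which the displayed identity is ``conservation'', namely that it is the discrete divergence theorem for the numerical flux on $\elem$. I would also note in passing that summing this elementwise identity over $\domPart$ and invoking the conservativity condition \eqnref{Fds_one_hp_hdg_global} together with the boundary relation \eqnref{Fds_one_hdg_bc} produces the corresponding global conservation statement, which can be treated immediately afterward.
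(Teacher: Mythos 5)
Your proposal is correct and follows essentially the same route as the paper: the paper's proof simply takes $\FdsTestAppr=\bs{1}$ in the elementwise local equations \eqnref{Fds_one_hp_local}, so that the volume derivative term drops and the flux balance over $\elemBnd$ remains. Your only (harmless) refinements are to localize the test function explicitly from the summed form \eqnref{Fds_one_hp_hdg_local} and to test against an arbitrary constant $\bs{c}$ rather than $\bs{1}$, which gives the componentwise statement.
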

\begin{proof}
Taking $\FdsTestAppr=\bs{1}$ in the local equations \eqnref{Fds_one_hp_local}, we obtain
\beq\small
\begin{aligned} 
    \LRp{{\FdsG}\FdsVarAppr,\bs{1}}_{\elem} 
    + \LRa{{\FdsABnd}{\FdsVarAppr}+{\FdsAbsA}\LRp{{\FdsVarAppr}-\FdsTrcVarAppr},\bs{1}}_{\elemBnd}
    = \LRp{{\bs{\forcing}},\bs{1}}_{\elem},
    \quad\forall\elem\in\domPart,
\end{aligned} 
\eeq
and thus
\beq\small
\begin{aligned} 
    \LRp{{\FdsG}\FdsVarAppr,\bs{1}}_{\elem} 
    + \sum_{\face\in\elemBnd}\LRa{{\FdsABnd}{\FdsVarAppr}+{\FdsAbsA}\LRp{{\FdsVarAppr}-\FdsTrcVarAppr},\bs{1}}_{\face}
    = \LRp{{\bs{\forcing}},\bs{1}}_{\elem},
    \quad\forall\elem\in\domPart,
\end{aligned} 
\eeq
which indicates the scheme is locally conservative. In particular, the amount of flux entering an element $\elem$ is equal to the amount of flux leaving the element if both the reaction term and forcing term vanish (i.e. $\FdsG={0}$ and $\bs{\forcing}=\bs{0}$).
\end{proof}

As we will show later, the locally conservative property can also be easily proven for Friedrichs' system with two-field structure. A similar result is presented in \cite{Egger2013} as well for an $hp$-HDG method used to solve the problem of Stokes flow.

\begin{lemma}[Global conservation]\lemlab{Fds_one_global_consrv}
The $hp$-HDG scheme in \eqnref{Fds_one_hp_hdg} is globally conservative.
\end{lemma}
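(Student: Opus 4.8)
The plan is to repeat the argument of Lemma~\lemref{Fds_one_local_consrv}, but now summed over the whole partition and closed by the conservativity condition. First I would take the constant test function $\FdsTestAppr=\bs{1}$ in the local equations \eqnref{Fds_one_hp_hdg_local}. Since $\FdsPartial\bs{1}=\bs{0}$ for every $\FdsIndx$, the volume term $-\FdsSum\LRp{\FdsAk\FdsVarAppr,\FdsPartial\FdsTestAppr}_{\domPart}$ vanishes identically, leaving
\beq
\LRp{\FdsG\FdsVarAppr,\bs{1}}_{\domPart}
+\LRa{\FdsABnd\FdsVarAppr+\FdsAbsA\LRp{\FdsVarAppr-\FdsTrcVarAppr},\bs{1}}_{\domPartBnd}
=\LRp{\bs{\forcing},\bs{1}}_{\domPart}.
\eeq

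Next I would reassemble the element-boundary integral over the skeleton. By the definition of $\LRa{\cdot,\cdot}_{\domPartBnd}$, each interior mortar is visited once from each of its two adjacent elements, so the two one-sided flux contributions (each taken with that element's own outward normal) add up to $\LRa{\jump{\FdsABnd\FdsVarAppr+\FdsAbsA(\FdsVarAppr-\FdsTrcVarAppr)},\bs{1}}_{\skelInt}$, while each boundary mortar is visited exactly once; using the convention $\jump{f}=f$ on $\skelBnd$ this gives
\beq
\LRa{\FdsABnd\FdsVarAppr+\FdsAbsA(\FdsVarAppr-\FdsTrcVarAppr),\bs{1}}_{\domPartBnd}
=\LRa{\jump{\FdsABnd\FdsVarAppr+\FdsAbsA(\FdsVarAppr-\FdsTrcVarAppr)},\bs{1}}_{\skel}.
\eeq
This reassembly is the one step that needs genuine care, and I regard it as the main (and essentially only) obstacle: it is exact on $hp$-nonconforming interfaces precisely because of the split-sided mortar construction together with the trace-degree choice \eqnref{polyMort} of Section~\secref{hpNoncon}, so that the flux restricted from either side already lies in $\FdsTrcVarApprSpc{}$ with no projection needed. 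Everything else is the same algebra carried out in Lemma~\lemref{Fds_one_local_consrv}.

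Finally I would invoke the global equation \eqnref{Fds_one_hp_hdg_global} with the admissible test function $\FdsTrcTestAppr=\bs{1}\in\FdsTrcVarApprSpc{}$, which identifies the skeleton term above with the boundary-data contribution $-\LRa{\half(\FdsABnd-\FdsMBnd)\bs{g},\bs{1}}_{\skelBnd}+\LRa{\half(\FdsABnd+\FdsMBnd)\FdsTrcVarAppr,\bs{1}}_{\skelBnd}$. Substituting this back into the displayed identity yields the global balance
\beq
\LRp{\FdsG\FdsVarAppr,\bs{1}}_{\domPart}
-\LRa{\half(\FdsABnd-\FdsMBnd)\bs{g},\bs{1}}_{\skelBnd}
+\LRa{\half(\FdsABnd+\FdsMBnd)\FdsTrcVarAppr,\bs{1}}_{\skelBnd}
=\LRp{\bs{\forcing},\bs{1}}_{\domPart},
\eeq
i.e. the total reaction plus the net numerical flux through $\domBnd$ equals the total forcing. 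In particular, when $\FdsG=0$ and $\bs{\forcing}=\bs{0}$ the net numerical flux across the domain boundary vanishes, which is exactly the assertion of global conservation.
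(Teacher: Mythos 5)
Your proposal is correct and follows essentially the same route as the paper: take $(\FdsTestAppr,\FdsTrcTestAppr)=(\bs{1},\bs{1})$ in \eqnref{Fds_one_hp_hdg}, reassemble the element-boundary sum into the skeleton jump term, and substitute the tested global equation \eqnref{Fds_one_hp_hdg_global} into the summed local equation to obtain the same final balance identity. The only difference is that you spell out the reassembly $\LRa{\cdot,\bs{1}}_{\domPartBnd}=\LRa{\jump{\cdot},\bs{1}}_{\skel}$ and its reliance on the split-sided mortar/trace-degree choices, a step the paper performs implicitly.
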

\begin{proof}
Taking $(\FdsTestAppr,\FdsTrcTestAppr)=(\bs{1},\bs{1})$ in  Eq. \eqnref{Fds_one_hp_hdg}, we obtain
\begin{subequations}\small
\begin{align}
\begin{split}\eqnlab{global_conv_pf_eq1}
    \LRp{{\FdsG}\FdsVarAppr,\bs{1}}_{\domPart} 
    + \LRa{\jump{{\FdsABnd}{\FdsVarAppr}+{\FdsAbsA}\LRp{{\FdsVarAppr}-\FdsTrcVarAppr}},\bs{1}}_{\skel} 
    = \LRp{{\bs{\forcing}},\bs{1}}_{\domPart},&
\end{split}\\
\begin{split}\eqnlab{global_conv_pf_eq2}
    \LRa{\jump{{\FdsABnd}{\FdsVarAppr}+{\FdsAbsA}\LRp{{\FdsVarAppr}-\FdsTrcVarAppr}},\bs{1}}_{\skel}
    = -\LRa{ \half\LRp{{\FdsABnd}-{\FdsMBnd}}\bs{g}, \bs{1}}_{\skelBnd} 
    + \LRa{\half\LRp{{\FdsABnd}+{\FdsMBnd}}\FdsTrcVarAppr, \bs{1}}_{\skelBnd}.&
\end{split}
\end{align}
\end{subequations}
Substitute Eq. \eqnref{global_conv_pf_eq2} into Eq. \eqnref{global_conv_pf_eq1}, we arrive at
\beq
    \LRp{{\FdsG}\FdsVarAppr,\bs{1}}_{\domPart} 
    -\LRa{ \half\LRp{{\FdsABnd}-{\FdsMBnd}}\bs{g}, \bs{1}}_{\skelBnd} 
    + \LRa{\half\LRp{{\FdsABnd}+{\FdsMBnd}}\FdsTrcVarAppr, \bs{1}}_{\skelBnd}
    = \LRp{{\bs{\forcing}},\bs{1}}_{\domPart},   
\eeq
which implies the scheme is globally conservative.
\end{proof}
\begin{lemma}[Well-posedness of the local equation]\lemlab{Fds_one_hp_localCon} Suppose that the assumptions \itmref{A1}-\itmref{A4} hold, the local solver \eqnref{Fds_one_hp_hdg_local} is well-posed.
\end{lemma}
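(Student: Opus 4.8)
The plan is to read \eqnref{Fds_one_hp_hdg_local}, restricted to a fixed element $\elem\in\domPart$, as a \emph{square} linear system for $\FdsVarAppr\vert_{\elem}\in\LRs{\polySpc{\polyElem}\LRp{\elem}}^{\FdsNVar}$, in which the trace $\FdsTrcVarAppr$ and the source $\bs{\forcing}$ enter only as given data; since the trial and test spaces coincide, solvability for arbitrary data is equivalent to uniqueness. Hence it suffices to prove that the only $\FdsVarAppr\in\LRs{\polySpc{\polyElem}\LRp{\elem}}^{\FdsNVar}$ satisfying the homogeneous local equation (i.e.\ \eqnref{Fds_one_hp_hdg_local} with $\FdsTrcVarAppr=\bs 0$ and $\bs{\forcing}=\bs 0$) is $\FdsVarAppr=\bs 0$.

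First I would take $\FdsTestAppr=\FdsVarAppr$ in the homogeneous local equation. The one manipulation needed is an integration by parts on the volume term: using $\FdsAk=\FdsAk^T$ from \itmref{A3}, one has $\FdsPartial\LRp{\FdsVarAppr^T\FdsAk\FdsVarAppr}=2\,\FdsVarAppr^T\FdsAk\,\FdsPartial\FdsVarAppr+\FdsVarAppr^T\LRp{\FdsPartial\FdsAk}\FdsVarAppr$, and summing over $\FdsIndx$ and integrating over $\elem$ gives
\beq
-\FdsSum\LRp{\FdsAk\FdsVarAppr,\FdsPartial\FdsVarAppr}_{\elem}
= -\half\LRa{\FdsVarAppr,\FdsABnd\FdsVarAppr}_{\elemBnd}
+ \half\FdsSum\LRp{\LRp{\FdsPartial\FdsAk}\FdsVarAppr,\FdsVarAppr}_{\elem} .
\eeq
Substituting this together with $\LRp{\FdsG\FdsVarAppr,\FdsVarAppr}_{\elem}=\half\LRp{\LRp{\FdsG+\FdsG^T}\FdsVarAppr,\FdsVarAppr}_{\elem}$ into the homogeneous equation, the two $\FdsABnd$–boundary contributions combine, leaving
\beq
\half\LRp{\LRp{\FdsG+\FdsG^T+\FdsSum\FdsPartial\FdsAk}\FdsVarAppr,\FdsVarAppr}_{\elem}
+ \LRa{\LRp{\half\FdsABnd+\FdsAbsA}\FdsVarAppr,\FdsVarAppr}_{\elemBnd}=0 .
\eeq

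The core of the argument is then to read off nonnegativity of both terms. By the full coercivity \itmref{A4}, the volume term is bounded below by $\mu_0\norm{\FdsVarAppr}_{\Lsp{2}\LRp{\elem}}^2$. For the boundary term, \itmref{A3} makes $\FdsABnd$ symmetric, so $\FdsAbsA=\FdsEigenVec\abs{\FdsEigenVal}\FdsEigenVec^{-1}$ is symmetric and shares its eigenvectors with $\FdsABnd$; since $\half\lambda_i+\abs{\lambda_i}\geq 0$ for every eigenvalue $\lambda_i$ of $\FdsABnd$, we get $\half\FdsABnd+\FdsAbsA\geq 0$ and the boundary integral is nonnegative. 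Therefore $\mu_0\norm{\FdsVarAppr}_{\Lsp{2}\LRp{\elem}}^2\le 0$, which forces $\FdsVarAppr=\bs 0$ on $\elem$; by the square-system reduction above, the local solver is well-posed.

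I expect the only genuine subtlety to be the bookkeeping in the integration-by-parts step — producing exactly the term $\FdsSum\FdsPartial\FdsAk$ so that it pairs with $\FdsG+\FdsG^T$ and \itmref{A4} applies verbatim — together with the elementary observation that $\half\FdsABnd+\FdsAbsA\geq 0$, which relies on the symmetry assumption \itmref{A3} (ensuring $\FdsAbsA$ is symmetric positive semidefinite with the same eigenbasis as $\FdsABnd$). Everything else, namely reducing well-posedness of the finite-dimensional square local problem to triviality of its homogeneous kernel, is immediate.
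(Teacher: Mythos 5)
Your proof is correct and is essentially the argument the paper defers to: the paper omits the proof by citing \cite[Lemma 6.1]{Tan2015}, whose content is exactly your energy identity obtained by testing with $\FdsVarAppr$, integrating by parts with \itmref{A3}, and invoking \itmref{A4} together with $\half\FdsABnd+\FdsAbsA\geq 0$. The same two ingredients (the elementwise identity and the semi-positiveness of $\half\FdsABnd+\FdsAbsA$) reappear verbatim in the paper's proof of Theorem \theoref{Fds_one_hp_globalCon}, so your write-up matches the intended route; no gaps.
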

 
\begin{proof}
Thanks to the $hp$-nonconforming treatment, the proof is the same as the proof of \cite[Lemma 6.1]{Tan2015}, and hence omitted.
\end{proof}

\begin{theorem}[Well-posedness of the $hp$-HDG formulation]\theolab{Fds_one_hp_globalCon} Suppose that
\ben
\item the assumptions \itmref{A1}-\itmref{A4} and  \eqnref{Abstrct_M1} hold. 
\item $\Null{\FdsABnd}=\LRc{\bs{0}}$\footnote{This condition actually implies the condition \eqnref{Abstrct_M2} which is a key to make the exact solution unique (see also Remark 6.3 in \cite{Tan2015}).}.
\een
 There exists a unique solution $\FdsTrcVarAppr\in\FdsTrcVarApprSpc{}$ for the $hp$-HDG method defined by \eqnref{Fds_one_hp_hdg}.
\end{theorem}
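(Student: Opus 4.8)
The plan is to follow the standard HDG route: condense the volume unknown, reduce well-posedness to injectivity of the resulting trace system, and then close the injectivity by an energy argument. Since the local solver \eqnref{Fds_one_hp_hdg_local} is well-posed by Lemma~\lemref{Fds_one_hp_localCon}, for any given $\FdsTrcVarAppr\in\FdsTrcVarApprSpc{}$ and any data it determines $\FdsVarAppr$ uniquely and affinely; substituting this $\FdsVarAppr$ into \eqnref{Fds_one_hp_hdg_global} leaves a square linear system for $\FdsTrcVarAppr$. Hence it suffices to show that the homogeneous problem ($\bs{\forcing}=\bs{0}$, $\bs{g}=\bs{0}$) admits only $\FdsVarAppr=\bs{0}$ and $\FdsTrcVarAppr=\bs{0}$.

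First I would take $\FdsTestAppr=\FdsVarAppr$ in \eqnref{Fds_one_hp_hdg_local} and $\FdsTrcTestAppr=\FdsTrcVarAppr$ in \eqnref{Fds_one_hp_hdg_global}, both with zero data. Elementwise integration by parts together with the symmetry \itmref{A3} of the $\FdsAk$ rewrites the volume derivative term as $-\FdsSum\LRp{\FdsAk\FdsVarAppr,\FdsPartial\FdsVarAppr}_{\domPart}=-\half\LRa{\FdsABnd\FdsVarAppr,\FdsVarAppr}_{\domPartBnd}+\half\LRp{\FdsSum\FdsPartial\FdsAk\,\FdsVarAppr,\FdsVarAppr}_{\domPart}$, so the local identity becomes
\[
\half\LRp{\LRp{\FdsG+\FdsG^{T}+\FdsSum\FdsPartial\FdsAk}\FdsVarAppr,\FdsVarAppr}_{\domPart}
+\half\LRa{\FdsABnd\FdsVarAppr,\FdsVarAppr}_{\domPartBnd}
+\LRa{\FdsAbsA\LRp{\FdsVarAppr-\FdsTrcVarAppr},\FdsVarAppr}_{\domPartBnd}=0,
\]
and full coercivity \itmref{A4} bounds the first term below by $\mu_0\norm{\FdsVarAppr}_{\domPart}^{2}$. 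For the global identity, since $\FdsTrcVarAppr$ is single-valued on every mortar one has $\LRa{\jump{\bs{q}},\FdsTrcVarAppr}_{\skel}=\LRa{\bs{q},\FdsTrcVarAppr}_{\domPartBnd}$ with $\bs{q}=\FdsABnd\FdsVarAppr+\FdsAbsA\LRp{\FdsVarAppr-\FdsTrcVarAppr}$, so that, recalling $\FdsMBnd=\FdsAbsA$,
\[
\LRa{\FdsABnd\FdsVarAppr+\FdsAbsA\LRp{\FdsVarAppr-\FdsTrcVarAppr},\FdsTrcVarAppr}_{\domPartBnd}
=\half\LRa{\LRp{\FdsABnd+\FdsMBnd}\FdsTrcVarAppr,\FdsTrcVarAppr}_{\skelBnd}.
\]

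Next I would subtract the global identity from the local one and regroup the skeleton contributions mortar by mortar, completing the square in $\FdsVarAppr-\FdsTrcVarAppr$. Because $\FdsABnd$ is continuous across $\skel$, on an interior mortar $\mort$ the matrix $\FdsABnd$ seen from $\elemP$ is the opposite of the one seen from $\elemM$ while $\FdsAbsA$ is the same from both sides; a short computation then shows that $\mort$ contributes
\[
\half\LRa{\LRp{\FdsABnd+2\FdsAbsA}\LRp{\FdsVarAppr^{-}-\FdsTrcVarAppr},\FdsVarAppr^{-}-\FdsTrcVarAppr}_{\mort}
+\half\LRa{\LRp{2\FdsAbsA-\FdsABnd}\LRp{\FdsVarAppr^{+}-\FdsTrcVarAppr},\FdsVarAppr^{+}-\FdsTrcVarAppr}_{\mort}
\]
(the matrices written relative to the $\elemM$ side), while a boundary mortar contributes $\half\LRa{\LRp{\FdsABnd+2\FdsAbsA}\LRp{\FdsVarAppr-\FdsTrcVarAppr},\FdsVarAppr-\FdsTrcVarAppr}_{\mort}+\half\LRa{\FdsMBnd\FdsTrcVarAppr,\FdsTrcVarAppr}_{\mort}$. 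The symmetric matrix $\FdsABnd$ has an eigendecomposition by \itmref{A3}; since the eigenvalues of $\FdsABnd\pm2\FdsAbsA$ are $\lambda_i\pm2\abs{\lambda_i}\ge0$, and $\FdsAbsA\ge0$, and $\FdsMBnd\ge0$ by \eqnref{Abstrct_M1}, every one of these quadratic forms is nonnegative. Together with $\mu_0\norm{\FdsVarAppr}_{\domPart}^{2}$ they sum to a quantity that equals zero, so each term vanishes separately.

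Finally, $\mu_0\norm{\FdsVarAppr}_{\domPart}^{2}=0$ gives $\FdsVarAppr=\bs{0}$; substituting this back, on each mortar the surviving quadratic form forces $\LRa{\FdsAbsA\FdsTrcVarAppr,\FdsTrcVarAppr}_{\mort}=0$ (on boundary mortars one uses $\FdsMBnd=\FdsAbsA$), whence $\FdsTrcVarAppr(x)\in\Null{\FdsAbsA}=\Null{\FdsABnd}$ for a.e.\ $x\in\skel$, and the hypothesis $\Null{\FdsABnd}=\LRc{\bs{0}}$ then yields $\FdsTrcVarAppr=\bs{0}$. The only real difficulty I anticipate is the bookkeeping on the $hp$-nonconforming (split-sided) interior mortars: one must sum the two one-sided contributions correctly, track the sign flip of $\FdsABnd$ between the two sides, and regroup so that each skeleton term is a manifestly nonnegative quadratic form; the matrix facts invoked ($\FdsABnd\pm2\FdsAbsA\ge0$ and $\Null{\FdsAbsA}=\Null{\FdsABnd}$) are elementary consequences of the eigendecomposition of the symmetric $\FdsABnd$, and the reduction to uniqueness is exactly where Lemma~\lemref{Fds_one_hp_localCon} is used.
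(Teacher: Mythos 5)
Your proof is correct and follows essentially the same route as the paper: test with the solution itself, integrate by parts using the symmetry \itmref{A3}, subtract the conservativity condition, and complete the square so that the skeleton terms become the nonnegative quadratic forms $\LRa{(\half\FdsABnd+\FdsAbsA)(\FdsVarAppr-\FdsTrcVarAppr),\FdsVarAppr-\FdsTrcVarAppr}_{\domPartBnd}$ and $\half\LRa{\FdsMBnd\FdsTrcVarAppr,\FdsTrcVarAppr}_{\skelBnd}$, after which \itmref{A4} gives $\FdsVarAppr=\bs{0}$ and $\Null{\FdsAbsA}=\Null{\FdsABnd}=\LRc{\bs{0}}$ gives $\FdsTrcVarAppr=\bs{0}$. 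Your mortar-by-mortar regrouping (using $\FdsABnd^{+}=-\FdsABnd^{-}$ and $\FdsABnd\pm2\FdsAbsA\ge0$) is just an explicit rewriting of the paper's key polarization identity and arrives at the same energy identity \eqnref{Fds_one_hp_globalCon1}.
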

\begin{proof}
Following the discussion presented in theorem 6.2 in \cite{Tan2015}, we first take $(\FdsTestAppr,\FdsTrcTestAppr)=(\FdsVarAppr,\FdsTrcVarAppr)$ and assume $\forcing=0$. We then perform integration by part to \eqnref{Fds_one_hp_hdg_local}, subtract \eqnref{Fds_one_hp_hdg_global} from the resulting equations, and substitute the key equality
\beq\small
\begin{aligned}
    &\LRa{{\FdsABnd}\LRp{\FdsTrcVarAppr-{\FdsVarAppr}},\FdsTrcVarAppr}_{\mort}
    =\half\LRs{
    \LRa{{\FdsABnd}\FdsTrcVarAppr,\FdsTrcVarAppr}_{\mort}
    +\LRa{{\FdsABnd}\LRp{{\FdsVarAppr}-\FdsTrcVarAppr},\LRp{{\FdsVarAppr}-\FdsTrcVarAppr}}_{\mort}
    -\LRa{{\FdsABnd}{\FdsVarAppr},{\FdsVarAppr}}_{\mort}
    },
\end{aligned}
\eeq
 which is valid for $\forall\mort\in\skel$, 
the following equation can be obtained along with homogeneous Dirichlet boundary condition:
\beq\eqnlab{Fds_one_hp_globalCon1}\small
\begin{aligned}
    &\half\LRp{\LRs{{\FdsG}+{\FdsG}^T+\FdsSum{\FdsPartial}{\FdsAk}}\FdsVarAppr,\FdsVarAppr}_{\domPart}
    +\half\LRa{{\FdsMBnd}\FdsTrcVarAppr,\FdsTrcVarAppr}_{\skelBnd}\\
    &+\LRa{\LRp{\half{\FdsABnd}+{\FdsAbsA}}\LRp{{\FdsVarAppr}-\FdsTrcVarAppr},\LRp{{\FdsVarAppr}-\FdsTrcVarAppr}}_{\domPartBnd} 
    =0
\end{aligned}
\eeq
whose left-hand side is non-negative owing to the coercivity condition \itmref{A4}, semi-positiveness of boundary operator \eqnref{Abstrct_M1} and semi-positiveness of $\half{\FdsABnd}+{\FdsAbsA}\geq0$. Therefore, we conclude that $\FdsVarAppr=\bs{0}$ in $\elem$ for $\forall\elem\in\domPart$. Furthermore, the last two terms in \eqnref{Fds_one_hp_globalCon1} is equal to zero since our assumption $\Null{\FdsABnd}=\LRc{\bs{0}}$ implies that $\Null{\abs{{\FdsABnd}}}=\LRc{\bs{0}}$. We thus can conclude that $\FdsTrcVarAppr=\bs{0}$ as well.
\end{proof}
\subsubsection{Friedrichs' system with two-field structure}\seclab{Fds_two}
In this section, we derive $hp$-HDG formulation of linear PDE in Eq. \eqnref{general_PDE} that satisfies two-field Friedrichs' system where the coefficient matrix $\FdsG$ and $\FdsAk$ can be decomposed into block matrices as presented in Eq. \eqnref{Fds_block_mat}. In addition, a set of assumptions \itmref{A1}-\itmref{A6} are assumed to hold. However, the strong coercivity is not necessarily required and can be further weakened by replacing \itmref{A4} with \itmref{A4a}-\itmref{A4b}. 

Through the Galerkin approximation along with integration by part, we will again obtain Eq. \eqnref{Fds_one_hp_local_undefined}. Moreover, the two-field structure can be further exploited by taking advantage of the decomposition where we can introduce $\FdsVarAppr=\LRp{\FdsVarAuxAppr,\FdsVarSolAppr}$, $\FdsTestAppr=\LRp{\FdsTestAuxAppr,\FdsTestSolAppr}$,  $\upwindFlux=\LRp{\upwindFlux^{\FdsVarAux}, \upwindFlux^{\FdsVarSol}}$, and $\Fdsforcing=\LRp{\FdsforcingAux, \FdsforcingSol}$.
Then, with the aid of Eq.\eqnref{Fds_block_mat}, the local equation can be rewritten as: seek $\LRp{\FdsVarAuxAppr,\FdsVarSolAppr}\in\FdsVarAuxApprSpc{}\times\FdsVarSolApprSpc{}$ such that
\begin{subequations}\eqnlab{Fds_two_hp_local_undefined}\small
\begin{align}
\begin{split}\eqnlab{Fds_two_hp_local1_undefined}
    -\FdsSum\LRp{{\FdsBk}\FdsVarSolAppr,{\FdsPartial}\FdsTestAuxAppr}_{\elem}
    +\LRp{{\FdsGaa}\FdsVarAuxAppr+{\FdsGas}\FdsVarSolAppr,\FdsTestAuxAppr}_{\elem} 
    +\LRa{\upwindFlux^{\FdsVarAux}\LRp{\FdsVarAuxAppr,\FdsVarSolAppr}\normal,{\FdsTestAuxAppr}}_{\elemBnd}
    = \LRp{{\FdsforcingAux},\FdsTestAuxAppr}_{\elem},
    \quad\forall\elem\in\domPart,&
\end{split}\\
\begin{split}\eqnlab{Fds_two_hp_local2_undefined}
    -\FdsSum\LRp{\FdsBk^T\FdsVarAuxAppr+{\FdsCk}\FdsVarSolAppr,{\FdsPartial}\FdsTestSolAppr}_{\elem}
    +\LRp{{\FdsGsa}\FdsVarAuxAppr + {\FdsGss}\FdsVarSolAppr,\FdsTestSolAppr}_{\elem}
    + \LRa{\upwindFlux^{\FdsVarSol}\LRp{\FdsVarAuxAppr,\FdsVarSolAppr}\normal,{\FdsTestSolAppr}}_{\elemBnd}
    = \LRp{{\FdsforcingSol},\FdsTestSolAppr}_{\elem},
    \quad\forall\elem\in\domPart,&
\end{split}
\end{align}
\end{subequations}
for all $\LRp{\FdsTestAuxAppr,\FdsTestSolAppr}\in\FdsVarAuxApprSpc{}\times\FdsVarSolApprSpc{}$. Now the upwind flux $\upwindFluxMort\LRp{\FdsVarAuxAppr^-,\FdsVarSolAppr^-,\FdsVarAuxAppr^+,\FdsVarSolAppr^+}\normal$ for the two-field system can also be derived by solving the Riemann problem stated in Eq. \eqnref{local_Riemann}. To that end, it is required to compute the eigen-decomposition of the coefficient matrix $\FdsABnd$. The two-field structure can be exploited again by decomposing $\FdsAbsA$ as follows:
\beq
    \FdsAbsA =     
    \begin{bmatrix}
    \FdsAbsAaa & \FdsAbsAas\\
    \FdsAbsAsa & \FdsAbsAss\\
    \end{bmatrix}, 
\eeq 
where $\FdsAbsAaa$, $\FdsAbsAas$, $\FdsAbsAsa$, and $\FdsAbsAss$ are $\FdsNVarAux\times\FdsNVarAux$, $\FdsNVarAux\times\FdsNVarSol$, $\FdsNVarSol\times\FdsNVarAux$, and $\FdsNVarSol\times\FdsNVarSol$ sub-block matrices of $\FdsAbsA$. In addition, the $\FdsNVar\times d$ matrix of upwind flux $\upwindFluxMort\normal$ can also be decomposed into $\FdsNVarAux\times d$ and $\FdsNVarSol\times d$ block matrices $(\upwindFlux^{\FdsVarAux})^*\normal$ and $(\upwindFlux^{\FdsVarSol})^*\normal$, respectively. By introducing the upwind states $\FdsVarAuxAppr^*$ and $\FdsVarSolAppr^*$, the numerical flux can be expressed in one-sided form:
\beq\eqnlab{Fds_two_upwind_flux}\small    \upwindFluxMort\LRp{\FdsVarAuxAppr^-,\FdsVarSolAppr^-,\FdsVarAuxAppr^*,\FdsVarSolAppr^*}\normal
    =
    \begin{bmatrix}
(\upwindFlux^{\FdsVarAux})^*\LRp{\FdsVarAuxAppr^-,\FdsVarSolAppr^-,\FdsVarAuxAppr^*,\FdsVarSolAppr^*}\normal\\
(\upwindFlux^{\FdsVarSol})^*\LRp{\FdsVarAuxAppr,\FdsVarSolAppr,\FdsVarAuxAppr^*,\FdsVarSolAppr^*}\normal
    \end{bmatrix}
    = 
    \FdsSum\normalComp{\FdsIndx}
    \begin{bmatrix}
\FdsAkaa & \FdsBk\\
\FdsBk^T & \FdsCk
    \end{bmatrix}
    \begin{bmatrix}
    \FdsVarAuxAppr\\
    \FdsVarSolAppr
    \end{bmatrix}
    +
    \begin{bmatrix}
    \FdsAbsAaa & \FdsAbsAas\\
    \FdsAbsAsa & \FdsAbsAss\\
    \end{bmatrix}
    \begin{bmatrix}
    \FdsVarAuxAppr - \FdsVarAuxAppr^*\\
    \FdsVarSolAppr - \FdsVarSolAppr^*
    \end{bmatrix},
\eeq
where $\FdsAkaa=0$ by the assumption \itmref{A5}. At this point, we could replace the upwind states $\LRp{\FdsVarAuxAppr^*,\FdsVarSolAppr^*}$ with the trace unknowns $\LRp{\FdsTrcVarAuxAppr,\FdsTrcVarSolAppr}$ and obtain an upwind-based HDG flux. However, one of the upwind states (and hence one of the trace unknowns) can be eliminated, and it is desirable since the system becomes even cheaper to solve. Such reduction can be achieved since $\FdsVarSolAppr^*$ and $\FdsVarAuxAppr^*$ are linearly dependent. The relationship is revealed by recognizing that in the Godunov approach \cite{godunov1959} the numerical flux $\upwindFluxMort\normal$ is the physical flux $\upwindFlux\normal$ evaluated by the approximation of the states defined on the mesh skeleton $\mort\in\skel$. That is,
\beq\eqnlab{Godunov_flux_eq}  \upwindFluxMort\LRp{\FdsVarAuxAppr^-,\FdsVarSolAppr^-,\FdsVarAuxAppr^*,\FdsVarSolAppr^*}\normal
= \upwindFlux\LRp{\FdsVarAuxAppr^*,\FdsVarSolAppr^*}\normal,
\eeq
where
\beq\eqnlab{Godunov_flux}
 \upwindFlux\LRp{\FdsVarAuxAppr^*,\FdsVarSolAppr^*}\normal
 =
     \FdsSum\normalComp{\FdsIndx}
     \begin{bmatrix}
\FdsAkaa & \FdsBk\\
\FdsBk^T & \FdsCk
    \end{bmatrix}
    \begin{bmatrix}
    \FdsVarAuxAppr^*\\
    \FdsVarSolAppr^*
    \end{bmatrix}\text{ and $\FdsAkaa=0$ by the assumption \itmref{A5}.}
\eeq
By equating \eqnref{Fds_two_upwind_flux} and \eqnref{Godunov_flux}, we can remove either $\FdsVarAuxAppr^*$ or $\FdsVarSolAppr^*$. In this work, we will consider the case with $\FdsVarSolAppr^*$. For the other possibilities, one can consult with \cite{stephen2018}  where the formulation only with $\FdsVarAuxAppr^*$ is discussed and can be easily deduced. 
\begin{lemma}[The reduced upwind flux]\lemlab{flux}
    The upwind numerical flux can be expressed as a function of $\FdsVarSolAppr^*$ only:
\beq\eqnlab{Fds_two_upwind_flux_reduced}
    \upwindFluxMort\LRp{\FdsVarAuxAppr, \FdsVarSolAppr, \FdsVarSolAppr^*}\normal =
    \begin{bmatrix}
    \FdsBBnd\FdsVarSolAppr^*\\
    \FdsBtBnd\FdsVarAuxAppr + \FdsCBnd\FdsVarSolAppr + \stabPar\LRp{\FdsVarSolAppr - \FdsVarSolAppr^*}
    \end{bmatrix}
\eeq 
where $\FdsBBnd:=\FdsSum\normalComp{\FdsIndx}\FdsBk$ and $\FdsCBnd:=\FdsSum\normalComp{\FdsIndx}\FdsCk$ are with $\normal=\LRp{\normalComp{1},\dots,\normalComp{d}}^T$ being unit outward vector of $\elemBnd$ for $\forall\elem\in\domPart$. The value of $\stabPar$ depends on either of the following assumptions:
\ben[label=(F.\arabic*)]
    \item \itmlab{F1}\,
    \ben[label=(F1.\alph*)]
        \item \itmlab{F1a} There exists an {invertible matrix $\FluxAssumpA\in\real^{\FdsNVarSol,\FdsNVarSol}$} such that {$\FdsBBnd\FluxAssumpA\FdsAbsAsa=\FdsAbsAaa$}, and
        \item \itmlab{F1b} there exists {an matrix $\FluxAssumpB\in\real^{\FdsNVarSol,\FdsNVarSol}$} such that {$\FdsAbsAas=\FdsBBnd\FluxAssumpB$}, and
        \item \itmlab{F1c} {$\bigcap_{\FdsIndx=1}^d\text{Range}\LRp{\FdsBk}=\emptyset$ and $\Null{\FdsBk}=\LRc{\bs{0}}$ for $\forall\FdsIndx=1,\dots,d$}.
    \een
    \item \itmlab{F2} {$\FdsAbsAas=0$}. Note that $\FdsAbsAas=(\FdsAbsAsa)^T$ owing to the assumption \itmref{A3}.
\een
In particular, if assumption \itmref{F1} holds then:
\beq\eqnlab{stabPar_F1}
    \stabPar := -\LRp{\FluxAssumpA^T\FdsBtBnd\FdsBBnd\FluxAssumpA}^{-1}\FluxAssumpA^T\FdsBtBnd\FdsBBnd\LRp{\FluxAssumpB+\id_{\FdsNVarSol}} + \FdsAbsAss,
\eeq
where $\id_{\FdsNVarSol}$ is a $\FdsNVarSol\times\FdsNVarSol$ identity matrix. On the other hand, if assumption \itmref{F2} holds then,
\beq\eqnlab{stabPar_F2}
    \stabPar := \FdsAbsAss.
\eeq
\end{lemma}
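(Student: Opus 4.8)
The plan is to exploit the defining property of the Godunov flux: the upwind normal flux at a mortar equals the physical flux $\upwindFlux\LRp{\cdot}\normal$ evaluated at the single trace state $\LRp{\FdsVarAuxAppr^*,\FdsVarSolAppr^*}$, which is exactly identity \eqnref{Godunov_flux_eq}. First I would equate the one-sided form \eqnref{Fds_two_upwind_flux}, in which the dissipation matrix $\FdsAbsA$ has been split into $\FdsNVarAux/\FdsNVarSol$ blocks, with the right-hand side of \eqnref{Godunov_flux}, and then compare the two blocks of the resulting identity. The $\FdsVarAux$-block will simultaneously give the first component of \eqnref{Fds_two_upwind_flux_reduced} and a linear constraint relating $\FdsVarAuxAppr-\FdsVarAuxAppr^*$ to $\FdsVarSolAppr-\FdsVarSolAppr^*$; the $\FdsVarSol$-block, once that constraint has been used to eliminate $\FdsVarAuxAppr-\FdsVarAuxAppr^*$, will display $\stabPar$.

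Concretely, from \eqnref{Godunov_flux} with $\FdsAkaa=0$ (assumption \itmref{A5}) the $\FdsVarAux$-block of $\upwindFluxMort\normal$ is simply $\FdsBBnd\FdsVarSolAppr^*$, which is already the top entry of \eqnref{Fds_two_upwind_flux_reduced}, so that block requires no further work; equating it with the $\FdsVarAux$-block of \eqnref{Fds_two_upwind_flux} yields the relation $(\ast)$: $\FdsAbsAaa\LRp{\FdsVarAuxAppr-\FdsVarAuxAppr^*}=-\LRp{\FdsBBnd+\FdsAbsAas}\LRp{\FdsVarSolAppr-\FdsVarSolAppr^*}$. The $\FdsVarSol$-block of \eqnref{Fds_two_upwind_flux} is $\FdsBtBnd\FdsVarAuxAppr+\FdsCBnd\FdsVarSolAppr+\FdsAbsAsa\LRp{\FdsVarAuxAppr-\FdsVarAuxAppr^*}+\FdsAbsAss\LRp{\FdsVarSolAppr-\FdsVarSolAppr^*}$, so the whole lemma reduces to rewriting the single term $\FdsAbsAsa\LRp{\FdsVarAuxAppr-\FdsVarAuxAppr^*}$ as a linear function of $\FdsVarSolAppr-\FdsVarSolAppr^*$ and reading off the coefficient of $\FdsVarSolAppr-\FdsVarSolAppr^*$ in the sum.

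Under \itmref{F2} this is immediate: $\FdsAbsAsa=\LRp{\FdsAbsAas}^{T}=0$ by \itmref{A3}, so the term drops and $\stabPar=\FdsAbsAss$, i.e.\ \eqnref{stabPar_F2}. Under \itmref{F1} I would substitute $\FdsAbsAas=\FdsBBnd\FluxAssumpB$ (by \itmref{F1b}) and $\FdsAbsAaa=\FdsBBnd\FluxAssumpA\FdsAbsAsa$ (by \itmref{F1a}) into $(\ast)$, obtaining $\FdsBBnd\FluxAssumpA\FdsAbsAsa\LRp{\FdsVarAuxAppr-\FdsVarAuxAppr^*}=-\FdsBBnd\LRp{\FluxAssumpB+\id_{\FdsNVarSol}}\LRp{\FdsVarSolAppr-\FdsVarSolAppr^*}$; then cancel the common left factor $\FdsBBnd$ by left-multiplying with $\LRp{\FdsBtBnd\FdsBBnd}^{-1}\FdsBtBnd$ (a left inverse of $\FdsBBnd$), which is legitimate because \itmref{F1c} forces $\Null{\FdsBBnd}=\LRc{\bs{0}}$ and hence $\FdsBtBnd\FdsBBnd$ to be invertible; and finally invert $\FluxAssumpA$. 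This gives $\FdsAbsAsa\LRp{\FdsVarAuxAppr-\FdsVarAuxAppr^*}=-\FluxAssumpA^{-1}\LRp{\FluxAssumpB+\id_{\FdsNVarSol}}\LRp{\FdsVarSolAppr-\FdsVarSolAppr^*}$; adding $\FdsAbsAss\LRp{\FdsVarSolAppr-\FdsVarSolAppr^*}$ and collecting identifies $\stabPar=-\FluxAssumpA^{-1}\LRp{\FluxAssumpB+\id_{\FdsNVarSol}}+\FdsAbsAss$, which is exactly \eqnref{stabPar_F1} once one observes $\LRp{\FluxAssumpA^{T}\FdsBtBnd\FdsBBnd\FluxAssumpA}^{-1}\FluxAssumpA^{T}\FdsBtBnd\FdsBBnd=\FluxAssumpA^{-1}$. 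Substituting back into the $\FdsVarSol$-block then reproduces \eqnref{Fds_two_upwind_flux_reduced}.

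I expect the one genuine obstacle to be this cancellation of $\FdsBBnd$ in the \itmref{F1} case: one has to verify that \itmref{F1c} -- triviality of $\Null{\FdsBk}$ for every $\FdsIndx$ together with the range-intersection condition on the $\FdsBk$'s -- really does force $\FdsBBnd=\FdsSum\normalComp{\FdsIndx}\FdsBk$ to have trivial kernel for every admissible unit normal $\normal$, so that $\FdsBtBnd\FdsBBnd$ is invertible and the left inverse above exists. Once that structural fact is secured, and given the invertibility of $\FluxAssumpA$, everything else is routine matrix algebra.
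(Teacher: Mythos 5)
Your proposal is correct and follows essentially the same route as the paper's Appendix~B proof: equate the one-sided form \eqnref{Fds_two_upwind_flux} with the Godunov identity \eqnref{Godunov_flux_eq}--\eqnref{Godunov_flux}, use the $\FdsVarAux$-block to relate $\FdsVarAuxAppr-\FdsVarAuxAppr^*$ to $\FdsVarSolAppr-\FdsVarSolAppr^*$, and eliminate via \itmref{F1a}--\itmref{F1c} or \itmref{F2}. Your version is in fact slightly cleaner than the paper's (whose Eq.~(b.5) drops the factor $\FdsVarSolAppr-\FdsVarSolAppr^*$ and writes $\FdsAbsAaa$ where $\FdsAbsAsa$ is needed for the substitution into (b.3)), and your observation that $\LRp{\FluxAssumpA^{T}\FdsBtBnd\FdsBBnd\FluxAssumpA}^{-1}\FluxAssumpA^{T}\FdsBtBnd\FdsBBnd=\FluxAssumpA^{-1}$ correctly reconciles your form of $\stabPar$ with \eqnref{stabPar_F1}.
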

\begin{proof}
It can be proved by simple algebraic manipulation along with the corresponding assumption. See Appendix \secref{proof_reduced_upwind} for detailed proof.
\end{proof}
We would like to mention that the assumption \itmref{F1} and \itmref{F2} are mutually exclusive from each other. For example, the derivation of HDG numerical flux for a convection-diffusion equation can only rely on \itmref{F1} while for an elliptic equation it can only rely on \itmref{F2} (See Section \secref{numerics} for more detail).   
According to the lemma \lemref{flux}, we can construct the upwind-based HDG flux by replacing  $\FdsVarSolAppr^*$ with $\FdsTrcVarSolAppr$:
\beq\eqnlab{Fds_two_hdg_flux}    \upwindFluxTrc\LRp{\FdsVarAuxAppr,\FdsVarSolAppr,\FdsTrcVarSolAppr}\normal :=
    \begin{bmatrix}
    \FdsBBnd\FdsTrcVarSolAppr\\
    \FdsBtBnd\FdsVarAuxAppr + \FdsCBnd\FdsVarSolAppr + \stabPar\LRp{\FdsVarSolAppr - \FdsTrcVarSolAppr}
    \end{bmatrix}, 
\eeq
where the definitions of $\FdsBBnd$ and $\FdsCBnd$ follow ones introduced in Lemma \lemref{flux} (these notations will be used in the rest of the paper as well). In fact, the numerical flux \eqnref{Fds_two_hdg_flux} can represent the larger class of HDG family other than just upwind-based HDG. It is possible to obtain different HDG schemes by setting the stability matrix $\stabPar$ to be different from \eqnref{stabPar_F1} and \eqnref{stabPar_F2}. Such an exploration is also studied in \cite{Tan2015}.

Now the local equation of an $hp$-HDG scheme for the two-field Friedrichs system can be constructed by substituting the upwind-based HDG flux \eqnref{Fds_two_hdg_flux} back into \eqnref{Fds_two_hp_local_undefined}: seek $\LRp{\FdsVarAuxAppr,\FdsVarSolAppr, \FdsTrcVarSolAppr}\in\FdsVarAuxApprSpc{}\times\FdsVarSolApprSpc{}\times\FdsTrcVarSolApprSpc{}$ such that
\begin{subequations}\eqnlab{Fds_two_hp_local}\small
\begin{align}
\begin{split}\eqnlab{Fds_two_hp_local1}
    -\FdsSum\LRp{{\FdsBk}\FdsVarSolAppr,{\FdsPartial}\FdsTestAuxAppr}_{\elem}
    +\LRp{{\FdsGaa}\FdsVarAuxAppr+{\FdsGas}\FdsVarSolAppr,\FdsTestAuxAppr}_{\elem} 
    +\LRa{\FdsBBnd\FdsTrcVarSolAppr,{\FdsTestAuxAppr}}_{\elemBnd}
    = \LRp{{\FdsforcingAux},\FdsTestAuxAppr}_{\elem},
    \quad\forall\elem\in\domPart,&
\end{split}\\
\begin{split}\eqnlab{Fds_two_hp_local2}
    -\FdsSum\LRp{\FdsBk^T\FdsVarAuxAppr+{\FdsCk}\FdsVarSolAppr,{\FdsPartial}\FdsTestSolAppr}_{\elem}
    +\LRp{{\FdsGsa}\FdsVarAuxAppr + {\FdsGss}\FdsVarSolAppr,\FdsTestSolAppr}_{\elem}&\\
    + \LRa{\FdsBtBnd\FdsVarAuxAppr + \FdsCBnd\FdsVarSolAppr + \stabPar\LRp{\FdsVarSolAppr - \FdsTrcVarSolAppr},{\FdsTestSolAppr}}_{\elemBnd}
    = \LRp{{\FdsforcingSol},\FdsTestSolAppr}_{\elem},
    \quad\forall\elem\in\domPart,&
\end{split}
\end{align}
\end{subequations}
for all $\LRp{\FdsTestAuxAppr,\FdsTestSolAppr}\in\FdsVarAuxApprSpc{}\times\FdsVarSolApprSpc{}$. Again, we close the system with the conservative condition. Since the first component in $\upwindFluxTrc\LRp{\FdsVarAuxAppr,\FdsVarSolAppr,\FdsTrcVarSolAppr}\normal$ is already uniquely defined, we weakly enforce the continuity in the second component. Specifically, for $\LRp{\FdsVarAuxAppr,\FdsVarSolAppr, \FdsTrcVarSolAppr}\in\FdsVarAuxApprSpc{}\times\FdsVarSolApprSpc{}\times\FdsTrcVarSolApprSpc{}$, 
\begin{equation}\eqnlab{Fds_two_hp_global}\small
\begin{aligned}
    \LRa{\jump{{\FdsBBnd}^{T}{\FdsVarAuxAppr} + {\FdsCBnd}{\FdsVarSolAppr} +{\stabPar}\LRp{{\FdsVarSolAppr}-\FdsTrcVarSolAppr}},\FdsTrcTestSolAppr}_{\mort}
    = \bs{0},\quad\forall\mort\in\skelInt,
\end{aligned}
\end{equation}
is enforced for $\forall\FdsTrcTestSolAppr\in\FdsTrcVarSolApprSpc{}$. Finally, the boundary conditions are specified in a similar way as in Eq. \eqnref{Fds_one_hdg_bc1}. The difference is that instead of taking $\FdsMBnd:=\stabPar$, we make use of the characteristic of the two-field structure and choose $\FdsMBnd$ in a special way
\beq\eqnlab{FdsMBnd}
    \FdsMBnd:=
    \begin{bmatrix}
    0 & -\alpha\FdsBBnd\\
    \alpha\FdsBtBnd & \FdsMss
    \end{bmatrix},
\eeq
where $\FdsMss\in\real^{\FdsNVarSol,\FdsNVarSol}$ is non-negative and $\alpha\in\LRc{-1,+1}$. With this specific setting, the boundary can then be enforced through Eq. \eqnref{Fds_one_hdg_bc1} with the boundary data $\bs{\BCVal}:\domBnd\to\real^{\FdsNVarAux\times\FdsNVarSol},\,\bs{\BCVal}=\LRp{\bs{\BCVal}^{\FdsVarAux},\bs{\BCVal}^{\FdsVarSol}}$ in which $\bs{\BCVal}^{\FdsVarAux}:\domBnd\to\real^{\FdsNVarAux}$ and $\bs{\BCVal}^{\FdsVarSol}:\domBnd\to\real^{\FdsNVarSol}$. Again, for clarity and for the numerical results, we use nonhomogeneous boundary conditions, but in the well-posedness analysis
it is sufficient to consider homogeneous boundary conditions. 
We further set $\FdsMss=2{\varrho}\id_{\FdsNVarSol}+{\FdsCBnd}$, where $\id_{\FdsNVarSol}$ is a $\FdsNVarSol\times\FdsNVarSol$ identity matrix and $\varrho$ is chosen on the case-by-case basis (see Section \secref{numerics} ). Again, the boundary operator $\FdsMBnd$ is not unique but must satisfy assumptions \eqnref{Abstrct_M1} and \eqnref{Abstrct_M2}.
In particular, \eqnref{Abstrct_M1} requires:
\beq\eqnlab{Fds_two_hdg_bc_constraints}
    2\varrho\id_{\FdsNVarSol}+\FdsCBnd \geq 0.
\eeq
For Dirichlet type of boundary we set $\alpha=1$ and $\varrho=\half$, the boundary condition \eqnref{Fds_one_hdg_bc1} now is restated as\footnote{In fact, with the setting $\alpha=1$ and $\varrho=\half$ we should obtain two equations from \eqnref{Fds_one_hdg_bc1}:
\begin{align*}
    & \LRa{{\FdsBBnd}\FdsTrcVarSolAppr,\FdsTrcTestAuxAppr}_{\mort} = 0,   \quad\forall\FdsTrcTestAuxAppr\in\FdsTrcVarAuxApprSpc{},\,\forall\mort\in\skelBnd\cap\domBndDir,\quad\text{and}
    &&\LRa{\FdsTrcVarSolAppr,\FdsTrcTestSolAppr}_{\mort} = 0,   \quad\forall\FdsTrcTestSolAppr\in\FdsTrcVarSolApprSpc{},\,\forall\mort\in\skelBnd\cap\domBndDir.
\end{align*}
These two equations are supposed to be consistent with each other and hence are equivalent. However, only the latter formulation is considered in this paper since:
\ben
\item Given that only the trace unknown $\FdsTrcVarSolAppr$ is introduced, no test function in $\FdsTrcVarAuxApprSpc{}$ should be involved, and
\item The latter option is more economical.
\een
}:
\beq\eqnlab{Fds_two_hp_hdg_bc_Dir}
    \LRa{\FdsTrcVarSolAppr,\FdsTrcTestSolAppr}_{\mort}
    = 0,
    \quad\forall\FdsTrcTestSolAppr\in\FdsTrcVarSolApprSpc{},\,\forall\mort\in\skelBnd\cap\domBndDir.
\eeq
On the other hand, for Neumann or Robin type of boundary condition, we set $\alpha=-1$ and the variable $\varrho$ depends on problems to be solved. In this case, \eqnref{Fds_one_hdg_bc1} now becomes:
\beq\eqnlab{Fds_two_hp_hdg_bc_Nmn_and_Rb}\small
\begin{aligned}
    \LRa{\FdsBBnd^{T}{\FdsVarAuxAppr} + {\FdsCBnd}{\FdsVarSolAppr} +{\stabPar}\LRp{{\FdsVarSolAppr}-\FdsTrcVarSolAppr},\FdsTrcTestSolAppr}_{\mort} 
    = \LRa{\LRp{{\varrho}\id_{\FdsNVarSol}+{\FdsCBnd}}\FdsTrcVarSolAppr,\FdsTrcTestSolAppr}_{\mort},\quad
    \forall\FdsTrcTestSolAppr\in\FdsTrcVarSolApprSpc{},\,\forall\mort\in\skelBnd\cap(\domBndNmn\cup\domBndRb).&
\end{aligned}
\eeq
Again, the stabilization parameter ${\stabPar}$ is set to be \eqnref{stabPar_F1} if the assumption \itmref{F1} holds or to be \eqnref{stabPar_F2} if the assumption \itmref{F2} holds. 
Finally, the consistency condition like \eqnref{Fds_one_hdg_bc2} is not needed here since the equation \eqnref{Fds_one_hdg_bc1} itself along with the set-up \eqnref{FdsMBnd} is sufficient to determine the trace unknown $\FdsTrcVarSolAppr$ on the boundary. Combining \eqnref{Fds_two_hp_local}, \eqnref{Fds_two_hp_global}, \eqnref{Fds_two_hp_hdg_bc_Dir}, and \eqnref{Fds_two_hp_hdg_bc_Nmn_and_Rb}, we can obtain the complete the $hp$-HDG formulation for the two-field Friedrichs' system: seek $\LRp{\FdsVarAuxAppr,\FdsVarSolAppr,\FdsTrcVarSolAppr}\in\FdsVarAuxApprSpc{}\times\FdsVarSolApprSpc{}\times\FdsTrcVarSolApprSpc{}$ such that
\begin{subequations}\eqnlab{Fds_two_hp_hdg}\small
\begin{align}
\begin{split}\eqnlab{Fds_two_hp_hdg_local1}
    -\FdsSum\LRp{{\FdsBk}\FdsVarSolAppr,{\FdsPartial}\FdsTestAuxAppr}_{\domPart}
    +\LRp{{\FdsGaa}\FdsVarAuxAppr+{\FdsGas}\FdsVarSolAppr,\FdsTestAuxAppr}_{\domPart} 
    +\LRa{{\FdsBBnd}\FdsTrcVarSolAppr,{\FdsTestAuxAppr}}_{\domPartBnd}
    = \LRp{{\bs{\FdsforcingAux}},\FdsTestAuxAppr}_{\domPart},&
\end{split}\\
\begin{split}\eqnlab{Fds_two_hp_hdg_local2}
    -\FdsSum\LRp{\FdsBk^T\FdsVarAuxAppr+{\FdsCk}\FdsVarSolAppr,{\FdsPartial}\FdsTestSolAppr}_{\domPart}
    +\LRp{{\FdsGsa}\FdsVarAuxAppr + {\FdsGss}\FdsVarSolAppr,\FdsTestSolAppr}_{\domPart}
    + \LRa{\FdsBBnd^{T}{\FdsVarAuxAppr} + {\FdsCBnd}{\FdsVarSolAppr} +{\stabPar}\LRp{{\FdsVarSolAppr}-\FdsTrcVarSolAppr},{\FdsTestSolAppr}}_{\domPartBnd}&\\
    = \LRp{{\bs{\FdsforcingSol}},\FdsTestSolAppr}_{\domPart},&
\end{split}\\
\begin{split}\eqnlab{Fds_two_hp_hdg_global}
    \LRa{\jump{\FdsBBnd^{T}{\FdsVarAuxAppr} + {\FdsCBnd}{\FdsVarSolAppr} +{\stabPar}\LRp{{\FdsVarSolAppr}-\FdsTrcVarSolAppr}},\FdsTrcTestSolAppr}_{\skel\backslash\domBndDir}
    = \LRa{\LRp{{\varrho}\id_{\FdsNVarSol}+{\FdsCBnd}}\FdsTrcVarSolAppr,\FdsTrcTestSolAppr}_{\skel\cap(\domBndNmn\cup\domBndRb)},&
\end{split}\\
\begin{split}\eqnlab{Fds_two_hp_hdg_Dirichlet}
    \LRa{\FdsTrcVarSolAppr,\FdsTrcTestSolAppr}_{\skelBnd\cap\domBndDir}
    = 0,&
\end{split}
\end{align}
\end{subequations}
for all $\LRp{\FdsTestAuxAppr,\FdsTestSolAppr,\FdsTrcTestSolAppr}\in\FdsVarAuxApprSpc{}\times\FdsVarSolApprSpc{}\times\FdsTrcVarSolApprSpc{}$. We now show that the numerical scheme in \eqnref{Fds_two_hp_hdg} is both locally and globally conservative, and well-posed. For the well-posedness proof, both full and partial coercivity will be discussed. It turns out that a few extra assumptions are required to complete the well-posedness proof and they are different for full and partial coercivity cases. 
\begin{lemma}[Local conservation]\lemlab{Fds_two_local_consrv}
The $hp$-HDG scheme in \eqnref{Fds_two_hp_hdg} is both locally and globally conservative.
\end{lemma}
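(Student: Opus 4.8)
The plan is to repeat, block by block, the constant‑test‑function arguments already used for the one‑field scheme in Lemma~\lemref{Fds_one_local_consrv} and Lemma~\lemref{Fds_one_global_consrv}, now applied to the two‑field local equations \eqnref{Fds_two_hp_local1}--\eqnref{Fds_two_hp_local2} and to the global relations \eqnref{Fds_two_hp_hdg_global}--\eqnref{Fds_two_hp_hdg_Dirichlet}. The only structural point to keep in mind is that the first block of the HDG flux \eqnref{Fds_two_hdg_flux}, namely $\FdsBBnd\FdsTrcVarSolAppr$, is built purely from the single‑valued trace $\FdsTrcVarSolAppr$, whereas the second block carries the stabilization term $\stabPar\LRp{\FdsVarSolAppr-\FdsTrcVarSolAppr}$; consequently the two blocks are treated slightly differently on the skeleton.

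For local conservation, first I would test \eqnref{Fds_two_hp_local1} with $\FdsTestAuxAppr=\bs{1}$ and \eqnref{Fds_two_hp_local2} with $\FdsTestSolAppr=\bs{1}$ on a fixed element $\elem$. Since $\FdsPartial$ annihilates constants, every volume divergence term drops out and what remains is
\begin{align*}
  \LRp{\FdsGaa\FdsVarAuxAppr+\FdsGas\FdsVarSolAppr,\bs{1}}_{\elem} + \sum_{\face\in\elemBnd}\LRa{\FdsBBnd\FdsTrcVarSolAppr,\bs{1}}_{\face} &= \LRp{\FdsforcingAux,\bs{1}}_{\elem},\\
  \LRp{\FdsGsa\FdsVarAuxAppr+\FdsGss\FdsVarSolAppr,\bs{1}}_{\elem} + \sum_{\face\in\elemBnd}\LRa{\FdsBtBnd\FdsVarAuxAppr+\FdsCBnd\FdsVarSolAppr+\stabPar\LRp{\FdsVarSolAppr-\FdsTrcVarSolAppr},\bs{1}}_{\face} &= \LRp{\FdsforcingSol,\bs{1}}_{\elem},
\end{align*}
that is, on each element the net (block) HDG flux $\upwindFluxTrc\normal$ through $\elemBnd$ is balanced by the elemental reaction and forcing; when $\FdsG=0$ and $\Fdsforcing=\bs{0}$ the net flux through $\elemBnd$ vanishes. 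This is exactly local conservation.

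For global conservation, I would sum \eqnref{Fds_two_hp_hdg_local1}--\eqnref{Fds_two_hp_hdg_local2} with $(\FdsTestAuxAppr,\FdsTestSolAppr)=(\bs{1},\bs{1})$ over $\domPart$ and recast the elemental boundary integrals as skeleton integrals. For the first block, $\LRa{\FdsBBnd\FdsTrcVarSolAppr,\bs{1}}_{\domPartBnd}$ collapses to $\LRa{\jump{\FdsBBnd\FdsTrcVarSolAppr},\bs{1}}_{\skel}$; because $\FdsTrcVarSolAppr$ is single‑valued on each mortar and $\FdsBBnd=\FdsSum\normalComp{\FdsIndx}\FdsBk$ changes sign with the outward normal, this jump is zero on every interior mortar, so only $\skelBnd$ survives, and on $\skelBnd\cap\domBndDir$ it vanishes too by \eqnref{Fds_two_hp_hdg_Dirichlet}. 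For the second block, $\LRa{\jump{\FdsBtBnd\FdsVarAuxAppr+\FdsCBnd\FdsVarSolAppr+\stabPar\LRp{\FdsVarSolAppr-\FdsTrcVarSolAppr}},\bs{1}}_{\skel}$ is split into the pieces on $\skel\backslash\domBndDir$ and on $\skelBnd\cap\domBndDir$; on the former I substitute the global equation \eqnref{Fds_two_hp_hdg_global} with $\FdsTrcTestSolAppr=\bs{1}$ (turning the interior jumps into $0$ and the Neumann/Robin ones into $\LRa{(\varrho\id_{\FdsNVarSol}+\FdsCBnd)\FdsTrcVarSolAppr,\bs{1}}$ over $\skelBnd\cap(\domBndNmn\cup\domBndRb)$), and on the latter I use \eqnref{Fds_two_hp_hdg_Dirichlet}. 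Adding the two tested local equations together with all these substitutions produces a single identity of the form $\LRp{\FdsG\FdsVarAppr,\bs{1}}_{\domPart} + (\text{boundary terms on }\skelBnd) = \LRp{\Fdsforcing,\bs{1}}_{\domPart}$, which is the claimed global conservation.

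The hard part here is not analytic but organizational: one must correctly partition the skeleton into $\skelInt$, $\skelBnd\cap\domBndDir$ and $\skelBnd\cap(\domBndNmn\cup\domBndRb)$, since the conservativity relation \eqnref{Fds_two_hp_hdg_global} and the Dirichlet relation \eqnref{Fds_two_hp_hdg_Dirichlet} live on complementary parts of $\skel$ and the two flux blocks are treated asymmetrically. The one genuinely new observation relative to the one‑field proof is that no conservativity condition must be imposed on the first block: $\FdsBBnd\FdsTrcVarSolAppr$ is automatically single‑valued and sign‑reversing across interior mortars, so its jump already vanishes. Once this is noted, the remaining manipulations are the same routine regroupings and cancellations as in Lemma~\lemref{Fds_one_global_consrv}.
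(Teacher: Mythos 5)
Your proof is correct and follows exactly the route the paper intends: the paper omits this proof entirely, stating only that it is the same constant-test-function argument as Lemmas~\lemref{Fds_one_local_consrv} and~\lemref{Fds_one_global_consrv}, which is precisely what you carry out (with the additional, correct observation that the first flux block $\FdsBBnd\FdsTrcVarSolAppr$ needs no conservativity condition since its interior jumps vanish automatically).
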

\begin{proof}
    The proofs are the same as the proof of lemma \lemref{Fds_one_local_consrv} and lemma \lemref{Fds_one_global_consrv},  and hence omitted.
\end{proof}
\begin{lemma}[Well-posedness of the local equation-with full coercivity]\lemlab{Fds_two_full_hp_localCon} The local solver composed by \eqnref{Fds_two_hp_hdg_local1} and \eqnref{Fds_two_hp_hdg_local2} is well-posed provided that:
\ben
    \item the assumptions \itmref{A1}-\itmref{A6} hold, and
    \item $\half\FdsCBnd + \stabPar\geq0$, and
    \item $\FdsBk$ is a constant and is non-zero for $\FdsIndx=1,\dots,d$.
\een
\end{lemma}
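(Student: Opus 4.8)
The plan is the standard HDG local-solver argument. For a prescribed trace $\FdsTrcVarSolAppr$ and prescribed data $(\FdsforcingAux,\FdsforcingSol)$, equations \eqnref{Fds_two_hp_hdg_local1}--\eqnref{Fds_two_hp_hdg_local2} form a square finite-dimensional linear system for $(\FdsVarAuxAppr,\FdsVarSolAppr)\in\FdsVarAuxApprSpc{}\times\FdsVarSolApprSpc{}$ (indeed one that decouples element by element, the trace being given), so it suffices to prove uniqueness: the only solution with $\FdsTrcVarSolAppr=\bs0$ and $\FdsforcingAux=\FdsforcingSol=\bs0$ is $\FdsVarAuxAppr=\FdsVarSolAppr=\bs0$. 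As in \cite[Lemma~6.1]{Tan2015}, the $hp$-nonconforming treatment is transparent here, since the local equations only involve $\LRa{\cdot,\cdot}_{\elemBnd}$ (expanded over the split-sided mortars) exactly as in the conforming case.

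First I would substitute the homogeneous data, take $(\FdsTestAuxAppr,\FdsTestSolAppr)=(\FdsVarAuxAppr,\FdsVarSolAppr)$, and add the two equations; the term $\LRa{\FdsBBnd\FdsTrcVarSolAppr,\FdsTestAuxAppr}_{\domPartBnd}$ drops out. Next I integrate by parts element by element to symmetrize the volume contributions. On the cross terms I use hypothesis~3, i.e.\ $\FdsPartial\FdsBk=0$, to transfer $\FdsSum\LRp{\FdsBk^{T}\FdsVarAuxAppr,\FdsPartial\FdsVarSolAppr}_{\domPart}$ onto $\FdsVarAuxAppr$; the resulting volume piece then cancels $\FdsSum\LRp{\FdsBk\FdsVarSolAppr,\FdsPartial\FdsVarAuxAppr}_{\domPart}$ and only a skeleton term $-\LRa{\FdsBBnd\FdsVarSolAppr,\FdsVarAuxAppr}_{\domPartBnd}$ survives, which in turn cancels $\LRa{\FdsBtBnd\FdsVarAuxAppr,\FdsVarSolAppr}_{\domPartBnd}$. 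On the $\FdsCk$ term I use $\FdsCk=\FdsCk^{T}$ (from \itmref{A3}) to write $\FdsSum\LRp{\FdsCk\FdsVarSolAppr,\FdsPartial\FdsVarSolAppr}_{\domPart}=\half\LRa{\FdsCBnd\FdsVarSolAppr,\FdsVarSolAppr}_{\domPartBnd}-\half\LRp{\LRs{\FdsSum\FdsPartial\FdsCk}\FdsVarSolAppr,\FdsVarSolAppr}_{\domPart}$. After these cancellations one is left with the energy identity
\[
  \half\LRp{\LRs{\FdsG+\FdsG^{T}+\FdsSum\FdsPartial\FdsAk}\FdsVarAppr,\FdsVarAppr}_{\domPart}
  +\LRa{\LRp{\half\FdsCBnd+\stabPar}\FdsVarSolAppr,\FdsVarSolAppr}_{\domPartBnd}=0,
\]
where I used \itmref{A5} ($\FdsAkaa=0$) together with $\FdsPartial\FdsBk=0$ to recognize $\LRs{\FdsSum\FdsPartial\FdsCk}\FdsVarSolAppr\cdot\FdsVarSolAppr=\LRs{\FdsSum\FdsPartial\FdsAk}\FdsVarAppr\cdot\FdsVarAppr$, and symmetry of the real bilinear form to replace $\FdsG$ by $\half\LRp{\FdsG+\FdsG^{T}}$.

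To conclude, I invoke the full coercivity \itmref{A4}, $\FdsG+\FdsG^{T}+\FdsSum\FdsPartial\FdsAk\ge2\mu_0\id_{\FdsNVar}$, to bound the first term below by $\mu_0\norm{\FdsVarAppr}^2_{\domPart}$, and hypothesis~2, $\half\FdsCBnd+\stabPar\ge0$, to see that the skeleton term is non-negative; hence $\FdsVarAppr=\bs0$ on $\domPart$, i.e.\ $\FdsVarAuxAppr=\FdsVarSolAppr=\bs0$, which gives uniqueness and therefore well-posedness. I expect the only real difficulty to be the bookkeeping: making every skeleton term cancel correctly and checking that hypotheses~2 and~3 together with \itmref{A5} are precisely what is needed to turn $\FdsSum\FdsPartial\FdsCk$ (which is what survives the integration by parts) into $\FdsSum\FdsPartial\FdsAk$, so that \itmref{A4} applies as stated; beyond this finite-dimensional identity there is no analytical subtlety.
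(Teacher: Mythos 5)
Your proposal is correct and follows essentially the same route as the paper's proof: test with the solution itself, integrate by parts using the constancy of $\FdsBk$ and the symmetry of $\FdsCk$ to cancel the cross and skeleton terms, reassemble $\FdsSum\FdsPartial\FdsCk$ into $\FdsSum\FdsPartial\FdsAk$ via \itmref{A5}, and conclude from the resulting energy identity using \itmref{A4} and $\half\FdsCBnd+\stabPar\geq 0$. The energy identity you arrive at is exactly the paper's equation \eqnref{Fds_two_full_hp_hdg_localproof2}.
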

\begin{proof}
Since the formulation is linear and $\FdsVarAppr=(\FdsVarAuxAppr,\FdsVarSolAppr)$ is in finite dimensional space $\FdsVarApprSpc{}$, it is sufficient to restrict to a single element $\elem$ and show that the solution $(\FdsVarAuxAppr,\FdsVarSolAppr)=0$ is a unique solution in $\elem$ for $\elem\in\domPart$ provided that $\FdsTrcVarSolAppr$ and $\Fdsforcing$ are zero. Let $\FdsTrcVarSolAppr$ and $\Fdsforcing$ be zero and $(\FdsTestAuxAppr,\FdsTestSolAppr)$ be $(\FdsVarAuxAppr,\FdsVarSolAppr)$ in \eqnref{Fds_two_hp_hdg_local1} and \eqnref{Fds_two_hp_hdg_local2}. Adding the equations yields
\begin{equation}\eqnlab{Fds_two_full_hp_hdg_localproof0}\small
\begin{aligned}
    -\FdsSum\LRp{{\FdsBk}\FdsVarSolAppr,{\FdsPartial}\FdsVarAuxAppr}_{\elem}
    +\LRp{{\FdsGaa}\FdsVarAuxAppr+{\FdsGas}\FdsVarSolAppr,\FdsVarAuxAppr}_{\elem}&\\
    -\FdsSum\LRp{\FdsBk^T\FdsVarAuxAppr+{\FdsCk}\FdsVarSolAppr,{\FdsPartial}\FdsVarSolAppr}_{\elem}
    +\LRp{{\FdsGsa}\FdsVarAuxAppr + {\FdsGss}\FdsVarSolAppr,\FdsVarSolAppr}_{\elem}
    + \LRa{\FdsBBnd^{T}{\FdsVarAuxAppr} + {\FdsCBnd}{\FdsVarSolAppr} +{\stabPar}{\FdsVarSolAppr},{\FdsVarSolAppr}}_{\elemBnd} 
    = 0.&
\end{aligned}
\end{equation}
By invoking the assumption that $\FdsBk$ is a constant for $\FdsIndx=1,\dots,d$, the term $\half\FdsSum\LRp{\LRp{\FdsPartial\FdsBk^T}\FdsVarAuxAppr,\FdsVarSolAppr}_{\elem}$ contribute nothing and can be freely added into \eqnref{Fds_two_full_hp_hdg_localproof0}. It gives
\begin{equation}\eqnlab{Fds_two_full_hp_hdg_localproof1}\small
\begin{aligned}
    -\FdsSum\LRp{{\FdsBk}\FdsVarSolAppr,{\FdsPartial}\FdsVarAuxAppr}_{\elem}
    +\LRp{{\FdsGaa}\FdsVarAuxAppr+{\FdsGas}\FdsVarSolAppr,\FdsVarAuxAppr}_{\elem}
    -\FdsSum\LRp{\FdsBk^T\FdsVarAuxAppr+{\FdsCk}\FdsVarSolAppr,{\FdsPartial}\FdsVarSolAppr}_{\elem}&\\
    +\half\FdsSum\LRp{\LRp{\FdsPartial\FdsBk^T}\FdsVarAuxAppr,\FdsVarSolAppr}_{\elem}
    +\LRp{{\FdsGsa}\FdsVarAuxAppr + {\FdsGss}\FdsVarSolAppr,\FdsVarSolAppr}_{\elem}
    + \LRa{\FdsBBnd^{T}{\FdsVarAuxAppr} + {\FdsCBnd}{\FdsVarSolAppr} +{\stabPar}{\FdsVarSolAppr},{\FdsVarSolAppr}}_{\elemBnd} 
    = 0.&
\end{aligned}
\end{equation}
The first term in \eqnref{Fds_two_full_hp_hdg_localproof1} can be further expanded as:
\begin{equation}\eqnlab{IBP1_Ref}\small
\begin{aligned}
    -\FdsSum\LRp{{\FdsBk}\FdsVarSolAppr,{\FdsPartial}\FdsVarAuxAppr}_{\elem} 
    &= \FdsSum\LRp{{\FdsBk}{\FdsPartial}\FdsVarSolAppr,\FdsVarAuxAppr}_{\elem}
    + \half\FdsSum\LRp{\LRp{{\FdsPartial}{\FdsBk}}\FdsVarSolAppr,\FdsVarAuxAppr}_{\elem}
    -\LRa{{\FdsBBnd}{\FdsVarSolAppr},{\FdsVarAuxAppr}}_{\elemBnd}, 
\end{aligned}
\end{equation}
which is the result of integration by part and of the product rule. The second term on the right-hand side of \eqnref{IBP1_Ref} is zero owing to our assumption and hence can be multiplied by an arbitrary constant. Similarly, it is easy to show that the following identity holds:
\begin{equation}\eqnlab{IBP2_Ref}\small
    -\FdsSum\LRp{{\FdsCk}\FdsVarSolAppr,{\FdsPartial}\FdsVarSolAppr}_{\elem}
    = \half\FdsSum\LRp{({\FdsPartial}{\FdsCk})\FdsVarSolAppr,\FdsVarSolAppr}_{\elem} - \half\LRa{{\FdsCBnd}{\FdsVarSolAppr},{\FdsVarSolAppr}}_{\elemBnd},
\end{equation}
which is also the result of integration by part but with the assumption about symmetry \itmref{A3}. Substituting \eqnref{IBP1_Ref} and \eqnref{IBP2_Ref} back into \eqnref{Fds_two_full_hp_hdg_localproof1}, and combining (undo the decomposition) the volume integrals, we arrive at 
\begin{equation}\eqnlab{Fds_two_full_hp_hdg_localproof2}
\begin{aligned}
    \half\LRp{\LRp{{\FdsG} + \FdsG^{T} + \sum_{\FdsIndx=1}^d{\FdsPartial}{\FdsAk}}\FdsVarAppr,\FdsVarAppr}_{\elem}
    + \LRa{(\half{\FdsCBnd} +{\stabPar}){\FdsVarSolAppr},{\FdsVarSolAppr}}_{\elemBnd} 
    = 0.
\end{aligned}
\end{equation}
With the assumption of full-coercivity \itmref{A4} and the assumption of semi-positiveness $\half{\FdsCBnd} + {\stabPar}\geq0$, we can conclude that $\FdsVarAppr=\LRp{\FdsVarAuxAppr,\FdsVarSolAppr}=\bs{0}$ in $\elem$ for any $\elem\in\domPart$.
\end{proof}

\begin{theorem}[Well-posedness of the $hp$-HDG formulation-with full coercivity]\theolab{Fds_two_full_hp_globalCon} Suppose: 
\ben
    \item the assumptions \itmref{A1}-\itmref{A6} and \eqnref{Abstrct_M1} hold, and
    \item $\half\FdsCBnd + \stabPar\geq0$, and
    \item $\FdsBk$ is constant and is nonzero for $\FdsIndx=1,\dots,d$, and
    \item $\bigcap_{\FdsIndx=1}^d\text{Range}\LRp{\FdsBk}=\emptyset$ and $\Null{\FdsBk}=\LRc{\bs{0}}$ for $\forall\FdsIndx=1,\dots,d$. 
\een
Then, there exists a unique solution $\FdsTrcVarSolAppr\in\FdsTrcVarSolApprSpc{}$ for the $hp$-HDG formulation defined in \eqnref{Fds_two_hp_hdg}.
\end{theorem}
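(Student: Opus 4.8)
The plan is to run globally the energy argument used for the one-field case (Theorem~\theoref{Fds_one_hp_globalCon}) and for the local solver in Lemma~\lemref{Fds_two_full_hp_localCon}. Since \eqnref{Fds_two_hp_hdg} is a square linear system over the finite-dimensional space $\FdsVarAuxApprSpc{}\times\FdsVarSolApprSpc{}\times\FdsTrcVarSolApprSpc{}$, existence of a unique solution is equivalent to showing that the homogeneous problem ($\Fdsforcing=\bs{0}$ together with homogeneous boundary data) has only the trivial solution. So I would set $\Fdsforcing=\bs{0}$, take the test tuple $\LRp{\FdsTestAuxAppr,\FdsTestSolAppr,\FdsTrcTestSolAppr}=\LRp{\FdsVarAuxAppr,\FdsVarSolAppr,\FdsTrcVarSolAppr}$ in \eqnref{Fds_two_hp_hdg}, and aim to deduce first that $\FdsVarAuxAppr=\FdsVarSolAppr=\bs{0}$ in every $\elem\in\domPart$ and then that $\FdsTrcVarSolAppr=\bs{0}$ on $\skel$.

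First I would add the two local equations \eqnref{Fds_two_hp_hdg_local1}--\eqnref{Fds_two_hp_hdg_local2}. Integrating the volume terms by parts, using that each $\FdsBk$ is constant (hypothesis~3, so $\FdsPartial\FdsBk=0$) and that $\FdsAk$ is symmetric (\itmref{A3}), exactly as in the derivation of \eqnref{IBP1_Ref}--\eqnref{IBP2_Ref}, the mixed $\LRp{\FdsBk^{T}\FdsVarAuxAppr,\FdsPartial\FdsVarSolAppr}$ volume terms cancel and the remaining volume integrals reassemble into $\half\LRp{\LRs{\FdsG+\FdsG^{T}+\FdsSum\FdsPartial\FdsAk}\FdsVarAppr,\FdsVarAppr}_{\domPart}$ with $\FdsVarAppr=\LRp{\FdsVarAuxAppr,\FdsVarSolAppr}$. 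Unlike in Lemma~\lemref{Fds_two_full_hp_localCon}, here $\FdsTrcVarSolAppr$ does not vanish, so I would then subtract the global equation \eqnref{Fds_two_hp_hdg_global} (tested against $\FdsTrcVarSolAppr$) and invoke \eqnref{Fds_two_hp_hdg_Dirichlet}, which with $\FdsTrcTestSolAppr=\FdsTrcVarSolAppr$ gives $\FdsTrcVarSolAppr=\bs{0}$ on $\skelBnd\cap\domBndDir$. On interior mortars the $\FdsVarAuxAppr$-boundary contributions cancel after using $\LRa{\FdsBtBnd\FdsVarAuxAppr,\FdsTrcVarSolAppr}=\LRa{\FdsBBnd\FdsTrcVarSolAppr,\FdsVarAuxAppr}$, and the leftover $\LRa{\FdsCBnd\FdsTrcVarSolAppr,\FdsTrcVarSolAppr}$ pieces cancel across a mortar because $\FdsBBnd$ and $\FdsCBnd$ reverse sign with the normal; a polarization identity of the type used for $\FdsABnd$ in Theorem~\theoref{Fds_one_hp_globalCon} turns the remaining interface terms into $\LRa{\LRp{\half\FdsCBnd+\stabPar}\LRp{\FdsVarSolAppr-\FdsTrcVarSolAppr},\FdsVarSolAppr-\FdsTrcVarSolAppr}$. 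On $\skelBnd\cap\LRp{\domBndNmn\cup\domBndRb}$ the special structure \eqnref{FdsMBnd} of $\FdsMBnd$ together with $\FdsMss=2\varrho\id_{\FdsNVarSol}+\FdsCBnd$ converts the boundary contribution into $\half\LRa{\FdsMss\FdsTrcVarSolAppr,\FdsTrcVarSolAppr}$. The net identity should be
\begin{multline*}
\half\LRp{\LRs{\FdsG+\FdsG^{T}+\FdsSum\FdsPartial\FdsAk}\FdsVarAppr,\FdsVarAppr}_{\domPart}\\
+\LRa{\LRp{\half\FdsCBnd+\stabPar}\LRp{\FdsVarSolAppr-\FdsTrcVarSolAppr},\FdsVarSolAppr-\FdsTrcVarSolAppr}_{\domPartBnd}\\
+\half\LRa{\FdsMss\FdsTrcVarSolAppr,\FdsTrcVarSolAppr}_{\skelBnd\cap\LRp{\domBndNmn\cup\domBndRb}}=0 ,
\end{multline*}
every term of which is nonnegative: the volume term by the full coercivity \itmref{A4}, the interface term by hypothesis~2 ($\half\FdsCBnd+\stabPar\geq0$), and the boundary term by \eqnref{Abstrct_M1}/\eqnref{Fds_two_hdg_bc_constraints} (which give $\FdsMss\geq0$).

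Consequently each term vanishes, and since $\FdsG+\FdsG^{T}+\FdsSum\FdsPartial\FdsAk\geq2\mu_0\id_{\FdsNVar}$ by \itmref{A4}, this forces $\FdsVarAppr=\LRp{\FdsVarAuxAppr,\FdsVarSolAppr}=\bs{0}$ in every element. Plugging $\FdsVarAuxAppr=\FdsVarSolAppr=\bs{0}$ (and $\Fdsforcing=\bs{0}$) back into \eqnref{Fds_two_hp_hdg_local1} leaves $\LRa{\FdsBBnd\FdsTrcVarSolAppr,\FdsTestAuxAppr}_{\domPartBnd}=0$ for all $\FdsTestAuxAppr\in\FdsVarAuxApprSpc{}$. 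On any mortar $\mort$ the trace degree $\polyMort$ equals the larger of the two adjacent element degrees by \eqnref{polyMort}, and $\FdsBBnd=\FdsSum\normalComp{\FdsIndx}\FdsBk$ is a constant matrix on $\mort$ because the mesh is affine; testing against the side whose element degree realizes $\polyMort$ makes the $\Lsp{2}$-projection act as the identity, so $\FdsBBnd\FdsTrcVarSolAppr=\bs{0}$ pointwise on $\mort$. Finally hypothesis~4 ($\Null{\FdsBk}=\LRc{\bs{0}}$ for every $\FdsIndx$ and $\bigcap_{\FdsIndx=1}^{d}\text{Range}\LRp{\FdsBk}=\emptyset$) forces $\FdsBBnd$ to be injective on each mortar: if a single component of $\normal$ is nonzero this is immediate from $\Null{\FdsBk}=\LRc{\bs{0}}$, and otherwise a nonzero $\FdsTrcVarSolAppr\in\Null{\FdsBBnd}$ would place the nonzero vector $\FdsBk\FdsTrcVarSolAppr$ in the intersection of two (hence, in two dimensions, of all) ranges $\text{Range}\LRp{\FdsBk}$, a contradiction. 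Therefore $\FdsTrcVarSolAppr=\bs{0}$ on $\skel$, which completes the uniqueness (hence existence) proof for the square system \eqnref{Fds_two_hp_hdg}.

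I expect the hard part to be the second step: correctly accounting for all inter-element jump terms and the $\skelBnd$ contributions so that, after subtracting \eqnref{Fds_two_hp_hdg_global} and using \eqnref{FdsMBnd} and \eqnref{Fds_two_hp_hdg_Dirichlet}, everything collapses to the manifestly nonnegative identity above. The delicate points are the reduced flux \eqnref{Fds_two_hdg_flux} (which carries only the trace $\FdsTrcVarSolAppr$ and no trace of $\FdsVarAuxAppr$); the fact that in case \itmref{F1} the stabilization $\stabPar$ can take different values on the two sides of an interior mortar, so $\half\FdsCBnd+\stabPar\geq0$ must be checked side by side; and the Dirichlet versus Neumann/Robin case split built into \eqnref{Fds_two_hp_hdg_global}--\eqnref{Fds_two_hp_hdg_Dirichlet}. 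A secondary subtlety is the degree-matching step that promotes $\LRa{\FdsBBnd\FdsTrcVarSolAppr,\FdsTestAuxAppr}_{\domPartBnd}=0$ to the pointwise identity $\FdsBBnd\FdsTrcVarSolAppr=\bs{0}$ on each mortar; this is exactly where the choice \eqnref{polyMort} of $\polyMort$ enters.
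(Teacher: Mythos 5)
Your proposal is correct and follows essentially the same route as the paper's proof: test with the solution itself, use the Dirichlet equation to kill $\FdsTrcVarSolAppr$ on $\domBndDir$, add the local equations and subtract the global one, exploit the constancy of $\FdsBk$ and the identities \eqnref{IBP1_Ref}--\eqnref{id1_Ref} to collapse everything to the nonnegative energy identity (yours agrees with \eqnref{Fds_two_full_hp_hdg_globalproof2} once $\FdsMss=2\varrho\id_{\FdsNVarSol}+\FdsCBnd$ is unpacked), conclude $\FdsVarAppr=\bs{0}$ by full coercivity, and then recover $\FdsTrcVarSolAppr=\bs{0}$ from the first local equation via $\Null{\FdsBBnd}=\LRc{\bs{0}}$. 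Your added remarks on the degree-matching step from \eqnref{polyMort} and on why hypothesis~4 forces injectivity of $\FdsBBnd$ are finer-grained than what the paper writes, but they do not change the argument.
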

\begin{proof}
Due to the finite dimensional nature and the linearity of the global system, it is sufficient to show that the solution $\FdsTrcVarSolAppr=0$ is the unique solution if $\Fdsforcing=0$ along with homogeneous boundary data $\bs{\BCVal}=\bs{0}$. We first let $\Fdsforcing=\bs{0}$ and $(\FdsTestAuxAppr,\FdsTestSolAppr,\FdsTrcTestSolAppr)=(\FdsVarAuxAppr,\FdsVarSolAppr,\FdsTrcVarSolAppr)$. The boundary condition \eqnref{Fds_two_hp_hdg_Dirichlet} now reads
\beq
\LRa{\FdsTrcVarSolAppr,\FdsTrcVarSolAppr}_{\skelBnd\cap\domBndDir}= 0, 
\eeq
which implies that $\FdsTrcVarSolAppr=\bs{0}$ at $\mort$ for $\forall\mort\in\skelBnd\cap\domBndDir$. Adding \eqnref{Fds_two_hp_hdg_local1} and \eqnref{Fds_two_hp_hdg_local2} together, and then subtracting \eqnref{Fds_two_hp_hdg_global} from the resulting equation, we obtain:
\begin{equation}\eqnlab{Fds_two_full_hp_hdg_globalproof0}\small
\begin{aligned}
    &-\FdsSum\LRp{{\FdsBk}\FdsVarSolAppr,{\FdsPartial}\FdsVarAuxAppr}_{\domPart}
    +\LRp{{\FdsGaa}\FdsVarAuxAppr+{\FdsGas}\FdsVarSolAppr,\FdsVarAuxAppr}_{\domPart}
    +\LRa{{\FdsBBnd}\FdsTrcVarSolAppr,{\FdsVarAuxAppr}}_{\domPartBnd\backslash\domBndDir}\\
    &-\FdsSum\LRp{\FdsBk^T\FdsVarAuxAppr+{\FdsCk}\FdsVarSolAppr,{\FdsPartial}\FdsVarSolAppr}_{\domPart}
    +\LRp{{\FdsGsa}\FdsVarAuxAppr + {\FdsGss}\FdsVarSolAppr,\FdsVarSolAppr}_{\domPart}\\
    &+ \LRa{\FdsBBnd^T{\FdsVarAuxAppr} + {\FdsCBnd}{\FdsVarSolAppr} +{\stabPar}({\FdsVarSolAppr}-\FdsTrcVarSolAppr),{\FdsVarSolAppr}}_{\domPartBnd\backslash\domBndDir}\\ 
    &-\LRa{\FdsBBnd^T{\FdsVarAuxAppr} + {\FdsCBnd}{\FdsVarSolAppr} +{\stabPar}\LRp{{\FdsVarSolAppr}-\FdsTrcVarSolAppr},\FdsTrcVarSolAppr}_{\domPartBnd\backslash\domBndDir}\\
    &+\LRa{\LRp{{\varrho}\id_{\FdsNVarSol}+{\FdsCBnd}}\FdsTrcVarSolAppr,\FdsTrcVarSolAppr}_{\skel\cap(\domBndNmn\cup\domBndRb),\intRef}= 0,
\end{aligned}
\end{equation}
where the result of $\FdsTrcVarSolAppr=\bs{0}$ at $\mort$ for $\forall\mort\in\skelBnd\cap\domBndDir$ is already applied. We can add the additional term  $\half\FdsSum\LRp{\LRp{\FdsPartial\FdsBk^T}\FdsVarAuxAppr,\FdsVarSolAppr}_{\domPart}$ as $\FdsBk$ is constant for $\FdsIndx=1,\dots,d$ to obtain
\begin{equation}\eqnlab{Fds_two_full_hp_hdg_globalproof1}\small
\begin{aligned}
    &-\FdsSum\LRp{{\FdsBk}\FdsVarSolAppr,{\FdsPartial}\FdsVarAuxAppr}_{\domPart}
    +\LRp{{\FdsGaa}\FdsVarAuxAppr+{\FdsGas}\FdsVarSolAppr,\FdsVarAuxAppr}_{\domPart}
    +\LRa{{\FdsBBnd}\FdsTrcVarSolAppr,{\FdsVarAuxAppr}}_{\domPartBnd\backslash\domBndDir}\\
    &-\FdsSum\LRp{\FdsBk^T\FdsVarAuxAppr+{\FdsCk}\FdsVarSolAppr,{\FdsPartial}\FdsVarSolAppr}_{\domPart}
    +\LRp{{\FdsGsa}\FdsVarAuxAppr + {\FdsGss}\FdsVarSolAppr,\FdsVarSolAppr}_{\domPart}
    +\half\FdsSum\LRp{\LRp{\FdsPartial\FdsBk^T}\FdsVarAuxAppr,\FdsVarSolAppr}_{\domPart}\\
    &+ \LRa{\FdsBBnd^T{\FdsVarAuxAppr} + {\FdsCBnd}{\FdsVarSolAppr} +{\stabPar}({\FdsVarSolAppr}-\FdsTrcVarSolAppr),{\FdsVarSolAppr}}_{\domPartBnd\backslash\domBndDir}\\ 
    &-\LRa{\FdsBBnd^T{\FdsVarAuxAppr} + {\FdsCBnd}{\FdsVarSolAppr} +{\stabPar}\LRp{{\FdsVarSolAppr}-\FdsTrcVarSolAppr},\FdsTrcVarSolAppr}_{\domPartBnd\backslash\domBndDir}\\
    &+\LRa{\LRp{{\varrho}\id_{\FdsNVarSol}+{\FdsCBnd}}\FdsTrcVarSolAppr,\FdsTrcVarSolAppr}_{\skel\cap(\domBndNmn\cup\domBndRb),\intRef}= 0.
\end{aligned}
\end{equation}
We have the following identity by inspection:
\beq\eqnlab{id1_Ref}\small
\begin{aligned}
    -\LRa{{\FdsCBnd}{\FdsVarSolAppr},\FdsTrcVarSolAppr}_{\domPartBnd\backslash\domBndDir}
    &=
    \half\LRa{{\FdsCBnd}\LRp{{\FdsVarSolAppr}-\FdsTrcVarSolAppr},\LRp{{\FdsVarSolAppr}-\FdsTrcVarSolAppr}}_{\domPartBnd\backslash\domBndDir}\\
    &- \half\LRa{{\FdsCBnd}{\FdsVarSolAppr},{\FdsVarSolAppr}}_{\domPartBnd\backslash\domBndDir}
    - \half\LRa{{\FdsCBnd}\FdsTrcVarSolAppr,\FdsTrcVarSolAppr}_{\domPartBnd\backslash\domBndDir}.
\end{aligned}
\eeq
Note that $\LRa{{\FdsCBnd}\FdsTrcVarSolAppr,\FdsTrcVarSolAppr}_{\domPartBnd\backslash\domBnd}=0$ since $\FdsCBnd$ is assumed to be continuous across the mesh skeleton. and the trace unknown $\FdsTrcVarSolAppr$ is uniquely defined on the mortar $\mort$ for all $\mort\in\skelInt$. As a consequence, the last term in \eqnref{id1_Ref} can be rewritten as $- \half\LRa{{\FdsCBnd}\FdsTrcVarSolAppr,\FdsTrcVarSolAppr}_{\domPartBnd\cap(\domBndNmn\cup\domBndRb),\intRef}$. Substituting equality \eqnref{IBP1_Ref}, \eqnref{IBP2_Ref}, and \eqnref{id1_Ref} back into \eqnref{Fds_two_full_hp_hdg_globalproof1}, and combining (undo the decomposition) the volume integrals, we arrive at the following:
\begin{equation}\eqnlab{Fds_two_full_hp_hdg_globalproof2}\small
\begin{aligned}
    &\half\LRp{\LRp{{\FdsG} + \FdsG^{T} + \sum_{\FdsIndx=1}^d{\FdsPartial}{\FdsAk}}\FdsVarAppr,\FdsVarAppr}_{\domPart}
    + \LRa{(\half{\FdsCBnd} + {\stabPar})({\FdsVarSolAppr}-\FdsTrcVarSolAppr),({\FdsVarSolAppr}-\FdsTrcVarSolAppr)}_{\domPartBnd\backslash\domBndDir}\\ 
    &+\LRa{(\half{\FdsCBnd} +{\stabPar}){\FdsVarSolAppr},{\FdsVarSolAppr}}_{\domPartBnd\cap\domBndDir}
    +\LRa{\LRp{\half{\FdsCBnd} + {\varrho}\id_{\FdsNVarSol}}\FdsTrcVarSolAppr,\FdsTrcVarSolAppr}_{\domPartBnd\cap\LRp{\domBndNmn\bigcup\domBndRb}}
    = 0.
\end{aligned}
\end{equation}
With the assumptions of full-coercivity \itmref{A4}, of semi-postiviness of the boundary operator \eqnref{Abstrct_M1} (hence inequality \eqnref{Fds_two_hdg_bc_constraints}), and of semi-positiveness $\half{\FdsCBnd} + {\stabPar}\geq0$, we can conclude that $\FdsVarAppr=\LRp{\FdsVarAuxAppr,\FdsVarSolAppr}=\bs{0}$ in $\elem$ for all $\elem\in\domPart$. 

Now substituting $\LRp{\FdsVarAuxAppr,\FdsVarSolAppr}=\bs{0}$ back to the sub-equation \eqnref{Fds_two_hp_hdg_local1} in the local solver along with $\Fdsforcing=\bs{0}$ and $\FdsTrcVarSolAppr=\bs{0}$ at $\mort$ for $\forall\mort\in\skelBnd\cap\domBndDir$, we get:
\beq
\LRa{{\FdsBBnd}\FdsTrcVarSolAppr,{\FdsTestAuxAppr}}_{\domPartBnd\backslash\domBndDir} = 0\quad \forall\FdsTestAuxAppr\in\FdsVarAuxApprSpc{},
\eeq
which implies that ${\FdsBBnd}\FdsTrcVarSolAppr=0$. By invoking our assumption that $\bigcap_{\FdsIndx=1}^d\text{Range}\LRp{\FdsBk}=\emptyset$ and $\Null{\FdsBk}=\LRc{\bs{0}}$ for $\forall\FdsIndx=1,\dots,d$, the condition $\Null{{\FdsBBnd}}=\LRc{\bs{0}}$ can be concluded.
We then conclude $\FdsTrcVarSolAppr=0$ in $\mort$ for all $\mort=\in\skel\backslash\domBndDir$.
\end{proof}

\begin{lemma}[Well-posedness of the local equation-with partial coercivity]\lemlab{Fds_two_partial_hp_localCon} The local solver composed by \eqnref{Fds_two_hp_hdg_local1} and \eqnref{Fds_two_hp_hdg_local2}  is well-posed provided that:
\ben
    \item the assumption \itmref{A1}-\itmref{A3}, \itmref{A4a}-\itmref{A4b} and \itmref{A5}-\itmref{A6} hold, and
    \item $\half\FdsCBnd + \stabPar>0$, and 
    \item $\bigcap_{\FdsIndx=1}^d\text{Range}\LRp{\FdsBk}=\emptyset$ and $\Null{\FdsBk}=\LRc{\bs{0}}$ for $\forall\FdsIndx=1,\dots,d$. 
\een
\end{lemma}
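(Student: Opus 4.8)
The plan is to follow the proof of Lemma~\lemref{Fds_two_full_hp_localCon} verbatim up to the local energy identity \eqnref{Fds_two_full_hp_hdg_localproof2}, and then adjoin one extra argument that recovers the $\FdsVarSol$-component in the element interior --- something the fully coercive argument gets for free but the merely partially coercive one does not. As there, linearity and finite dimensionality reduce the claim to: on a fixed $\elem\in\domPart$, with $\FdsTrcVarSolAppr=\bs 0$ and $\Fdsforcing=\bs 0$, the only local solution is $\LRp{\FdsVarAuxAppr,\FdsVarSolAppr}=\bs 0$. Taking $\LRp{\FdsTestAuxAppr,\FdsTestSolAppr}=\LRp{\FdsVarAuxAppr,\FdsVarSolAppr}$ in \eqnref{Fds_two_hp_hdg_local1}--\eqnref{Fds_two_hp_hdg_local2} and summing, freely adding the null term $\half\FdsSum\LRp{(\FdsPartial\FdsBk^T)\FdsVarAuxAppr,\FdsVarSolAppr}_\elem$ (licit since \itmref{A4b} makes every $\FdsBk$ constant), and using the integration-by-parts identities \eqnref{IBP1_Ref}--\eqnref{IBP2_Ref} (the latter via the symmetry \itmref{A3}) together with $-\LRa{\FdsBBnd\FdsVarSolAppr,\FdsVarAuxAppr}_{\elemBnd}=-\LRa{\FdsBtBnd\FdsVarAuxAppr,\FdsVarSolAppr}_{\elemBnd}$, one reaches precisely \eqnref{Fds_two_full_hp_hdg_localproof2}, i.e.\ $\half\LRp{(\FdsG+\FdsG^T+\FdsSum\FdsPartial\FdsAk)\FdsVarAppr,\FdsVarAppr}_\elem+\LRa{(\half\FdsCBnd+\stabPar)\FdsVarSolAppr,\FdsVarSolAppr}_{\elemBnd}=0$.

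The genuine departure from Lemma~\lemref{Fds_two_full_hp_localCon} is how this identity is used. With \itmref{A4a} in place of \itmref{A4}, the volume term is only bounded below by $\mu_0\norm{\FdsVarAuxAppr}_\elem^2$ (the coercivity matrix now controls just the $\FdsVarAux$-block), while the boundary term is nonnegative because $\half\FdsCBnd+\stabPar>0$. Both terms being nonnegative with zero sum, each vanishes, so $\FdsVarAuxAppr=\bs 0$ in $\elem$; and since $\half\FdsCBnd+\stabPar$ is now \emph{strictly} positive definite --- this is exactly why hypothesis~2 is sharpened from ``$\ge 0$'' in Lemma~\lemref{Fds_two_full_hp_localCon} to ``$>0$'' here --- we also obtain $\FdsVarSolAppr=\bs 0$ on $\elemBnd$.

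To upgrade $\FdsVarSolAppr=\bs 0$ on $\elemBnd$ to $\FdsVarSolAppr=\bs 0$ in $\elem$, substitute $\FdsVarAuxAppr=\bs 0$, $\FdsTrcVarSolAppr=\bs 0$, $\Fdsforcing=\bs 0$ and (from \itmref{A4b}) $\FdsGas=0$ into \eqnref{Fds_two_hp_hdg_local1} restricted to $\elem$; what remains is $-\FdsSum\LRp{\FdsBk\FdsVarSolAppr,\FdsPartial\FdsTestAuxAppr}_\elem=0$ for all admissible $\FdsTestAuxAppr$. Integrating by parts, using constancy of the $\FdsBk$ and $\FdsVarSolAppr|_{\elemBnd}=\bs 0$, this becomes $\FdsSum\LRp{\FdsBk\FdsPartial\FdsVarSolAppr,\FdsTestAuxAppr}_\elem=0$; since each component of $\FdsSum\FdsBk\FdsPartial\FdsVarSolAppr$ is a polynomial of degree at most $\polyElem$ on $\elem$ and hence an admissible test function, we conclude $\FdsSum\FdsBk\FdsPartial\FdsVarSolAppr=\bs 0$ in $\elem$. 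Now hypothesis~3 enters: expanding $\FdsVarSolAppr$ into homogeneous parts (each of which separately satisfies $\FdsSum\FdsBk\FdsPartial(\cdot)=\bs 0$), the condition $\bigcap_{\FdsIndx=1}^d\text{Range}(\FdsBk)=\LRc{\bs 0}$ forces $\FdsBk\FdsPartial\FdsVarSolAppr=\bs 0$ for each $\FdsIndx$ (for $d=2$ immediately, since $B_1\partial_1\FdsVarSolAppr=-B_2\partial_2\FdsVarSolAppr\in\text{Range}(B_1)\cap\text{Range}(B_2)$), and then $\Null{\FdsBk}=\LRc{\bs 0}$ gives $\FdsPartial\FdsVarSolAppr=\bs 0$ for every $\FdsIndx$; thus $\FdsVarSolAppr$ is constant on $\elem$, and $=\bs 0$ there by its vanishing trace.

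I expect the main obstacle to be this last step --- the interior recovery of $\FdsVarSolAppr$ --- which has no analogue in the fully coercive setting and is precisely why the constancy of $\FdsBk$ (through \itmref{A4b}) and the range/kernel conditions on the $\FdsBk$ must be assumed here. The delicate sub-point inside it is the passage $\FdsSum\FdsBk\FdsPartial\FdsVarSolAppr=\bs 0\Rightarrow\FdsPartial\FdsVarSolAppr=\bs 0$ $(\FdsIndx=1,\dots,d)$: for $d=2$ it is immediate from the range condition, but for $d\ge 3$ one must strip off the homogeneous components degree by degree (the top-degree part being annihilated first) and apply the range condition to the reduced relation. All remaining computations are transcriptions of those already carried out for Lemma~\lemref{Fds_two_full_hp_localCon}.
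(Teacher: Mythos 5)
Your proof follows the same route as the paper's: derive the local energy identity exactly as in the fully coercive case, use partial coercivity \itmref{A4a} together with the strict positivity of $\half\FdsCBnd+\stabPar$ to conclude $\FdsVarAuxAppr=\bs{0}$ in $\elem$ and $\FdsVarSolAppr=\bs{0}$ on $\elemBnd$, then integrate the first local equation by parts and invoke the constancy of the $\FdsBk$ (via \itmref{A4b}) and the range/kernel hypotheses to recover $\FdsVarSolAppr=\bs{0}$ in the interior. Your write-up is in fact more explicit than the paper's at the final step --- the paper asserts $\FdsSum\FdsBk\FdsPartial\FdsVarSolAppr=\bs{0}\Rightarrow\FdsVarSolAppr=\bs{0}$ without the componentwise reduction, the appeal to the vanishing trace, or the caveat about $d\ge 3$ --- but the underlying argument is identical.
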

\begin{proof}
Essentially, the $hp$-HDG formulation for the two-field Friedrichs' system with partial coercivity is the same as the one with full coercivity. Hence, we can obtain the equation \eqnref{Fds_two_full_hp_hdg_localproof2} as well following the same arguments discussed in the proof of Lemma \lemref{Fds_two_full_hp_localCon}. By applying the assumption of partial coercivity \itmref{A4a} and of positiveness of $\half{\FdsCBnd} + {\stabPar}>0$, we can conclude that $\FdsVarAuxAppr=\bs{0}$ on $\elem$ for any $\elem\in\domPart$ and ${\FdsVarSolAppr}=\bs{0}$ on $\face\in\elemBnd$ for all $\elem\in\domPart$. By applying integration by part to the first term in \eqnref{Fds_two_hp_hdg_local1}, and substituting the result that we just obtained into it along with $\FdsTrcVarSolAppr=\bs{0}$ and $\Fdsforcing=\bs{0}$, we have:
\beq
\FdsSum\LRp{{\FdsPartial}({\FdsBk}\FdsVarSolAppr),\FdsTestAuxAppr}_{\elem} = 0
\quad \forall\FdsTestAuxAppr\in\FdsVarAuxApprSpc{},
\eeq
which implies that $\FdsSum\FdsPartial(\FdsBk\FdsVarSolAppr)=\bs{0}$ in $\elem$. Furthermore, it can be rewritten as $\FdsSum\FdsBk\FdsPartial(\FdsVarSolAppr)=\bs{0}$  in $\elem$ according to the assumption \itmref{A4b}. Based on our assumption that $\bigcap_{\FdsIndx=1}^d\text{Range}\LRp{{\FdsBk}}=\emptyset$ and $\Null{{\FdsBk}}=\LRc{\bs{0}}$ for $\forall\FdsIndx=1,\dots,d$, we can further conclude that $\FdsVarSolAppr=\bs{0}$ on $\elem$ for any $\elem\in\domPart$.
\end{proof}

\begin{theorem}[Well-posedness of the $hp$-HDG formulation -with partial coercivity]\theolab{Fds_two_partial_hp_globalCon} Suppose: 
\ben
    \item the assumptions \itmref{A1}-\itmref{A3}, \itmref{A4a}-\itmref{A4b}, \itmref{A5}-\itmref{A6}, and \eqnref{Abstrct_M1} hold, and 
    \item $\half\FdsCBnd + \stabPar>0$, 
    \item $\bigcap_{\FdsIndx=1}^d\text{Range}\LRp{\FdsBk}=\emptyset$ and $\Null{\FdsBk}=\LRc{\bs{0}}$ for $\forall\FdsIndx=1,\dots,d$. 
\een
There exists a unique solution $\FdsTrcVarSolAppr\in\FdsTrcVarSolApprSpc{}$ for the $hp$-HDG formulation defined in \eqnref{Fds_two_hp_hdg}.
\end{theorem}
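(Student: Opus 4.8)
The plan is to replay the proof of Theorem~\theoref{Fds_two_full_hp_globalCon} line for line until the energy identity \eqnref{Fds_two_full_hp_hdg_globalproof2}, and only then exploit the \emph{strict} inequality $\half\FdsCBnd+\stabPar>0$ together with the block structure \itmref{A4b} to make up for the fact that partial coercivity \itmref{A4a} controls only the $\FdsVarAux$-component in the volume term. Since \eqnref{Fds_two_hp_hdg} is a square linear system on finite-dimensional spaces, it suffices to show that $\Fdsforcing=\bs{0}$ together with homogeneous boundary data $\bs{\BCVal}=\bs{0}$ forces $\FdsTrcVarSolAppr=\bs{0}$, from which existence also follows. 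So I would set $\Fdsforcing=\bs{0}$, $\bs{\BCVal}=\bs{0}$, test with $(\FdsTestAuxAppr,\FdsTestSolAppr,\FdsTrcTestSolAppr)=(\FdsVarAuxAppr,\FdsVarSolAppr,\FdsTrcVarSolAppr)$, read off $\FdsTrcVarSolAppr=\bs{0}$ on $\skelBnd\cap\domBndDir$ from \eqnref{Fds_two_hp_hdg_Dirichlet}, then add \eqnref{Fds_two_hp_hdg_local1} and \eqnref{Fds_two_hp_hdg_local2}, subtract \eqnref{Fds_two_hp_hdg_global}, add the vanishing term $\half\FdsSum\LRp{\LRp{\FdsPartial\FdsBk^T}\FdsVarAuxAppr,\FdsVarSolAppr}_{\domPart}$ (which is zero precisely because \itmref{A4b} makes $\FdsBk$ constant), and invoke the integration-by-parts identities \eqnref{IBP1_Ref}, \eqnref{IBP2_Ref}, \eqnref{id1_Ref} to arrive at \eqnref{Fds_two_full_hp_hdg_globalproof2}.

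From \eqnref{Fds_two_full_hp_hdg_globalproof2} the argument diverges from the full-coercivity case. Assumption \itmref{A4a} bounds the volume term from below only by $\mu_0\norm{\FdsVarAuxAppr}^2_{\domPart}$, so it yields $\FdsVarAuxAppr=\bs{0}$ in every $\elem\in\domPart$; the \emph{strict} positivity $\half\FdsCBnd+\stabPar>0$ applied to the two boundary contributions on $\domPartBnd\backslash\domBndDir$ and on $\domPartBnd\cap\domBndDir$ forces $\FdsVarSolAppr=\FdsTrcVarSolAppr$ on $\domPartBnd\backslash\domBndDir$ and $\FdsVarSolAppr=\bs{0}$ on $\domPartBnd\cap\domBndDir$; and \eqnref{Abstrct_M1}, equivalently the constraint \eqnref{Fds_two_hdg_bc_constraints}, makes the remaining term $\LRa{\LRp{\half\FdsCBnd+\varrho\id_{\FdsNVarSol}}\FdsTrcVarSolAppr,\FdsTrcVarSolAppr}_{\domPartBnd\cap(\domBndNmn\cup\domBndRb)}$ nonnegative, which is consistent with those conclusions. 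Note that, in contrast with the full-coercivity proof, this does \emph{not} yet give $\FdsVarSolAppr=\bs{0}$ in the element interiors.

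To finish, I would feed these facts back into the first local equation \eqnref{Fds_two_hp_hdg_local1}: with $\FdsVarAuxAppr=\bs{0}$, $\Fdsforcing=\bs{0}$ and $\FdsGas=0$ (by \itmref{A4b}) it becomes $-\FdsSum\LRp{\FdsBk\FdsVarSolAppr,\FdsPartial\FdsTestAuxAppr}_{\domPart}+\LRa{\FdsBBnd\FdsTrcVarSolAppr,\FdsTestAuxAppr}_{\domPartBnd}=0$; integrating by parts element by element and using $\FdsVarSolAppr=\FdsTrcVarSolAppr$ on $\domPartBnd\backslash\domBndDir$ (both vanishing on $\domBndDir$) to cancel the boundary terms leaves $\FdsSum\LRp{\FdsPartial\LRp{\FdsBk\FdsVarSolAppr},\FdsTestAuxAppr}_{\elem}=0$ for all $\FdsTestAuxAppr\in\FdsVarAuxApprSpc{}$ and all $\elem\in\domPart$. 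Choosing $\FdsTestAuxAppr=\FdsSum\FdsPartial\LRp{\FdsBk\FdsVarSolAppr}$, which by \itmref{A4b} equals $\FdsSum\FdsBk\FdsPartial\FdsVarSolAppr$ and is a legitimate test function, gives $\FdsSum\FdsBk\FdsPartial\FdsVarSolAppr=\bs{0}$ in each $\elem$, whereupon the hypotheses $\bigcap_{\FdsIndx=1}^d\text{Range}\LRp{\FdsBk}=\emptyset$ and $\Null{\FdsBk}=\LRc{\bs{0}}$, $\FdsIndx=1,\dots,d$, force $\FdsVarSolAppr=\bs{0}$ in $\elem$, exactly as in the closing step of Lemma~\lemref{Fds_two_partial_hp_localCon}. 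Hence $\FdsTrcVarSolAppr=\FdsVarSolAppr=\bs{0}$ on $\skel\backslash\domBndDir$, and combined with $\FdsTrcVarSolAppr=\bs{0}$ on $\domBndDir$ this gives $\FdsTrcVarSolAppr=\bs{0}$, completing uniqueness and hence existence. The proof is otherwise pure bookkeeping; the one step carrying genuine content is this final implication $\FdsSum\FdsBk\FdsPartial\FdsVarSolAppr=\bs{0}\Rightarrow\FdsVarSolAppr=\bs{0}$, where the constancy of $\FdsBk$ from \itmref{A4b} is indispensable (it keeps $\FdsPartial\LRp{\FdsBk\FdsVarSolAppr}$ a polynomial usable as a test function), and it is the one place I would scrutinize for degenerate configurations of the $\FdsBk$.
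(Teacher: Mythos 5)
Your proposal is correct and follows essentially the same route as the paper's proof: replay the full-coercivity argument up to the energy identity \eqnref{Fds_two_full_hp_hdg_globalproof2}, use \itmref{A4a} and the strict positivity of $\half\FdsCBnd+\stabPar$ to get $\FdsVarAuxAppr=\bs{0}$ and $\FdsVarSolAppr=\FdsTrcVarSolAppr$ on $\skel\backslash\domBndDir$, then feed this back into \eqnref{Fds_two_hp_hdg_local1} via integration by parts to obtain $\FdsSum\FdsBk\FdsPartial\FdsVarSolAppr=\bs{0}$ and conclude with the range/nullspace hypotheses on $\FdsBk$. Your explicit remarks that the constancy of $\FdsBk$ keeps $\FdsSum\FdsPartial(\FdsBk\FdsVarSolAppr)$ an admissible test function, and that the boundary terms cancel because $\FdsVarSolAppr=\FdsTrcVarSolAppr$ on $\domPartBnd\backslash\domBndDir$, are details the paper leaves implicit but are exactly the right justifications.
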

\begin{proof}
Given that the $hp$-HDG formulation for the two-field Friedrichs' system with partial coercivity is the same as the one with full coercivity. We can directly follow the same arguments used in the proof of Theorem \theoref{Fds_two_full_hp_globalCon} and it should lead us to the equation \eqnref{Fds_two_full_hp_hdg_globalproof2} as well. With the assumptions of partial coercivity \itmref{A4a}, of coefficient matrices \itmref{A4b}, of semi-positiveness of the boundary operator \eqnref{Abstrct_M1} (hence inequality \eqnref{Fds_two_hdg_bc_constraints}), and of positiveness of $\half{\FdsCBnd} + {\stabPar}>0$, the following results can then be derived: $\FdsVarAuxAppr=\bs{0}$ in $\elem$ for all $\elem\in\domPart$, and ${\FdsVarSolAppr}=\FdsTrcVarSolAppr$ in $\face\cap\mort$ for all $\face\in\elemBnd$ for $\forall\elemBnd\in\domPartBnd\backslash\domBndDir$ and for all $\mort\in\skel\backslash\domBndDir$. Now we perform integration by part to the first term in \eqnref{Fds_two_hp_hdg_local1}, transfer all integration over $\elemBnd$ to the summation of the integration over $\mort$ where $\mort\in\skel$, and apply the conclusion we just obtained into the resultant equation along with $\Fdsforcing=\bs{0}$, we get:
\beq
\FdsSum\LRp{{\FdsPartial}({\FdsBk}\FdsVarSolAppr),\FdsTestAuxAppr}_{\domPart} = 0\quad \forall\FdsTestAuxAppr\in\FdsVarAuxApprSpc{},
\eeq
which implies that $\FdsSum{\FdsPartial}({\FdsBk}\FdsVarSolAppr)=\bs{0}$ in $\elem$ for all $\elem\in\domPart$. Furthermore, it can be rewritten as $\FdsSum\FdsBk\FdsPartial(\FdsVarSolAppr)=\bs{0}$ according to the assumption \itmref{A4b}. Given that we assume $\bigcap_{\FdsIndx=1}^d$ $\text{Range}\LRp{{\FdsBk}}=\emptyset$ and $\Null{{\FdsBk}}=\LRc{\bs{0}}$ for $\forall\FdsIndx=1,\dots,d$. We can conclude that $\FdsVarSolAppr=\bs{0}$ in $\elem$ for all $\elem\in\domPart$ but ${\FdsVarSolAppr}=\FdsTrcVarSolAppr$ in $\face\cap\mort$ for all $\face\in\elemBnd$ for $\forall\elemBnd\in\domPartBnd\backslash\domBndDir$ for all $\mort\in\skel\backslash\domBndDir$. This leads to $\FdsTrcVarSolAppr=\bs{0}$ in $\skel\backslash\domBndDir$.
\end{proof}

\section{Strategy for \texorpdfstring{$hp$}{hp}-adaptation} \seclab{strategy_hp}
The formulations stated in \eqnref{Fds_one_hp_hdg} and \eqnref{Fds_two_hp_hdg} provide us an HDG scheme that can be carried out on $hp$-nonconforming meshes. As a result, we have a lot of flexibility when constructing finite element spaces. 
It is well-known that a smooth solution can be well resolved using a high degree of approximation even on a coarse mesh, whereas a solution with a sharp gradient can only be properly solved using a low degree of approximation on a fine mesh. Given that these different behaviors may occur locally, it is beneficial to use an adaptation procedure that allows us to improve the numerical results with a reasonable computational cost. This process can be achieved by refining elements locally via either dividing them into smaller ones ($h$-adaptation), or enriching their approximation spaces ($p$-adaptation). To that end, two essential ingredients are needed: a gauge to probe an error for each element and a method to define a new spatial discretization \cite{Huerta1999}. For the first ingredient, two different approaches are adopted in this work. One is to use an error indicator while the other is to use an adjoint-based error estimate. For the second ingredient, the regularity indicator proposed in \cite{Dolejsi2013} is applied. In the following discussion, we will discuss the error indicator obtained by two different approaches and then outline the algorithm for $hp$-adpatation.

\subsection{Doleji's approach}
By denoting $\bs{q}_h$ as an approximate solution, $\abs{\face}$ as a length of a face of an element and $\bs{\DirVal}$ as Dirichlet data, a local error estimator is defined as the following in \cite{Dolejsi2013}:
\begin{equation}\eqnlab{local_error_est_doleji}\small
    \LocErrEst{h}{\text{Doleji}}:=\LRp{ \sum_{\face\in\elemBnd\backslash\domBndDir}\sum_{\mort\in\face}\frac{1}{\abs{\face}}\LRa{\jump{\bs{q}_h},\jump{\bs{q}_h}}_{\mort} 
    + \sum_{\face\in\elemBnd\cap\domBndDir}\sum_{\mort\in\face}\frac{1}{\abs{\face}}\LRa{\bs{q}_h-\bs{\DirVal},\bs{q}_h-\bs{\DirVal}}_{\mort}}^{\half}\quad\forall\elem\in\domPart,
\end{equation}
which, originally, is derived in the context of the interior-penalty DG methods. We simply use it as a local error indicator to probe errors in our work. It is inexpensive since only the computation of the jump between adjacent elements is needed. In particular, $\bs{q}_h:=\FdsVarAppr$ and $\bs{q}_h:=\FdsVarSolAppr$ are picked for the one and two-field Friedrichs system, respectively.

\subsection{Adjoint approach}
The main idea of the adjoint approach is to measure the error in the output functional of interest. Error arises when the output functional is evaluated by a less accurate solution. Based on the pioneering work \cite{eriksson1995}, the dual-weight-residual (DWR) method had been developed for error control and mesh optimization within the context of finite element methods \cite{becker2001}. In this method, an additional linear system is needed to be solved an adjoint of the original governing equations, which then gives an estimate of the error in the target functional. This estimate can be used as a criterion to drive adaptation such that the error in the target functional is reduced. Recently, this method is adopted in solving the elliptic equations by using HDG method in \cite{cockburn2022} along with $h$-adaptivity, and the evaluation of error estimate is completed with the aid of the postprocessing technique \cite{cockburn2012b}.

In our work, we use the discrete adjoint approach where DWR method is still largely followed but the primal problem considered here is already in a discrete form. The counterpart of the adjoint problem is usually referred to as the primal problem. In this paper, the primal problem refers to the $hp$-HDG formulation stated in \eqnref{Fds_one_hp_hdg} or \eqnref{Fds_two_hp_hdg}. For HDG discretization, the discrete adjoint approach had been studied and implemented in \cite{balan2013, Dahm2014, woopen2014a, woopen2015, Balan2016, fidkowski2020, may2021}. Here we derive the adjoint equations directly on the HDG formulation, and thus never need to form or transpose a large matrix. As shall be shown, it is simply integration by parts and algebraic manipulations to arrive at  discrete adjoints that are insightful. To proceed with the discussion, some additional notations are necessary. $\Output{\cdot}$ indicates some output functional $\OutFcnl:\mathcal{X}\mapsto\real$ where $\mathcal{X}$ is a suitable Hilbert space. Examples of such functional could be drag coefficient, lift coefficient, energy across the entire domain, and so on. The subscript $H$ is used to denote the approximation computed at a coarse discretization while $h$ is a finer level. We then define the operator $\Inject$ as the injection from level $H$ to level $h$ and such operation can be done by interpolation. In addition, the interpolated quantity is denoted with a subscript $H$ and a superscript $h$. For example, the quantity $\FdsVarApprInject=\Inject\FdsVarApprH$ is obtained by interpolating the approximate solution $\FdsVarApprH$ that is solved at the coarser level (i.e. a lower degree of approximation or a coarser mesh or the combination.).
We define the following lumped variables for $\LRp{\FdsVarAppr,\FdsTrcVarAppr}\in\FdsVarApprSpc{}\times\FdsTrcVarApprSpc{}$ and $\LRp{\FdsVarAuxAppr,\FdsVarSolAppr,\FdsTrcVarSolAppr}\in\FdsVarAuxApprSpc{}\times\FdsVarSolApprSpc{}\times\FdsTrcVarSolApprSpc{}$ that are solutions to the $hp$-HDG formulations \eqnref{Fds_one_hp_hdg} and \eqnref{Fds_two_hp_hdg} respectively:
\begin{subequations}
\begin{align}
\begin{split}
\FdsVarLump:=
\begin{cases}
    \LRp{\FdsVarAppr,\FdsTrcVarAppr} & \text{for one-field Friedrichs' system},\\
    \LRp{\FdsVarAuxAppr,\FdsVarSolAppr,\FdsTrcVarSolAppr} & \text{for two-field Friedrichs' system},
\end{cases}
\end{split}\\
\begin{split}
\FdsTestVarLump:=
\begin{cases}
    \LRp{\FdsTestAppr,\FdsTrcTestAppr} & \text{for one-field Friedrichs' system},\\
    \LRp{\FdsTestAuxAppr,\FdsTestSolAppr,\FdsTrcTestSolAppr} & \text{for two-field Friedrichs' system},
\end{cases}
\end{split}
\end{align}
\end{subequations}
where $\FdsTestVarLump$ resides in the product Hilbert space $\mathcal{Y}$ constructed from the corrresponding fields that define $\FdsTestVarLump$. The residual $\ResidualhOne{\cdot;\cdot}$ of $hp$-HDG formulation of one-field Friedrichs' system \eqnref{Fds_one_hp_hdg} is a functional $\Res^{\text{one}}_h:\mathcal{X}\times\mathcal{Y}\mapsto\real$ acting on the product of $\mathcal{X}$ and $\mathcal{Y}$ and is 
composed by two functionals $\Res^{\FdsVar}_{h,\elem}:\mathcal{X}\times\mathcal{Y}\mapsto\real$ 
and $\Res^{\FdsTrcVar}_{h,\mort}:\mathcal{X}\times\mathcal{Y}\mapsto\real$. It is then defined as: 
\begin{equation}\eqnlab{res_Fds_one}\small
    \begin{aligned}
    \ResidualhOne{\FdsVarLump;\FdsTestVarLump} 
    &= 
    \sum_{\elem\in\domPart}\Res^{\FdsVar}_{h,\elem}\LRs{\FdsVarLump;\FdsTestVarLump} 
    + \sum_{\mort\in\skel}\Res^{\FdsTrcVar}_{h,\mort}\LRs{\FdsVarLump;\FdsTestVarLump},
    \end{aligned}
\end{equation}
where
\begin{subequations}\small
\begin{align}
\begin{split}
\Res^{\FdsVar}_{h,\elem}\LRs{\FdsVarLump;\FdsTestVarLump}
:=
-\FdsSum\LRp{{\FdsAk}\FdsVarAppr,{\FdsPartial}\FdsTestAppr}_{\elem} 
+ \LRp{{\FdsG}\FdsVarAppr,{\FdsTestAppr}}_{\elem} 
+ \LRa{{\FdsABnd}{\FdsVarAppr}+{\stabPar}\LRp{{\FdsVarAppr}-\FdsTrcVarAppr},{\FdsTestAppr}}_{\elemBnd}
- \LRp{{\bs{\forcing}},\FdsTestAppr}_{\elem},
\end{split}\\
\begin{split}
\Res^{\FdsTrcVar}_{h,\mort}\LRs{\FdsVarLump;\FdsTestVarLump}
:=
\LRa{\jump{{\FdsABnd}{\FdsVarAppr}+{\stabPar}\LRp{{\FdsVarAppr}-\FdsTrcVarAppr}},\FdsTrcTestAppr}_{\mort}
+\LRa{ \half\LRp{{\FdsABnd}-{\FdsMBnd}}\bs{g}, \FdsTrcTestAppr}_{\mort\cap\skelBnd} 
- \LRa{\half\LRp{{\FdsABnd}+{\FdsMBnd}}\FdsTrcVarAppr, \FdsTrcTestAppr}_{\mort\cap\skelBnd},
\end{split}
\end{align}
\end{subequations}
for all $\FdsTestVarLump\in\FdsVarApprSpc{}\times\FdsTrcVarApprSpc{}$. On the other hand, the residual $\ResidualhTwo{\cdot;\cdot}$ of $hp$-HDG formulation for the two-field Friedrichs' system is also a functional $\Res^{\text{two}}_h:\mathcal{X}\times\mathcal{Y}\mapsto\real$ acting on the product of $\mathcal{X}$ and $\mathcal{Y}$ and is 
composed by three functionals $\Res^{\FdsAux}_{h,\elem}:\mathcal{X}\times\mathcal{Y}\mapsto\real$ 
, $\Res^{\FdsSol}_{h,\elem}:\mathcal{X}\times\mathcal{Y}\mapsto\real$ and $\Res^{\FdsTrcSol}_{h,\mort}:\mathcal{X}\times\mathcal{Y}\mapsto\real$. It is defined as:
\begin{equation}\eqnlab{res_Fds_two}\small
    \begin{aligned}
    \ResidualhTwo{\FdsVarLump;\FdsTestVarLump} 
    &= 
    \sum_{\elem\in\domPart}\Res^{\FdsAux}_{h,\elem}\LRs{\FdsVarLump;\FdsTestVarLump} 
    +\sum_{\elem\in\domPart}\Res^{\FdsSol}_{h,\elem}\LRs{\FdsVarLump;\FdsTestVarLump} 
    +\sum_{\mort\in\skel}\Res^{\FdsTrcSol}_{h,\mort}\LRs{\FdsVarLump;\FdsTestVarLump},
    \end{aligned}
\end{equation}
where
\begin{subequations}\small
\begin{align}
\begin{split}
\Res^{\FdsAux}_{h,\elem}\LRs{\FdsVarLump;\FdsTestVarLump}
:=&
-\FdsSum\LRp{{\FdsBk}\FdsVarSolAppr,{\FdsPartial}\FdsTestAuxAppr}_{\elem}
+\LRp{{\FdsGaa}\FdsVarAuxAppr+{\FdsGas}\FdsVarSolAppr,\FdsTestAuxAppr}_{\elem} 
+\LRa{{\FdsBBnd}\FdsTrcVarSolAppr,{\FdsTestAuxAppr}}_{\elemBnd}
-\LRp{{\bs{\FdsforcingAux}},\FdsTestAuxAppr}_{\elem},
\end{split}\\
\begin{split}
\Res^{\FdsSol}_{h,\elem}\LRs{\FdsVarLump;\FdsTestVarLump}
:=&
-\FdsSum\LRp{\FdsBk^T\FdsVarAuxAppr+{\FdsCk}\FdsVarSolAppr,{\FdsPartial}\FdsTestSolAppr}_{\elem}
+\LRp{{\FdsGsa}\FdsVarAuxAppr + {\FdsGss}\FdsVarSolAppr,\FdsTestSolAppr}_{\elem}\\
&+ \LRa{\FdsBBnd^{T}{\FdsVarAuxAppr} + {\FdsCBnd}{\FdsVarSolAppr} +{\stabPar}\LRp{{\FdsVarSolAppr}-\FdsTrcVarSolAppr},{\FdsTestSolAppr}}_{\elemBnd}
- \LRp{{\bs{\FdsforcingSol}},\FdsTestSolAppr}_{\elem},
\end{split}\\
\begin{split}
\Res^{\FdsTrcSol}_{h,\mort}\LRs{\FdsVarLump;\FdsTestVarLump}
:=&
\LRa{\jump{\FdsBBnd^{T}{\FdsVarAuxAppr} + {\FdsCBnd}{\FdsVarSolAppr} +{\stabPar}\LRp{{\FdsVarSolAppr}-\FdsTrcVarSolAppr}},\FdsTrcTestSolAppr}_{\mort\backslash\domBndDir}
- \LRa{\LRp{{\varrho}\id_{\FdsNVarSol}+{\FdsCBnd}}\FdsTrcVarSolAppr,\FdsTrcTestSolAppr}_{\mort\cap(\domBndNmn\cup\domBndRb)}\\
&+\LRa{{\FdsCBnd}\FdsTrcVarSolAppr,\FdsTrcTestSolAppr}_{\mort\cap\domBndDir},
\end{split}
\end{align}
\end{subequations}
for all $\FdsTestVarLump\in\FdsVarAuxApprSpc{}\times\FdsVarSolApprSpc{}\times\FdsTrcVarSolApprSpc{}$. At this point, we can further define a more general residual based on \eqnref{res_Fds_one} and \eqnref{res_Fds_two} that covers both cases:
\beq
    \Res_h\LRs{\FdsVarLump;\FdsTestVarLump}
    :=
    \begin{cases}
        \ResidualhOne{\FdsVarLump;\FdsTestVarLump}\quad\text{if only identified as a one-field Friedrichs' system},\\ 
        \ResidualhTwo{\FdsVarLump;\FdsTestVarLump}\quad\text{if identified as a two-field Friedrichs' system}. 
    \end{cases}
\eeq
Obviously, the residual is always zero if it is evaluated by using the correct solution while it is generally non-zero when using the interpolated solution. That is, $\Res_h\LRs{\FdsVarLump;\FdsTestVarLump}=0$ but  $\Res_h\LRp{\FdsVarLumpInject;\FdsTestVarLump}\neq 0$ for all $\FdsTestVarLump\in\FdsVarApprSpc{}\times\FdsTrcVarApprSpc{}$ with respect to the one-field Friedrichs' system and for all $\FdsTestVarLump\in\FdsVarAuxApprSpc{}\times\FdsVarSolApprSpc{}\times\FdsTrcVarSolApprSpc{}$ with respect to the two -field Friedrichs' system. The error of the output functional $\Output{\cdot}$ can now be approximated as 
\cite{Dahm2014,Balan2016,Woopen2014b}:
\beq\eqnlab{output_error}
    \Output{\FdsVarLumpH} - \Output{\FdsVarLump} \approx -\Res_h\LRs{\FdsVarLumpInject;\FdsTestVarLump},
\eeq
where $\FdsTestVarLump\in\FdsVarApprSpc{}\times\FdsTrcVarApprSpc{}$ and $\FdsTestVarLump\in\FdsVarAuxApprSpc{}\times\FdsVarSolApprSpc{}\times\FdsTrcVarSolApprSpc{}$ for the one-field and two-field Friedrichs' system, respectively. Here, the variable $\FdsTestVarLump$ is an adjoint variable and serves as a detection of the sensitivity of output functional error induced by a less accurate solution. Further, it has to satisfy the adjoint $hp$-HDG formulation that is either \eqnref{Fds_one_ad_hdg} or \eqnref{Fds_two_ad_hdg} and depends on which system we are looking at. The derivation of the adjoint $hp$-HDG formulation and well-posedness analysis are discussed in Appendix \secref{adjoint_hdg}.

From \eqnref{output_error}, it can be seen that two different approximation spaces are required. In this work, we construct the finer space by enriching the degree of approximation without refining the mesh. That is, the meshes used in solving the primal and adjoint $hp$-HDG formulations are the same (i.e. $\domPart=\domPartH$) while the finite element spaces used in the primal and adjoint $hp$-HDG formulations differ by one degree of approximation for each element. The computational cost is relatively cheap in this way and the enrichment can be done without increasing the complexity of the code.  Toward the adaptation, we need to localize the error approximation presented in \eqnref{output_error}. By defining the localized residual ${\Res}_{h,\elem}$ as:
\beq
    {\Res}_{h,\elem}\LRs{\FdsVarLump;\FdsTestVarLump}
    :=
    \begin{cases}
        \Res^{\FdsVar}_{h,\elem}\LRs{\FdsVarLump;\FdsTestVarLump} & \text{for one-field Friedrichs' system},\\ 
        \Res^{\FdsAux}_{h,\elem}\LRs{\FdsVarLump;\FdsTestVarLump}+\Res^{\FdsSol}_{h,\elem}\LRs{\FdsVarLump;\FdsTestVarLump} & \text{for two-field Friedrichs' system}, 
    \end{cases}
\eeq 
and following the works \cite{Dahm2014,Woopen2014b,Balan2016}, the local error indicator based on the adjoint approach can be defined as :
\begin{equation}\eqnlab{local_error_est_adjoint}
    \LocErrEst{H}{\text{adjoint}} := \abs{{\Res}_{h,\elem}\LRs{\FdsVarLumpInject;\FdsTestVarLump}},
\end{equation}
where the adjoint variable $\FdsTestVarLump$ is the solution to the adjoint $hp$-HDG formulation which is either \eqnref{Fds_one_ad_hdg} or \eqnref{Fds_two_ad_hdg}. It should be emphasized that the error indicator \eqnref{local_error_est_adjoint} does not include the contribution from the trace unknowns (i.e. $\Res^{\FdsTrcSol}_{h,\mort}$ and $\Res^{\FdsTrcSol}_{h,\mort}$ are neglected) owing to its insignificant influence \cite{Dahm2014,Woopen2014b}.

\begin{rema}
It should be pointed out that the output error stated in \eqnref{output_error} can directly be computed by evaluating the difference between $\Output{\FdsVarLumpH}$ and $\Output{\FdsVarLump}$, where we have to solve the $hp$-HDG formulation \eqnref{Fds_one_hp_hdg} or \eqnref{Fds_two_hp_hdg} at two different levels of approximation. However, in this work, we stick to the approximation given by the DWR method. This method is more general in the sense that the adjoint problem is always linear and is the only problem needed to be solved at the fine level of approximation. It holds true regardless of whether the primal problem is linear or not.
\end{rema}

\subsection{An adaptation algorithm}
The algorithm used in this work is the simplified version of the strategy proposed in \cite{Dolejsi2013}, which provides all necessary keys for carrying out $hp$-adaption.
Combining the previous discussion on error indicators, we denote a general local and global error indicator as:
\begin{subequations}\eqnlab{error_est}\small
\begin{align}
\begin{split}\eqnlab{local_error_est}
    \LocErrEstGen{H}:=
        \LocErrEst{H}{\text{Doleji}}\text{ or }
        \LocErrEst{H}{\text{adjoint}}&
\end{split},\\
\begin{split}\eqnlab{global_error_est}
    \GlbErrEstGen{H}:=\LRp{ \sum_{\elem\in\domPart}\LocErrEstGen{H}^2}^{\half}&
\end{split}.
\end{align}
\end{subequations}
To drive full $hp$-adpatation, a method to decide how to construct a new spatial discretization is also necessary. In this work, it is desirable that the spatial discretization can be constructed according to the smoothness of the solution. To achieve it, a local regularity indicator is needed and the one proposed in \cite{Dolejsi2013} is applied in this work. By denoting $\abs{\elem}$ as the area of an element, the indicator reads:
\begin{equation}\eqnlab{local_reg_est}
    \LocRegInd{\bs{q}_h}:=\frac{ \sum_{\face\in\elemBnd}\sum_{\mort\in\face\backslash\domBnd}\LRa{\jump{\bs{q}_h},\jump{\bs{q}_h}}_{\mort} }{\abs{\elem}\elemL^{2\polyElem-3}},
\end{equation}
where $\bs{q}_h:=\FdsVarAppr$ and $\bs{q}_h:=\FdsVarSolAppr$ are  one-field and two-field Friedrichs' system, respectively.
Once error and regularity indicators are computed, one or a couple of the following operations will be performed:\\
\bit
    \item $h$-refinement\footnote{We also enforce the number of hanging nodes resulting from local $h$-refinement to be always one in each interface within a mesh.}: to split a given \tit{mother element} $\elem$ into four \tit{daughter elements} $\elemChild$ by connecting centers of its edges.
    \item $p$-refinement: to increase the degree of polynomial approximation for a given element $\elem$, i.e., we set $\polyElem = \polyElem+1$.
    \item $p$-coarsening: to decrease the degree of polynomial approximation for a given element $\elem$, i.e., we set $\polyElem = \polyElem-1$.
\eit

In the original strategy presented in \cite{Dolejsi2013}, there are two additional operations called $h$-coarsening and $hp$-substitution. They merge elements that have arisen in a previous adaptation cycle along with $p$-refinement, $p$-coarsening, or nothing. However, according to our numerical experiments, this action only slightly increased efficiency, and sometimes the performance seems to be degrading. For this reason, we remove these operations from our adaptation strategy. Given the user-defined tolerance $0\leq\tol\leq1$ and the maximum cycle number, the $hp$-adaption procedure can now be performed by following the strategy outlined in Algorithm \algmref{hp_algm}.
\begin{algorithm}
\caption{An $hp$-adaptation algorithm}\algmlab{hp_algm}
\begin{algorithmic}[1]
\State $\LocErrEstGen{H}\gets 1\quad\forall\elem\in\domPartH$\text{ and }$\GlbErrEstGen{H}\gets 0$\Comment{The initialization for starting the adaptation cycle(s).}
\While{$\max_{\elem\in\domPartH}{\LocErrEstGen{H}}\geq\tol\GlbErrEstGen{H}$\text{ or }\#\text{cycle}$\leq$\#\text{max. cycle}}
\State \text{Solve the $hp$-HDG formulation stated in \eqnref{Fds_one_hp_hdg} or \eqnref{Fds_two_hp_hdg} on a coarse (current) level.}
\State \text{(Solve the adjoint $hp$-HDG formulation stated in \eqnref{Fds_one_ad_hdg} or \eqnref{Fds_two_ad_hdg}}
\State \text{on a fine (by enriching degree of approximation) level if the adjoint approach is applied).}
\State \text{Compute and update local and global error indicator presented in \eqnref{error_est}.}
    \For{$\elem\in\domPartH$}
        \If{$\LocErrEstGen{H}\geq\tol\max_{\elem\in\domPartH}{\LocErrEstGen{H}}$}
            \If{$\LocRegInd{\solApprH}\leq\elemL^{-2}$}
            \State \text{Tag the element as $p$-refinement}
            \ElsIf{$\elemL^{-2}<\LocRegInd{\solApprH}\leq\elemL^{-4}$}
            \State \text{Tag the element as $h$-refinement}
            \Else
            \State \text{Tag the element as $h$-refinement along with $p$-coarsening}
            \EndIf
        \EndIf
    \EndFor
    \State \text{Perform adaption and construct the new corresponding finite element space} 
\EndWhile
\end{algorithmic}
\end{algorithm}

It should be noted that in each $hp$-adaption cycle a global refinement will be performed if $\tol=0$ while only a single element will be refined if $\tol=1$. However, we still will use relatively small $\tol$ and it will not result in global refinement which we would like to avoid for the sake of efficiency. It could happen since all problems to be solved in this paper have solutions that have extremely sharp gradients within a portion of the domain or on the boundary. Under this circumstance, the maximum of the local error indicator is relatively large in general (i.e. $\max_{\elem\in\domPartH}{\LocErrEstGen{H}}\gg\min_{\elem\in\domPartH}{\LocErrEstGen{H}}$) and its distribution leans toward to its' minimum value $\min_{\elem\in\domPartH}{\LocErrEstGen{H}}$.

\section{Numerical experiments} \seclab{numerics}
In this section, we are going to present several numerical experiments for different kinds of PDEs. The numerical solution is obtained by solving $hp$-HDG formulations \eqnref{Fds_one_hp_hdg} or \eqnref{Fds_two_hp_hdg} along with the adaptivity strategy discussed in the section \secref{strategy_hp}. The main goal is to demonstrate the validity of the unified $hp$ formulations and examine the performance of our proposed adaptive algorithm. 
As for the performance, we measure 
the convergence rate of the error in the $\Lsp{2}$-norm versus the number of degrees of freedoms (DOFs) resulting from  the statically condensed $hp$-HDG formulations. It should be noted that the required DOFs for the adjoint approach are slightly higher than those for Doleji's method.
This is because we additionally need to solve adjoint HDG formulation presented in \eqnref{Fds_one_ad_hdg} or \eqnref{Fds_two_ad_hdg} to get the adjoint that is used to evaluate the error indicator \eqnref{local_error_est_adjoint}. 

The PDEs under consideration in the experiments can be classified as elliptic, hyperbolic, and mixed equations. In the following subsections, we will briefly discuss each type of PDEs and justify the well-posedness of its HDG formulation by using the lemmas and the theorems discussed in the section \secref{hpHDGform}.
In this section, we simply use subscript $h$ to denote the numerical solution. Readers should not confuse it with the notation used in Section \secref{strategy_hp} where $h$ and $H$ are used to refer to different levels.

\subsection{Elliptic PDEs}
For elliptic PDEs, we consider: 
\ben[label=(E.\arabic*)]
    \item \itmlab{e1}Poisson problem (isotropic diffusion) with a corner singularity,
    \item \itmlab{e2}anisotropic diffusion problem with discontinuous Dirichlet boundary condition, and
    \item \itmlab{e3}heterogeneous anisotropic diffusion problem with  discontinuous field $\diffCoeff$.
\een
We analyze these problems 
by using the two-field Friedrichs' system with partial coercivity. 
The problem reads: find a function $\sol:\dom\to\real$ such that:
\begin{equation}\eqnlab{AniDiff}
\begin{split}
    -\Div{\LRp{\diffCoeff\nabla{\sol}}} = \forcing,\quad&\text{in}\ \dom,\\
    \sol = \BCVal^{\sol},\quad&\text{on}\ \domBndDir,\\
    \diffCoeff\Grad{\sol}\cdot\normal + \rtCoeff\sol = \BCVal^{\sol},\quad&\text{on}\ \domBndNmn\cap\domBndRb\text{ where }\rtCoeff=0\text{ when on the }\domBndNmn,
\end{split}
\end{equation}
where the boundary data $\BCVal^{\sol}:\domBnd\mapsto\real$ is in $\Lsp{2}(\domBnd)$ and is defined as
\beq
\BCVal^{\sol}
=
\begin{cases}
    \DirVal\text{ on }\domBndDir,\\
    \NmnVal\text{ on }\domBndNmn,\\
    \RbVal\text{ on }\domBndRb.
\end{cases}
\eeq
Here, $\forcing\in\Lsp{2}(\dom)$ is a source term; and $\diffCoeff\in\LRs{\Lsp{\infty}(\dom)}^d$ is a diffusivity coefficient that is assumed to be  symmetric positive-definite  and its lowest eigenvalue is uniformly bounded away from zero. To be able to interpret the numerical result later, we briefly review some physical aspects of the PDE stated in \eqnref{AniDiff}. At each location within $\dom$, we have the principal direction of anisotropy denoted by $\phySkwCompX$ and the direction of weak diffusion denoted by $\phySkwCompY$. As shown in Figure \figref{skewedDomain} along with coordinate of physical domain $(\phyCompX,\phyCompY)$, it is possible to align $\phyCompX$ to $\phySkwCompX$ by rotating the system with the angle $\misalign$ so that  the equation \eqnref{AniDiff} becomes:

\begin{equation*}
    \diffCoeffCompX\ppSq{\sol}{\phySkwCompX} + \diffCoeffCompY\ppSq{\sol}{\phySkwCompY} = \forcing\quad\text{in}\ \dom, 
\end{equation*}
where $\diffCoeffCompX$ and $\diffCoeffCompY$ are referred to the diffusivity in $\phySkwCompX$-direction and in $\phySkwCompY$-direction, respectively. 
Since $\diffCoeffCompX$ represents the principal direction of anisotropy, we always have $\diffCoeffCompX \geq \diffCoeffCompY$. At this point, we can define the anisotropy ratio $\anisoRatio:=\diffCoeffCompX/\diffCoeffCompY$ which indicates the strength of the anisotropy. The case $\anisoRatio=1$ corresponds to isotropic diffusion (i.e. a Laplace or Poisson equation). 


\begin{figure}[H]
    \centering
    \includegraphics[width=60mm]{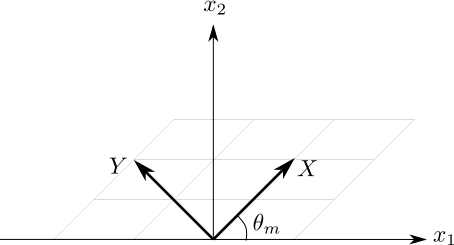}
    \caption{The skewed domain of anisotropic field with $\phySkwCompX$ parallel to the anisotropic principal direction.}
    \figlab{skewedDomain}
\end{figure}

To cast the problems into the Friedrichs' framework, we rewrite the original PDE stated in \eqnref{AniDiff} into the first order form by introducing the \textit{auxiliary variable} $\aux:=-\diffCoeff\nabla{\sol}$:
\begin{subequations}\eqnlab{AniDiffMixed}
\begin{align}
\begin{split}
    \nabla\sol + \diffCoeff^{-1}\aux=0,\quad&\text{in}\ \dom,
\end{split}\\
\begin{split}
    \nabla\cdot\aux = \forcing,\quad&\text{in}\ \dom,
\end{split}\\
\begin{split}
        \sol = \BCVal^{\sol},\quad&\text{on}\ \domBndDir,\\
        -\aux\cdot\normal + \rtCoeff\sol = \BCVal^{\sol},\quad&\text{on}\ \domBndNmn\cup\domBndRb\text{ where }\rtCoeff=0\text{ when on the }\domBndNmn.
\end{split}
\end{align}
\end{subequations}
Thus, the size of the system is determined: $\FdsNVar=d+1$, $\FdsNVarAux=d$, and $\FdsNVarSol=1$. The corresponding two-field Friedrichs' system reads:
\beq\eqnlab{Fds_block_mat_elliptic}
\FdsG =
\begin{bmatrix}
\diffCoeff^{-1} & 0_{d\times 1}\\
0_{1\times d} & 0
\end{bmatrix},\quad\quad
\FdsAk =
\begin{bmatrix}
0_{d\times d} & \normalCon_{\FdsIndx}\\
\normalCon_{\FdsIndx}^T & 0
\end{bmatrix},
\eeq
where $\normalCon_{\FdsIndx}$ stands for the $\FdsIndx$-th canonical basis in $\real^{\FdsNVarAux}$ and $0$ with subscript indicates zero matrix with its dimension. To enforce boundary conditions properly, the boundary operator $\FdsMBnd$ defined in \eqnref{FdsMBnd} can be specified as:
\beq\eqnlab{FdsMBnd_elliptic}
    \begin{cases}
    \alpha=+1,\,\FdsMss=2{\varrho}\id_{\FdsNVarSol},\text{ on }\domBndDir\text{ where }\varrho:=\half,\\
    \alpha=-1,\,\FdsMss=2{\varrho}\id_{\FdsNVarSol},\text{ on }\domBndNmn\cap\domBndRb\text{ where }\varrho:=\rtCoeff,\\
    \end{cases}
\eeq
where we require $\rtCoeff=0$ on $\domBndNmn$ and $\rtCoeff>0$ on $\domBndRb$ in order to for the conditions in \eqnref{Abstrct_M1} and \eqnref{Abstrct_M2} to hold.

\begin{lemma}
The $hp$-HDG formulation for the PDE stated in \eqnref{AniDiffMixed} is well-posed both locally and globally.
\end{lemma}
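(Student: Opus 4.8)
The plan is to exhibit the first-order mixed system \eqnref{AniDiffMixed} as a concrete two-field Friedrichs' system of \emph{partial}-coercivity type, and then to invoke Lemma \lemref{Fds_two_partial_hp_localCon} for the local solver and Theorem \theoref{Fds_two_partial_hp_globalCon} for the global formulation. With the block data \eqnref{Fds_block_mat_elliptic} and the sizes $\FdsNVar=d+1$, $\FdsNVarAux=d$, $\FdsNVarSol=1$, I would first verify the structural hypotheses. Assumptions \itmref{A2} and \itmref{A3} are immediate since each $\FdsAk=\LRs{\begin{smallmatrix}0_{d\times d}&\normalCon_{\FdsIndx}\\\normalCon_{\FdsIndx}^{T}&0\end{smallmatrix}}$ is constant and symmetric; \itmref{A5} holds because $\FdsAkaa=0$; and \itmref{A4b} holds because $\FdsGas=(\FdsGsa)^{T}=0$ and $\FdsBk=\normalCon_{\FdsIndx}$ is constant. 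For \itmref{A1}, \itmref{A6} and \itmref{A4a} I would use the standing hypotheses that $\diffCoeff$ is symmetric positive-definite, lies in $\LRs{\Lsp{\infty}\LRp{\dom}}^{d}$, and has smallest eigenvalue uniformly bounded below: then $\diffCoeff^{-1}\in\LRs{\Lsp{\infty}\LRp{\dom}}^{d,d}$ (hence \itmref{A1}), $\FdsGaa=\diffCoeff^{-1}\geq k_0\id_{\FdsNVarAux}$ with $k_0>0$ the reciprocal of an essential upper bound for the eigenvalues of $\diffCoeff$ (hence \itmref{A6}), and since $\FdsAk$ is constant $\FdsG+\FdsG^{T}+\FdsSum\FdsPartial\FdsAk=\LRs{\begin{smallmatrix}2\diffCoeff^{-1}&0\\0&0\end{smallmatrix}}\geq 2k_0\,\id^{o}_{\FdsNVarAux}$ (hence \itmref{A4a} with $\mu_0=k_0$).

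The one genuine computation is the evaluation of $\FdsAbsA$, which pins down whether flux assumption \itmref{F1} or \itmref{F2} applies and what $\stabPar$ is. From \eqnref{Fds_block_mat_elliptic}, $\FdsABnd=\FdsSum\normalComp{\FdsIndx}\FdsAk=\LRs{\begin{smallmatrix}0_{d\times d}&\normal\\\normal^{T}&0\end{smallmatrix}}$, so $\FdsABnd^{2}=\LRs{\begin{smallmatrix}\normal\normal^{T}&0\\0&\abs{\normal}^{2}\end{smallmatrix}}=\LRs{\begin{smallmatrix}\normal\normal^{T}&0\\0&1\end{smallmatrix}}$ using $\abs{\normal}=1$; since $\normal\normal^{T}$ is an orthogonal projector it coincides with its own positive semidefinite square root, whence $\FdsAbsA=\LRs{\begin{smallmatrix}\normal\normal^{T}&0\\0&1\end{smallmatrix}}$. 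Reading off the off-diagonal block, $\FdsAbsAas=0_{d\times1}$, so assumption \itmref{F2} is the one in force (matching the earlier remark that an elliptic problem can rely only on \itmref{F2}), and \eqnref{stabPar_F2} gives $\stabPar=\FdsAbsAss=1$. Since $\FdsCk=0$ we have $\FdsCBnd=\FdsSum\normalComp{\FdsIndx}\FdsCk=0$, hence $\half\FdsCBnd+\stabPar=1>0$, which furnishes the strict positivity demanded by both Lemma \lemref{Fds_two_partial_hp_localCon} and Theorem \theoref{Fds_two_partial_hp_globalCon}.

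It then remains to check the range/nullspace conditions on $\FdsBk$ and the boundary-operator conditions. Since $\FdsBk=\normalCon_{\FdsIndx}$ is a nonzero column, $\Null{\FdsBk}=\LRc{\bs0}$; and because the subspaces $\text{Range}\LRp{\FdsBk}=\operatorname{span}\LRc{\normalCon_{\FdsIndx}}$ are distinct coordinate axes (here $d=2$, and more generally $d\geq2$), $\bigcap_{\FdsIndx=1}^{d}\text{Range}\LRp{\FdsBk}=\LRc{\bs0}$. For the boundary, the three choices in \eqnref{FdsMBnd_elliptic} are instances of the template \eqnref{FdsMBnd}, and \eqnref{Abstrct_M1}---via the reduced inequality \eqnref{Fds_two_hdg_bc_constraints}, $2\varrho\id_{\FdsNVarSol}+\FdsCBnd\geq0$---amounts to $\varrho\geq0$, which holds in each case ($\varrho=\half$ on $\domBndDir$, $\varrho=\rtCoeff=0$ on $\domBndNmn$, $\varrho=\rtCoeff>0$ on $\domBndRb$). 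With all hypotheses in place, Lemma \lemref{Fds_two_partial_hp_localCon} yields well-posedness of the local solver and Theorem \theoref{Fds_two_partial_hp_globalCon} yields well-posedness of the global $hp$-HDG formulation, which is the claim.

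I expect the only point requiring care to be the square-root identity for $\FdsAbsA$ and the resulting fact that it is \itmref{F2}, not \itmref{F1}, that applies for this elliptic problem; once $\FdsAbsAas=0$ and $\stabPar=1$ are established, the argument reduces to a mechanical check against the hypothesis lists of Lemma \lemref{Fds_two_partial_hp_localCon} and Theorem \theoref{Fds_two_partial_hp_globalCon}.
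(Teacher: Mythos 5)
Your proposal is correct and follows essentially the same route as the paper's proof: identify \eqnref{AniDiffMixed} as a two-field Friedrichs' system with partial coercivity, verify \itmref{A1}--\itmref{A3}, \itmref{A4a}--\itmref{A4b}, \itmref{A5}--\itmref{A6}, compute $\FdsAbsA$ to see that \itmref{F2} applies with $\stabPar=1$, check $\half\FdsCBnd+\stabPar>0$ and the range/nullspace conditions on $\FdsBk=\normalCon_{\FdsIndx}$, and invoke Lemma \lemref{Fds_two_partial_hp_localCon} and Theorem \theoref{Fds_two_partial_hp_globalCon}. Your version is somewhat more explicit than the paper's (the square-root computation of $\FdsAbsA$ via $\FdsABnd^2$ and the verification of the boundary-operator condition \eqnref{Fds_two_hdg_bc_constraints} are spelled out rather than asserted), but the argument is the same.
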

\begin{proof}
The assumptions \itmref{A1}-\itmref{A3}, \itmref{A4a}-\itmref{A4b} and \itmref{A5}-\itmref{A6} is obviously satisfied by substituting \eqnref{FdsMBnd_elliptic} into each conditions. 

On the other hand, the numerical flux falls into the category \itmref{F2} where we have:
\beq
    \FdsAbsA =     
    \begin{bmatrix}
    \frac{\normal\normal^T}{\norm{\normal}_2} & 0_{d\times1}\\
    0_{1\times d} & \norm{\normal}_2^2\\
    \end{bmatrix}
\eeq
in which $\norm{\cdot}_2$ is a standard Euclidean norm. Thus, the stabilization parameter reads $\stabPar=\norm{\normal}_2^2=1$.  It is evident that:
\bit
    \item $\half\FdsCBnd + \stabPar=\stabPar=1>0$, and
    \item $\bigcap_{\FdsIndx=1}^d\text{Range}\LRp{\normalCon_{\FdsIndx}}=\emptyset$ and $\Null{\normalCon_{\FdsIndx}}=\LRc{\bs{0}}$ for $\forall\FdsIndx=1,\dots,d$. 
\eit
Hence, by Lemma \lemref{Fds_two_partial_hp_localCon} and Theorem \theoref{Fds_two_partial_hp_globalCon} we can conclude that the $hp$-HDG formulation for the elliptic PDE \eqnref{AniDiffMixed} is well-posed locally and globally.
\end{proof}

\subsection{Hyperbolic PDE}
We consider the following hyperbolic PDE:
\ben[label=(HP.\arabic*)]
    \item \itmlab{hp1}steady-state linear advection with  variable speed and discontinuous inflow condition. 
\een
The PDE for steady-state linear advection reads: find a function $\sol:\dom\to\real$ such that:
\begin{equation}\eqnlab{LinAdv}
\begin{split}
    \Div{\LRp{\vel\sol}} = \forcing, \quad&\text{in}\ \dom,\\
    \sol = \DirVal,\quad&\text{on}\ \domBnd^-
\end{split}
\end{equation}
with $\vel\in\LRs{\Lsp{\infty}\LRp{\dom}}^{d}$, $\Div{\vel}\in\Lsp{\infty}\LRp{\dom}$, $\forcing\in\Lsp{2}\LRp{\dom}$ and $\DirVal\in\Lsp{2}\LRp{\domBnd^-}$. Here, we adopt the convention to denote the inflow boundaries as $\domBnd^- = \LRc{x \in \partial\Omega: \vel\cdot\normal<0}$ and they are essentially Dirichlet boundaries in this problem set. It is well-known that singularity (or discontinuity) can be propagated by linear advection. Hence, we can expect that there is a shock within the domain $\dom$ if a discontinuity is specified at the inflow boundary $\domBnd^-$. The problem can be analyzed by the one-field Friedrichs' system. The size of the system is $\FdsNVar=1$ and the corresponding system reads: 
\beq\eqnlab{Fds_block_mat_hyper}
\FdsG = 0
,\quad\quad
\FdsAk =
\velComp{\FdsIndx} 
\text{ for } k=1,\cdots,d.
\eeq
It is easy to show that the assumptions \itmref{A1}-\itmref{A3} are valid. To have coercivity \itmref{A4}, we further assume that
\beq\eqnlab{LinAdv_assumption}
    \infess_{\dom}\half\Div{\vel}\geq0.
\eeq
Finally, we also require the following condition to obtain a well-posed HDG scheme:
\begin{align}\eqnlab{LinAdv_flux_assumption}
    & \vel\cdot\normal\neq0\text{ on }\mort,\quad\forall\mort\in\skel,
    && \jump{\vel\cdot\normal}=0\text{ on }\elemBnd,\quad\forall\elem\in\domPart,
\end{align}
where we assume $\vel\cdot\normal$ is always continuous across element edges and does not vanish at edges (or mortars). It should be noted that the condition for continuity can be relaxed and the resulting numerical flux has the weight-average type of stabilization parameter \cite{Tan2015}. 
\begin{lemma}
The $hp$-HDG formulation for the PDE stated in \eqnref{LinAdv} is well-posed both locally and globally if the assumptions \eqnref{LinAdv_assumption} and \eqnref{LinAdv_flux_assumption} hold.
\end{lemma}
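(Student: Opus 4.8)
The plan is to read \eqnref{LinAdv} as a one-field Friedrichs' system of size $\FdsNVar=1$ with $\FdsG=0$ and $\FdsAk=\velComp{\FdsIndx}$ as in \eqnref{Fds_block_mat_hyper}, and then simply verify the hypotheses of Lemma \lemref{Fds_one_hp_localCon} (local well-posedness) and Theorem \theoref{Fds_one_hp_globalCon} (global well-posedness); local and global conservation come for free from Lemma \lemref{Fds_one_local_consrv} and Lemma \lemref{Fds_one_global_consrv}, which need no structural assumptions. First I would dispatch \itmref{A1}--\itmref{A3}: \itmref{A1} holds trivially since $\FdsG=0$; \itmref{A2} holds because $\velComp{\FdsIndx}\in\Lsp{\infty}(\dom)$ and $\FdsSum\FdsPartial\velComp{\FdsIndx}=\Div\vel\in\Lsp{\infty}(\dom)$ by hypothesis; \itmref{A3} is automatic since every scalar is symmetric. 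For \itmref{A4}, note that $\FdsG+\FdsG^{T}+\FdsSum\FdsPartial\FdsAk=\Div\vel$, so assumption \eqnref{LinAdv_assumption} is exactly the coercivity required.

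Next I would check the two remaining ingredients of Theorem \theoref{Fds_one_hp_globalCon}. Here $\FdsABnd=\FdsSum\normalComp{\FdsIndx}\velComp{\FdsIndx}=\vel\cdot\normal$ is scalar, so the boundary operator $\FdsMBnd:=\FdsAbsA=\abs{\vel\cdot\normal}$ is non-negative, which is \eqnref{Abstrct_M1}; and the condition $\vel\cdot\normal\neq0$ on every mortar in \eqnref{LinAdv_flux_assumption} gives $\Null{\FdsABnd}=\LRc{0}$, which by the footnote to the theorem also yields \eqnref{Abstrct_M2}. The second half of \eqnref{LinAdv_flux_assumption}, $\jump{\vel\cdot\normal}=0$ on $\elemBnd$, is precisely the standing continuity of $\FdsABnd$ across $\skel$ used when deriving the Godunov flux, so $\upwindFluxTrc\LRp{\FdsVarAppr,\FdsTrcVarAppr}\normal=\vel\cdot\normal\,\FdsVarAppr+\abs{\vel\cdot\normal}\LRp{\FdsVarAppr-\FdsTrcVarAppr}$ is well-defined on the whole skeleton. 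With all hypotheses in place, Lemma \lemref{Fds_one_hp_localCon} gives well-posedness of the local solver and Theorem \theoref{Fds_one_hp_globalCon} gives a unique trace $\FdsTrcVarAppr$, hence a well-posed $hp$-HDG formulation.

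The only delicate point --- and the main obstacle --- is that \eqnref{LinAdv_assumption} supplies \itmref{A4} only in the degenerate form $\Div\vel\geq0$ rather than with a strictly positive constant, so the step ``$\FdsVarAppr=\bs0$'' in the proof of Theorem \theoref{Fds_one_hp_globalCon} needs extra care where $\Div\vel$ vanishes. I would handle this by returning to the energy identity \eqnref{Fds_one_hp_globalCon1}: nonnegativity of its three terms forces each to vanish, so $\abs{\vel\cdot\normal}>0$ gives $\FdsTrcVarAppr=\bs0$ on $\skelBnd$ and $\half\vel\cdot\normal+\abs{\vel\cdot\normal}\geq\half\abs{\vel\cdot\normal}>0$ gives $\FdsVarAppr=\FdsTrcVarAppr$ on every $\elemBnd$; thus $\FdsVarAppr$ is single-valued across interior mortars and vanishes on $\domBnd$. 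Substituting $\FdsVarAppr=\FdsTrcVarAppr$ back into the local equations with test function $\FdsTestAppr=\FdsVarAppr$ and summing over $\domPart$, the interior-mortar terms cancel (continuity of $\vel\cdot\normal$ together with $\normal^{+}=-\normal^{-}$) and one is left with a transport identity from which an element-by-element sweep along the flow --- transferring vanishing of $\FdsVarAppr$ from inflow to outflow faces, using $\vel\cdot\normal\neq0$ on each mortar --- upgrades $\Div\vel\geq0$ to $\FdsVarAppr\equiv\bs0$, whence $\FdsTrcVarAppr\equiv\bs0$ on $\skel$ as well. (If instead $\Div\vel$ is bounded below by a positive constant, Theorem \theoref{Fds_one_hp_globalCon} applies verbatim and no sweep is needed.)
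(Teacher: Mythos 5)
Your verification of \itmref{A1}--\itmref{A3}, of \eqnref{Abstrct_M1} via $\FdsMBnd=\abs{\vel\cdot\normal}$, and of $\Null{\FdsABnd}=\LRc{0}$ from $\vel\cdot\normal\neq0$ is exactly what is needed, and you correctly isolate the one delicate point: \eqnref{LinAdv_assumption} only yields $\Div{\vel}\geq 0$, not the strict bound $\geq 2\mu_0>0$ demanded by \itmref{A4}. Where you diverge from the paper is in how this degeneracy is repaired. The paper introduces the change of dependent variable $\sol=\TranMap\Tilde{\sol}$ with $\TranMap=e^{-\gamma\LRp{\phyVar-\phyVar_0}\cdot\vel}$ and $\gamma<0$; the transformed problem is again of the form \eqnref{LinAdv} with velocity $\Tilde{\vel}=\TranMap\vel$, the extra contribution $-\gamma\norm{\vel}_2^2\TranMap>0$ makes \itmref{A4} hold strictly, so Lemma \lemref{Fds_one_hp_localCon} and Theorem \theoref{Fds_one_hp_globalCon} apply verbatim to the transformed system, and the conclusion is pulled back through the bijection $\TranMap$.

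Your alternative --- forcing each term of the energy identity \eqnref{Fds_one_hp_globalCon1} to vanish and then sweeping element by element along the flow --- is the classical solvability argument for upwind DG transport, but as written it has a genuine gap. The sweep presupposes a flow-consistent (acyclic) ordering of the elements, and this does not follow from \eqnref{LinAdv_assumption} and \eqnref{LinAdv_flux_assumption}: a divergence-free rotational field whose streamlines close up inside $\dom$ can satisfy $\vel\cdot\normal\neq0$ on every mortar while the element dependency graph contains cycles, so there is no inflow face from which to start propagating. Moreover, even on a single element, once you know $\FdsVarAppr$ is continuous across mortars and vanishes on the inflow faces, the resulting identity $\LRp{\Div{\LRp{\vel\FdsVarAppr}},\FdsTestAppr}_{\elem}=0$ for all polynomial $\FdsTestAppr$ does not immediately force $\FdsVarAppr\equiv 0$ for variable $\vel$, because $\Div{\LRp{\vel\FdsVarAppr}}$ need not lie in the polynomial test space. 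To keep your route you would need an additional no-closed-characteristics hypothesis and a more careful elementwise argument; the paper's exponential-weight transformation avoids both issues, and I would recommend adopting it.
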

\begin{proof}
Consider the following transformation:
\begin{equation}\eqnlab{LinAd_TranMap}
    \sol=\TranMap\Tilde{\sol},\quad\text{where }\TranMap:=e^{-\gamma\LRp{\phyVar-\phyVar_0}\cdot\vel},
\end{equation}
in which $\gamma\in\real$, $\phyVar_0\in\dom$, and $\Tilde{\sol}:\dom\to\real$. Substituting \eqnref{LinAd_TranMap} back into \eqnref{LinAdv} gives us:
\begin{equation}\eqnlab{LinAdv_Tran}
\begin{split}
    \Div{\LRp{\Tilde{\vel}\Tilde{\sol}}} = \forcing, \quad&\text{in}\ \dom,\\
    \Tilde{\sol} = \Tilde{\DirVal},\quad&\text{on}\ \domBnd^-,
\end{split}
\end{equation}
where $\Tilde{\vel}=\TranMap\vel$ and $\Tilde{\DirVal}=\TranMap^{-1}\DirVal$. Note that $\TranMap>0$ and hence its inverse always exists. The PDE \eqnref{LinAdv_Tran} can also be identified as a one-field Friedrichs' system where:
\beq\eqnlab{Fds_block_mat_Tran_hyper}
\FdsG = 0
,\quad\quad
\FdsAk =
\Tilde{\vel}_{\FdsIndx}.
\eeq
It is obvious that \itmref{A1}-\itmref{A3} are valid and
\beq
    \FdsG+ \FdsG^{T} + \sum_{\FdsIndx=1}^d\FdsPartial\FdsAk
    = \sum_{\FdsIndx=1}^d\FdsPartial\Tilde{\vel}_{\FdsIndx}
    = \TranMap\Grad{\cdot\vel} - \norm{\vel}^2_2\gamma\TranMap
    > 0,
\eeq
where the last inequality will hold by the assumption \eqnref{LinAdv_assumption} and by taking $\gamma<0$. Therefore, condition \itmref{A4} is also satisfied. Finally, 
 $\Null{\FdsABnd}=\Null{\Tilde{\vel}\cdot\normal}=\LRc{0}$ along all surfaces of the elements since the continuity of $\vel\cdot\normal$ is assumed and the mapping $\TranMap$ is diffeomorphism. With the aid of Lemma \lemref{Fds_one_hp_localCon} and Theorem \theoref{Fds_one_hp_globalCon}, we can conclude that the $hp$-HDG formulation for \eqnref{LinAdv_Tran} is well-posed both locally and globally. Given that the mapping $\TranMap$ is bijective, this conclusion is also valid for \eqnref{LinAdv}.
\end{proof}

\subsection{Mixed PDE}
For mixed PDE, we consider:
\ben[label=(HB.\arabic*)]
    \item \itmlab{hb1} steady-state convection-diffusion problem  with discontinuous inflow condition,
\een
\begin{equation}\eqnlab{ConvDiff}
\begin{split}
    &\Div{\LRp{\vel\sol - \diffCoeff\Grad\sol}} = \forcing, \quad\text{ in }\dom,\\
    &\begin{cases}
    \sol = \BCVal^{\sol}, \quad\text{ on }\OutBnd, \\
    \LRp{\vel\sol - \diffCoeff\Grad\sol}\cdot\normal = \BCVal^{\sol}, \quad\text{ on }\InBnd\cup\ZeroBnd,
    \end{cases} 
\end{split}
\end{equation}
where the boundary data $\BCVal^{\sol}:\domBnd\mapsto\real$ is in $\Lsp{2}(\domBnd)$ and is defined as
\beq
\BCVal^{\sol}
=
\begin{cases}
    \DirVal\text{ on }\OutBnd,\\
    \BCVal_{N,R}\text{ on }\ZeroBnd\cup\InBnd.
\end{cases}
\eeq
Here, $\vel\in\LRs{\Lsp{\infty}\LRp{\dom}}^{d}$, $\Div{\vel}\in\Lsp{\infty}\LRp{\dom}$, $\forcing\in\Lsp{2}\LRp{\dom}$, and $\diffCoeff$ whose lowest eigenvalue is uniformly bounded away from zero is a symmetric positive definite matrix-valued defined on $\dom$. In addition, $\InBnd\cup\OutBnd\cup\ZeroBnd=\domBnd$ where $\InBnd=\LRc{x \in \partial\Omega: \vel\cdot\normal<0}$ is an inflow boundary; $\OutBnd\LRc{x \in \partial\Omega: \vel\cdot\normal>0}$ is an outflow boundary; and $\ZeroBnd\LRc{x \in \partial\Omega: \vel\cdot\normal=0}$ is a zero-flow boundary. It is evident that $\OutBnd=\domBndDir$ and $\ZeroBnd\cup\InBnd=\domBndNmn\cup\domBndRb$. The problem can be analyzed by a two-field Friedrichs' system with partial coercivity. The size of the system is $\FdsNVar=d+1$, $\FdsNVarAux=d$, and $\FdsNVarSol=1$. The two-field Friedrichs' system for this problem reads:
\beq\eqnlab{Fds_block_mat_mixed}
\FdsG =
\begin{bmatrix}
\diffCoeff^{-1} & 0_{d\times1}\\
0_{1\times d} & 0
\end{bmatrix}\quad \text{and}\quad 
\FdsAk =
\begin{bmatrix}
0_{d \times d} & \normalCon_{\FdsIndx}\\
\normalCon_{\FdsIndx}^T & \velComp{\FdsIndx}
\end{bmatrix}.
\eeq
We further assume that:
\beq\eqnlab{ConvDiff_assumption}
    \infess_{\dom}\LRp{\half\Div{\vel}}\geq 0,
\eeq
to gain partial coercivity. The boundary conditions can be enforced by specifying the boundary operator $\FdsMBnd$ as
\beq\eqnlab{FdsMBnd_mixed}
    \begin{cases}
    \alpha=+1,\,\FdsMss=2{\varrho}\id_{\FdsNVarSol} + \vel\cdot\normal, \text{ on }\domBndDir,\\
    \alpha=-1,\,\FdsMss=2{\varrho}\id_{\FdsNVarSol} + \vel\cdot\normal,\text{ on }\domBndNmn\cap\domBndRb.\\
    \end{cases}
\eeq
In addition, we set $\varrho=\half$ on $\domBndDir$, $\varrho=0$ on $\domBndNmn$ and $\varrho=-\vel\cdot\normal$ on $\domBndRb$ and the conditions \eqnref{Abstrct_M1} and \eqnref{Abstrct_M2} hold. Finally, the following condition is also assumed:
\begin{align}\eqnlab{ConvDiff_flux_assumption}
    & \vel\cdot\normal\neq0\text{ on }\mort,\quad\forall\mort\in\skel,
    && \jump{\vel\cdot\normal}=0\text{ on }\elemBnd,\quad\forall\elem\in\domPart,
\end{align}
where we assume $\vel\cdot\normal$ is always continuous across element edges and does not vanish at edges (or mortars). As mentioned in the previous example, this condition can be relaxed by modifying the derivation of the upwind flux.
\begin{lemma}
The $hp$-HDG formulation for the PDE stated in \eqnref{ConvDiff} is well-posed both locally and globally if the assumptions \eqnref{ConvDiff_assumption} and \eqnref{ConvDiff_flux_assumption} hold.
\end{lemma}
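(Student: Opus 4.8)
The plan is to reduce the statement to Lemma~\lemref{Fds_two_partial_hp_localCon} and Theorem~\theoref{Fds_two_partial_hp_globalCon} by verifying, one at a time, that their hypotheses hold for the two-field realization \eqnref{Fds_block_mat_mixed}. Reading off $\FdsBk=\normalCon_{\FdsIndx}$, $\FdsCk=\velComp{\FdsIndx}$, $\FdsGaa=\diffCoeff^{-1}$ and $\FdsGas=\FdsGsa=0$, assumptions \itmref{A1}--\itmref{A3} follow from the symmetry of $\diffCoeff^{-1}$ and of each $\FdsAk$ together with $\diffCoeff,\diffCoeff^{-1},\vel,\Div\vel\in\Lsp{\infty}\LRp{\dom}$; assumption \itmref{A5} is immediate since $\FdsAkaa=0$; \itmref{A6} holds with $k_0=1/\norm{\diffCoeff}_{\Lsp{\infty}}$ because then $\diffCoeff^{-1}\geq k_0\id_{\FdsNVarAux}$; and \itmref{A4b} holds since $\FdsGas=\FdsGsa=0$ and each $\FdsBk=\normalCon_{\FdsIndx}$ is constant. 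For the partial coercivity \itmref{A4a}, I would note that $\FdsG+\FdsG^{T}+\FdsSum\FdsPartial\FdsAk$ is block diagonal with blocks $2\diffCoeff^{-1}$ and $\Div\vel$; the first dominates $2\mu_0\id_{\FdsNVarAux}$ for a uniform $\mu_0>0$, the second is nonnegative by \eqnref{ConvDiff_assumption}, so the full matrix dominates $2\mu_0\id^{o}_{\FdsNVarAux}$.

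The substantive step is to exhibit the stabilization matrix $\stabPar$ and to show that the flux belongs to case~\itmref{F1} of Lemma~\lemref{flux}. I would diagonalize $\FdsABnd=\begin{bmatrix}0&\normal\\\normal^{T}&\vel\cdot\normal\end{bmatrix}$: its spectrum consists of $0$ with multiplicity $d-1$ (eigenvectors $\LRp{\bs{w},0}$ with $\bs{w}\perp\normal$) and $\lambda_{\pm}=\tfrac12\LRp{\vel\cdot\normal\pm\sqrt{\LRp{\vel\cdot\normal}^{2}+4}}$ with eigenvectors $\LRp{\normal,\lambda_{\pm}}$. Assembling $\FdsAbsA=\FdsEigenVec\abs{\FdsEigenVal}\FdsEigenVec^{-1}$ and simplifying with $\lambda_{+}\lambda_{-}=-1$ and $\norm{\normal}_{2}=1$ gives the blocks $\FdsAbsAaa=\tfrac{2\lambda_{+}}{1+\lambda_{+}^{2}}\normal\normal^{T}$, $\FdsAbsAas=(\FdsAbsAsa)^{T}=\tfrac{\lambda_{+}^{2}-1}{1+\lambda_{+}^{2}}\normal$, and $\FdsAbsAss=\tfrac{\lambda_{+}^{4}+1}{\lambda_{+}\LRp{1+\lambda_{+}^{2}}}$. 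Since $\FdsBBnd=\normal$, condition \itmref{F1a} holds with the scalar $\FluxAssumpA=2\lambda_{+}/\LRp{\lambda_{+}^{2}-1}$, \itmref{F1b} with $\FluxAssumpB=\LRp{\lambda_{+}^{2}-1}/\LRp{1+\lambda_{+}^{2}}$, and \itmref{F1c} trivially because $\FdsBk=\normalCon_{\FdsIndx}$. This is precisely where the flux hypothesis \eqnref{ConvDiff_flux_assumption} is used: $\FluxAssumpA$ is well defined and invertible if and only if $\lambda_{+}^{2}\neq1$, and $\lambda_{+}^{2}=1$ if and only if $\vel\cdot\normal=0$, so the non-vanishing of $\vel\cdot\normal$ on every mortar is exactly what makes case~\itmref{F1} available (while \itmref{F2}, which would require $\FdsAbsAas=0$, fails there).

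Substituting $\FluxAssumpA$, $\FluxAssumpB$, and $\FdsBtBnd\FdsBBnd=\norm{\normal}_{2}^{2}=1$ into \eqnref{stabPar_F1} and simplifying collapses $\stabPar$ to the single positive scalar $\stabPar=1/\lambda_{+}=\tfrac12\LRp{\sqrt{\LRp{\vel\cdot\normal}^{2}+4}-\vel\cdot\normal}>0$. Since $\FdsCBnd=\vel\cdot\normal$, this yields $\tfrac12\FdsCBnd+\stabPar=\tfrac12\sqrt{\LRp{\vel\cdot\normal}^{2}+4}>0$, the strict positivity required by the partial-coercivity results. It then remains to observe that $\bigcap_{\FdsIndx=1}^{d}\text{Range}\LRp{\FdsBk}=\emptyset$ and $\Null{\FdsBk}=\LRc{\bs{0}}$ hold because $\FdsBk=\normalCon_{\FdsIndx}$, and that the boundary operator \eqnref{FdsMBnd_mixed} with the prescribed $\varrho$ satisfies \eqnref{Abstrct_M1}--\eqnref{Abstrct_M2}, equivalently $2\varrho+\vel\cdot\normal\geq0$ on each of $\domBndDir$, $\domBndNmn$, $\domBndRb$.

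With all hypotheses verified, Lemma~\lemref{Fds_two_partial_hp_localCon} gives local well-posedness and Theorem~\theoref{Fds_two_partial_hp_globalCon} gives global well-posedness. The main obstacle is the explicit eigen-decomposition of $\FdsABnd$ and the ensuing algebraic reduction of \eqnref{stabPar_F1} down to $\stabPar=1/\lambda_{+}$; everything else is routine substitution, and the only conceptually delicate point is recognizing that \eqnref{ConvDiff_flux_assumption} is exactly the condition guaranteeing the invertibility of $\FluxAssumpA$ in \itmref{F1a}.
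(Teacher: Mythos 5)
Your proposal is correct and follows essentially the same route as the paper: verify \itmref{A1}--\itmref{A3}, \itmref{A4a}--\itmref{A4b}, \itmref{A5}--\itmref{A6}, compute $\FdsAbsA$, exhibit $\FluxAssumpA$ and $\FluxAssumpB$ for case \itmref{F1}, reduce \eqnref{stabPar_F1} to $\stabPar=\half\LRp{\sqrt{\abs{\vel\cdot\normal}^2+4}-\vel\cdot\normal}$, and invoke Lemma~\lemref{Fds_two_partial_hp_localCon} and Theorem~\theoref{Fds_two_partial_hp_globalCon}. Your parametrization of the blocks via $\lambda_{+}$ is equivalent to the paper's expressions in $\vel\cdot\normal$ (since $\sqrt{(\vel\cdot\normal)^2+4}=\lambda_{+}+1/\lambda_{+}$), and you merely show more of the eigendecomposition and the algebra that the paper leaves implicit.
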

\begin{proof}
The assumptions \itmref{A1}-\itmref{A3}, \itmref{A4b}, and \itmref{A5}-\itmref{A6} hold true and can be easily verified. In addition, \itmref{A4a} is also true if the assumption \eqnref{ConvDiff_assumption} holds. On the other hand, the eigendecomposition of $\FdsABnd$ for the convection-diffusion problem is: 
\beq
    \FdsAbsA =  \frac{1}{\sqrt{\abs{\vel\cdot\normal}^2+4}}   
    \begin{bmatrix}
    2\normal\normal^T & \LRp{\vel\cdot\normal}\normal\\
    \LRp{\vel\cdot\normal}\normal^{T} & \abs{\vel\cdot\normal}^2+2\\
    \end{bmatrix}.
\eeq
Thus, with the following setting of $\FluxAssumpA=\frac{2}{\vel\cdot\normal}$ and 
$\FluxAssumpB=\frac{\vel\cdot\normal}{\sqrt{\abs{\vel\cdot\normal}^2+4}}
$,
the hypothesis \itmref{F1} of the numerical flux will hold since $\vel\cdot\normal\neq0$ across all elements. Then we have the stabilization parameter $\stabPar=\half\LRp{\sqrt{\abs{\vel\cdot\normal}^2+4} - \vel\cdot\normal}$. 
This leads to the following:
\bit
    \item $\half\FdsCBnd + \stabPar=\half\sqrt{\abs{\vel\cdot\normal}^2+4}>0$, and
    \item $\bigcap_{\FdsIndx=1}^d\text{Range}\LRp{\normalCon_{\FdsIndx}}=\emptyset$ and $\Null{\normalCon_{\FdsIndx}}=\LRc{\bs{0}}$ for $\forall\FdsIndx=1,\dots,d$. 
\eit
\,\\
By Lemma \lemref{Fds_two_partial_hp_localCon} and Theorem \theoref{Fds_two_partial_hp_globalCon}, we conclude that the HDG formulation for \eqnref{ConvDiff_flux_assumption} is well-posed both locally and globally.
\end{proof}

\subsection{Numerical settings and results}
For the numerical experiments of all problems stated before, \itmref{e1}-\itmref{e3},\itmref{hp1}, and \itmref{hb1}, they are defined on the square domain $\dom=[0,1]\times[0,1]$ except for \itmref{e3} which is defined on the rectangular domain $\dom=[0,8.4]\times[0,24]$. In addition, they are initially solved on the simple meshes as shown in Figure \figref{init_mesh} with $\polyElem=2$ for $\forall\elem\in\domPart$ at the $0$-th cycle of adaptation. 

\begin{figure}[!htb]
\centering
\subfloat[Initial mesh 1]{
	\includegraphics[width=55mm]{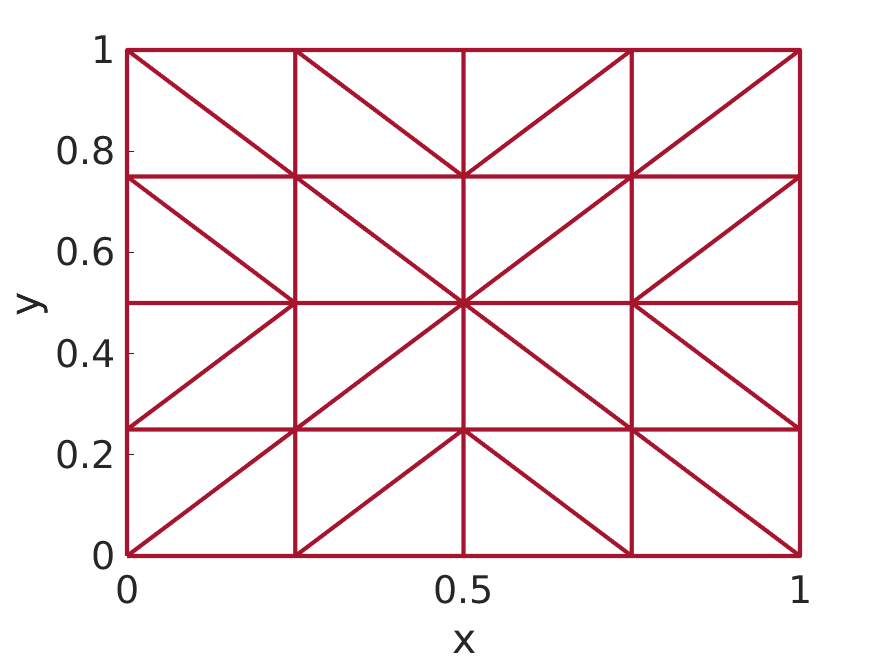}
}
\subfloat[Initial mesh 2]{
	\includegraphics[width=55mm]{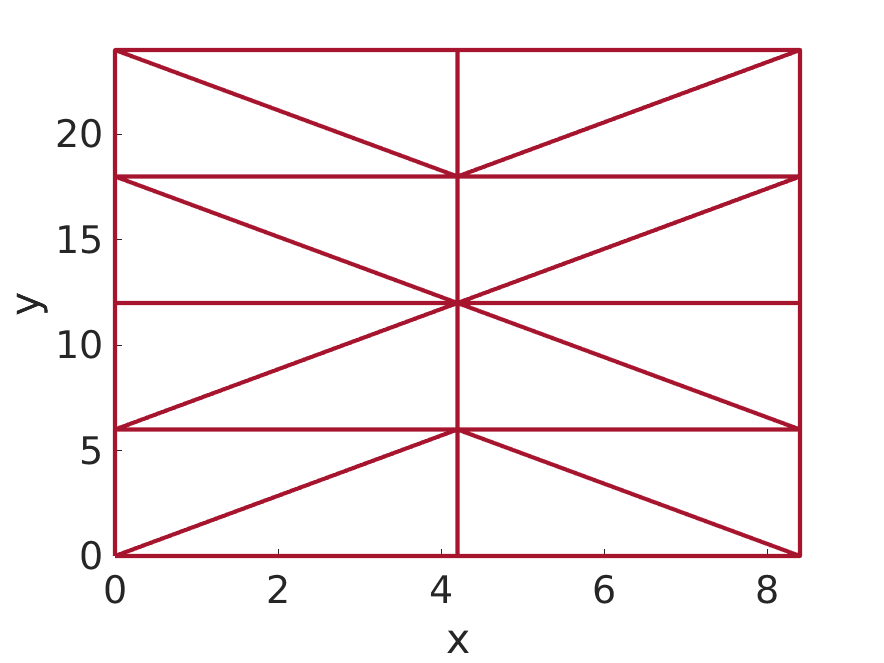}
}
\caption{(a) Initial mesh for all the numerical examples except \itmref{e3}. (b) Initial mesh used only for \itmref{e3}.}  
\figlab{init_mesh}
\end{figure}
\FloatBarrier

The solver developed in this work is built upon a \texttt{MATLAB} code discussed in \cite{Hesthaven2007}. For the numerical evaluation of integrals, the cubature rule is used over elements and the Gauss quadrature over the surfaces of elements. The adaptation is performed using Algorithm \algmref{hp_algm} with different error indicators stated in \eqnref{error_est} for all problems. Convergence histories of $\Lsp{2}$-error norm are also presented if the exact solutions are available. 

\subsection*{\itmref{e1} Poisson problem with a corner singularity}
Consider the Poisson problem stated in \eqnref{AniDiff} where the diffusivity coefficient $\diffCoeff$ is set to be the identity matrix, the forcing term $\forcing$ is set to be zero and the exact solution is given below:
\begin{equation}\eqnlab{e1_sol}
    \sol\LRp{\x,\y}=2\LRp{\x^2 + \y^2}^{-3/4}\x\y\LRp{1 - \x}\LRp{1 - \y}.
\end{equation}
Dirichlet boundary condition 
is applied to all the boundaries  such 
that the solution can satisfy \eqnref{e1_sol}. It can be shown (see \cite{Babuska1990}) that the solution presented in \eqnref{e1_sol} is singular at the origin $\LRp{\x,\y}=\LRp{0,0}$, but is regular in the rest of the domain $\dom$. The problem is also studied in \cite{Dolejsi2013,Dolejsi2015}.\\

We define the output functional for the adjoint formulation \eqnref{Fds_two_ad_hdg} by
\beq\eqnlab{e1_output}
    \Output{\FdsVarLump}:=-\LRa{\FdsVarAuxAppr\cdot\normal+{\FdsVarSolAppr+\FdsTrcVarSolAppr},1}_{\domBnd_{\text{left}}\cup\domBnd_{\text{bottom}}}.
\eeq
It is the integration of the solution over the boundaries where a singularity exists. As consequence, an error induced by the singularity should be detected by the adjoint. Hence, the error indicator will be small if the solution near the singularity is well-resolved.

Figure \figref{e1_result} shows the numerical results at the final cycle of adaptation where two different local error indicators are used to drive the adaptation process with tolerance $\tol=0.01$. The pattern of mesh configurations shown in Figure  \figref{e1_result} exactly meets our expectations where aggressive $h$-refinement takes place near the singularity while intensive $p$-refinement occurs in the other part of the domain. The mesh configurations produced by the two different methods are similar. Around the singularity, numerous small elements with low-order approximation are generated by the adaptation procedure, while away from it, 
a few large elements are generated with high-order approximation. 
However, overall, higher degrees of approximation are generated in the adjoint approach as opposed to the Doleji's approach.

Despite the presence of intense oscillation near the singularity, $hp$-adaptation forces the oscillation zone to shrink.
As demonstrated in Figure \figref{e1_result}, the numerically polluted area is significantly reduced to a tiny region at the final cycle of the adaptation. This improvement can also be seen  in Figure \figref{e1_conv} where convergence histories of $\Lsp{2}$-error norm of $\solAppr$ are plotted. Figure \figref{e1_conv} shows a convergence study with different tolerance values $\tol$ as well. 
For the lower tolerance $\tol=0.01$, both approaches show good convergence behavior, but Doleji's method requires fewer degrees of freedom than the adjoint method at a given error level.
For the higher tolerance $\tol=0.1$, however, the convergence rate for Doleji's approach is flattened out near $10^{3}$ degrees of freedom, whereas the adjoint counterpart still converges to the true solution with increasing degrees of freedom beyond $10^{3}$.

\subsection*{\itmref{e2} Anisotropic diffusion problem with the discontinuous Dirichlet boundary condition}
In this example, we consider a strongly anisotropic diffusion problem stated in \eqnref{AniDiff} where the diffusivity coefficient $\diffCoeff$ is set to be  $\misalign=\pi/4$ and $\anisoRatio=1000$, and the forcing term $\forcing$ is set to be zero. In addition, the following piecewise constant Dirichlet boundary condition is applied to all boundaries:
\begin{equation}\eqnlab{e2_bc}
    \DirVal=
    \begin{cases}
    1\quad\text{when }\x=1\text{ or }\y=0,\\
    0\quad\text{when }\x=0\text{ or }\y=1.
    \end{cases}
\end{equation}
It should be noted that there are discontinuities at the corners $(0,0)$ and $(1,1)$. This problem is also investigated in \cite{Umansky2005,Li2010}. A semi-analytic solution for this test problem can be found by a sequence of geometric transformations which are numerically computed using  \texttt{MATLAB} Schwarz-Christoffel toolbox \cite{Driscoll1996}. Given that the accuracy of the mapping is sufficient enough, we treat this semi-analytic solution as ``exact" to benchmark
against our $hp$-HDG solution.\\

The output functional used in the adjoint formulation for this problem is specified as:
\beq\eqnlab{e2_output}
    \Output{\FdsVarLump}:=-\LRa{\FdsVarAuxAppr\cdot\normal+\FdsVarSolAppr+\FdsTrcVarSolAppr,\cos{2\pi\phyCompX}\cos{2\pi\phyCompY}}_{\domBnd}
\eeq
where it is the integration of weighted solutions over all boundaries which are all Dirichlet boundaries in this testing case. As a result, the adjoint will try to detect the region covered by the sinusoidal function that is across the entire domain $\dom$. Such sinusoidal weighted output over a boundary is also considered in \cite{Woopen2014b}.

With the tolerance $\tol=0.01$, we plot the numerical results of both the approaches in Figure \figref{e2_result} at the adaptation cycle where $\Lsp{2}$-error norm of $\solAppr$ are similar (it is about $O(10^{-3})$). 
Due to the strong anisotropic feature, the solution behaves like convection where the amount of the flux transported in the specific direction is far more favorable than in the other direction. In this example, the dominant direction is 45 degrees from the $\x$-axis. As a result, ``discontinuity"-like behavior occurs within the domain along the diagonal. 
Due to the presence of discontinuous Dirichlet boundary data, Gibbs phenomenon \cite{gottlieb1997} occurs around the corners $(0,0)$ and $(1,1)$.
Similar to the numerical result shown in \itmref{e1}, the numerically polluted area can significantly be reduced though not removed completely via the adaptation process. This observation is also consistent with the convergence histories presented in Figure \figref{e2_conv} where the results obtained by different tolerances with various anisotropic ratios are shown. It can be observed in the convergence histories that a small tolerance value is required in this case to achieve acceptable convergence rates. However, the adaptation process stops due to the criteria $\max_{\elem\in\domPart}{\LocErrEstGen{h}}\geq\tol\GlbErrEstGen{h}$ for Doleji's approach while the process can proceed further for the adjoint approach and stops since  the maximum number of iterations is reached. Finally, we would like to point out that increasing the anisotropic ratio $\anisoRatio$ will make the problem harder to be solved in that the profile of the $\FdsVarSolAppr$ will tend to be even steeper. A shock-like front may form, which causes more $h$-refinement and hence more DOFs. In summary, both approaches are comparable in this case but more robust stopping criteria may be needed. The effort in designing robust stopping criteria may not be trivial and hence we will include it in our future work. 


\subsection*{\itmref{e3} Heterogeneous anisotropic diffusion problem with  discontinuous field $\diffCoeff$}
Here we  consider the problem stated in \eqnref{AniDiff}, but with a piecewise constant diffusivity coefficient $\diffCoeff$ and Neumann/Robin mixed type  boundary conditions
\beq\eqnlab{e3_robin}
    \diffCoeff\Grad{\sol}\cdot\normal + \rtCoeff\sol = \BCVal,\quad\text{on}\ \domBndNmn\cup\domBndRb, 
\eeq
where $\domBnd=\domBndNmn\cup\domBndRb$. Given that $\diffCoeff$ is now a function of the physical domain $\LRp{\x, \y}$, the problem is not only being anisotropic but also heterogeneous. 
The heat conduction in non-homogeneous materials can be modeled using this PDE, 
where $\sol$ is the unknown temperature field. For example, a so-called ``battery problem" \cite{Demkowicz2006}, is this type of model and examined here. The domain is then modeled as a battery composed of five different materials which are indexed as numbers $1$-$5$ and specified in Table \tabref{e3_geometry}. The values of $\diffCoeff$ for different materials and the corresponding forcing term $\forcing$ are summarized in Table \tabref{e3_coeff}. The boundary data is given in Table \tabref{e3_bc}.\\

The output functional used here
is specified as:
\beq\eqnlab{e3_output}
    \Output{\FdsVarLump}:=-\LRa{\FdsVarAuxAppr\cdot\normal+\FdsVarSolAppr+\FdsTrcVarSolAppr,1}_{\domBnd_{\text{left}}},
\eeq
which is the integration of the solution over the left boundary which is actually a Neumann boundary. That is, the disturbance induced by the numerical error within the domain $\dom$ will also be propagated here and hence can be detected by the adjoint.  

Figure \figref{e3_result} shows the numerical results of the two methods at the final cycle of adaptation 
with the tolerance $\tol=0.01$. This problem is challenging in that the coefficient of the PDE is discontinuous across the entire domain. Without aligning the mesh skeleton with these discontinuities, a serious Gibb's phenomenon is easily induced.
This is the situation in this experiment where each material does have a smooth solution, enjoying high order degree approximation, while each material has discontinuous diffusivity across the interfaces between the materials, resulting in severe anisotropy and requiring $h$-refinement. 
If we simply employ an isotropic $h$-refinement, then the numerically polluted area may still spread to some extent if the discontinuous diffusivity is not sufficiently resolved. One remedy for solving this issue is to use anisotropic $h$-refinement \cite{dolejsi1998,Li2010,ceze2013,Balan2016,bartos2019}. However, such refinement requires a more delicate error estimator/indicator and needs to be equipped with a proper algorithm for generating a mesh. This task is left for our future work.

\begin{table}[!htb]
\centering
\begin{tabular}{cc}
\hline
Material & Region \\
\hline
1 & $[0,\,8.4]\times[0,\,0.8)$, $(8,\,8.4]\times[0.8,\,23.2]$, $[0,\,8.4]\times(23.2,\,24]$\\
2 & $[0,\,6.1)\times[1.6,\,3.6)$, $[0,\,6.1)\times[18.8,\,21.2)$\\
3 & $[0,\,6.1)\times[3.6,\,18.8)$\\
4 & $[6.1,\,6.5)\times[0.8,\,21.2)$\\
5 & $[0,\,6.1)\times[0.8,\,1.6)$, $(6.5,\,8)\times[0.8,\,21.2)$, $[0,8)\times[21.2,\,23.2)$\\
\hline
\end{tabular}
\caption{The geometry of materials of the battery modeled by \itmref{e3}.}\tablab{e3_geometry}
\end{table}

\begin{table}[!htb]
\parbox{.5\linewidth}{
\centering
\begin{tabular}{cccccc}
\hline
Material & $\diffCoeffCompX$ & $\diffCoeffCompY$ & $\anisoRatio$ & $\misalign$ & $\forcing$\\
\hline
1 & 25.0& 25.0   & 1.00 & 0.0& 0.0\\
2 & 7.0 & 0.8    & 8.75 & 0.0& 0.0\\
3 & 5.0 & 0.00001& $5.00\times10^5$& 0.0& 1.0\\
4 & 0.2 & 0.2    & 1.00 & 0.0& 1.0\\
5 & 0.05& 0.05   & 1.00 & 0.0& 0.0\\
\hline
\end{tabular}
\caption{Data of diffusivity coefficient $\diffCoeff$ and of forcing term $\forcing$ for \itmref{e3}.}\tablab{e3_coeff}
}
\hfill
\parbox{.45\linewidth}{
\centering
\begin{tabular}{ccc}
\hline
BC data & $\rtCoeff$ & $\BCVal$\\
\hline
Left    & 0.0   & 0.0 \\
Up      & 1.0   & 3.0 \\
Right   & 2.0   & 2.0 \\
Bottom  & 3.0   & 1.0 \\
\hline
\end{tabular}
\caption{Boundary data for \itmref{e3}.}\tablab{e3_bc}
}
\end{table}\tablab{e3_bc_data}


\subsection*{\itmref{hp1} Steady-state linear advection with  variable speed and  discontinuous inflow condition}
In this experiment, we are going to solve the linear advection problem described in \eqnref{LinAdv} along with the advection velocity $\vel=(1+\sin{(\pi \y)},\ 2)$ and the inflow data that is given as:
\beq\eqnlab{hp1_bc}
    \DirVal = \begin{cases}
    1,\quad\text{for }\x = 0,\ 0 \leq \y \leq 1,\\
    \sin^6(2 \pi \x),\quad\text{for } 0 \leq \x \leq 0.5,\ \y = 0,\\
    0,\quad\text{for } 0.5 \leq \x \leq 1,\ \y = 0,
    \end{cases}
\eeq
where there is a discontinuity occurring right at the origin. The problem is also studied in \cite{Tan2015,Muralikrishnan2017} and can be solved exactly by using the method of characteristics.\\

The output functional $\OutFcnl$ for the adjoint formulation is specified as:
\beq\eqnlab{hp1_output}
    \Output{\FdsVarLump}:=\LRp{\half\LRp{-\vel\cdot\normal-\abs{\vel\cdot\normal}}\widehat{\sol}_h,\cos{2\pi\phyCompX}\cos{2\pi\phyCompY}}_{\domBnd},
\eeq
which is the integration of the solution over the outflow boundaries. The design of the output functional \eqnref{hp1_output} is similar to the one we used in \eqnref{e2_output} for the elliptic problem. The wave-carrying nature of the hyperbolic problem implies that $\widehat{\sol}_h$ can be well-solved at outflow boundaries only if every data point is properly resolved when it is traced back from these boundaries to inflow ones. As a result, it can be expected that the error indicator can be minimized if $\widehat{\sol}_h$ is well-resolved at these boundaries. 

In Figure \figref{hp1_result}, 
we see the numerical results of both methods at the final cycle of adaptation. 
Due to the discontinuous inflow boundary data and the nature of hyperbolic PDEs, we have a shock formed within the domain $\dom$. It is very challenging to remove
the oscillation induced by Gibbs' phenomena unless all the discontinuities are well aligned with the skeleton of the mesh along with the first order of approximation. Given that we only consider isotropic $h$-refinement here, there is no way to meet this condition. However, we can still narrow down the region of shock-induced oscillation by the $hp$-adaptation process. As we expect, the aggressive $h$-refinement is performed around the shock. However, adaptation is carried out in a less aggressive way in the near-outflow region. We numerically found that this less aggressive behavior becomes less dominant when using the adjoint approach. It is numerically found that the aggressive $h$-refinement from the inflow to outflow boundary can still be obtained if we use finer initial mesh or  higher initial $\polyElem$.

Figure \figref{hp1_conv} presents the convergence histories of $\Lsp{2}$-error norm of $\solAppr$ using both approaches along with two different tolerance values. It can be observed that, again, the convergence rate can only be improved (not zero anymore) if the tolerance is set to be small enough. Otherwise, the two approaches are comparable in this example as well.

\subsection*{\itmref{hb1} Steady-state convection-diffusion problem with  discontinuous inflow condition}
In this example, we focus on the steady-state convection-diffusion equation \eqnref{ConvDiff} and especially examine the problem first proposed by Eriksson and Johnson in \cite{Eriksson1993}. This problem is also investigated in \cite{Chan2014} using a discontinuous Petrov-Galerkin method. The diffusivity matrix is set to be $\diffCoeff:=\epsilon\id_2$ where $\id_2$ is $2\times2$ identity matrix, and the velocity field is stated as $\vel:=\LRp{0,1}$. The boundaries are given as follows:
\begin{align*}
    \InBnd = \LRc{(\x,\y): \x=0,\, 0\leq \y \leq 1},\\
    \OutBnd = \LRc{(\x,\y): \x=1,\, 0\leq \y \leq 1},\\
    \ZeroBnd = \LRc{(\x,\y): 0\leq \x \leq 1,\, \y=0\text{ or }1}.
\end{align*}
The boundary conditions read:
\beq
\begin{split}
    &\DirVal = 0,\\
    &\BCVal_{N,R} = \begin{cases}
    \LRp{\vel\sol_0+\aux_0}\cdot\normal\text{ on }\InBnd,\\
    0\text{ on }\ZeroBnd,\\
    \end{cases}
\end{split}
\eeq
and $\sol_0:=\sol(0,\y)$, $\aux_0:=\aux(0,\y)$. Further, the function $\sol_0$ is set to be a discontinuous function :
\begin{equation}\eqnlab{hb1_bc}
    \sol_0(\y)=
    \begin{cases}
    (\y-1)^2,\quad \y > 0.5,\\
    -\y^2,\quad \y\leq 0.5.
    \end{cases}
\end{equation}
The Eriksson-Johnson problem can be solved by the separation of variables and the solution is:
\begin{equation}\eqnlab{hb1_sol}
    \sol(\x,\y) = C_0 + \sum_{i=1}^{\infty} C_i\frac{e^{s_2(\x-1)} - e^{s_1(\x-1)}}{e^{-s_2} - e^{-s_1}}\cos{\LRp{i\pi \y}},
\end{equation}
where
\begin{align*}
    &C_i = \int_0^1 2\sol_0\cos{\LRp{i\pi \y}}\,d\y,\\
    &s_{1,2} = \frac{1\pm\sqrt{1+4\epsilon\sigma_i}}{2\epsilon},\\
    &\sigma_i = \epsilon i^2\pi^2.
\end{align*}
In this testing case, we actually have $\InBnd=\domBndDir$, $\OutBnd=\domBndRb$, and $\ZeroBnd=\domBndNmn$. Note that we do not have a closed form of the exact solution. Therefore, for a convergence study, we approximate $\sol_0$ using the first 20 terms of the series in \eqnref{hb1_sol}. Similarly, $\aux_0$ can be approximated in the same way. The problem is tricky because there is not only discontinuous inflow data but also a boundary layer developed around the outflow boundary. In addition, the smaller the diffusivity coefficient $\epsilon$ is, the thinner the boundary layer. To have the layer well-resolved, there must be a mesh with proper resolutions. \\

The output functional $\OutFcnl$ for the adjoint formulation is specified as:
\beq\eqnlab{hb1_output}
    \Output{\FdsVarLump}:=\LRp{\FdsTrcVarSolAppr,1}_{\domBnd_{\text{right}}}
\eeq
where the integration is defined over the outflow boundary $\OutBnd$. The reason why we choose this output functional is two folds. 
First, a boundary layer is developed contingent on the outflow boundary. 
Second, inspired by the design of \eqnref{hp1_output}, there is still a wave-carrying feature in a mixed type of problem though the wave is dissipated while it travels. In brief, we should expect that these two issues can be properly addressed by reducing the value of the error indicator associated with \eqnref{hb1_output}.

Figure \figref{hb1_result} shows the numerical results of the two approaches with diffusivity $\epsilon=10^{-3}$ and tolerance $\tol=0.05$ at the final cycle of adaptation. 
As discussed previously, there is a boundary layer (sharp gradient in solution $\solAppr$) around the outflow boundary $\x=1$. In this example, the resulting $\polyElem$ maps for the two methods are significantly different. For Doleji's approach, 
even if there is another steep gradient at the inflow boundary when discontinuous Dirichlet boundary data is applied, the sharp gradient created in the boundary layer dominates the $hp$-adaptation process.
As a result, less refinement is done in the inflow region. Specifically speaking, 
because the local error indicator value $\LocErrEst{h}{\text{Doleji}}$  surrounding the boundary layer is higher than that of the area around the inflow boundary, less refinement is performed there unless an even smaller tolerance value is provided.
Furthermore, 
the method used a low degree of approximation and failed to detect smooth regions.
On the other hand, for the adjoint approach, sharp gradient features in both inflow and outflow boundaries are captured by $h$-refinement. However, we need more mesh refinement at the outflow boundary to have the boundary layer well-resolved. 
The improperly resolved boundary layer was the cause of the adjoint approach's flattened convergence rate in Figure \figref{hb1_conv}.

Figure \figref{hb1_conv} shows the convergence histories of $\Lsp{2}$-error norm of $\solAppr$ for two different approaches together with three different diffusivities. To capture the thinner boundary layer, we employ a more strict tolerance value when the solution is less diffusive and therefore more advection-dominated.
In this testing case, Doleji's approach slightly outperforms the adjoint approach in terms of accuracy and convergence rate.

\begin{figure}[!htb]
\centering
\subfloat[Tolerance $\tol=0.1$]{
	\includegraphics[width=80mm]{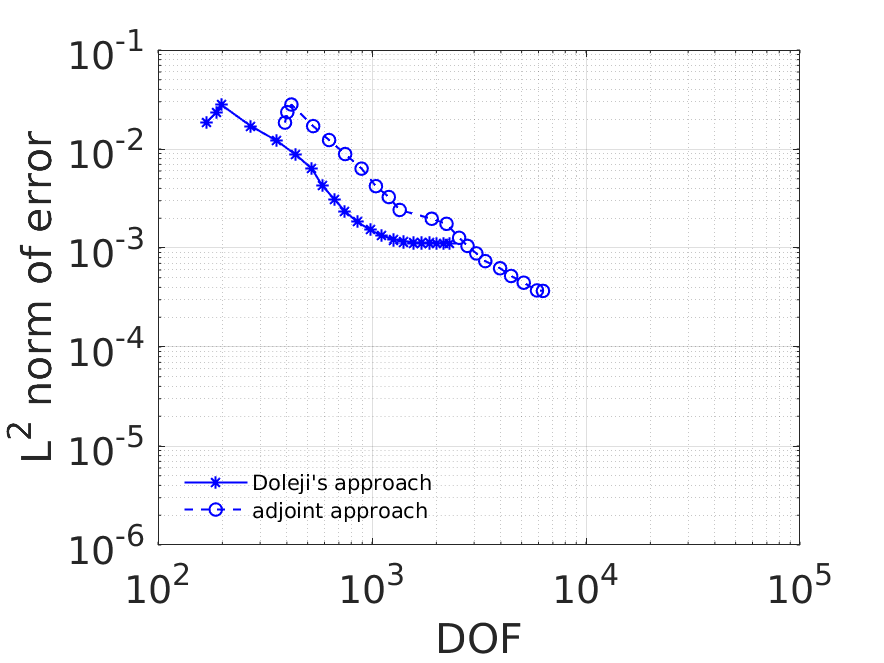}
}
\subfloat[Tolerance $\tol=0.01$]{
	\includegraphics[width=80mm]{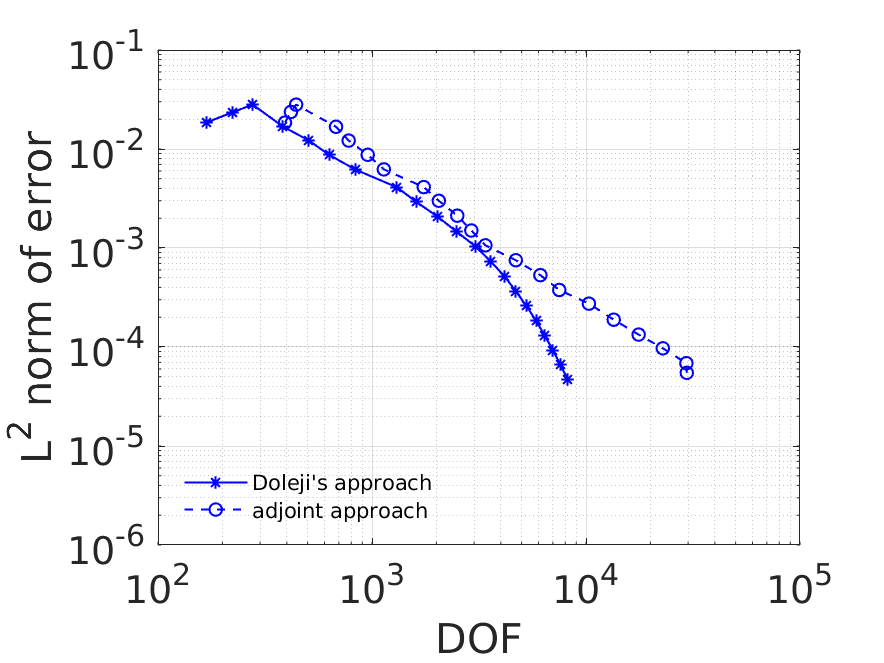}
}
\caption{Convergence histories of $\Lsp{2}$-error norm of $\solAppr$ by numerically solving the elliptic problem \itmref{e1} that admits the exact solution stated in \eqnref{e1_sol} with different tolerance (a) $\omega=0.1$ and (b) $\omega=0.01$. In each plot, the results obtained by Doleji's and adjoint approaches are presented. In particular, the adjoint approach is derived with regard to the output functional defined in \eqnref{e1_output}.}
\figlab{e1_conv}
\end{figure}

\begin{figure}[!htb]
\centering
\subfloat{
	\includegraphics[width=80mm,valign=t]{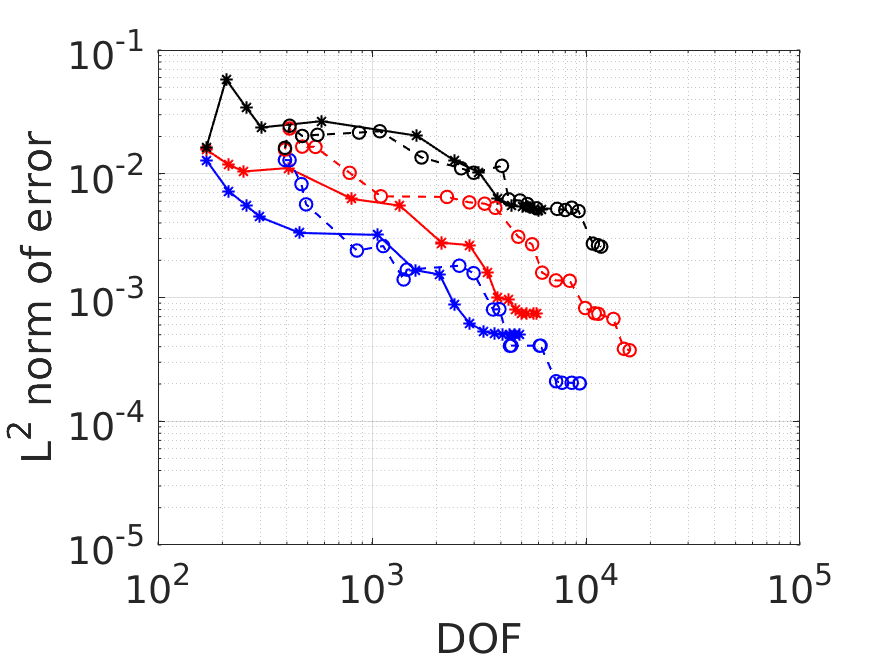}
}
\subfloat{
	\includegraphics[width=80mm,valign=t]{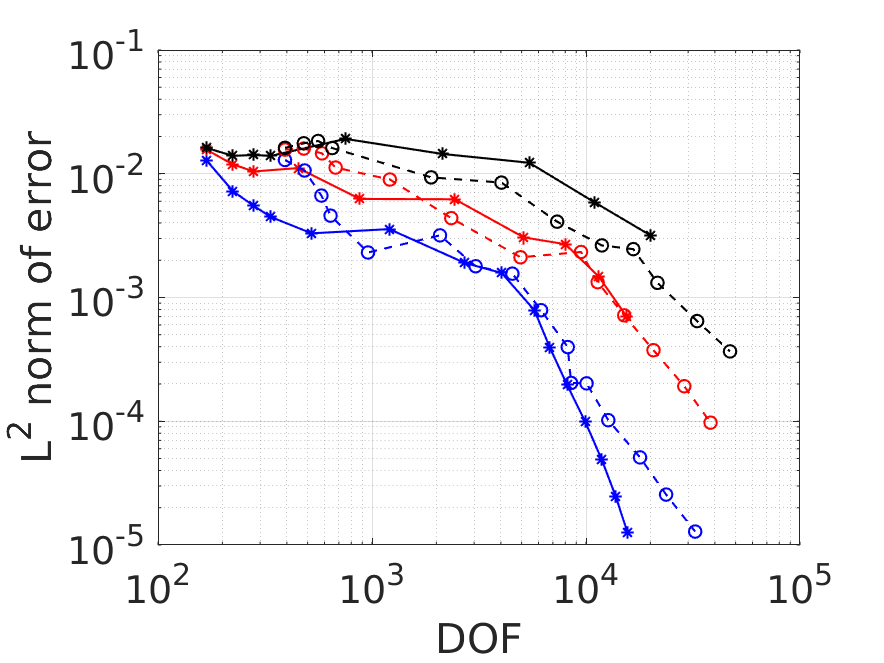}
}\\
\setcounter{subfigure}{0}
\subfloat[Tolerance $\tol=0.1$]{
	\includegraphics[width=50mm]{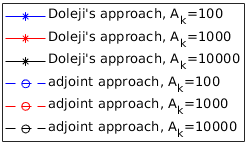}
}
\subfloat[Tolerance $\tol=0.01$]{
	\includegraphics[width=50mm]{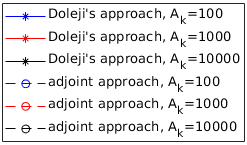}
}
    \caption{Convergence histories of $\Lsp{2}$-error norm of $\solAppr$ by numerically solving the elliptic problem \itmref{e2} where the semi-analytic solution can be obtained with the aid of accurate mappings. Different tolerance values of (a) $\tol=0.1$ and (b) $\tol=0.01$ are used. In each plot, the results with various anisotropy ratios $\anisoRatio$ (denoted by different colors) that are obtained by different approaches (denoted by different marks) are presented. The adjoint approach is derived with regard to the output functional defined in \eqnref{e2_output}.}
\figlab{e2_conv}
\end{figure}

\begin{figure}[!htb]
\centering
\subfloat[Tolerance $\tol=0.1$]{
	\includegraphics[width=80mm]{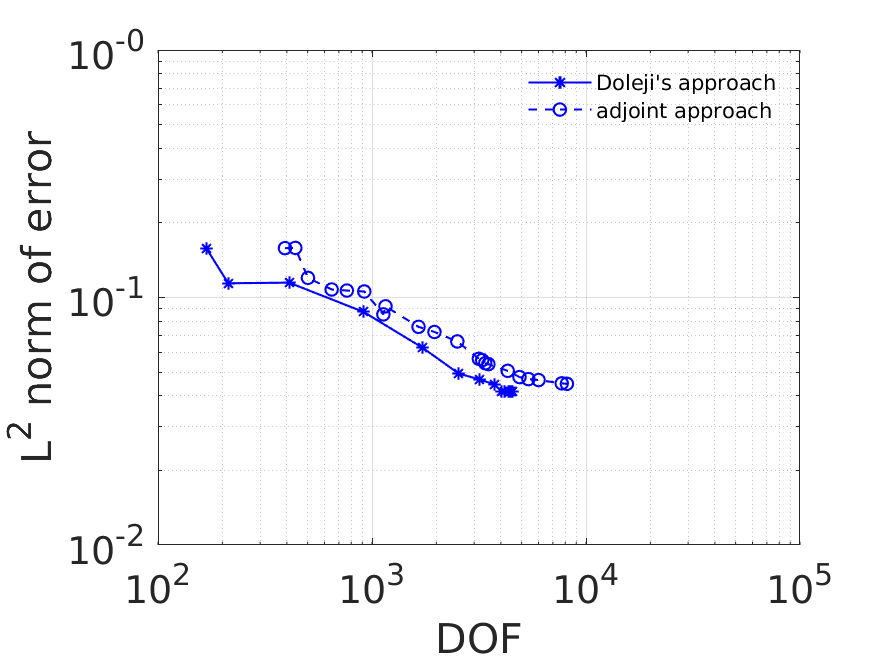}
}
\subfloat[Tolerance $\tol=0.05$]{
	\includegraphics[width=80mm]{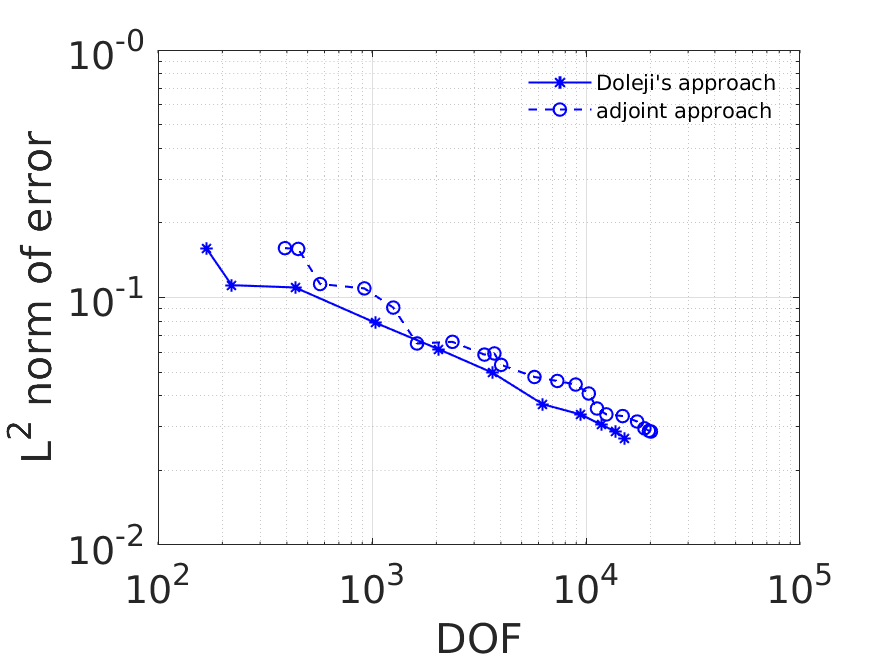}
}
\caption{Convergence histories of $\Lsp{2}$-error norm of $\solAppr$ by numerically solving the hyperbolic problem \itmref{hp1}. The exact solution can be found by the method of characteristics. The results obtained by two different approaches with two different tolerances of (a) $\tol=0.1$ and (b) $\tol=0.05$ are presented. The adjoint approach is derived with regard to the output functional defined in \eqnref{hp1_output}.}
\figlab{hp1_conv}
\end{figure}

\begin{figure}[!htb]
\centering
\subfloat{
	\includegraphics[width=80mm]{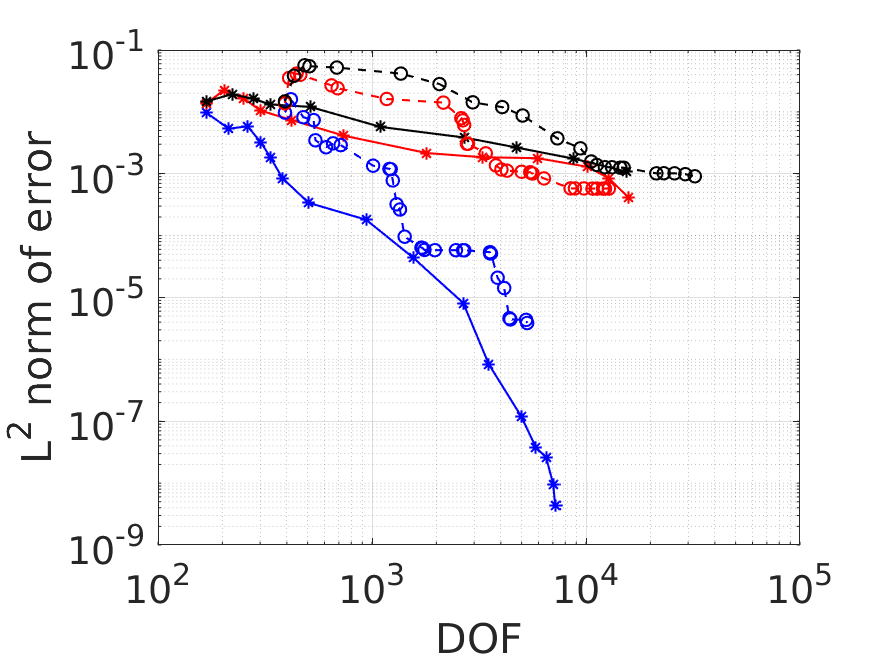}
}
\subfloat{
	\includegraphics[width=50mm]{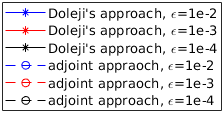}
}
\setcounter{subfigure}{0}
\caption{Convergence histories of $\Lsp{2}$-error norm of $\solAppr$ by numerically solving the mixed problem \itmref{hb1} that admits the exact solution stated in \eqnref{hb1_sol}. The results with various diffusivity values $\epsilon$ (denoted by different colors) that are obtained by different approaches (denoted by different marks) are presented. For different diffusivity values $\epsilon=10^{-2},10^{-3}$ and $10^{-4}$, different tolerance $\tol=0.1,0.05$ and $0.01$ are used respectively. The adjoint approach is derived with regard to the output functional defined in \eqnref{hb1_output}}
\figlab{hb1_conv}
\end{figure}

\begin{figure*}
  \centering
\begin{tabular}{cccc}
& Doleji's approach & Adjoint approach\\
\rotatebox[origin=c]{90}{$\solAppr$ (Surface)} &
\includegraphics[trim=15 0 35 10,clip,width=5cm,align=c]{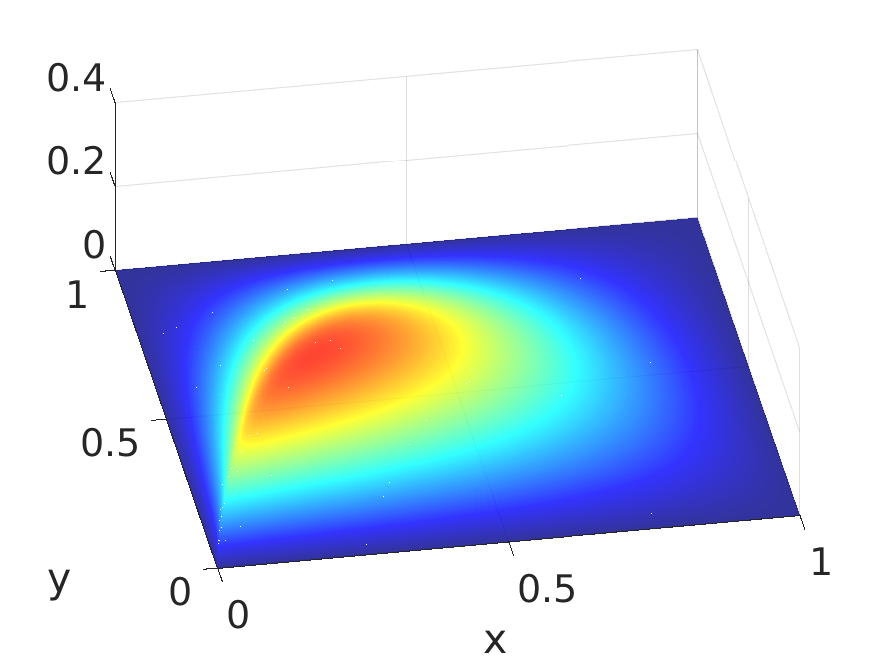}&
\includegraphics[trim=15 0 35 10,clip,width=5cm,align=c]{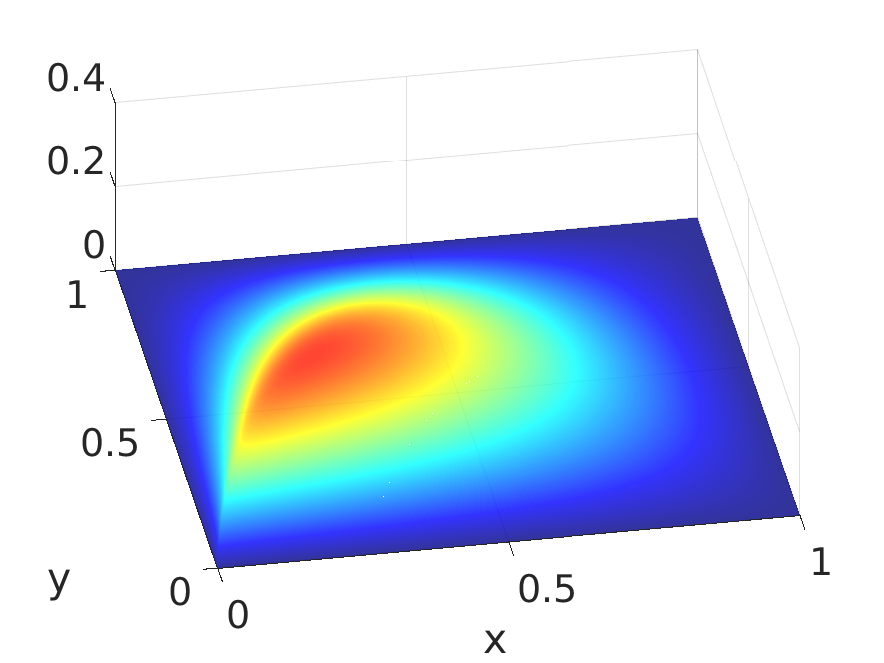}&
\includegraphics[trim=160 0 90 0,clip,width=0.7cm,align=c]{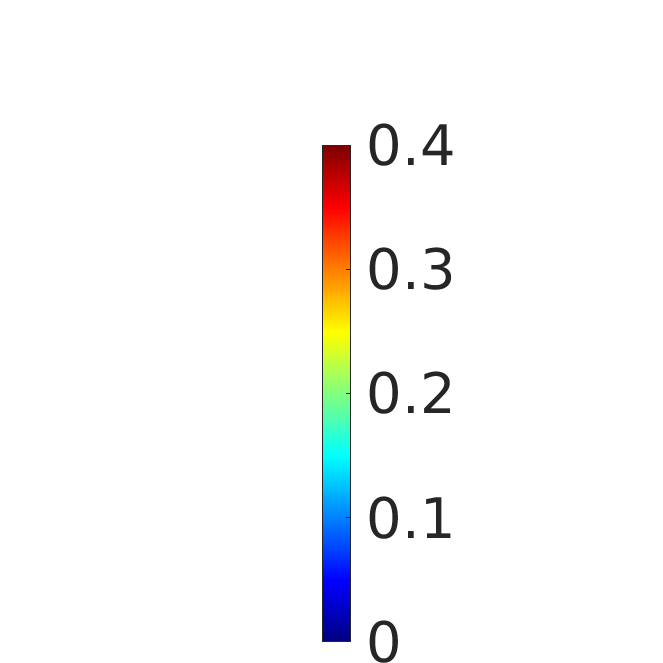}\\
\rotatebox[origin=c]{90}{$\solAppr$ (Contour)}&
\includegraphics[trim=23 15 35 10,clip,width=5cm,align=c]{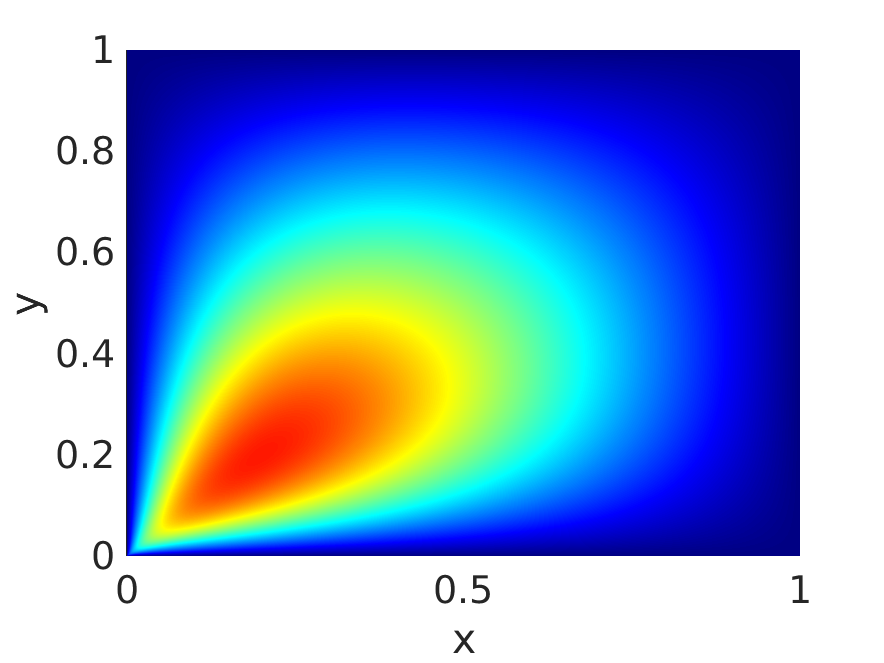}&
\includegraphics[trim=23 15 35 10,clip,width=5cm,align=c]{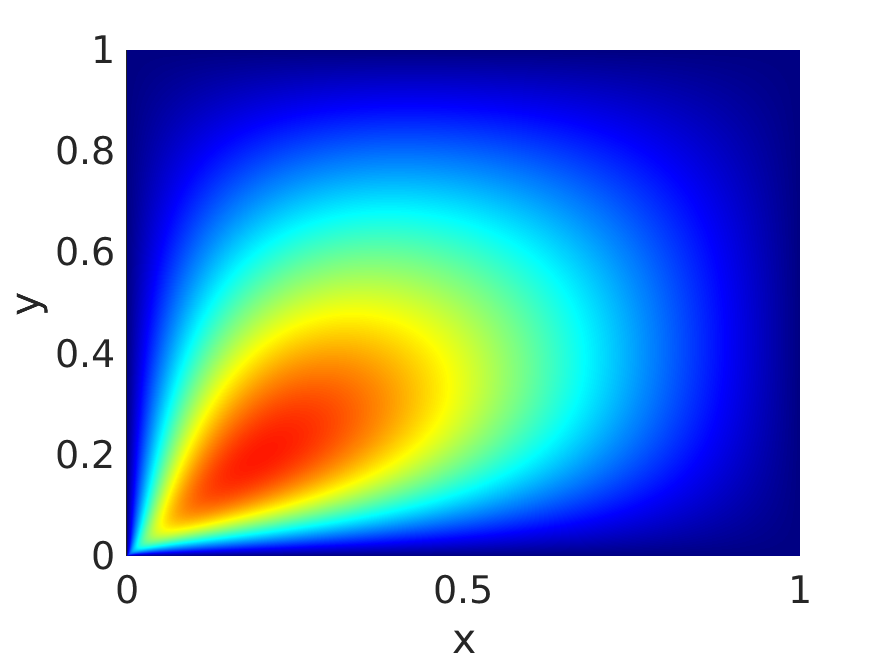}&
\includegraphics[trim=160 0 90 0,clip,width=0.7cm,align=c]{Figures2/Numerical Results/E1/u_colobar_iso_sig.png}\\
\rotatebox[origin=c]{90}{$\polyElem$ map}&
\includegraphics[trim=15 5 35 10,clip,width=5cm,align=c]{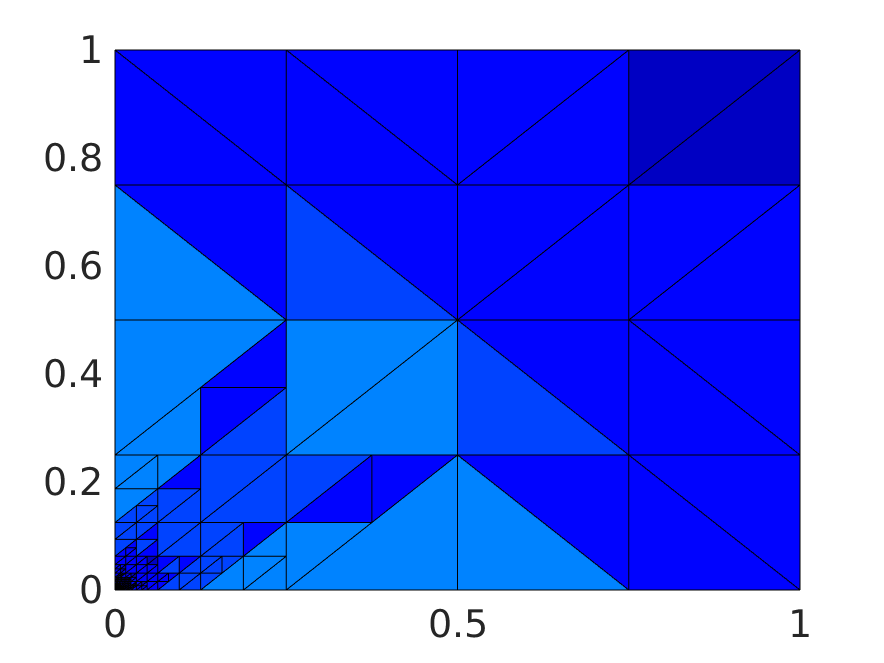}&
\includegraphics[trim=15 5 35 10,clip,width=5cm,align=c]{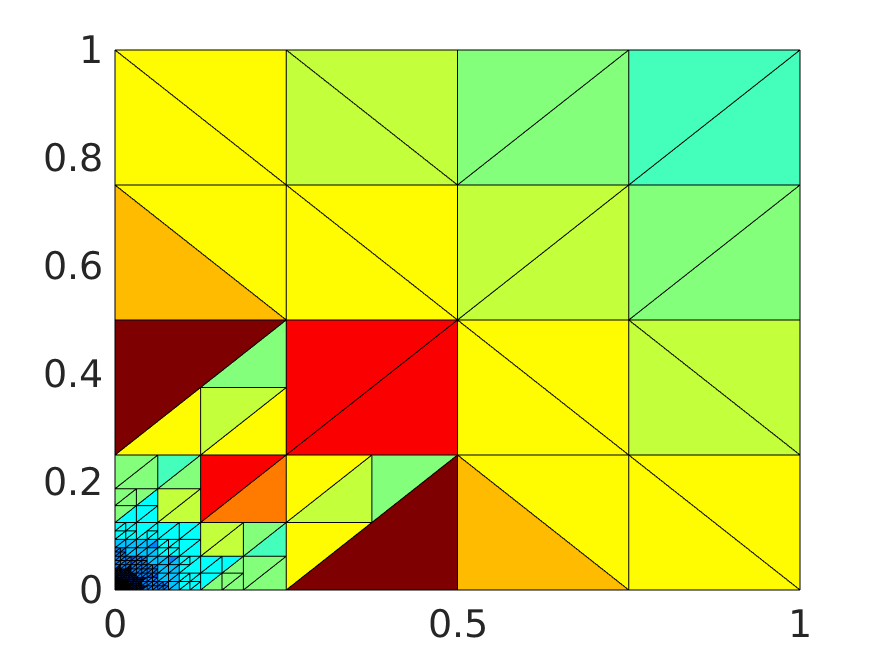}&
\includegraphics[trim=160 0 90 0,clip,width=0.7cm,align=c]{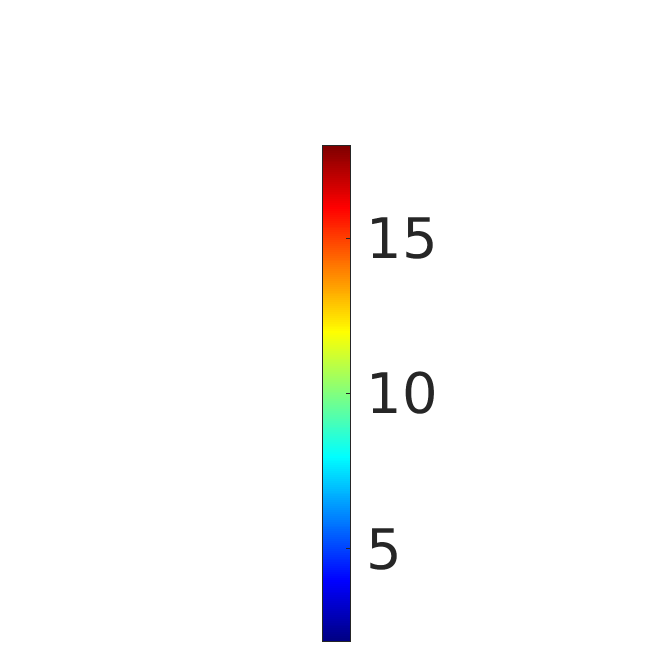}\\
\rotatebox[origin=c]{90}{$\abs{\sol-\solAppr}$}&
\includegraphics[trim=20 15 35 10,clip,width=5cm,align=c]{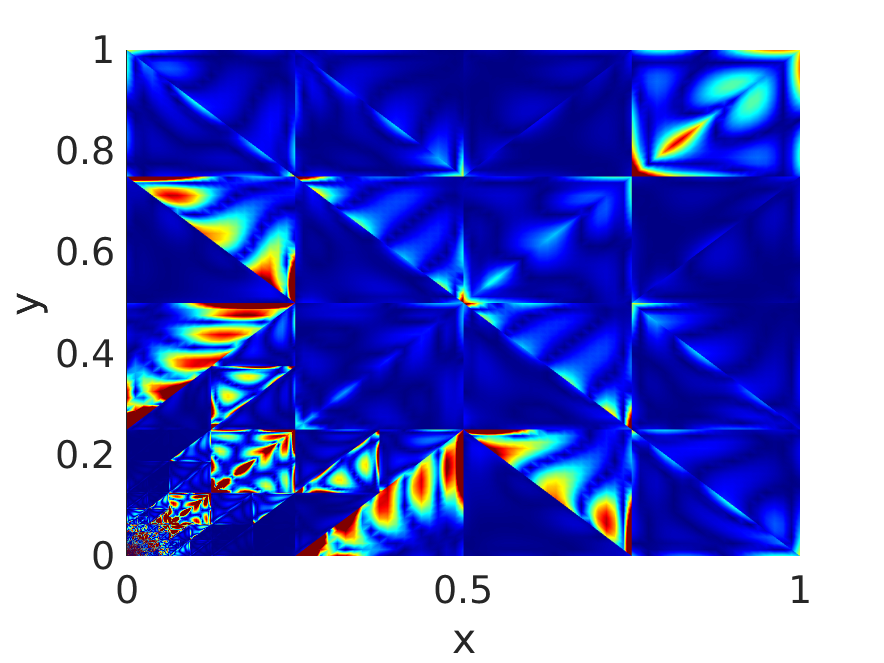}&
\includegraphics[trim=20 15 35 10,clip,width=5cm,align=c]{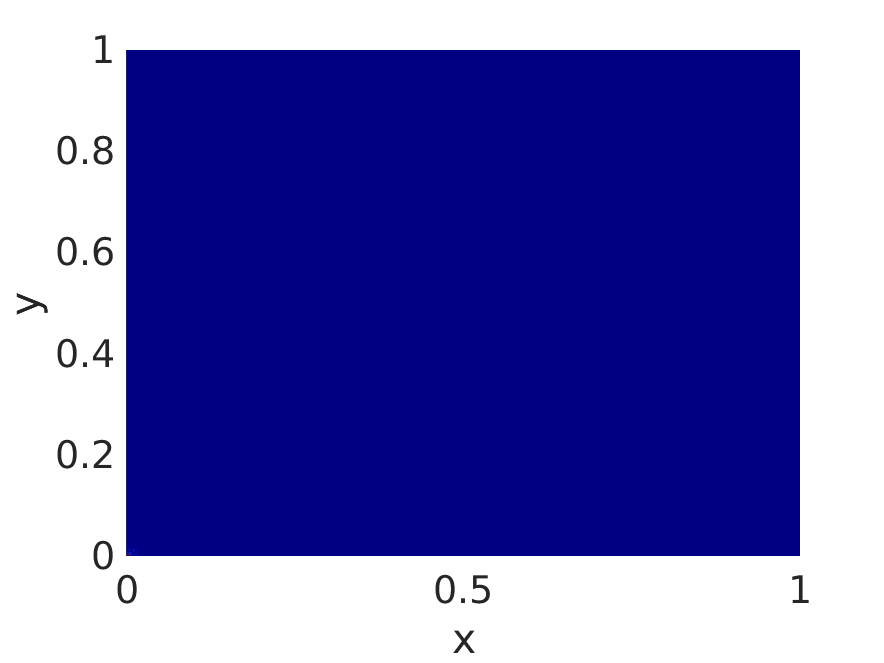}&
\includegraphics[trim=155 0 75 0,clip,width=0.7cm,align=c]{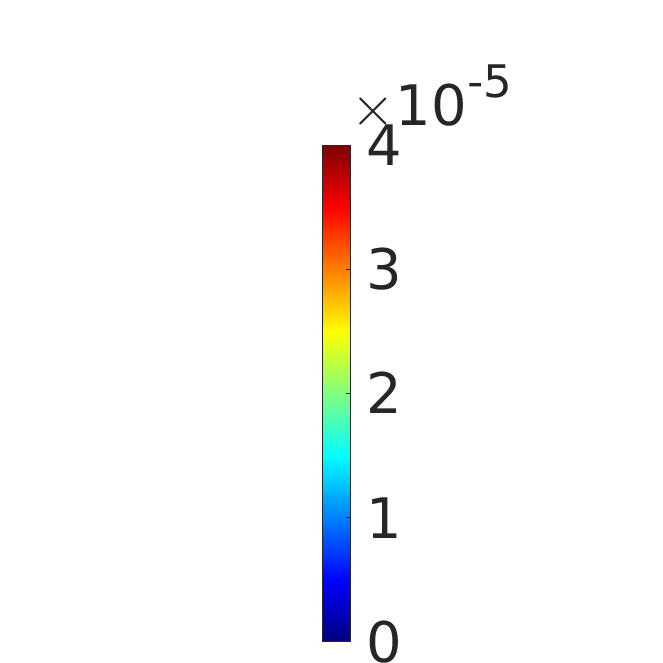}\\
\end{tabular}
    \caption{Numerical results at the final cycle of adaptation for the elliptic problem \itmref{e1} that admits the exact solution stated in \eqnref{e1_sol}. Two different local error indicators/estimations are used to drive the adaptation process with tolerance $\tol=0.01$. The left column uses Doleji's approach \eqnref{local_error_est_doleji} while the right column uses the adjoint approach \eqnref{local_error_est_adjoint} with regard to the output functional defined in \eqnref{e1_output}. Surface and contour plots of the numerical solution are present in the first two rows, the mesh configuration along with the arrangement of the degree of approximation $\polyElem$ is presented in the third row, and the absolute error is presented in the fourth row.}
    \figlab{e1_result}
\end{figure*}

\begin{figure*}
  \centering
\begin{tabular}{cccc}
& Doleji's approach & Adjoint approach\\
\rotatebox[origin=c]{90}{$\solAppr$ (Surface)} &
\includegraphics[trim=15 0 35 10,clip,width=5cm,align=c]{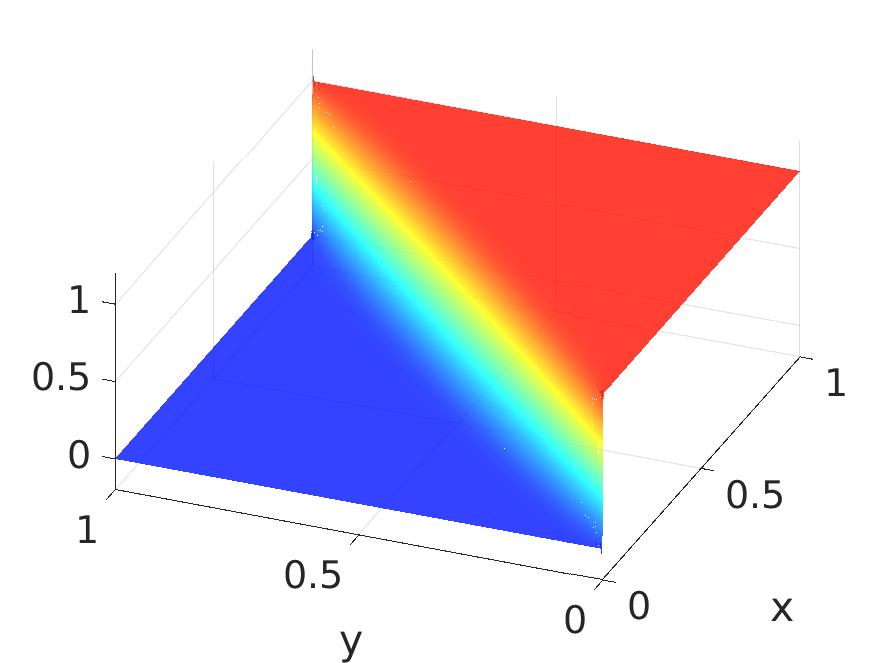}&
\includegraphics[trim=15 0 35 10,clip,width=5cm,align=c]{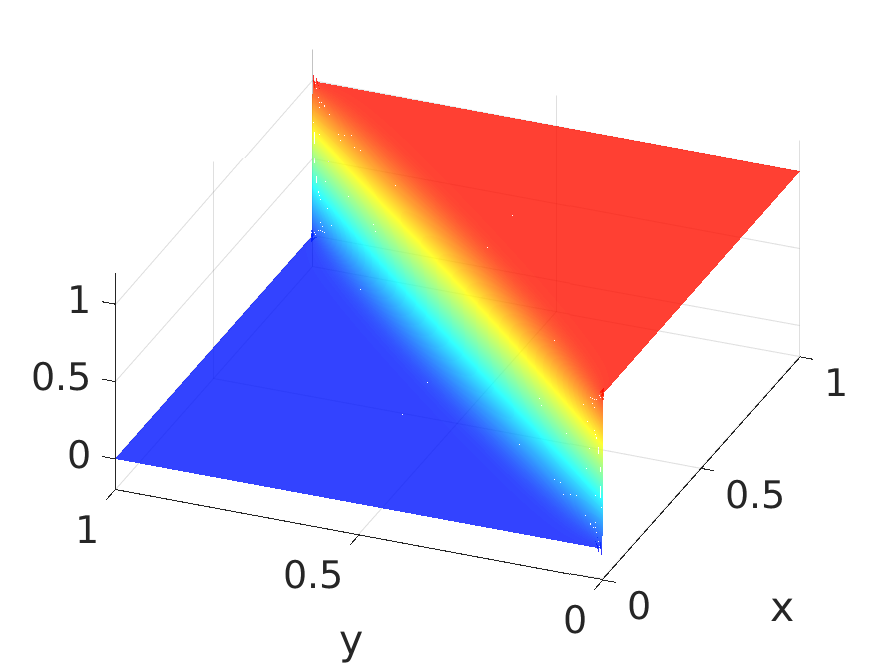}&
\includegraphics[trim=160 0 90 0,clip,width=0.7cm,align=c]{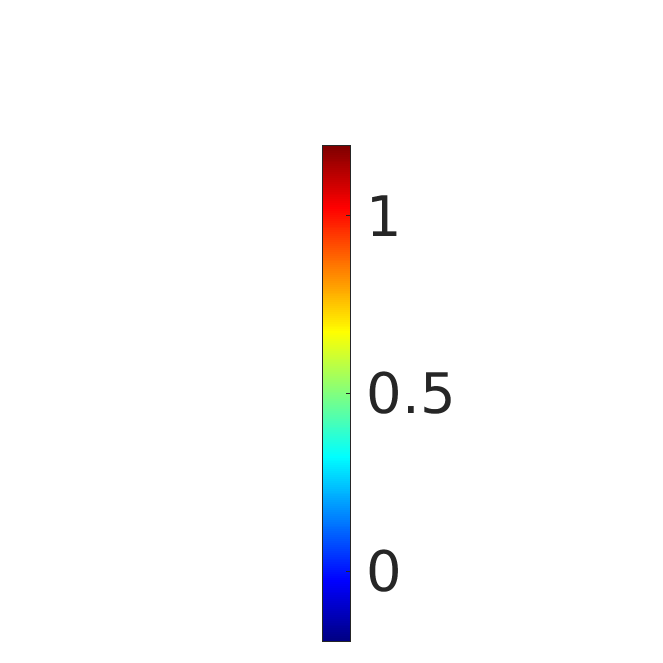}\\
\rotatebox[origin=c]{90}{$\solAppr$ (Contour)}&
\includegraphics[trim=23 15 35 10,clip,width=5cm,align=c]{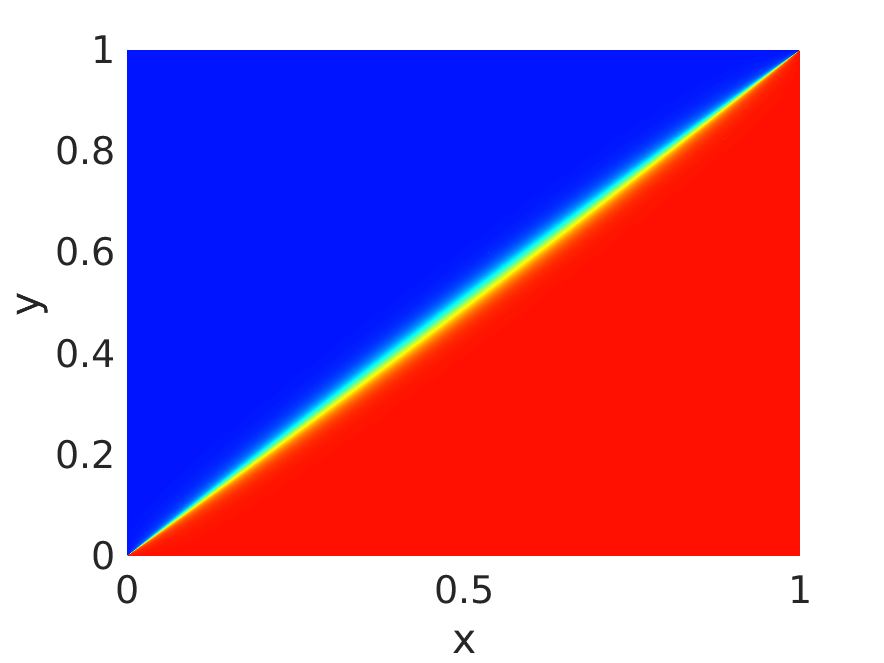}&
\includegraphics[trim=23 15 35 10,clip,width=5cm,align=c]{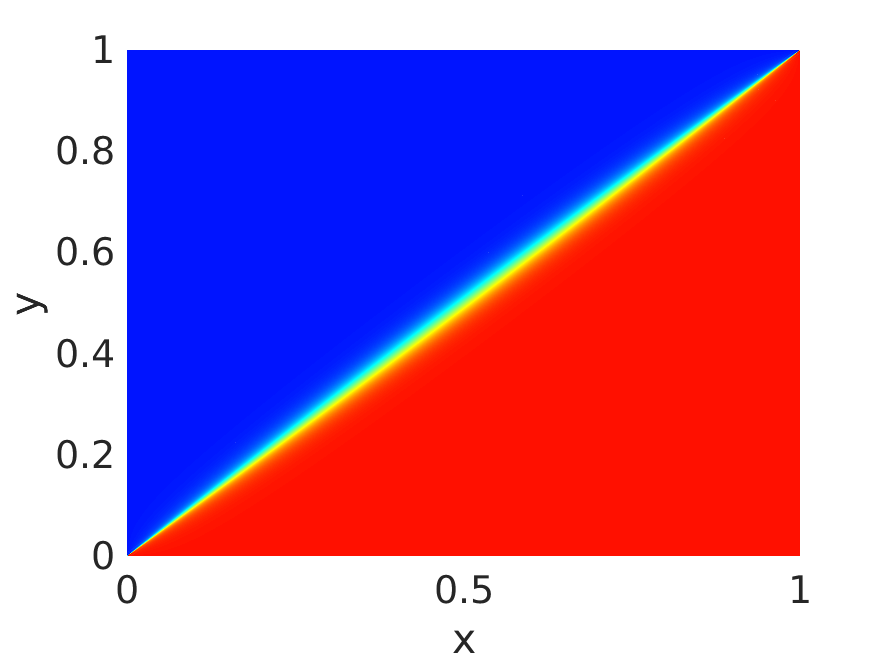}&
\includegraphics[trim=160 0 90 0,clip,width=0.7cm,align=c]{Figures2/Numerical Results/E2/u_colobar_aniso_sig.png}\\
\rotatebox[origin=c]{90}{$\polyElem$ map}&
\includegraphics[trim=15 5 35 10,clip,width=5cm,align=c]{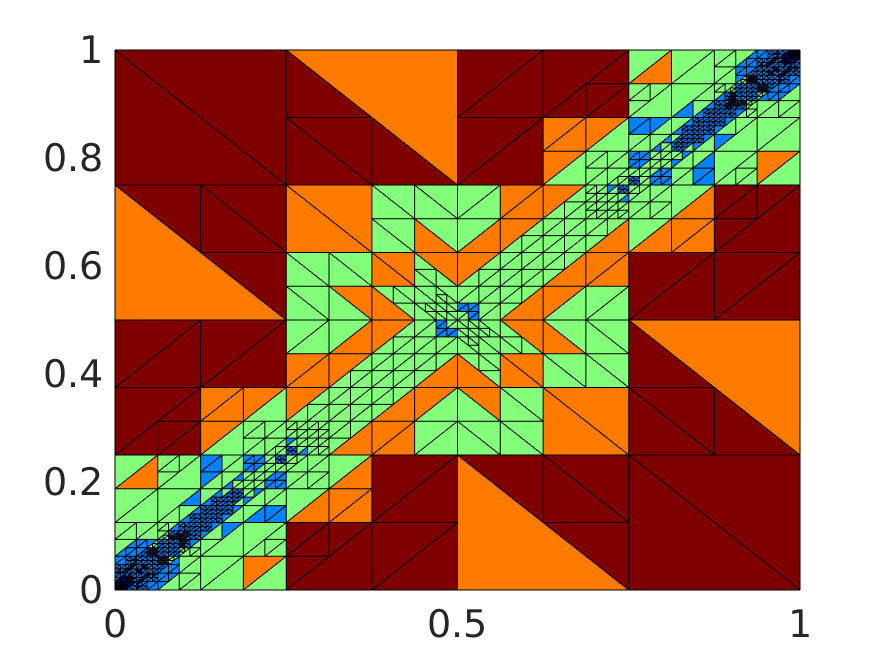}&
\includegraphics[trim=15 5 35 10,clip,width=5cm,align=c]{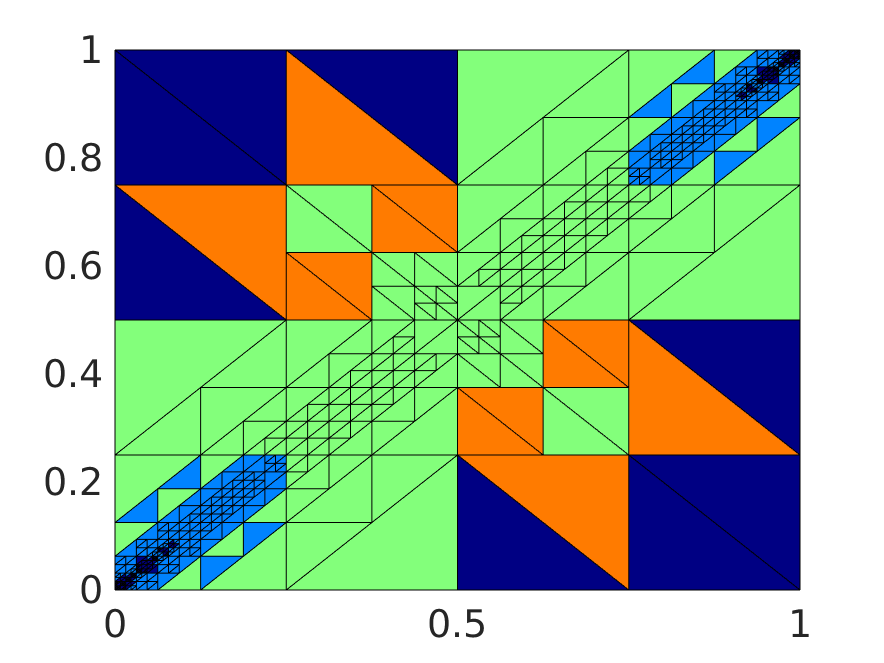}&
\includegraphics[trim=160 0 90 0,clip,width=0.7cm,align=c]{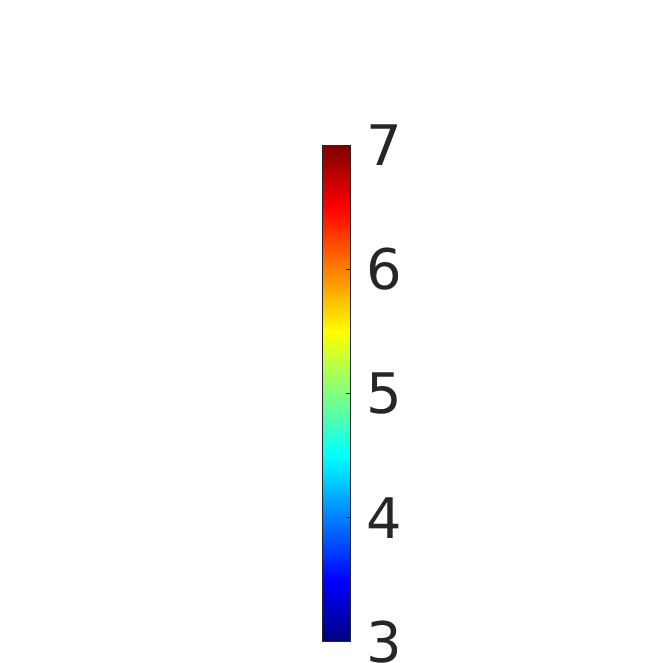}\\
\rotatebox[origin=c]{90}{$\abs{\sol-\solAppr}$}&
\includegraphics[trim=20 15 35 10,clip,width=5cm,align=c]{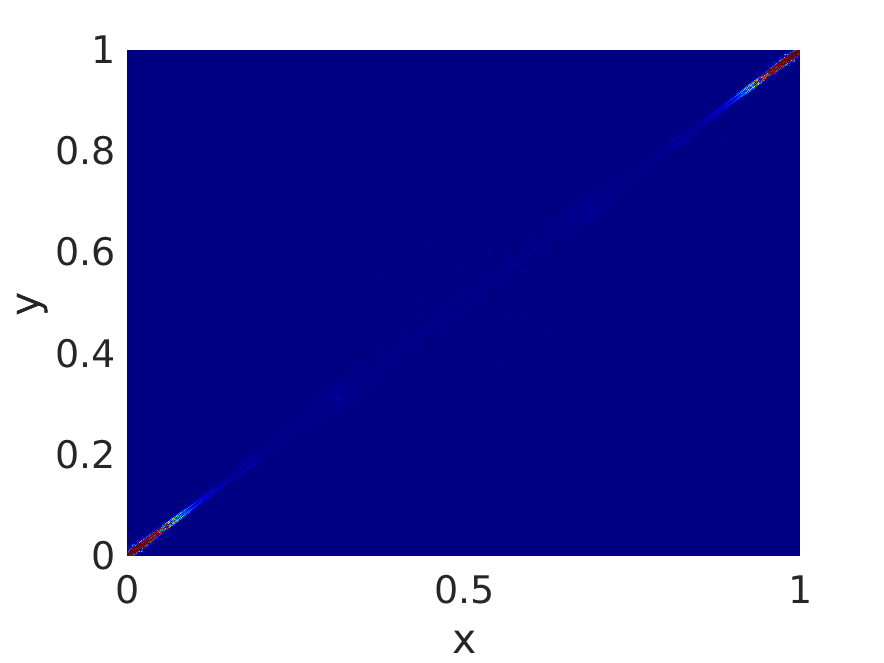}&
\includegraphics[trim=20 15 35 10,clip,width=5cm,align=c]{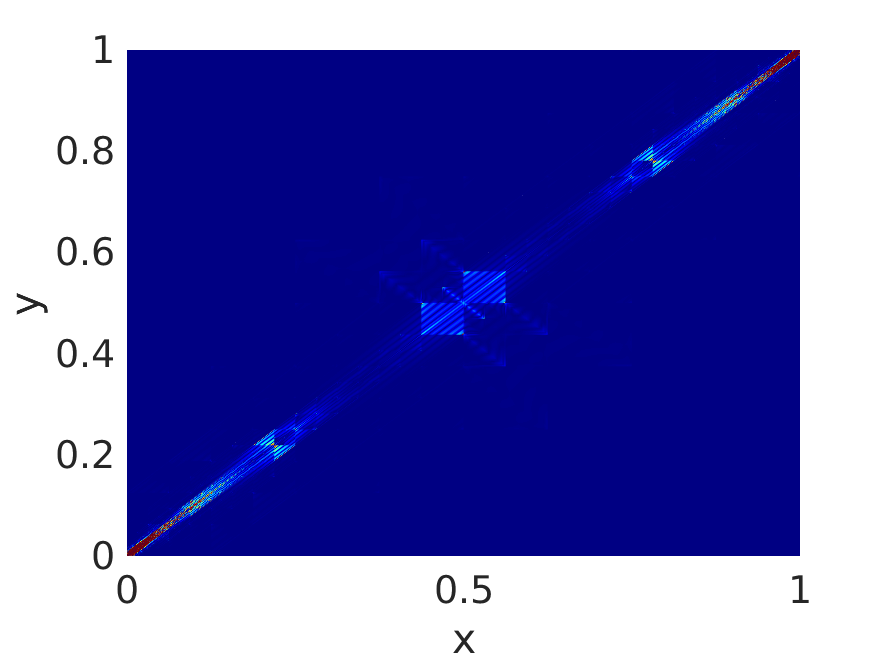}&
\includegraphics[trim=155 0 75 0,clip,width=0.7cm,align=c]{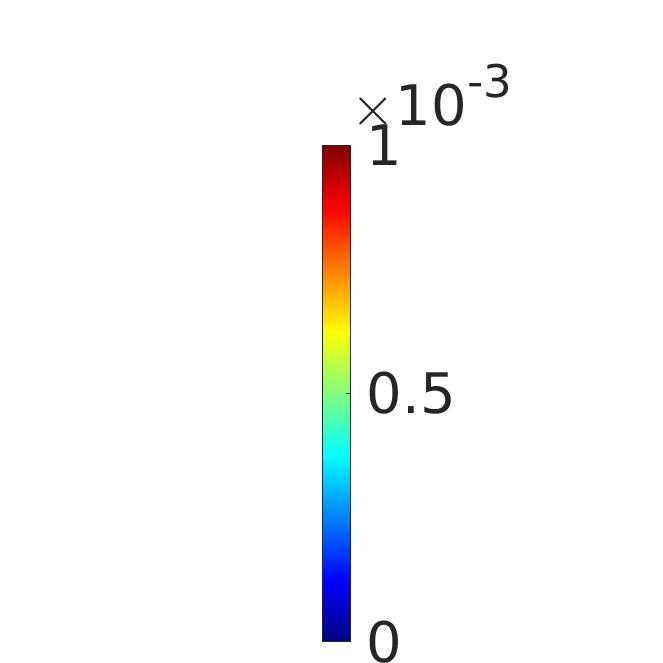}\\
\end{tabular}
    \caption{Numerical results at the adaptation cycle where $\Lsp{2}$-error norm of $\solAppr$ are similar, $O(10^{-3})$, for both approaches by which the adaptation process is driven with tolerance $\tol=0.01$. The results are obtained by solving the elliptic problem \itmref{e2} where anisotropic ratio $\anisoRatio=1000$ and the semi-analytic solution can be obtained with the aid of accurate mappings. The left column uses Doleji's approach \eqnref{local_error_est_doleji} while the right column uses the adjoint approach \eqnref{local_error_est_adjoint} with regard to the output functional defined in \eqnref{e2_output}. Surface and contour plots of the numerical solution are present in the first two rows, the mesh configuration along with the arrangement of the degree of approximation $\polyElem$ is presented in the third row, and the absolute error is presented in the fourth row.}
    \figlab{e2_result}
\end{figure*}

\begin{figure*}
  \centering
\begin{tabular}{cccc}
& Doleji's approach & Adjoint approach\\
\rotatebox[origin=c]{90}{$\solAppr$ (Surface)} &
\includegraphics[trim=15 0 35 10,clip,width=5cm,align=c]{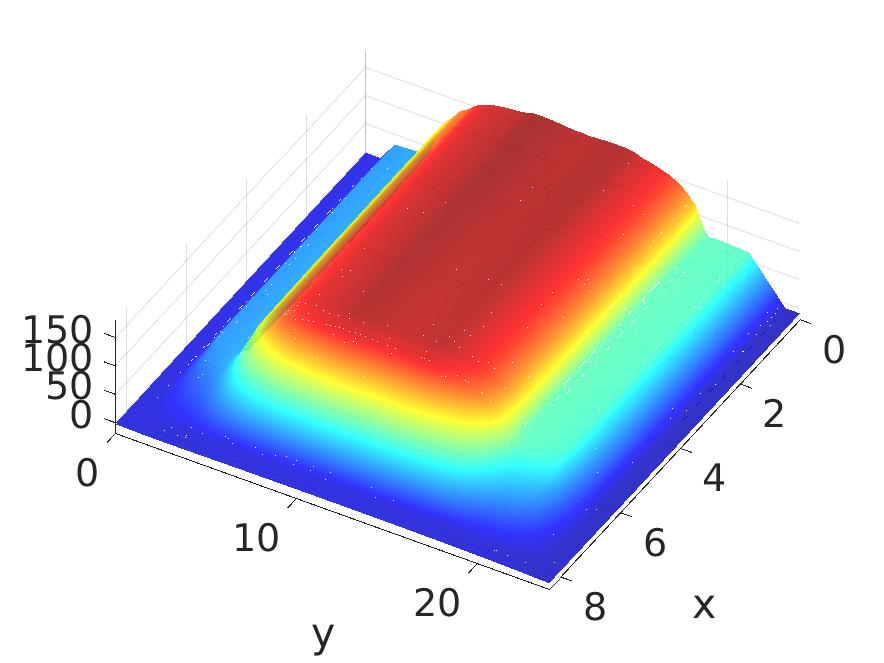}&
\includegraphics[trim=15 0 35 10,clip,width=5cm,align=c]{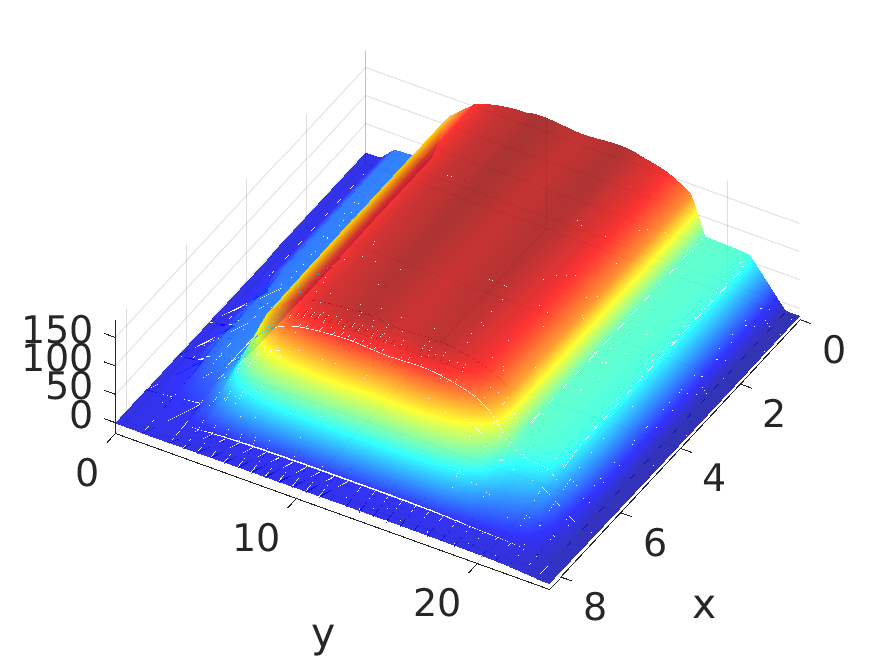}&
\includegraphics[trim=160 0 90 0,clip,width=0.7cm,align=c]{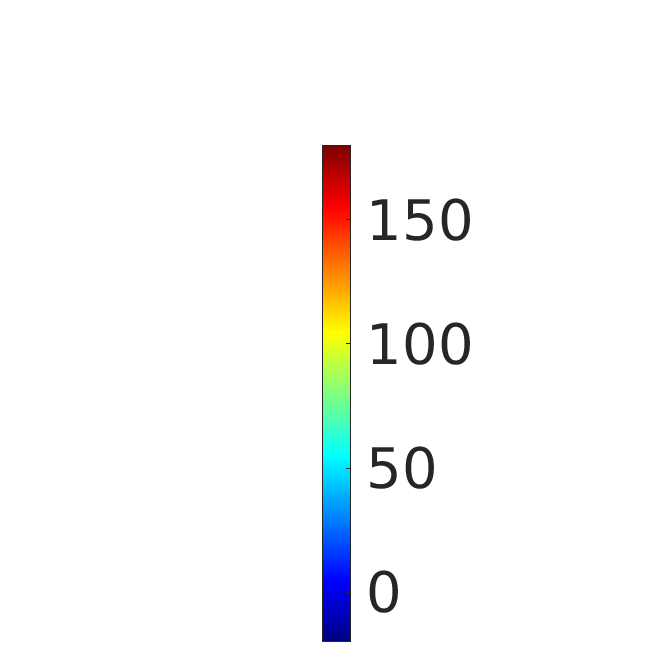}\\
\rotatebox[origin=c]{90}{$\solAppr$ (Contour)}&
\includegraphics[trim=23 15 35 10,clip,width=5cm,align=c]{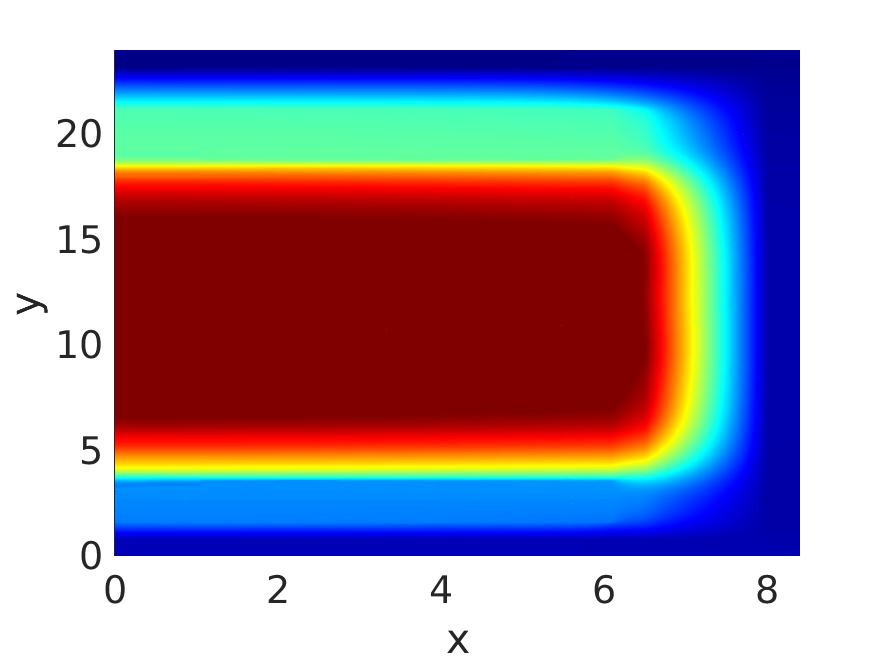}&
\includegraphics[trim=23 15 35 10,clip,width=5cm,align=c]{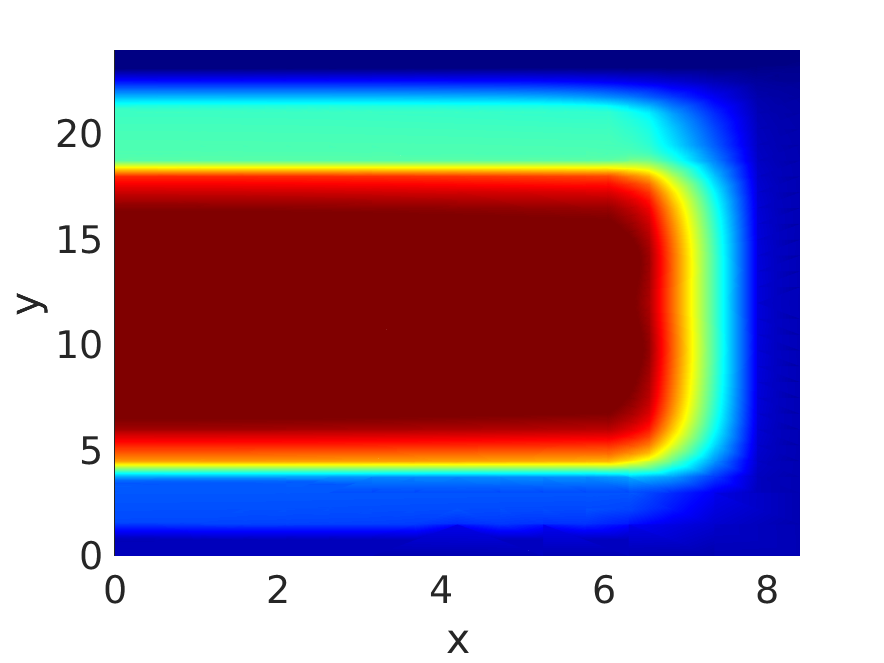}&
\includegraphics[trim=160 0 90 0,clip,width=0.7cm,align=c]{Figures2/Numerical Results/E3/u_colobar_battery.png}\\
\rotatebox[origin=c]{90}{$\polyElem$ map}&
\includegraphics[trim=15 5 35 10,clip,width=5cm,align=c]{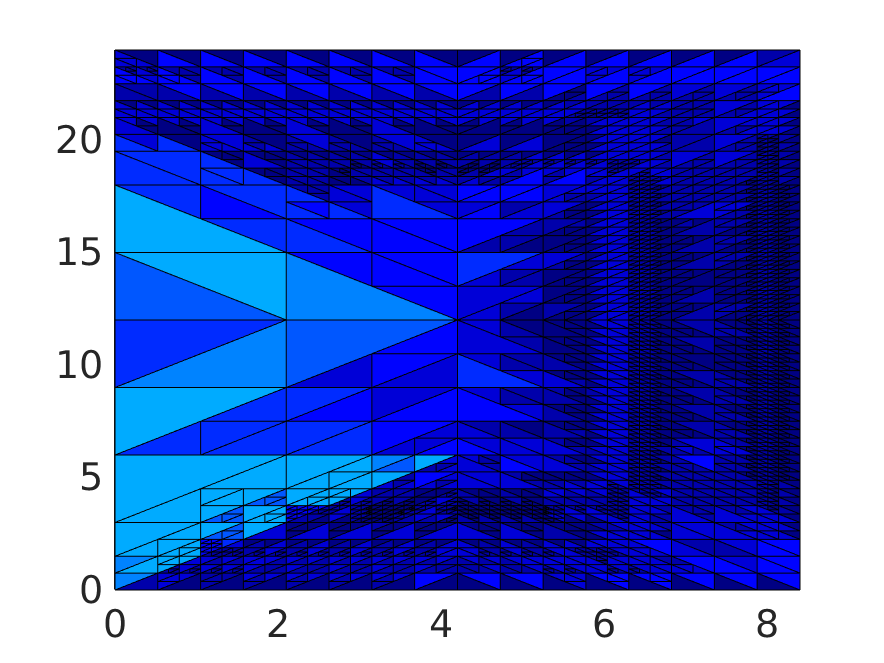}&
\includegraphics[trim=15 5 35 10,clip,width=5cm,align=c]{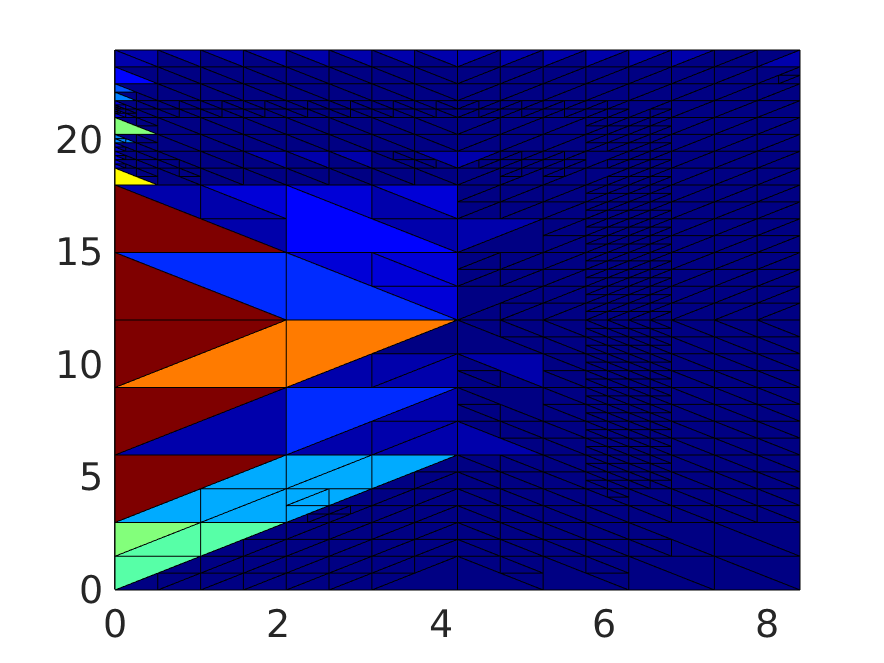}&
\includegraphics[trim=160 0 90 0,clip,width=0.7cm,align=c]{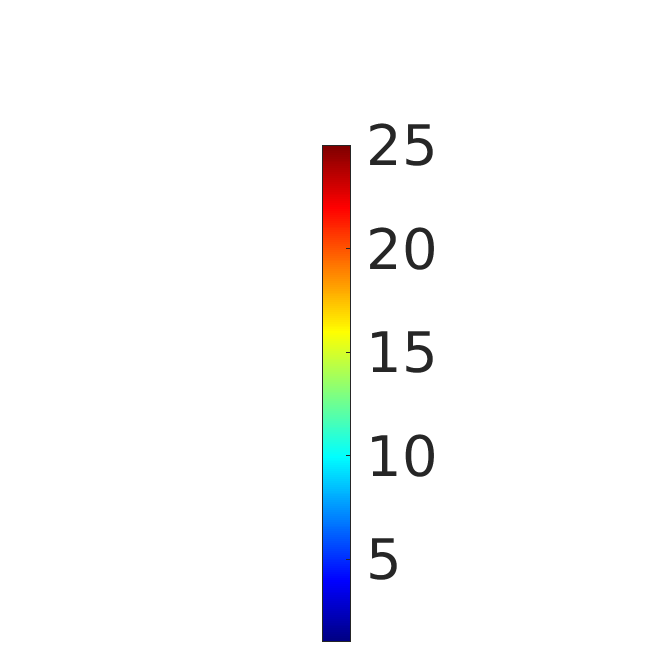}\\
\end{tabular}
    \caption{Numerical results at the final cycle of adaptation for the elliptic problem \itmref{e3}. Two different local error indicators/estimations are used to drive the adaptation process with tolerance $\tol=0.01$. The left column uses Doleji's approach \eqnref{local_error_est_doleji} while the right column uses the adjoint approach \eqnref{local_error_est_adjoint} with regard to the output functional defined in \eqnref{e3_output}. Surface and contour plots of the numerical solution are present in the first two-row, and the mesh configuration along with the arrangement of the degree of approximation $\polyElem$ is presented in the third row.}
    \figlab{e3_result}
\end{figure*}

\begin{figure*}
  \centering
\begin{tabular}{cccc}
 & Doleji's approach & Adjoint approach\\
\rotatebox[origin=c]{90}{$\solAppr$ (Surface)} &
\includegraphics[trim=15 3 35 10,clip,width=5cm,align=c]{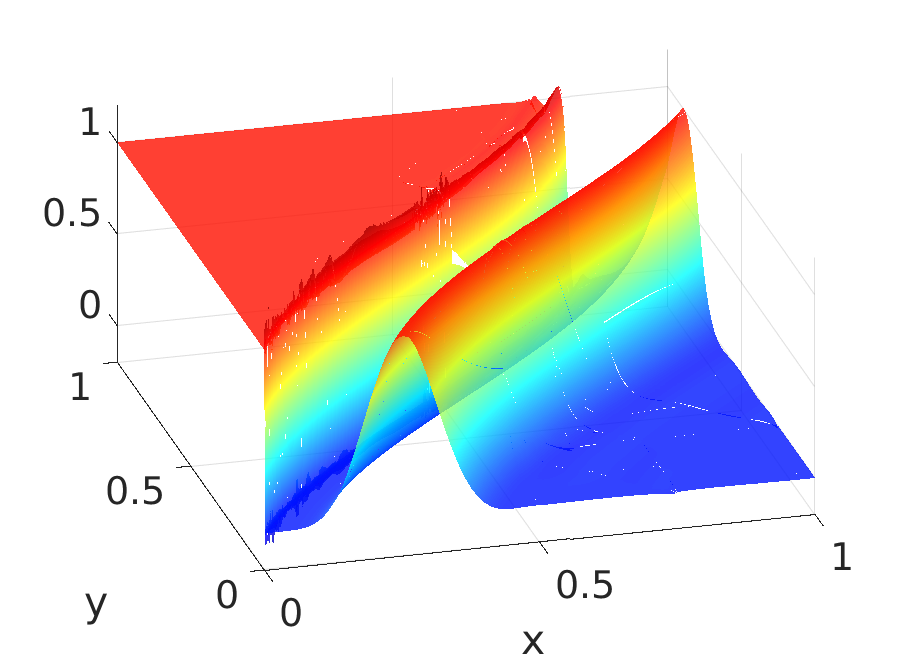}&
\includegraphics[trim=15 3 35 10,clip,width=5cm,align=c]{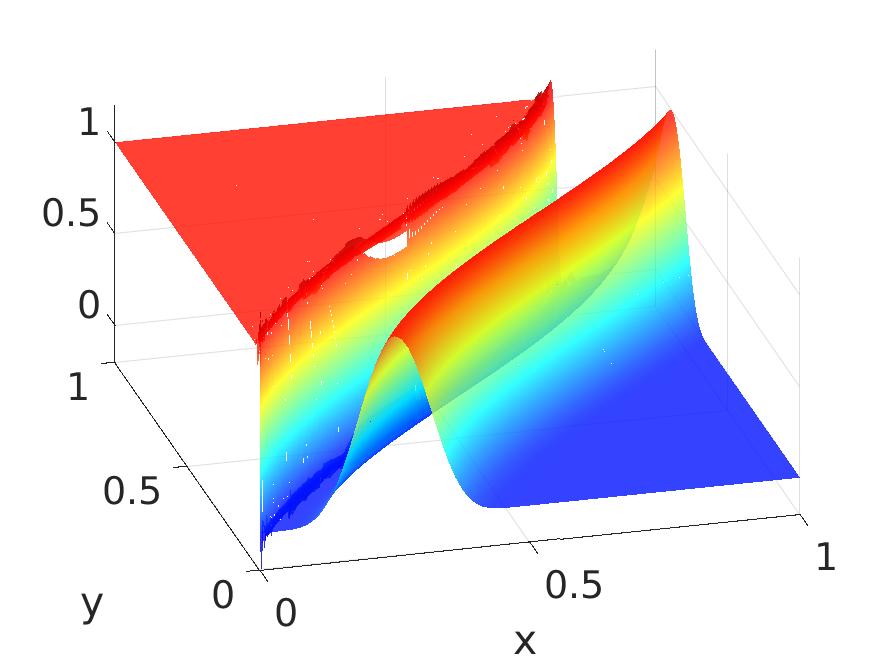}&
\includegraphics[trim=160 0 90 0,clip,width=0.7cm,align=c]{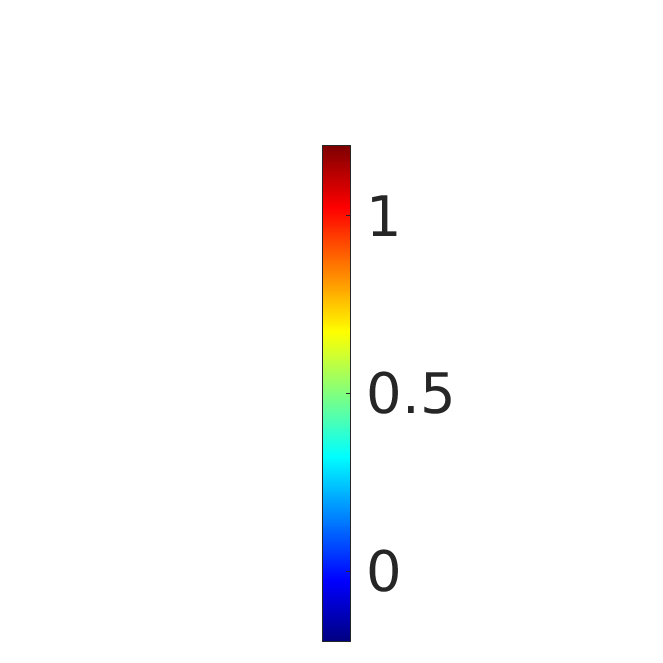}\\
\rotatebox[origin=c]{90}{$\solAppr$ (Contour)} &
\includegraphics[trim=15 20 35 10,clip,width=5cm,align=c]{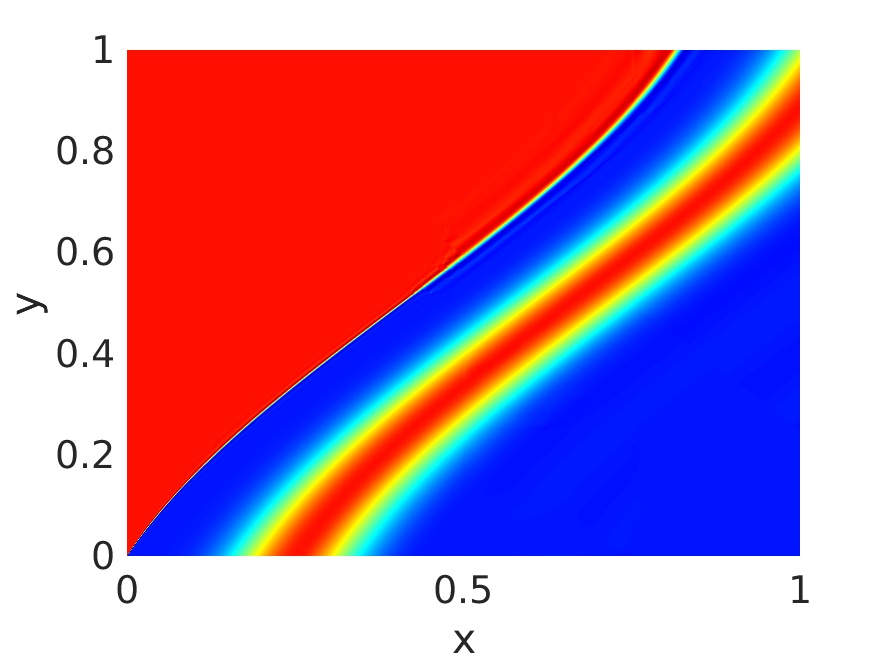}&
\includegraphics[trim=15 20 35 10,clip,width=5cm,align=c]{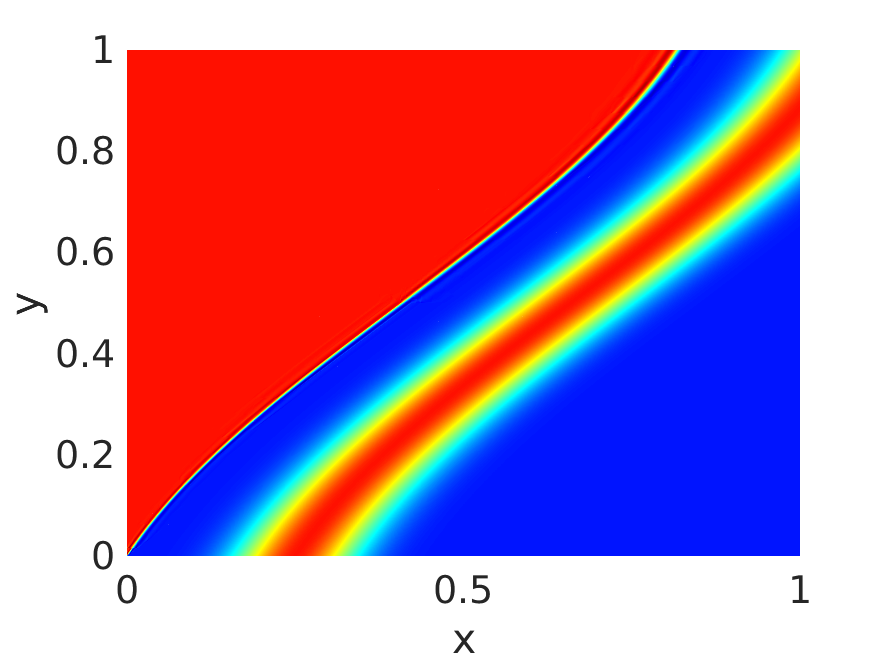}&
\includegraphics[trim=160 0 90 0,clip,width=0.7cm,align=c]{Figures2/Numerical Results/HP1/u_colobar_lin_adv.png}\\
\rotatebox[origin=c]{90}{$\polyElem$ map}&
\includegraphics[trim=10 3 35 10,clip,width=5cm,align=c]{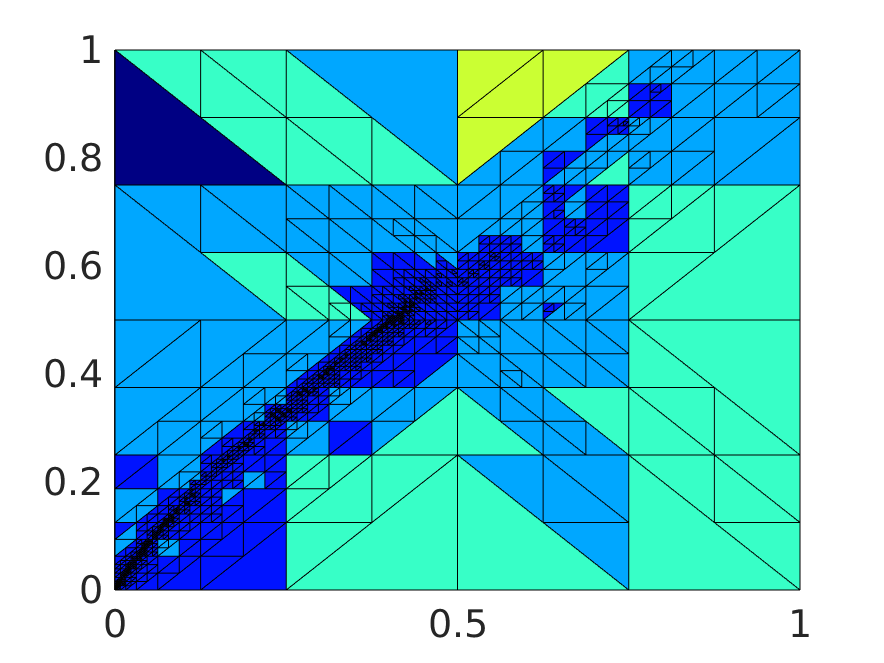}&
\includegraphics[trim=10 3 35 10,clip,width=5cm,align=c]{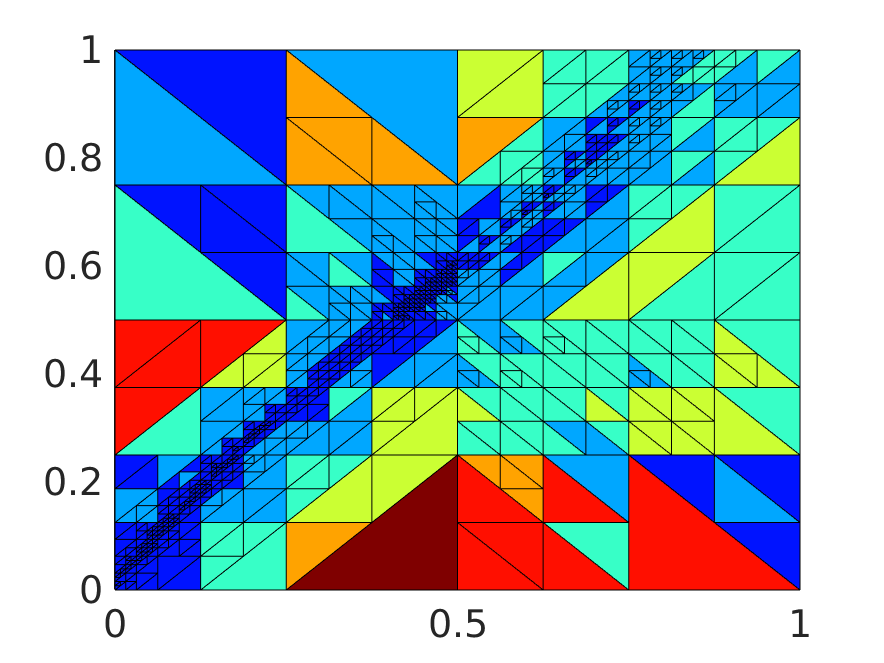}&
\includegraphics[trim=161 0 112 0,clip,width=0.5cm,align=c]{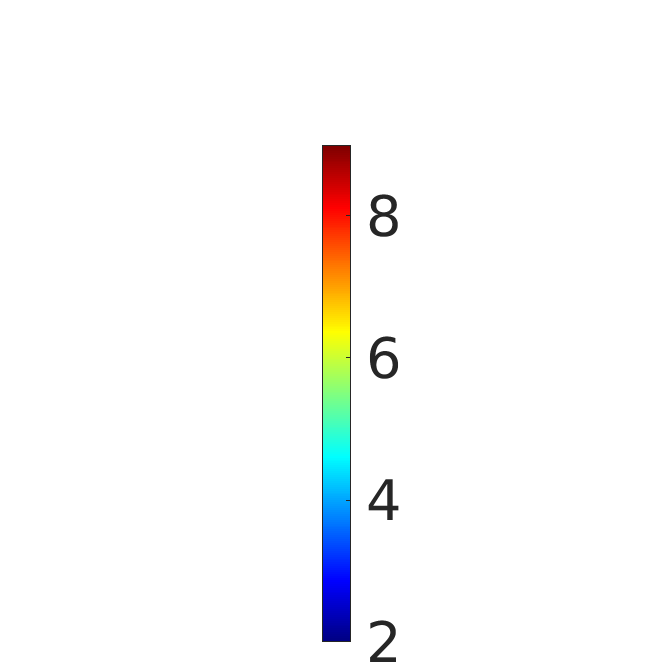}\\
\rotatebox[origin=c]{90}{$\abs{\sol-\solAppr}$}&
\includegraphics[trim=15 15 35 10,clip,width=5cm,align=c]{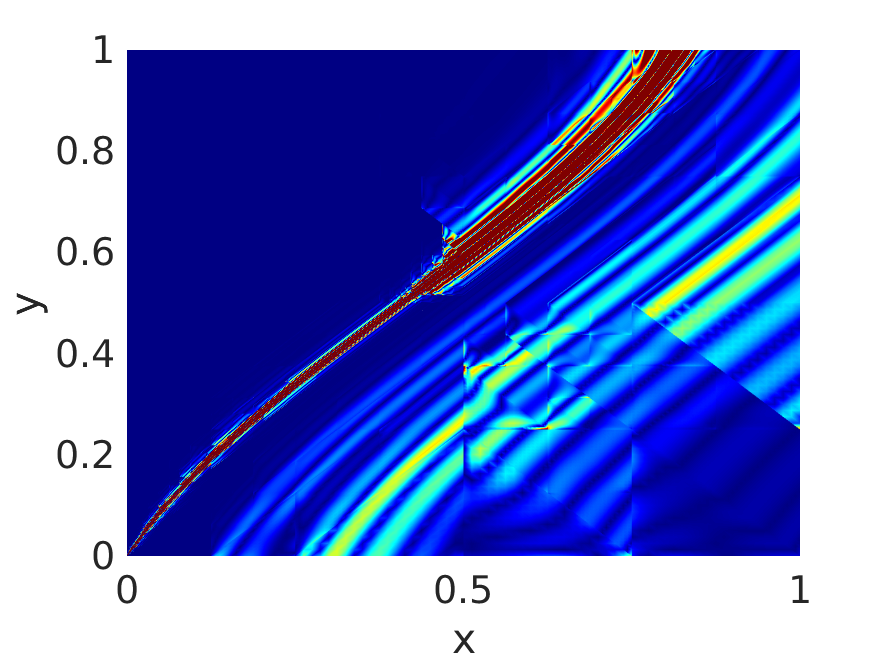}&
\includegraphics[trim=15 15 35 10,clip,width=5cm,align=c]{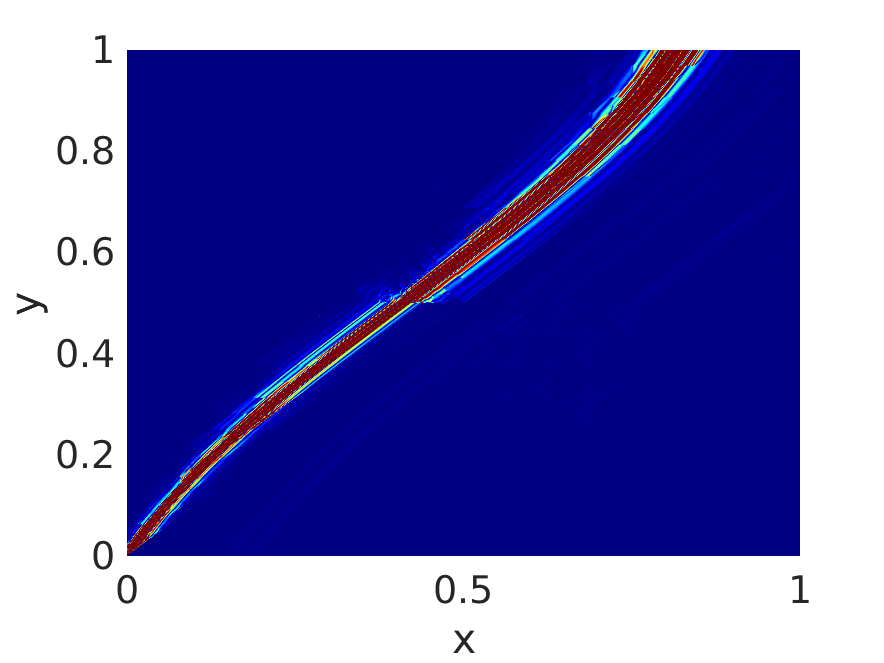}&
\includegraphics[trim=160 0 85 0,clip,width=0.8cm,align=c]{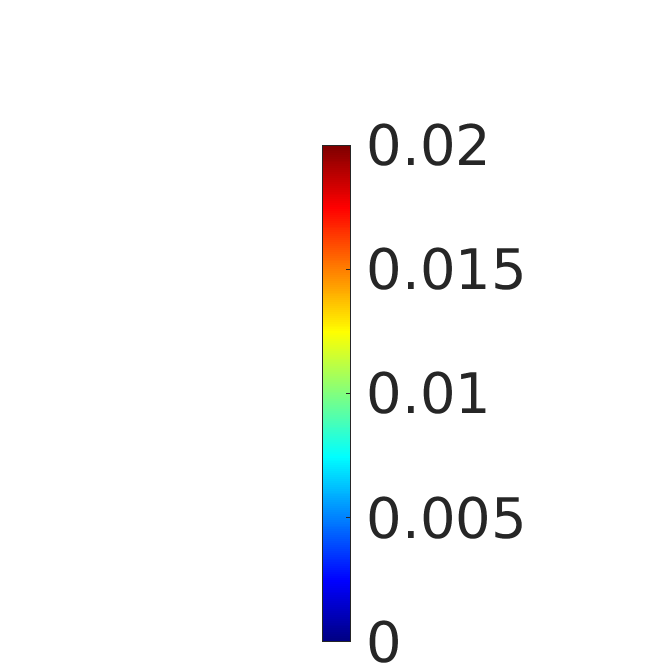}\\
\end{tabular}
    \caption{Numerical results at the final cycle of adaptation for the hyperbolic problem \itmref{hp1} where the exact solution can be found by the method of characteristics. Two different local error indicators/estimations are used to drive the adaptation process with tolerance $\tol=0.05$. The left column uses Doleji's approach \eqnref{local_error_est_doleji} while the right column uses the adjoint approach \eqnref{local_error_est_adjoint} with regard to the output functional defined in \eqnref{hp1_output}. Surface and contour plots of the numerical solution are present in the first two rows, the mesh configuration along with the arrangement of the degree of approximation $\polyElem$ is presented in the third row, and the absolute error is presented in the fourth row.}
    \figlab{hp1_result}
\end{figure*}

\begin{figure*}
  \centering
\begin{tabular}{cccc}
& Doleji's approach & Adjoint approach\\
\rotatebox[origin=c]{90}{$\solAppr$ (Surface)} &
\includegraphics[trim=15 3 35 10,clip,width=5cm,align=c]{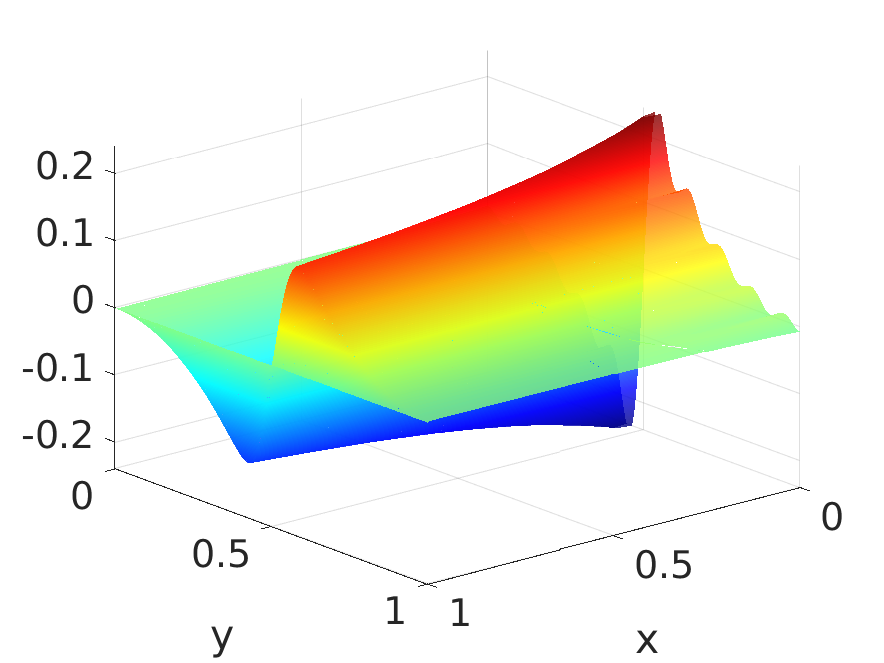}&
\includegraphics[trim=15 3 35 10,clip,width=5cm,align=c]{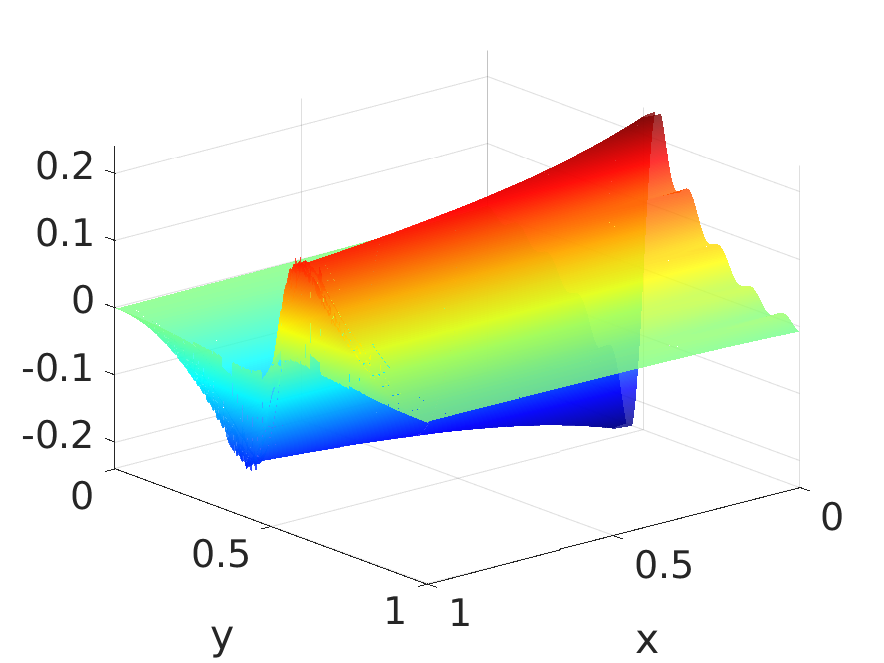}&
\includegraphics[trim=160 0 90 0,clip,width=0.7cm,align=c]{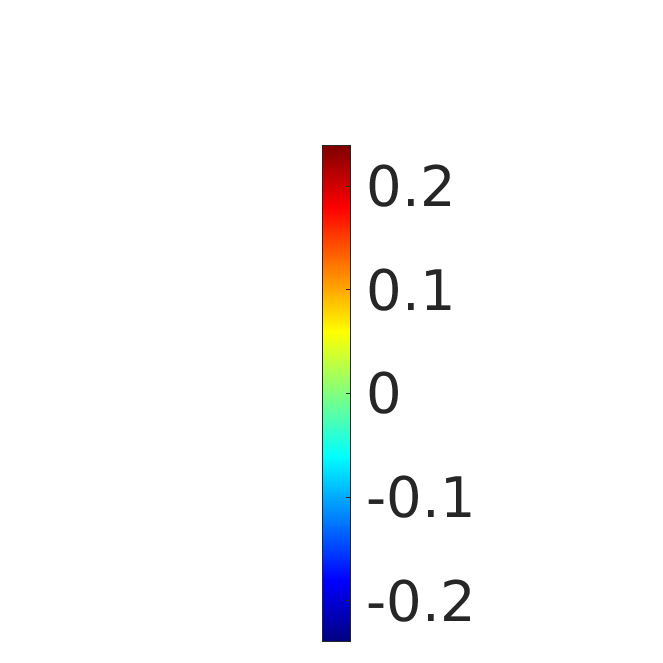}\\
\rotatebox[origin=c]{90}{$\solAppr$ (Contour)} &
\includegraphics[trim=5 3 35 10,clip,width=5cm,align=c]{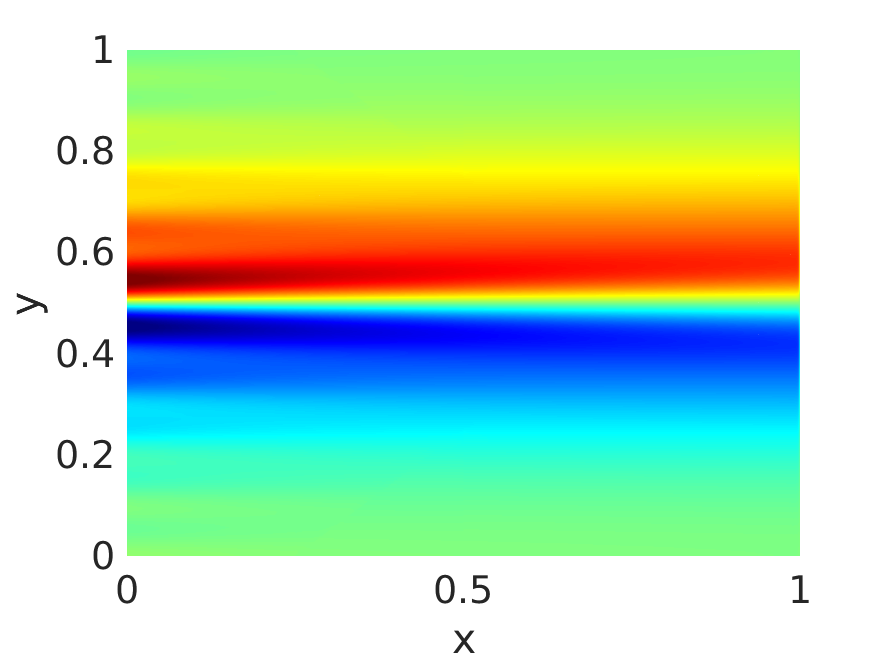}&
\includegraphics[trim=5 3 35 10,clip,width=5cm,align=c]{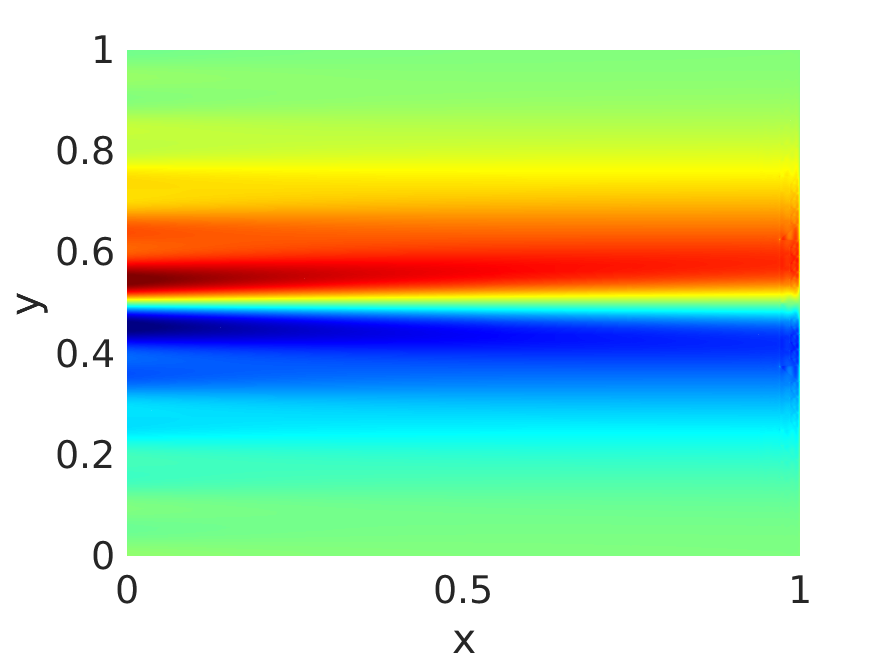}&
\includegraphics[trim=160 0 90 0,clip,width=0.7cm,align=c]{Figures2/Numerical Results/HB1/u_colobar_conv_diff.png}\\
\rotatebox[origin=c]{90}{$\polyElem$ map}&
\includegraphics[trim=10 3 35 10,clip,width=5cm,align=c]{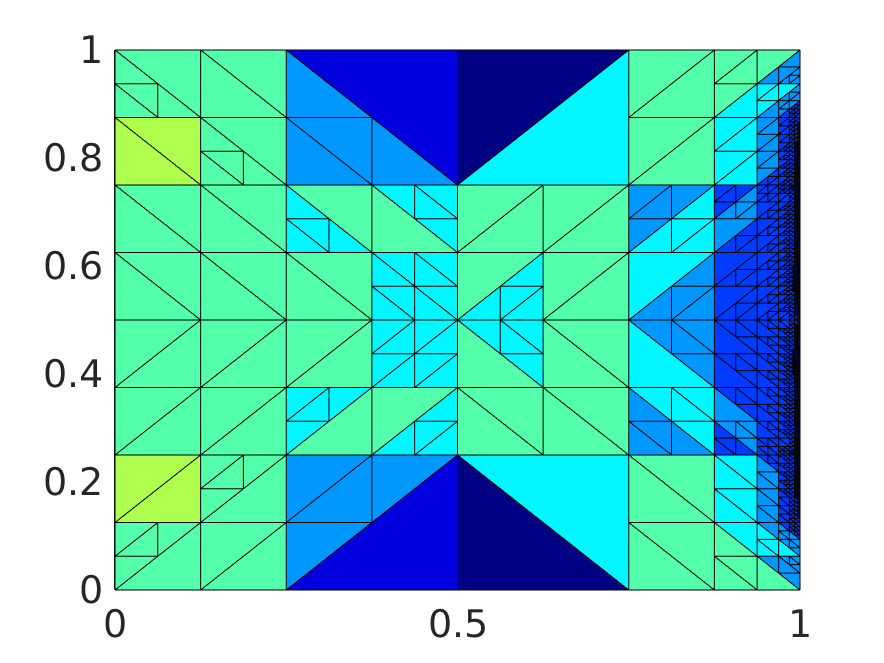}&
\includegraphics[trim=10 3 35 10,clip,width=5cm,align=c]{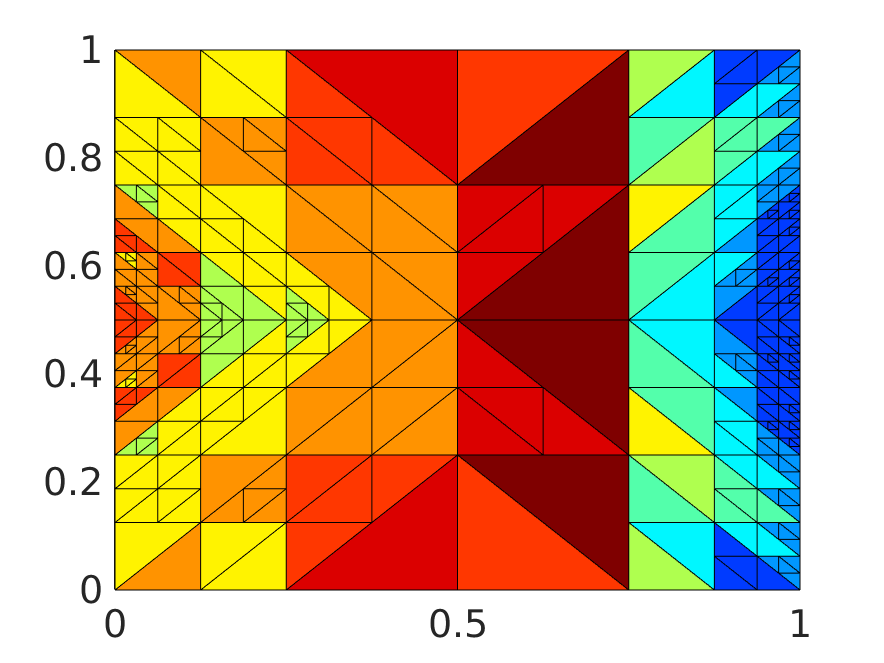}&
\includegraphics[trim=160 0 90 0,clip,width=0.7cm,align=c]{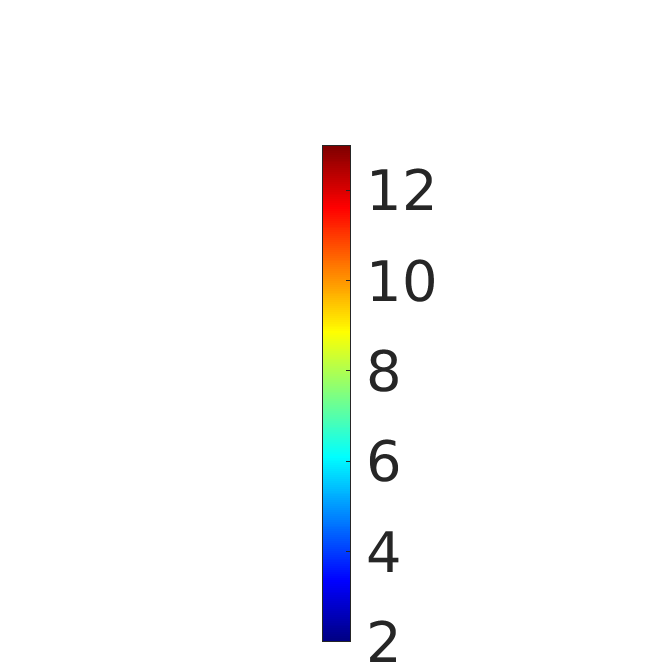}\\
\rotatebox[origin=c]{90}{$\abs{\sol-\solAppr}$}&
\includegraphics[trim=23 16 35 10,clip,width=5cm,align=c]{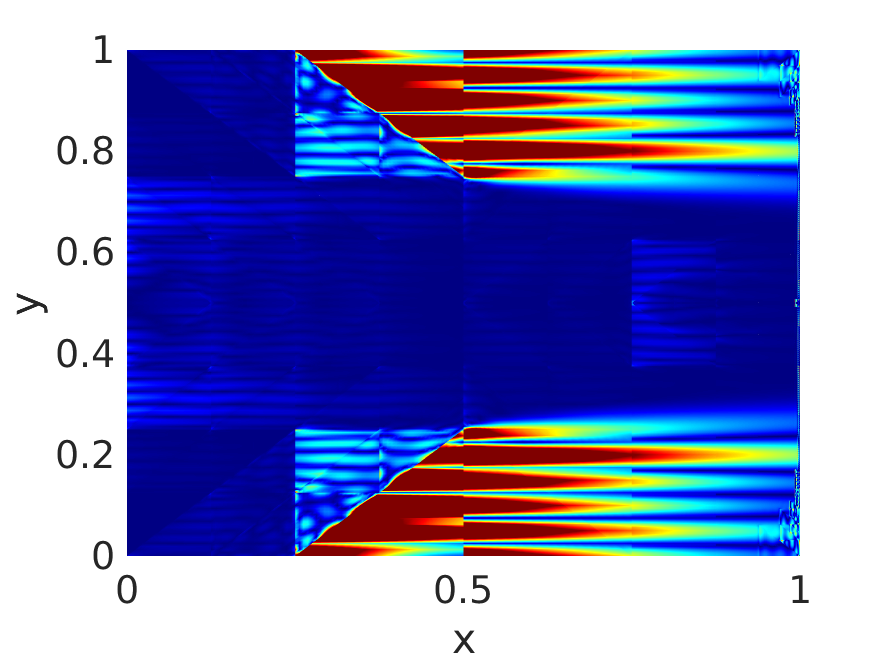}&
\includegraphics[trim=23 16 35 10,clip,width=5cm,align=c]{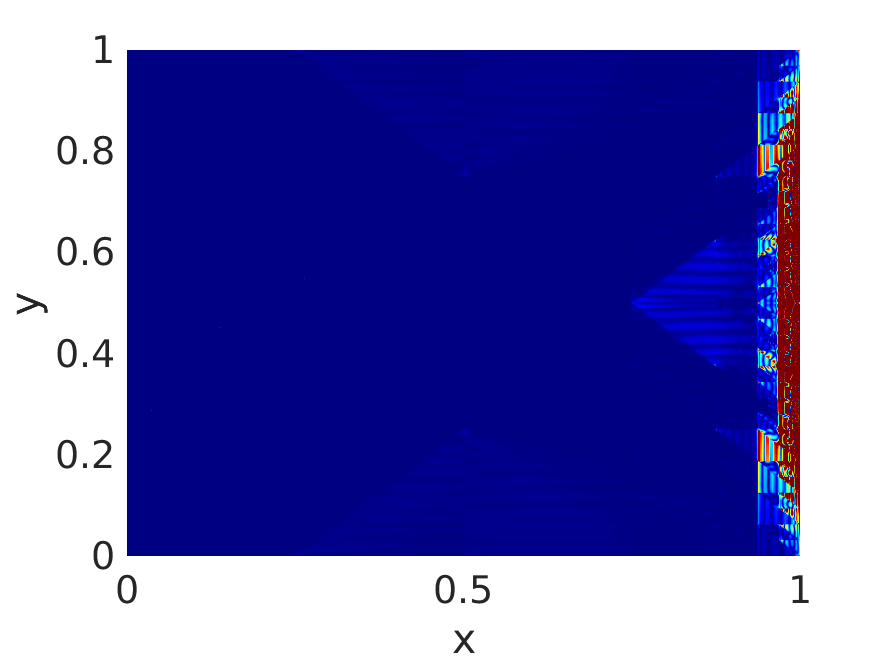}&
\includegraphics[trim=160 0 75 0,clip,width=0.7cm,align=c]{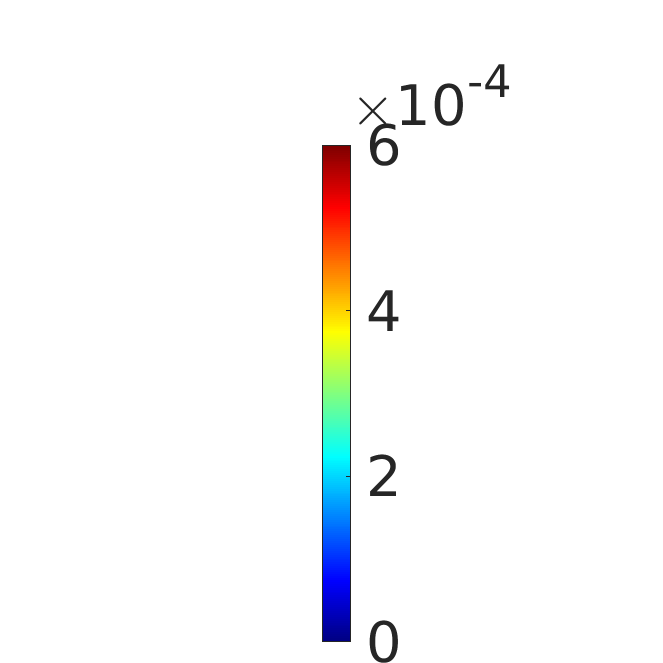}\\
\end{tabular}
    \caption{Numerical results at the final cycle of adaptation for the mixed problem \itmref{hb1} that admits the exact solution stated in \eqnref{hb1_sol}. Two different local error indicators/estimations are used to drive the adaptation process with tolerance $\tol=0.05$. The left column uses Doleji's approach \eqnref{local_error_est_doleji} while the right column uses the adjoint approach \eqnref{local_error_est_adjoint} with regard to the output functional defined in \eqnref{hb1_output}. Surface and contour plots of the numerical solution are present in the first two rows, the mesh configuration along with the arrangement of the degree of approximation $\polyElem$ is presented in the third row, and the absolute error is presented in the fourth row.}
    \figlab{hb1_result}
\end{figure*}
\FloatBarrier

\section{Conclusion} \seclab{conclusion}
In this work, we proposed unified $hp$-HDG frameworks for  Friedrichs' system that embraces a large classes of PDEs. The key ingredient in dealing with $hp$-nonconforming interfaces is to take advantage of natural built-in mortars in HDG methods. By choosing this  mortar type and the trace unknown approximation space, an upwind-based numerical flux can be naturally derived and constructed over these mortars. For the one-field system, we have a general form of the numerical flux. For the two-field system, we exploit the system's intrinsic structure to obtain the numerical flux with the elimination of one of the two trace unknowns. Although the existence of such a numerical flux can only be guaranteed by introducing a few assumptions, this explicit reduced form of the numerical flux is key to proving the well-posedness. All in all, combing all pieces of components together, the resulting $hp$-HDG formulations are parameter-free and are proved to be well-posed for both one-field and two-field Friedrichs' system. The proofs are built upon the assumptions outlined in the Friedrichs' systems and upon a few additional assumptions. These assumptions are verified for all numerical examples considered in the paper. As we have seen,  Friedrichs' systems allow us to systematically construct and analyze $hp$-HDG schemes for one-field and two-field systems. The unification opens an opportunity for us to develop a single universal code to solve various kinds of PDEs.

Besides the analysis, several numerical experiments are also carried out. In the experiments, three distinct types of PDEs are considered: elliptic, hyperbolic, and mixed-type. We showed that these PDEs fall into our framework and their $hp$-HDG formulations are hence well-posed. In addition, to verify the effectiveness of $hp$-adaptation, we also suggest a simple algorithm along with two different approaches for driving the adaptivity: Doleji's approach, and the adjoint approach. Their differences lie in the error indicators. The first one relies on the estimation of the smoothness of the approximate solution with regard to the PDE to be solved. The second one relies on the estimation of the accuracy of a chosen output functional. In terms of performance, both approaches show improvement in the convergence rates for almost all the examples through the adaptation process if an appropriate tolerance is applied. The numerically polluted area induced by singularity, discontinuity, or sharp gradient can decrease as the adaption proceeds. In summary, the performances of these two approaches are comparable though sometimes one is better than the other. 
We also found that both approaches are quite sensitive to the user-defined tolerance value. Specifically speaking, the values have to be small enough to handle the problems whose solutions possess extremely different characteristics in the same domain

Finally, we add a couple of  remarks. In this work, we only considered one-field and two-field Friedrichs' systems. Another significant structure is the three-field, which includes the PDE governed by the linearized incompressible flow. However, such a system requires a significant amount of dedicated discussion and deserves another paper to cover it. Thus, it is left to our future work. The other remark is about time-dependent PDEs. Although we only consider steady-state PDEs in this paper, it is not hard to extend our current work to time-dependent models with the aid of the method of Rothe \cite{rothe1930} where the time-derivative term is first discretized by some single-step time scheme. The discretized terms can be treated as reaction and forcing terms, and the resulting semi-discrete PDE can then be re-written as a general conservation form stated in \eqnref{general_PDE}. Thus, the analysis presented in this study is applicable. However, the algorithm of $hp$-adaption may need to be re-designed because different time steps can cause the solution to behave differently. Thus, $h$-coarsening operations may be required in response to this change. Besides, a proper transfer of the solution between each adaptation needs to be carefully addressed. 

\section*{Acknowledgments}
Thanks to Geonyoung Lee for the fruitful discussion on the $hp$-adaptivity for elliptic equations. This research is partially funded by the National Science Foundation awards NSF-OAC-2212442, NSF-2108320, NSF-1808576 and NSF-CAREER-1845799; by the Department of Energy award DE-SC0018147 and DE-SC0022211.

\appendix
\renewcommand{\theequation}{a.\arabic{equation}}
\section{Adjoint \texorpdfstring{$hp$}{hp}-HDG formulation}\seclab{adjoint_hdg}
Although we use discrete adjoint approach in this work, the derivation of adjoint problem outline in \cite{becker2001} can still be followed. Intead of treating the governing equation \eqnref{general_PDE} as a primal problem, here we consider $hp$-HDG formulation that is stated in \eqnref{Fds_one_hp_hdg} or \eqnref{Fds_two_hp_hdg} as a primal problem. The procedure of derivation of the adjoint problem is briefly outlined below:  
\ben
    \item Re-state the primal problem, either \eqnref{Fds_one_hp_hdg} or \eqnref{Fds_two_hp_hdg}, as a bilinear form.
    \item Define an output functional $\Output{\cdot}$ of the approximate solution $\FdsVarLump$ and use it as an objective function of an optimization problem along with the constraint posed by the bilinear formulation given in step one.  
    \item Re-write the constrained optimization problem constructed in the previous step as the unconstrained one via the Lagrangian approach in which the test function $\FdsTestVarLump$ presented in the bilinear formulation now becomes a Lagrange multiplier.
    \item Solve this trivial optimization problem by taking advantage of the first optimality condition where the adjoint $hp$-HDG formulation can then be derived. 
\een
In the first way, the derivation of the adjoint PDE can also be done by following a similar procedure. However, it should be noted that the adjoint $hp$-HDG formulations derived by the first and second way are not necessarily the same and in fact they are different in this paper. To express an adjoint $hp$-HDG formulation, we have to further introduce the notation of functional derivative $\frac{\delta\OutFcnl\LRs{X}}{\delta Y}$ which is the differential (or variation or first variation) of the functional $\OutFcnl\LRs{X}$.

The adjoint $hp$-HDG formulation with regard to the one-field Friedrichs' system reads: seek $\LRp{\FdsAdVarAppr,\FdsAdTrcVarAppr}\in\FdsAdVarApprSpc{}\times\FdsAdTrcVarApprSpc{}$ such that:
\begin{subequations}\eqnlab{Fds_one_ad_hdg}\small
\begin{align}
\begin{split}\eqnlab{Fds_one_ad_hdg_local}
    -\FdsSum\LRp{\FdsAdVarAppr,{\FdsPartial}\LRp{{\FdsAk}\FdsAdTestAppr}}_{\domPart} 
    + \LRp{\FdsG^T\FdsAdVarAppr,{\FdsAdTestAppr}}_{\domPart} 
    +\LRa{-{\FdsABnd}\FdsAdTrcVarAppr+{\stabPar}\LRp{{\FdsAdVarAppr}-\FdsAdTrcVarAppr},{\FdsAdTestAppr}}_{\domPartBnd} 
    = -\frac{\delta\OutFcnl\LRs{\FdsVarLump}}{\FdsAdTestAppr}&
\end{split}\\
\begin{split}\eqnlab{Fds_one_ad_hdg_global}
    \LRa{\jump{-{\FdsABnd}\FdsAdTrcVarAppr+{\stabPar}\LRp{{\FdsAdVarAppr}-\FdsAdTrcVarAppr}},\FdsAdTrcTestAppr}_{\skel}
    = 
    -\LRa{\half\LRp{-{\FdsABnd}+\FdsMBnd}^T\FdsAdTrcVarAppr, \FdsAdTrcTestAppr}_{\skelBnd}
    -\frac{\delta\OutFcnl\LRs{\FdsVarLump}}{\FdsAdTrcTestAppr},&
\end{split}
\end{align}
\end{subequations}
for all $\LRp{\FdsAdTestAppr,\FdsAdTrcTestAppr}\in\FdsAdVarApprSpc{}\times\FdsAdTrcVarApprSpc{}$. The stabilization parameter is still set as $\FdsMBnd:=\stabPar$. In order to incorporate the boundary condition, the output functional has to include the following term:
\beq\eqnlab{Fds_one_bc_output}
\Output{\FdsVarLump}=-\LRa{\half\LRp{-\FdsABnd-\FdsMBnd}^T\FdsTrcVarAppr,\bs{\BCVal}}_{\skelBnd},
\eeq
where the function $\bs{\BCVal}:\domBnd\rightarrow\real^{\FdsNVar}$ is defined as
\begin{equation}\eqnlab{Fds_one_ad_Dirichlet}
    \bs{g} := \begin{cases}
    \bs{\DirVal}\quad&\text{if}\,\LRp{\FdsABnd+\FdsMBnd}\neq0,\\
    \bs{0}&\text{if}\,\LRp{\FdsABnd+\FdsMBnd}=0,
    \end{cases}
\end{equation}
where $\bs{\DirVal}$ is the Dirichlet data. Comparing \eqnref{Fds_one_Dirichlet} and \eqnref{Fds_one_ad_Dirichlet}, it can be observed that the inflow and outflow boundaries are switched between the primal formulation \eqnref{Fds_one_hp_hdg} and its's adjoint formulation \eqnref{Fds_one_ad_hdg}. It should be also noted that the adjoint problem \eqnref{Fds_one_ad_hdg} will automatically have homogeneous boundary conditions if the output functional does not include the term specified in \eqnref{Fds_one_bc_output} (i.e. $\DirVal=0$). 

The well-posedness analysis is similar to lemma \lemref{Fds_one_hp_localCon} for the local equation \eqnref{Fds_one_ad_hdg_local} and to theorem \theoref{Fds_one_hp_globalCon} for the adjoint $hp$-HDG formulation \eqnref{Fds_one_ad_hdg}. Thus, we simply outline the following lemma and theorem about well-posedness without any proof for the sake of brevity. 
\begin{lemma}[Well-posedness of the local equation]\lemlab{Fds_one_ad_hp_localCon} Suppose that the assumptions \itmref{A1}-\itmref{A4} hold, the local solver \eqnref{Fds_one_ad_hdg_local} is well-posed.
\end{lemma}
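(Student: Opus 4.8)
Since \eqnref{Fds_one_ad_hdg_local} is, for a prescribed trace $\FdsAdTrcVarAppr$ and a prescribed right--hand side, a square linear system on the finite--dimensional space $\FdsAdVarApprSpc{}$, well--posedness is equivalent to uniqueness. So the plan is to show: if $\FdsAdTrcVarAppr=\bs{0}$ on $\elemBnd$ and the functional--derivative term on the right vanishes, then $\FdsAdVarAppr=\bs{0}$ on each $\elem\in\domPart$. This mirrors the primal argument of Lemma \lemref{Fds_one_hp_localCon} (i.e.\ \cite[Lemma~6.1]{Tan2015}), and I would follow the same template.

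Concretely, I would test \eqnref{Fds_one_ad_hdg_local} with $\FdsAdTestAppr=\FdsAdVarAppr$, impose $\FdsAdTrcVarAppr=\bs{0}$ and a vanishing right--hand side, and then integrate by parts the first--order term, using the symmetry \itmref{A3} to produce a divergence $\FdsSum\FdsPartial\LRp{\FdsAdVarAppr^{T}\FdsAk\FdsAdVarAppr}$ and thereby symmetrise the bilinear form. Collecting the volume contributions should yield a quadratic form in $\FdsAdVarAppr$ governed by the combination $\FdsG+\FdsG^{T}+\FdsSum\FdsPartial\FdsAk$ appearing in \itmref{A4}, hence bounded below by $\mu_0\norm{\FdsAdVarAppr}^{2}_{\elem}$, while the surviving face contributions should reduce to $\LRa{\LRp{\FdsAbsA\pm\half\FdsABnd}\FdsAdVarAppr,\FdsAdVarAppr}_{\elemBnd}$, which is non--negative because $\FdsABnd$ is symmetric and therefore $\FdsAbsA\pm\half\FdsABnd\geq 0$ (each eigenvalue $\lambda$ of $\FdsABnd$ satisfies $\abs{\lambda}\pm\half\lambda\geq 0$). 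An energy identity in which both terms are non--negative and one is strictly coercive then forces $\FdsAdVarAppr=\bs{0}$ on every element.

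An equivalent and perhaps more transparent route is to recognise \eqnref{Fds_one_ad_hdg} as the HDG formulation \eqnref{Fds_one_hp_hdg} applied to the \emph{adjoint Friedrichs operator}: writing the $\Lsp{2}$--adjoint of $\FdsSum\FdsPartial\LRp{\FdsAk\cdot}+\FdsG\cdot$ in conservative form as $\FdsSum\FdsPartial\LRp{\widetilde{A}_k\cdot}+\widetilde{G}\cdot$ with $\widetilde{A}_k:=-\FdsAk$ and $\widetilde{G}:=\FdsG^{T}+\FdsSum\FdsPartial\FdsAk$, one checks that $(\widetilde{A}_k,\widetilde{G})$ still satisfy \itmref{A1}--\itmref{A3}, that the boundary matrix $\abs{\widetilde{A}}=\FdsAbsA$ is unchanged, and that coercivity is self--dual,
\[
\widetilde{G}+\widetilde{G}^{T}+\FdsSum\FdsPartial\widetilde{A}_k
=\FdsG+\FdsG^{T}+\FdsSum\FdsPartial\FdsAk\geq 2\mu_0\id_{\FdsNVar},
\]
so \itmref{A4} holds with the same $\mu_0$; Lemma \lemref{Fds_one_hp_localCon} then applies verbatim.

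The step I expect to be the main obstacle --- in either route --- is the boundary/volume bookkeeping: one must track the $\FdsSum\FdsPartial\FdsAk$ term and the various face integrals with the correct signs (including the fact that interior faces are visited twice in $\domPartBnd$ and that $\FdsAdVarAppr$ is double--valued there) so that the energy identity comes out with precisely the sign pattern of \itmref{A4} rather than the opposite one. Once that identification is pinned down, the coercivity \itmref{A4}, the positivity of the stabilisation, and the non--negativity of $\FdsAbsA\pm\half\FdsABnd$ close the argument exactly as in the primal case.
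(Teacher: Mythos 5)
The paper never writes out a proof of this lemma: it states that the analysis is ``similar to'' Lemma \lemref{Fds_one_hp_localCon} and omits it, and the only substantive justification it offers is the closing remark of Appendix~\secref{adjoint_hdg} that the discrete adjoint system is the \emph{transpose} of the primal one, so invertibility (elementwise for the local solvers, and globally) transfers from Lemma \lemref{Fds_one_hp_localCon}. You do not use that transpose argument, and both routes you propose instead run into concrete trouble. The direct energy route fails at exactly the step you flagged. Testing \eqnref{Fds_one_ad_hdg_local} on one element with $\FdsAdTestAppr=\FdsAdVarAppr$, $\FdsAdTrcVarAppr=\bs{0}$ and zero right-hand side, the conservative-form term integrates by parts as
\[
-\FdsSum\LRp{\FdsAdVarAppr,\FdsPartial\LRp{\FdsAk\FdsAdVarAppr}}_{\elem}
=-\half\LRp{\LRp{\textstyle\FdsSum\FdsPartial\FdsAk}\FdsAdVarAppr,\FdsAdVarAppr}_{\elem}
-\half\LRa{\FdsABnd\FdsAdVarAppr,\FdsAdVarAppr}_{\elemBnd},
\]
so the energy identity reads
\[
\half\LRp{\LRp{\FdsG+\FdsG^{T}-\textstyle\FdsSum\FdsPartial\FdsAk}\FdsAdVarAppr,\FdsAdVarAppr}_{\elem}
+\LRa{\LRp{\FdsAbsA-\tfrac{1}{2}\FdsABnd}\FdsAdVarAppr,\FdsAdVarAppr}_{\elemBnd}=0.
\]
The face term is non-negative, as you say, but the volume form is governed by $\FdsG+\FdsG^{T}-\FdsSum\FdsPartial\FdsAk$, i.e.\ with the \emph{opposite} sign on $\FdsSum\FdsPartial\FdsAk$ from \itmref{A4}. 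Taking $\FdsG=0$ and $\FdsAk=\velComp{\FdsIndx}\id_{\FdsNVar}$ with strictly positive divergence satisfies \itmref{A4} yet makes this volume form negative definite, so the identity no longer forces $\FdsAdVarAppr=\bs{0}$. Your argument closes only when $\FdsSum\FdsPartial\FdsAk=0$.

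Your second route --- that the Friedrichs assumptions are self-dual under $\widetilde{A}_k=-\FdsAk$, $\widetilde{G}=\FdsG^{T}+\FdsSum\FdsPartial\FdsAk$ --- is a correct statement about the adjoint \emph{operator}, but it cannot be applied ``verbatim'' to \eqnref{Fds_one_ad_hdg_local}, because the paper's adjoint scheme is \emph{not} the upwind HDG discretization of the adjoint PDE; the last paragraph of Appendix~\secref{adjoint_hdg} says this explicitly, and indeed the volume--volume block of \eqnref{Fds_one_ad_hdg_local} does not coincide with the primal local form applied to $(\widetilde{A}_k,\widetilde{G})$ (the sign of the first-order term and the face contribution both differ). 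The argument the paper actually relies on, and the one you should use, is purely algebraic: the adjoint local equation is obtained by transposing the primal local system element by element, so its matrix is invertible if and only if the primal local matrix is, and Lemma \lemref{Fds_one_hp_localCon} finishes the proof with no sign bookkeeping at all. (In carrying this out it is worth verifying that the volume--volume block of \eqnref{Fds_one_ad_hdg_local} as printed really is the transpose of that of \eqnref{Fds_one_hp_hdg_local}; as written, the first term appears to carry the opposite sign, which is precisely what derails the energy identity above.)
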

\begin{theorem}[Well-posedness of the $hp$-HDG formulation]\theolab{Fds_one_ad_hp_globalCon} Suppose that
\ben
\item the assumptions \itmref{A1}-\itmref{A4} and  \eqnref{Abstrct_M1} hold. 
\item $\Null{\FdsABnd}=\LRc{\bs{0}}$.
\een
 There exists a unique solution $\FdsAdTrcVarAppr\in\FdsAdTrcVarApprSpc{}$ for the adjoint $hp$-HDG formulation defined in \eqnref{Fds_one_ad_hdg}.
\end{theorem}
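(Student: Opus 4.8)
The plan is to replay the proof of Theorem~\theoref{Fds_one_hp_globalCon}. Because \eqnref{Fds_one_ad_hdg} is a square, finite-dimensional linear system whose local solver is invertible by Lemma~\lemref{Fds_one_ad_hp_localCon}, it suffices to prove uniqueness: assuming the output functional is annihilated ($\frac{\delta\OutFcnl}{\cdot}\equiv0$), so that by \eqnref{Fds_one_ad_Dirichlet} the Dirichlet data $\bs{\DirVal}=\bs{0}$ and the boundary contribution to $\OutFcnl$ vanishes as well, I want to show that the only $\LRp{\FdsAdVarAppr,\FdsAdTrcVarAppr}\in\FdsAdVarApprSpc{}\times\FdsAdTrcVarApprSpc{}$ satisfying \eqnref{Fds_one_ad_hdg} is $\LRp{\bs{0},\bs{0}}$; existence then follows by the dimension count.

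First I would take $\LRp{\FdsAdTestAppr,\FdsAdTrcTestAppr}=\LRp{\FdsAdVarAppr,\FdsAdTrcVarAppr}$ in \eqnref{Fds_one_ad_hdg}. In the local equation \eqnref{Fds_one_ad_hdg_local} I integrate the volume advection term by parts element-by-element and invoke the symmetry \itmref{A3} together with the product rule, in the spirit of \eqnref{IBP1_Ref}--\eqnref{IBP2_Ref}, so as to generate the volume form $\half\LRp{\LRs{\FdsG+\FdsG^{T}+\FdsSum\FdsPartial\FdsAk}\FdsAdVarAppr,\FdsAdVarAppr}_{\domPart}$ together with interface terms weighted by $\FdsABnd$ and by $\stabPar$. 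I then subtract the global equation \eqnref{Fds_one_ad_hdg_global} tested against $\FdsAdTrcVarAppr$, converting $\LRa{\jump{\cdot},\FdsAdTrcVarAppr}_{\skel}$ into a sum over $\domPartBnd$ by single-valuedness of $\FdsAdTrcVarAppr$, and on $\skelBnd$ I use that $\FdsABnd$ is continuous across the skeleton so that the purely $\FdsABnd$-weighted trace contributions coming from the two sides of an interior mortar cancel. Inserting, on each mortar, the same algebraic identity for $\LRa{\FdsABnd\LRp{\FdsAdTrcVarAppr-\FdsAdVarAppr},\FdsAdTrcVarAppr}_{\mort}$ used in Theorem~\theoref{Fds_one_hp_globalCon}, I expect to land on the analogue of \eqnref{Fds_one_hp_globalCon1},
\begin{equation*}
    \half\LRp{\LRs{\FdsG+\FdsG^{T}+\FdsSum\FdsPartial\FdsAk}\FdsAdVarAppr,\FdsAdVarAppr}_{\domPart}
    +\half\LRa{\FdsMBnd\FdsAdTrcVarAppr,\FdsAdTrcVarAppr}_{\skelBnd}
    +\LRa{\LRp{\stabPar-\half\FdsABnd}\LRp{\FdsAdVarAppr-\FdsAdTrcVarAppr},\FdsAdVarAppr-\FdsAdTrcVarAppr}_{\domPartBnd}=0,
\end{equation*}
the only change relative to the primal case being the sign in front of $\FdsABnd$ in the last bracket, which records the interchange of inflow and outflow in the adjoint problem. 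All three terms are non-negative --- the first by the coercivity \itmref{A4}, the second by \eqnref{Abstrct_M1} (recall $\FdsMBnd:=\stabPar\geq0$), and the third because $\stabPar=\FdsAbsA=\FdsEigenVec\abs{\FdsEigenVal}\FdsEigenVec^{-1}\geq\half\abs{\FdsABnd}\geq\half\FdsABnd$ --- so each of them vanishes. Thus $\FdsAdVarAppr=\bs{0}$ on every $\elem\in\domPart$; feeding this back into the surviving boundary terms yields $\FdsAbsA\FdsAdTrcVarAppr=\bs{0}$ on $\domPartBnd$, and since $\Null{\FdsABnd}=\LRc{\bs{0}}$ implies $\Null{\FdsAbsA}=\LRc{\bs{0}}$ we conclude $\FdsAdTrcVarAppr=\bs{0}$.

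The step I expect to be delicate is the adjoint sign bookkeeping in the integration by parts. Unlike the primal term $-\FdsSum\LRp{\FdsAk\FdsVarAppr,\FdsPartial\FdsTestAppr}$, the volume advection of \eqnref{Fds_one_ad_hdg_local} is written in the divergence form $-\FdsSum\LRp{\FdsAdVarAppr,\FdsPartial\LRp{\FdsAk\FdsAdVarAppr}}$, so one must combine its integration-by-parts boundary output with the flux term $\LRa{-\FdsABnd\FdsAdTrcVarAppr+\stabPar\LRp{\FdsAdVarAppr-\FdsAdTrcVarAppr},\FdsAdVarAppr}_{\domPartBnd}$ and with the $\skelBnd$ term $-\LRa{\half\LRp{-\FdsABnd+\FdsMBnd}^{T}\FdsAdTrcVarAppr,\FdsAdTrcVarAppr}_{\skelBnd}$ very carefully, so that the extraneous $\FdsABnd$-only pieces cancel and that $\FdsSum\FdsPartial\FdsAk$ enters the volume form with the \emph{same} sign as in \eqnref{Fds_one_hp_globalCon1}. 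A convenient cross-check, and indeed an alternative proof, is to recall that \eqnref{Fds_one_ad_hdg} was obtained from the Lagrangian of the primal formulation \eqnref{Fds_one_hp_hdg} by formal integration by parts, so that its statically condensed operator is the transpose of the primal one; since the latter is invertible by Theorem~\theoref{Fds_one_hp_globalCon} and Lemma~\lemref{Fds_one_hp_localCon}, and a square matrix is invertible precisely when its transpose is, well-posedness of the adjoint system follows at once. Checking that the two routes are consistent --- that the boundary and boundary-condition terms of \eqnref{Fds_one_ad_hdg} genuinely are the transpose of those in \eqnref{Fds_one_hp_hdg} with inflow and outflow swapped --- is the one place where the argument is not a verbatim copy of the primal case.
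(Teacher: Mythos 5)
Your overall strategy coincides with the paper's: the paper gives no written proof of Theorem~\theoref{Fds_one_ad_hp_globalCon}, stating only that the analysis is ``similar to'' Lemma~\lemref{Fds_one_hp_localCon} and Theorem~\theoref{Fds_one_hp_globalCon}, and then, at the end of Appendix~A, justifies well-posedness of all the adjoint formulations by precisely the transpose argument you offer as a cross-check (the statically condensed adjoint matrix is the transpose of the primal one, and a square matrix is invertible iff its transpose is). So your second route is the paper's actual argument, and your first route is a fleshing-out of the ``similar to the primal case'' assertion.

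One concrete caution on the energy route, which is exactly the ``delicate sign bookkeeping'' you flagged but then resolved optimistically. The adjoint volume term in \eqnref{Fds_one_ad_hdg_local} is written in divergence form, $-\FdsSum\LRp{\FdsAdVarAppr,\FdsPartial\LRp{\FdsAk\FdsAdTestAppr}}$. Testing with $\FdsAdTestAppr=\FdsAdVarAppr$ and using the product rule plus \itmref{A3} gives $-\FdsSum\LRp{\FdsAdVarAppr,\FdsPartial\LRp{\FdsAk\FdsAdVarAppr}}_{\elem}=-\half\FdsSum\LRp{\LRp{\FdsPartial\FdsAk}\FdsAdVarAppr,\FdsAdVarAppr}_{\elem}-\half\LRa{\FdsABnd\FdsAdVarAppr,\FdsAdVarAppr}_{\elemBnd}$, i.e.\ the $\FdsSum\FdsPartial\FdsAk$ contribution enters with the \emph{opposite} sign from the primal computation, so the volume quadratic form that falls out is $\half\LRp{\LRp{\FdsG+\FdsG^{T}-\FdsSum\FdsPartial\FdsAk}\FdsAdVarAppr,\FdsAdVarAppr}$, which is not the quantity controlled by \itmref{A4}; moreover the flux term $\LRa{-\FdsABnd\FdsAdTrcVarAppr+\stabPar\LRp{\FdsAdVarAppr-\FdsAdTrcVarAppr},\FdsAdVarAppr}$ does not combine with the global equation into a clean sum of nonnegative squares the way the primal terms do, so the identity you wrote down does not follow from \eqnref{Fds_one_ad_hdg} as literally printed. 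Your energy sketch would go through verbatim only if the adjoint advection term were in the non-divergence form $-\FdsSum\LRp{\FdsAk\FdsPartial\FdsAdVarAppr,\FdsAdTestAppr}$ (for which the coercivity quantity is again $\FdsG+\FdsG^{T}+\FdsSum\FdsPartial\FdsAk$). Since the paper's own justification is the transpose argument, and you supply that too, your proposal still contains a valid proof; but you should either verify the claimed identity against \eqnref{Fds_one_ad_hdg} or drop the energy route in favor of the transpose route as the primary argument.
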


On the other hand, the adjoint $hp$-HDG formulation with regard to the two-field Friedrichs' system reads: seek $\LRp{\FdsAdVarAuxAppr,\FdsAdVarSolAppr,\FdsAdTrcVarSolAppr}\in\FdsAdVarAuxApprSpc{}\times\FdsAdVarSolApprSpc{}\times\FdsAdTrcVarSolApprSpc{}$ such that
\begin{subequations}\eqnlab{Fds_two_ad_hdg}\small
\begin{align}
\begin{split}\eqnlab{Fds_two_ad_hdg_local1}
    -\FdsSum\LRp{\FdsAdVarSolAppr,{\FdsPartial}\LRp{\FdsBk^T\FdsAdTestAuxAppr}}_{\domPart}
    +\LRp{({\FdsGaa})^T\FdsAdVarAuxAppr+(({\FdsGsa})^T\FdsAdVarSolAppr,\FdsAdTestAuxAppr}_{\domPart} 
    -\LRa{{\FdsBBnd}\FdsAdTrcVarSolAppr,{\FdsAdTestAuxAppr}}_{\domPartBnd}&\\
    = -\frac{\delta\OutFcnl\LRs{\FdsVarLump}}{\FdsAdTestAuxAppr}&
\end{split}\\
\begin{split}\eqnlab{Fds_two_ad_hdg_local2}
    -\FdsSum\LRp{\FdsAdVarAuxAppr,{\FdsPartial}\LRp{{\FdsBk}\FdsAdTestSolAppr}}_{\domPart}
    -\FdsSum\LRp{\FdsAdVarSolAppr,{\FdsPartial}\LRp{{\FdsCk}\FdsAdTestSolAppr}}_{\domPart}
    +\LRp{{\FdsGas})^T\FdsAdVarAuxAppr + ({\FdsGss})^T\FdsAdVarSolAppr,\FdsAdTestSolAppr}_{\domPart}&\\
    + \LRa{-\FdsBBnd^{T}{\FdsAdVarAuxAppr} - \FdsCBnd^T\FdsAdTrcVarSolAppr +{\stabPar}\LRp{{\FdsAdVarSolAppr}-\FdsAdTrcVarSolAppr},{\FdsAdTestSolAppr}}_{\domPartBnd}
    = -\frac{\delta\OutFcnl\LRs{\FdsVarLump}}{\FdsAdTestSolAppr}&
\end{split}\\
\begin{split}\eqnlab{Fds_two_ad_hdg_global}
    \LRa{\jump{-\FdsBBnd^{T}{\FdsAdVarAuxAppr} - \FdsCBnd^T\FdsAdTrcVarSolAppr +{\stabPar}\LRp{{\FdsAdVarSolAppr}-\FdsAdTrcVarSolAppr},{\FdsAdTestSolAppr}},\FdsAdTrcTestSolAppr}_{\skel\backslash\domBndDir}
    = \LRa{{\varrho}\id_{\FdsNVarSol}\FdsAdTrcVarSolAppr,\FdsAdTrcTestSolAppr}_{\skel\cap(\domBndNmn\cup\domBndRb)}
    -\frac{\delta\OutFcnl\LRs{\FdsVarLump}}{\FdsAdTrcTestSolAppr}&
\end{split}\\
\begin{split}\eqnlab{Fds_two_ad_hdg_Dirichlet}
    \LRa{\FdsAdTrcVarSolAppr,\FdsAdTrcTestSolAppr}_{\skelBnd\cap\domBndDir}
    = -\frac{\delta\OutFcnl\LRs{\FdsVarLump}}{\FdsAdTrcTestSolAppr},&
\end{split}
\end{align}
\end{subequations}
for all $\LRp{\FdsAdTestAuxAppr,\FdsAdTestSolAppr,\FdsAdTrcTestSolAppr}\in\FdsAdVarAuxApprSpc{}\times\FdsAdVarSolApprSpc{}\times\FdsAdTrcVarSolApprSpc{}$. The following term has to be added into the output functional to account for the boundary conditions:
\beq\eqnlab{Fds_two_ad_Dirichlet}\small
\Output{\FdsVarLump}
=\LRa{-\FdsBBnd^T\FdsVarAuxAppr-\LRp{\FdsCBnd+\stabPar}\LRp{\FdsVarSolAppr+\FdsTrcVarSolAppr},\proj\bs{\DirVal}}_{\skelBnd\cap\domBndDir}
-\LRa{\FdsTrcVarSolAppr,\proj\bs{\NmnVal}}_{\skelBnd\cap\domBndNmn}
-\LRa{\FdsTrcVarSolAppr,\proj\bs{\RbVal}}_{\skelBnd\cap\domBndRb},
\eeq
where $\proj$ denotes the $\Lsp{2}$-projection into the space $\LRc{\eval{\FdsTrcVarSolAppr}{\domBnd}\,\forall\FdsTrcVarSolAppr\in\FdsTrcVarSolApprSpc{}}$. The similar analysis used in lemma \lemref{Fds_two_full_hp_localCon} and theorem \theoref{Fds_two_full_hp_globalCon}  can still be applied to the adjoint $hp$-HDG formulation \eqnref{Fds_two_ad_hdg} for the primal formulation \eqnref{Fds_two_hp_hdg} with full coercivity. Likewise, the similar argument presented in lemma \lemref{Fds_two_partial_hp_localCon} and theorem \theoref{Fds_two_partial_hp_globalCon} can be used for the same adjoint formulation \eqnref{Fds_two_ad_hdg} but for the primal formulation \eqnref{Fds_two_hp_hdg} with partial coercivity. To be concise, the following lemma and theorem about well-posedness are hence given without proof. 
\begin{lemma}[Well-posedness of the local equation-with full coercivity]\lemlab{Fds_two_full_ad_localCon} The local solver composed by \eqnref{Fds_two_ad_hdg_local1} and \eqnref{Fds_two_ad_hdg_local2} is well-posed provided that:
\ben
    \item the assumptions \itmref{A1}-\itmref{A6} hold, and
    \item $\half\FdsCBnd + \stabPar\geq0$, and
    \item $\FdsBk$ is a constant and is non-zero for $\FdsIndx=1,\dots,d$.
\een
\end{lemma}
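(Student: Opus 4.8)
The plan is to reproduce, \emph{mutatis mutandis}, the argument of Lemma~\lemref{Fds_two_full_hp_localCon}. Since \eqnref{Fds_two_ad_hdg_local1}--\eqnref{Fds_two_ad_hdg_local2} is, for each fixed $\elem\in\domPart$, a linear square system on the finite-dimensional space $\FdsAdVarAuxApprSpc{}\times\FdsAdVarSolApprSpc{}$, it suffices to establish injectivity: set the functional derivatives $\delta\OutFcnl/\delta(\cdot)$ and the trace $\FdsAdTrcVarSolAppr$ to zero and show that the only resulting solution is $\LRp{\FdsAdVarAuxAppr,\FdsAdVarSolAppr}=\bs{0}$ on $\elem$. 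First I would take the test functions $\LRp{\FdsAdTestAuxAppr,\FdsAdTestSolAppr}=\LRp{\FdsAdVarAuxAppr,\FdsAdVarSolAppr}$ in \eqnref{Fds_two_ad_hdg_local1}--\eqnref{Fds_two_ad_hdg_local2}, add the two equations, and discard every term carrying the (vanishing) trace unknown, so that the sole surviving boundary contribution is $\LRa{-\FdsBBnd^{T}\FdsAdVarAuxAppr+\stabPar\FdsAdVarSolAppr,\FdsAdVarSolAppr}_{\elemBnd}$.

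Next I would integrate by parts in each of the three volume derivative terms, namely $-\FdsSum\LRp{\FdsAdVarSolAppr,\FdsPartial\LRp{\FdsBk^{T}\FdsAdVarAuxAppr}}_{\elem}$, $-\FdsSum\LRp{\FdsAdVarAuxAppr,\FdsPartial\LRp{\FdsBk\FdsAdVarSolAppr}}_{\elem}$ and $-\FdsSum\LRp{\FdsAdVarSolAppr,\FdsPartial\LRp{\FdsCk\FdsAdVarSolAppr}}_{\elem}$, using that $\FdsBk$ is constant (so every product-rule remainder involving $\FdsPartial\FdsBk$ drops, exactly as the zero terms inserted in \eqnref{Fds_two_full_hp_hdg_localproof1} do) and that $\FdsCk=\FdsCk^{T}$ by \itmref{A3}, which lets the $\FdsCk$-term be symmetrized as in \eqnref{IBP2_Ref}. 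The two $\FdsBk$-terms should combine into a pure boundary contribution that annihilates the cross term $-\LRa{\FdsBBnd^{T}\FdsAdVarAuxAppr,\FdsAdVarSolAppr}_{\elemBnd}$, while the $\FdsCk$-term leaves a volume piece that, after undoing the block decomposition of $\FdsG$ and $\FdsAk$ (and using $\FdsAkaa=0$ from \itmref{A5} together with $\FdsBk$ constant, so that $\FdsSum\FdsPartial\FdsAk$ reduces to its lower-right block), assembles into $\half\LRp{\LRp{\FdsG+\FdsG^{T}+\FdsSum\FdsPartial\FdsAk}\FdsAdVarAppr,\FdsAdVarAppr}_{\elem}$ with $\FdsAdVarAppr:=\LRp{\FdsAdVarAuxAppr,\FdsAdVarSolAppr}$ — the transpose on the adjoint reaction matrix being immaterial because only the symmetric part $\FdsG+\FdsG^{T}$ enters a quadratic form. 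I expect to arrive at the adjoint counterpart of \eqnref{Fds_two_full_hp_hdg_localproof2},
\begin{equation*}
\half\LRp{\LRp{\FdsG+\FdsG^{T}+\FdsSum\FdsPartial\FdsAk}\FdsAdVarAppr,\FdsAdVarAppr}_{\elem}
+\LRa{\LRp{\half\FdsCBnd+\stabPar}\FdsAdVarSolAppr,\FdsAdVarSolAppr}_{\elemBnd}=0 ,
\end{equation*}
after which full coercivity \itmref{A4} and $\half\FdsCBnd+\stabPar\geq0$ force $\FdsAdVarAppr=\bs{0}$ on $\elem$; since $\elem$ was arbitrary, the local solver is injective and hence invertible.

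The step I expect to be the main obstacle is precisely this integration-by-parts bookkeeping. In contrast with the primal formulation — where only one of the two $\FdsBk$-coupling terms is integrated by parts — here the derivatives already sit on the coefficient-weighted test functions, so each of the three derivative terms generates a boundary contribution of $\FdsBBnd$- or $\FdsCBnd$-type whose sign and transpose structure must be tracked with care; one has to verify that the cross terms coupling $\FdsAdVarAuxAppr$ and $\FdsAdVarSolAppr$ at the element boundary cancel, that what remains is exactly $\half\FdsCBnd+\stabPar$ (the second, essential, use of the symmetry \itmref{A3} of $\FdsCk$), and that the first-order volume contributions reassemble into $+\FdsSum\FdsPartial\FdsAk$ rather than its negative, so that \itmref{A4} is genuinely applicable. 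Once this identity is in hand the conclusion is immediate, and the corresponding global statement (the analogue of Theorem~\theoref{Fds_two_full_hp_globalCon}, requiring in addition $\bigcap_{\FdsIndx=1}^{d}\text{Range}\LRp{\FdsBk}=\emptyset$ and $\Null{\FdsBk}=\LRc{\bs{0}}$) follows by the same adaptation of the primal global argument, with the adjoint Dirichlet condition \eqnref{Fds_two_ad_hdg_Dirichlet} replacing \eqnref{Fds_two_hp_hdg_Dirichlet}.
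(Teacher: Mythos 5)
Your overall strategy (finite dimensionality plus an element-local energy identity) is the right one in spirit, but the paper does not actually redo this computation: its justification for Lemma~\lemref{Fds_two_full_ad_localCon} is the observation, made at the end of Appendix~\secref{adjoint_hdg}, that the adjoint system is the \emph{transpose} of the primal one, so that the quadratic form obtained by testing with the trial functions is literally the same as in Lemma~\lemref{Fds_two_full_hp_localCon} (since $x^{T}A^{T}x=x^{T}Ax$) and the primal conditions transfer verbatim. Your route of recomputing the energy identity directly from \eqnref{Fds_two_ad_hdg_local1}--\eqnref{Fds_two_ad_hdg_local2} is legitimate in principle, but the two bookkeeping outcomes you assert --- and yourself flag as the main obstacle --- do not come out as claimed when computed from those equations as printed. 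First, with $\FdsBk$ constant the two $\FdsBk$-volume terms combine, by the divergence theorem, into the single boundary term $-\LRa{\FdsBBnd^{T}\FdsAdVarAuxAppr,\FdsAdVarSolAppr}_{\elemBnd}$; this has the \emph{same} sign as the $-\FdsBBnd^{T}\FdsAdVarAuxAppr$ contribution of the boundary flux in \eqnref{Fds_two_ad_hdg_local2}, so the cross terms double to $-2\LRa{\FdsBBnd^{T}\FdsAdVarAuxAppr,\FdsAdVarSolAppr}_{\elemBnd}$ rather than cancel (in the primal proof the flux carries $+\FdsBBnd^{T}\FdsVarAuxAppr$, which is what makes the cancellation work). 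Second, because the derivative sits on $\FdsCk\FdsAdTestSolAppr$, symmetrizing the $\FdsCk$-term yields $-\half\FdsSum\LRp{(\FdsPartial\FdsCk)\FdsAdVarSolAppr,\FdsAdVarSolAppr}_{\elem}-\half\LRa{\FdsCBnd\FdsAdVarSolAppr,\FdsAdVarSolAppr}_{\elemBnd}$, and the compensating $+\FdsCBnd\FdsVarSolAppr$ of the primal flux is here replaced by $-\FdsCBnd^{T}\FdsAdTrcVarSolAppr$, which vanishes once the trace is set to zero; you therefore end up with $\half\LRp{(\FdsG+\FdsG^{T}-\FdsSum\FdsPartial\FdsAk)\FdsAdVarAppr,\FdsAdVarAppr}_{\elem}$ in the volume and $\LRa{(-\half\FdsCBnd+\stabPar)\FdsAdVarSolAppr,\FdsAdVarSolAppr}_{\elemBnd}$ on the boundary, neither of which is controlled by \itmref{A4} and $\half\FdsCBnd+\stabPar\geq 0$.

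The fact that the direct computation does not reproduce the primal quadratic form is itself evidence that \eqnref{Fds_two_ad_hdg_local1}--\eqnref{Fds_two_ad_hdg_local2} as printed are not the exact discrete transpose of \eqnref{Fds_two_hp_hdg_local1}--\eqnref{Fds_two_hp_hdg_local2} (the appendix contains visible typographical slips in these formulas), so your argument cannot be completed without first fixing that discrepancy. The clean way to close the proof is to bypass the recomputation entirely: write the primal local solver as a square matrix $A$, identify the adjoint local solver as $A^{T}$, and note that $A^{T}$ is invertible if and only if $A$ is, with injectivity of $A$ already established by Lemma~\lemref{Fds_two_full_hp_localCon} under exactly the hypotheses listed. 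If you insist on the direct energy route, you must first verify term by term that each volume and boundary contribution of the adjoint formulation is the adjoint (transpose) of the corresponding primal contribution; only then does the identity you wrote down follow, and it follows for free.
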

\begin{theorem}[Well-posedness of the $hp$-HDG formulation-with full coercivity]\theolab{Fds_two_full_ad_globalCon} Suppose: 
\ben
    \item the assumptions \itmref{A1}-\itmref{A6} and \eqnref{Abstrct_M1} hold, and
    \item $\half\FdsCBnd + \stabPar\geq0$, and
    \item $\FdsBk$ is constant and is nonzero for $\FdsIndx=1,\dots,d$, and
    \item $\bigcap_{\FdsIndx=1}^d\text{Range}\LRp{\FdsBk}=\emptyset$ and $\Null{\FdsBk}=\LRc{\bs{0}}$ for $\forall\FdsIndx=1,\dots,d$. 
\een
Then, there exists a unique solution $\FdsAdTrcVarSolAppr\in\FdsAdTrcVarSolApprSpc{}$ for the $hp$-HDG formulation defined in \eqnref{Fds_two_ad_hdg}.
\end{theorem}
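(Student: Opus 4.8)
The plan is to run the adjoint counterpart of the arguments behind Lemma~\lemref{Fds_two_full_hp_localCon} and Theorem~\theoref{Fds_two_full_hp_globalCon}: the adjoint $hp$-HDG formulation \eqnref{Fds_two_ad_hdg} is itself a two-field Friedrichs' HDG system (it is the transpose of \eqnref{Fds_two_hp_hdg}), and its associated coercivity matrix $\FdsG^{T}+\FdsSum\FdsPartial\FdsAk$ inherits, through the symmetry \itmref{A3}, exactly the full-coercivity bound \itmref{A4}. By linearity and the finite dimensionality of $\FdsAdVarAuxApprSpc{}\times\FdsAdVarSolApprSpc{}\times\FdsAdTrcVarSolApprSpc{}$ it suffices to prove uniqueness. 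So I would set all the functional-derivative right-hand sides to zero (equivalently, take homogeneous boundary data in \eqnref{Fds_two_ad_Dirichlet}), choose $\LRp{\FdsAdTestAuxAppr,\FdsAdTestSolAppr,\FdsAdTrcTestSolAppr}=\LRp{\FdsAdVarAuxAppr,\FdsAdVarSolAppr,\FdsAdTrcVarSolAppr}$, and read off from \eqnref{Fds_two_ad_hdg_Dirichlet} that $\LRa{\FdsAdTrcVarSolAppr,\FdsAdTrcVarSolAppr}_{\skelBnd\cap\domBndDir}=0$, hence $\FdsAdTrcVarSolAppr=\bs{0}$ on $\skelBnd\cap\domBndDir$.

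Next I would add the two local equations \eqnref{Fds_two_ad_hdg_local1}--\eqnref{Fds_two_ad_hdg_local2} and subtract the global equation \eqnref{Fds_two_ad_hdg_global}. Since $\FdsBk$ is constant, $\FdsPartial\LRp{\FdsBk^{T}\FdsAdTestAuxAppr}=\FdsBk^{T}\FdsPartial\FdsAdTestAuxAppr$ (and similarly for the $\FdsBk\FdsAdTestSolAppr$ and $\FdsCk\FdsAdTestSolAppr$ terms), so after integration by parts these volume contributions take precisely the form handled in \eqnref{IBP1_Ref} and \eqnref{IBP2_Ref} --- the lower-order term in \eqnref{IBP1_Ref} dropping because $\FdsPartial\FdsBk^{T}=\bs{0}$ --- while the mixed skeleton term is recast as in \eqnref{id1_Ref} (here $\FdsCBnd=\FdsCBnd^{T}$ by \itmref{A3}, and $\LRa{\FdsCBnd\FdsAdTrcVarSolAppr,\FdsAdTrcVarSolAppr}$ over interior mortars vanishes since $\FdsCBnd$ is continuous across $\skel$ and $\FdsAdTrcVarSolAppr$ is single-valued). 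The transposes and sign flips in the adjoint flux ($-\FdsBBnd$, $-\FdsBBnd^{T}$, $-\FdsCBnd^{T}$, and $\stabPar\LRp{\FdsAdVarSolAppr-\FdsAdTrcVarSolAppr}$) then assemble consistently, and the outcome is the adjoint analogue of \eqnref{Fds_two_full_hp_hdg_globalproof2}:
\begin{equation*}\small
\begin{aligned}
&\half\LRp{\LRp{{\FdsG}+\FdsG^{T}+\FdsSum{\FdsPartial}{\FdsAk}}\FdsAdGenAppr,\FdsAdGenAppr}_{\domPart}
+\LRa{\LRp{\half{\FdsCBnd}+{\stabPar}}\LRp{{\FdsAdVarSolAppr}-\FdsAdTrcVarSolAppr},{\FdsAdVarSolAppr}-\FdsAdTrcVarSolAppr}_{\domPartBnd\backslash\domBndDir}\\
&+\LRa{\LRp{\half{\FdsCBnd}+{\stabPar}}{\FdsAdVarSolAppr},{\FdsAdVarSolAppr}}_{\domPartBnd\cap\domBndDir}
+\LRa{\LRp{\half{\FdsCBnd}+{\varrho}\id_{\FdsNVarSol}}\FdsAdTrcVarSolAppr,\FdsAdTrcVarSolAppr}_{\domPartBnd\cap\LRp{\domBndNmn\cup\domBndRb}}=0,
\end{aligned}
\end{equation*}
where $\FdsAdGenAppr:=\LRp{\FdsAdVarAuxAppr,\FdsAdVarSolAppr}$.

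From here the conclusion follows as in the primal case: full coercivity \itmref{A4}, semi-positiveness of the boundary operator \eqnref{Abstrct_M1} (hence $2\varrho\id_{\FdsNVarSol}+\FdsCBnd\geq0$ from \eqnref{Fds_two_hdg_bc_constraints}), and $\half\FdsCBnd+\stabPar\geq0$ make every term above non-negative, forcing $\FdsAdGenAppr=\LRp{\FdsAdVarAuxAppr,\FdsAdVarSolAppr}=\bs{0}$ in every $\elem\in\domPart$. Substituting this, together with $\FdsAdTrcVarSolAppr=\bs{0}$ on $\skelBnd\cap\domBndDir$, into \eqnref{Fds_two_ad_hdg_local1} leaves $\LRa{\FdsBBnd\FdsAdTrcVarSolAppr,\FdsAdTestAuxAppr}_{\domPartBnd\backslash\domBndDir}=0$ for all $\FdsAdTestAuxAppr\in\FdsAdVarAuxApprSpc{}$, so $\FdsBBnd\FdsAdTrcVarSolAppr=\bs{0}$; the hypotheses $\bigcap_{\FdsIndx=1}^{d}\text{Range}\LRp{\FdsBk}=\emptyset$ and $\Null{\FdsBk}=\LRc{\bs{0}}$ then give $\Null{\FdsBBnd}=\LRc{\bs{0}}$ (the very step that yields $\Null{\FdsABnd}=\LRc{\bs{0}}$ in Theorem~\theoref{Fds_two_full_hp_globalCon}), whence $\FdsAdTrcVarSolAppr=\bs{0}$ on $\skel\backslash\domBndDir$; combined with the Dirichlet part, $\FdsAdTrcVarSolAppr=\bs{0}$ on all of $\skel$, which is the claimed uniqueness, and existence then follows.

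The main obstacle is not conceptual but a matter of bookkeeping: one must check that integrating by parts in the adjoint local forms --- where the spatial derivatives act on $\FdsAk$, $\FdsBk$, $\FdsCk$ times the test function rather than on the test function alone --- reproduces \eqnref{IBP1_Ref}, \eqnref{IBP2_Ref} and \eqnref{id1_Ref} term by term, so that the sign- and transpose-flipped numerical flux in \eqnref{Fds_two_ad_hdg} still collapses into \eqnref{Fds_two_full_hp_hdg_globalproof2}. This relies on \itmref{A3}, on $\FdsBk$ being constant, and on the continuity of $\FdsABnd$ and $\FdsCBnd$ across the mesh skeleton; once this is verified, the coercivity and positivity hypotheses close the argument exactly as in the primal two-field theorem, and the corresponding local statement Lemma~\lemref{Fds_two_full_ad_localCon} is obtained by restricting the same energy identity to a single element.
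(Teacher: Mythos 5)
Your proposal is correct and follows exactly the route the paper indicates: the paper states Theorem~\theoref{Fds_two_full_ad_globalCon} without proof, remarking only that the ``similar analysis'' of Lemma~\lemref{Fds_two_full_hp_localCon} and Theorem~\theoref{Fds_two_full_hp_globalCon} can still be applied to the adjoint formulation \eqnref{Fds_two_ad_hdg}, and your energy argument is precisely that analysis carried out in detail --- same test-function choice, same integration-by-parts identities \eqnref{IBP1_Ref}--\eqnref{id1_Ref} (with the expected sign flip on the $\FdsPartial\FdsAk$ volume terms that still yields the symmetrized coercivity matrix $\FdsG+\FdsG^T+\FdsSum\FdsPartial\FdsAk$), same use of \itmref{A4}, of $\half\FdsCBnd+\stabPar\geq0$, and of the range/nullspace hypotheses on $\FdsBk$ to force $\FdsAdTrcVarSolAppr=\bs{0}$ on the skeleton. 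The only ingredient the paper offers that you mention but do not exploit is the purely algebraic shortcut stated at the end of Appendix~\secref{adjoint_hdg}: the discrete adjoint system matrix is the transpose of the primal one, so invertibility already follows from Theorem~\theoref{Fds_two_full_hp_globalCon} without redoing the energy estimate.
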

\begin{lemma}[Well-posedness of the local equation-with partial coercivity]\lemlab{Fds_two_partial_ad_localCon} The local solver composed by \eqnref{Fds_two_ad_hdg_local1} and \eqnref{Fds_two_ad_hdg_local2}  is well-posed provided that:
\ben
    \item the assumption \itmref{A1}-\itmref{A3}, \itmref{A4a}-\itmref{A4b} and \itmref{A5}-\itmref{A6} hold, and
    \item $\half\FdsCBnd + \stabPar>0$, and 
    \item $\bigcap_{\FdsIndx=1}^d\text{Range}\LRp{\FdsBk}=\emptyset$ and $\Null{\FdsBk}=\LRc{\bs{0}}$ for $\forall\FdsIndx=1,\dots,d$. 
\een
\end{lemma}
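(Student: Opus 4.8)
The plan is to transfer, essentially verbatim, the energy argument used in the proof of Lemma~\lemref{Fds_two_partial_hp_localCon} to the adjoint setting, exploiting that the adjoint $hp$-HDG formulation is obtained from the primal one by \emph{transposing} the bilinear form (Appendix~\secref{adjoint_hdg}, steps 1--4: the primal test function becomes the Lagrange multiplier, i.e.\ the adjoint unknown). By linearity and by finite dimensionality of $\FdsAdVarAuxApprSpc{}\times\FdsAdVarSolApprSpc{}$, it suffices to show that when the functional-variation source terms on the right of \eqnref{Fds_two_ad_hdg_local1}--\eqnref{Fds_two_ad_hdg_local2} vanish and the fixed trace satisfies $\FdsAdTrcVarSolAppr=\bs{0}$, the only solution is $\FdsAdVarAuxAppr=\bs{0}$ and $\FdsAdVarSolAppr=\bs{0}$ on every $\elem\in\domPart$. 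Let $a_{\mathrm{loc}}(\cdot,\cdot)$ denote the primal local bilinear form, i.e.\ the left-hand side of \eqnref{Fds_two_hp_hdg_local1}--\eqnref{Fds_two_hp_hdg_local2} with the trace set to zero. The corresponding adjoint local bilinear form is $a_{\mathrm{loc}}^{*}(\psi,\phi)=a_{\mathrm{loc}}(\phi,\psi)$, so its restriction to the diagonal is unchanged: $a_{\mathrm{loc}}^{*}(v,v)=a_{\mathrm{loc}}(v,v)$ for every $v$ in the common trial/test space.

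Consequently, taking $(\FdsAdTestAuxAppr,\FdsAdTestSolAppr)=(\FdsAdVarAuxAppr,\FdsAdVarSolAppr)$ in \eqnref{Fds_two_ad_hdg_local1}--\eqnref{Fds_two_ad_hdg_local2}, adding, and using only that each $\FdsBk$ is constant (\itmref{A4b}) and each $\FdsAk$ is symmetric (\itmref{A3}), I obtain exactly the identity \eqnref{Fds_two_full_hp_hdg_localproof2}, now with $\FdsVarAppr$ replaced by $\bs{\phi}:=\LRp{\FdsAdVarAuxAppr,\FdsAdVarSolAppr}$:
\begin{equation*}
    \half\LRp{\LRp{\FdsG+\FdsG^{T}+\FdsSum\FdsPartial\FdsAk}\bs{\phi},\bs{\phi}}_{\elem}
    + \LRa{\LRp{\half\FdsCBnd+\stabPar}\FdsAdVarSolAppr,\FdsAdVarSolAppr}_{\elemBnd} = 0.
\end{equation*}
From here the conclusion follows as in the primal proof: the partial-coercivity assumption \itmref{A4a} bounds the first term below by $\mu_0\norm{\FdsAdVarAuxAppr}^{2}_{\elem}$, and with the \emph{strict} positivity $\half\FdsCBnd+\stabPar>0$ assumed here, the vanishing of the sum of two non-negative quantities forces $\FdsAdVarAuxAppr=\bs{0}$ in $\elem$ and $\FdsAdVarSolAppr=\bs{0}$ on $\elemBnd$, for every $\elem\in\domPart$. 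Substituting these, together with $\FdsAdTrcVarSolAppr=\bs{0}$, into \eqnref{Fds_two_ad_hdg_local1} — in which the reaction term also drops since \itmref{A4b} gives $(\FdsGsa)^{T}=\bs{0}$ — leaves $-\FdsSum\LRp{\FdsAdVarSolAppr,\FdsPartial\LRp{\FdsBk^{T}\FdsAdTestAuxAppr}}_{\elem}=0$ for all $\FdsAdTestAuxAppr\in\FdsAdVarAuxApprSpc{}$. Integrating by parts once more (using $\FdsBk$ constant and $\FdsAdVarSolAppr=\bs{0}$ on $\elemBnd$ to discard the boundary term) gives $\FdsSum\LRp{\FdsBk\FdsPartial\FdsAdVarSolAppr,\FdsAdTestAuxAppr}_{\elem}=0$ for all $\FdsAdTestAuxAppr$, hence $\FdsSum\FdsBk\FdsPartial\FdsAdVarSolAppr=\bs{0}$ in $\elem$; the rank hypotheses $\bigcap_{\FdsIndx=1}^{d}\text{Range}\LRp{\FdsBk}=\emptyset$ and $\Null{\FdsBk}=\LRc{\bs{0}}$ then yield $\FdsAdVarSolAppr=\bs{0}$ in $\elem$, exactly as in the last step of the proof of Lemma~\lemref{Fds_two_partial_hp_localCon}.

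The delicate point — which I expect to be the only real obstacle — is the reduction to the identity above. Abstractly it is immediate from $a_{\mathrm{loc}}^{*}(v,v)=a_{\mathrm{loc}}(v,v)$; but if one prefers to verify it by expanding \eqnref{Fds_two_ad_hdg_local1}--\eqnref{Fds_two_ad_hdg_local2} directly, one must carry the transposed coefficient matrices $\FdsBk^{T}$ and $\FdsCk$ through the integrations by parts and check that the $\FdsBBnd$-type cross terms between $\FdsAdVarAuxAppr$ and $\FdsAdVarSolAppr$ on $\elemBnd$ cancel, leaving precisely the boundary form $\LRa{\LRp{\half\FdsCBnd+\stabPar}\FdsAdVarSolAppr,\FdsAdVarSolAppr}_{\elemBnd}$ with the right sign. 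Holding the trace fixed at $\bs{0}$ throughout — so that the conservativity and Dirichlet rows \eqnref{Fds_two_ad_hdg_global}--\eqnref{Fds_two_ad_hdg_Dirichlet} do not enter the local analysis — is what makes this bookkeeping routine, and the sign tracking through the transposes is where an error would most easily creep in.
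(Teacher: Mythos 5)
Your proposal is correct and follows exactly the route the paper takes: the paper states this lemma without proof, remarking only that the argument of Lemma \lemref{Fds_two_partial_hp_localCon} carries over to the adjoint system and that the adjoint matrix is the transpose of the primal one, which is precisely your combination of the diagonal identity $a^{*}_{\mathrm{loc}}(v,v)=a_{\mathrm{loc}}(v,v)$ with the primal energy identity \eqnref{Fds_two_full_hp_hdg_localproof2} and the final step using $\Null{\FdsBk}=\LRc{\bs{0}}$ and the range condition. No gap; you have simply written out the details the paper leaves implicit.
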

\begin{theorem}[Well-posedness of the $hp$-HDG formulation -with partial coercivity]\theolab{Fds_two_partial_ad_globalCon} Suppose: 
\ben
    \item the assumptions \itmref{A1}-\itmref{A3}, \itmref{A4a}-\itmref{A4b}, \itmref{A5}-\itmref{A6}, and \eqnref{Abstrct_M1} hold, and 
    \item $\half\FdsCBnd + \stabPar>0$, 
    \item $\bigcap_{\FdsIndx=1}^d\text{Range}\LRp{\FdsBk}=\emptyset$ and $\Null{\FdsBk}=\LRc{\bs{0}}$ for $\forall\FdsIndx=1,\dots,d$. 
\een
There exists a unique solution $\FdsAdTrcVarSolAppr\in\FdsAdTrcVarSolApprSpc{}$ for the $hp$-HDG formulation defined in \eqnref{Fds_two_ad_hdg}.
\end{theorem}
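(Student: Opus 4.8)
The plan is to follow, essentially line for line, the argument that proved Theorem~\theoref{Fds_two_partial_hp_globalCon}, exploiting the fact that the adjoint formulation \eqnref{Fds_two_ad_hdg} is, by construction, the transpose of the primal formulation \eqnref{Fds_two_hp_hdg} (up to the boundary-data terms gathered in \eqnref{Fds_two_ad_Dirichlet}, which are irrelevant below). Since \eqnref{Fds_two_ad_hdg} is linear in the adjoint unknowns and posed on the finite-dimensional space $\FdsAdVarAuxApprSpc{}\times\FdsAdVarSolApprSpc{}\times\FdsAdTrcVarSolApprSpc{}$, it suffices to show that the homogeneous problem --- obtained by dropping every right-hand-side functional derivative $\delta\OutFcnl\LRs{\FdsVarLump}/\delta(\cdot)$ --- admits only $(\FdsAdVarAuxAppr,\FdsAdVarSolAppr,\FdsAdTrcVarSolAppr)=\bs{0}$. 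What makes the transfer of the primal proof almost automatic is that a bilinear form and its transpose agree on the diagonal; hence testing \eqnref{Fds_two_ad_hdg} against the adjoint solution itself produces exactly the primal energy identity \eqnref{Fds_two_full_hp_hdg_globalproof2}, with $\FdsVarAppr$ replaced by $(\FdsAdVarAuxAppr,\FdsAdVarSolAppr)$ and $\FdsG$ by $\FdsG^{T}$, which is harmless since $\FdsG^{T}+(\FdsG^{T})^{T}=\FdsG+\FdsG^{T}$ so the coercivity matrix of \itmref{A4a} is unchanged.

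Concretely, I would first take $(\FdsAdTestAuxAppr,\FdsAdTestSolAppr,\FdsAdTrcTestSolAppr)=(\FdsAdVarAuxAppr,\FdsAdVarSolAppr,\FdsAdTrcVarSolAppr)$. The adjoint Dirichlet equation \eqnref{Fds_two_ad_hdg_Dirichlet} immediately gives $\LRa{\FdsAdTrcVarSolAppr,\FdsAdTrcVarSolAppr}_{\skelBnd\cap\domBndDir}=0$, so $\FdsAdTrcVarSolAppr=\bs{0}$ on $\skelBnd\cap\domBndDir$. Adding \eqnref{Fds_two_ad_hdg_local1} and \eqnref{Fds_two_ad_hdg_local2} and subtracting \eqnref{Fds_two_ad_hdg_global}, I would move every differentiated coefficient back onto the adjoint volume variables by integration by parts: since $\FdsBk$ is constant by \itmref{A4b} the $\FdsBk$-terms reassemble exactly as in the primal, the $\FdsCk$-term is treated through \eqnref{IBP2_Ref} using $\FdsCk=\FdsCk^{T}$ from \itmref{A3}, the cross reaction terms vanish because $\FdsGas=(\FdsGsa)^{T}=0$ by \itmref{A4b}, and the skeleton/boundary terms are regrouped with the identity \eqnref{id1_Ref} ($\FdsCBnd$ continuous across $\skel$, $\FdsAdTrcVarSolAppr$ single-valued on $\skelInt$). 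The outcome is precisely \eqnref{Fds_two_full_hp_hdg_globalproof2} in the adjoint variables. Invoking partial coercivity \itmref{A4a}, the semi-positiveness \eqnref{Abstrct_M1} (hence \eqnref{Fds_two_hdg_bc_constraints}), and $\half\FdsCBnd+\stabPar>0$, I conclude $\FdsAdVarAuxAppr=\bs{0}$ in every $\elem\in\domPart$, $\FdsAdVarSolAppr=\bs{0}$ on $\domPartBnd\cap\domBndDir$, and $\FdsAdVarSolAppr=\FdsAdTrcVarSolAppr$ on $\face\cap\mort$ for every non-Dirichlet face.

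To finish, I would substitute these facts into \eqnref{Fds_two_ad_hdg_local1} with sources zero, integrate by parts, and reach $\FdsSum\FdsBk\FdsPartial\FdsAdVarSolAppr=\bs{0}$ in each element (again using that $\FdsBk$ is constant); the hypotheses $\bigcap_{\FdsIndx=1}^{d}\text{Range}\LRp{\FdsBk}=\emptyset$ and $\Null{\FdsBk}=\LRc{\bs{0}}$ then force $\FdsAdVarSolAppr=\bs{0}$ on every $\elem\in\domPart$, whence $\FdsAdTrcVarSolAppr=\bs{0}$ on all of $\skel\backslash\domBndDir$ as well, completing the argument. The step I expect to be the main obstacle is the integration-by-parts bookkeeping in the second paragraph: one must track every transpose the adjoint introduces --- most delicately the $-\FdsBBnd^{T}$ and $-\FdsCBnd^{T}$ appearing in the adjoint numerical flux in \eqnref{Fds_two_ad_hdg_local2} and \eqnref{Fds_two_ad_hdg_global} --- and check that, after regrouping, the surface contributions cancel exactly as in the primal case, leaving no spurious residual. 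Once this is verified, no idea beyond the proof of Theorem~\theoref{Fds_two_partial_hp_globalCon} is needed, and the analogous local result, Lemma~\lemref{Fds_two_partial_ad_localCon}, follows by the same computation restricted to a single element with $\FdsAdTrcVarSolAppr$ and the sources set to zero.
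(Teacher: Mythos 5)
Your proposal is correct and matches the paper's approach: the paper omits the proof of Theorem~\theoref{Fds_two_partial_ad_globalCon} precisely on the grounds you articulate, namely that the argument of Theorem~\theoref{Fds_two_partial_hp_globalCon} carries over verbatim, and it separately records your key observation (that the adjoint system is the transpose of the primal, so testing on the diagonal reproduces the same energy identity and invertibility of one system implies invertibility of the other). Your elaboration of the integration-by-parts bookkeeping is consistent with what the paper leaves implicit, so no further comparison is needed.
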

It should be noted that the assumptions needed for the well-posedness of primal formulations are exactly what requirements are needed for the corresponding adjoint formulation to be well-posedness. In other words, the well-posedness of the primal problems will imply one of the adjoint problems in this work. In fact, this observation holds true for all HDG methods. To see it, we can express both volume and trace unknowns as discrete vectors (i.e. each item in the vectors represents a nodal value) instead of a function. The system of primal equations can then be rewritten in matrix form and the transpose of the matrix is exactly the matrix in the corresponding system of adjoint equations \cite{Dahm2014,Woopen2014b}. It is easy to see that the transpose of a square matrix is indeed invertible if the original square matrix is invertible.

Finally, we would like to point out that the adjoint $hp$-HDG formulations \eqnref{Fds_one_ad_hdg} and \eqnref{Fds_two_ad_hdg} are not equipped with upwind-based HDG numerical flux discussed in this paper. That is, adjoint $hp$-HDG formulations derived from the adjoint problem of the general PDEs \eqnref{general_PDE} using the Godnouv approach addressed in this paper will be different from the formulations presented in \eqnref{Fds_one_ad_hdg} and \eqnref{Fds_two_ad_hdg}.   
\renewcommand{\theequation}{b.\arabic{equation}}
\section{Proof of the existence of the upwind flux in reduced form}\seclab{proof_reduced_upwind}
We first look at the upwind flux stated in \eqnref{Godunov_flux}. Thanks to the assumption \itmref{A5}, the submatrix $\FdsAkaa$ contribute nothing and hence we have
\beq\eqnlab{flux1}
    \upwindFlux\LRp{\FdsVarAuxAppr^*, \FdsVarSolAppr^*}\normal =
    \begin{bmatrix}
    \FdsBBnd\FdsVarSolAppr^*\\
    \FdsBtBnd\FdsVarAuxAppr^* + \FdsCBnd\FdsVarSolAppr^*
    \end{bmatrix}.
\eeq
By the equality \eqnref{Godunov_flux_eq}, Eq. \eqnref{flux1} can be rewritten as
\beq\eqnlab{flux2}  \upwindFluxMort\LRp{\FdsVarAuxAppr^-,\FdsVarSolAppr^-,\FdsVarAuxAppr^*,\FdsVarSolAppr^*}\normal =
\begin{bmatrix}
\FdsBBnd\FdsVarSolAppr^*\\
\FdsBtBnd\FdsVarAuxAppr^* + \FdsCBnd\FdsVarSolAppr^*
\end{bmatrix}.
\eeq
Since the upwind state $\FdsBBnd\FdsVarSolAppr^*$ is desired to be kept in the numerical flux, we replace the second component in $\upwindFluxMort$ by using Eq. \eqnref{Fds_two_upwind_flux} and arrive at
\beq\eqnlab{flux3}  \upwindFluxMort\LRp{\FdsVarAuxAppr^-,\FdsVarSolAppr^-,\FdsVarAuxAppr^*,\FdsVarSolAppr^*}\normal =
\begin{bmatrix}
\FdsBBnd\FdsVarSolAppr^*\\
\FdsBBnd^T\FdsVarAuxAppr+\FdsCBnd\FdsVarSolAppr+\FdsAbsAsa\LRp{\FdsVarAuxAppr - \FdsVarAuxAppr^*}+ \FdsAbsAss\LRp{\FdsVarSolAppr - \FdsVarSolAppr^*}
\end{bmatrix}.
\eeq
The goal is to eliminate the state $\FdsVarAuxAppr^*$ from the right-hand side of Eq. \eqnref{flux3} via either the assumption \itmref{F1} or \itmref{F2}. 

We first consider that \itmref{F1} holds true. Since the second row in the equality \eqnref{Godunov_flux_eq} is already used to rewrite \eqnref{flux2} as \eqnref{flux3} and the first row remains unused, we can take advantage of it and it reads:
\beq\eqnlab{flux4}
\FdsBBnd\FdsVarSolAppr^*
=
\FdsBBnd\FdsVarSolAppr+\FdsAbsAaa\LRp{\FdsVarAuxAppr - \FdsVarAuxAppr^*}+\FdsAbsAas\LRp{\FdsVarSolAppr - \FdsVarSolAppr^*}.
\eeq
By invoking the assumptions \itmref{F1a} and \itmref{F1b}, the Eq. \eqnref{flux4} can be rearranged as
\beq\eqnlab{flux5}
\FdsAbsAaa\LRp{\FdsVarAuxAppr - \FdsVarAuxAppr^*}
=
-\LRp{\FluxAssumpA^T\FdsBtBnd\FdsBBnd\FluxAssumpA}^{-1}\FluxAssumpA^T\FdsBtBnd\FdsBBnd\LRp{\FluxAssumpB+\id_{\FdsNVarSol}},
\eeq
where the matrix $\LRp{\FluxAssumpA^T\FdsBtBnd\FdsBBnd\FluxAssumpA}^{-1}$ is guaranteed to exist owing to the assumptions \itmref{F1a} ($\FluxAssumpA^{-1}$ exists) and \itmref{F1c} (implies that $\Null{\FdsBBnd}=\LRc{\bs{0}}$). Now substitute \eqnref{flux5} into \eqnref{flux3} and define $\stabPar$ in the way described in \eqnref{stabPar_F1}, we then can arrive at the formation \eqnref{Fds_two_upwind_flux_reduced}.

Now assume that \itmref{F2} holds true, it is obvious that the state $\FdsVarAuxAppr^*$ will vanish. By applying the definition \eqnref{stabPar_F2}, the formation \eqnref{Fds_two_upwind_flux_reduced} can then be obtained. 

\bibliographystyle{unsrt}
\bibliography{References.bib}
\end{document}